\newtheorem{theorem}{Theorem}
\newtheorem{corollary}[theorem]{Corollary}
\newtheorem{proposition}[theorem]{Proposition}
\newtheorem{lemma}[theorem]{Lemma}
\theoremstyle{definition}
\newtheorem{remark}[theorem]{Remark}
\newtheorem{definition}[theorem]{Definition}
\numberwithin{theorem}{section}
\numberwithin{equation}{section}
\newcommand{\R}{\mathbb{R}}
\newcommand{\N}{\mathbb{N}}
\newcommand{\pa}{\partial}
\renewcommand{\S}{\mathbb{S}}
\newcommand{\cC}{{\mathcal C}}
\newcommand{\cF}{{\mathcal F}}   
\newcommand{\cH}{{\mathcal H}}
\newcommand{\cO}{{\mathcal O}}
\newcommand{\cR}{{\mathcal R}}   
\newcommand{\cS}{{\mathcal S}}   
\newcommand{\cT}{{\mathcal T}}   
\newcommand{\cV}{{\mathcal V}}
\newcommand{\dist}{{\rm dist}}
\newcommand{\weak}{\rightharpoonup}
\newcommand{\embed}{\hookrightarrow}
\newcommand{\eps}{\varepsilon}
\DeclareMathOperator{\loc}{loc}
\renewcommand{\epsilon}{\varepsilon}
\author[N. Soave]{Nicola Soave}\thanks{}
\address{Nicola Soave \newline \indent
Dipartimento di Matematica,  Politecnico di Milano,  \newline \indent
Via Edoardo Bonardi 9, 20133 Milano, Italy}
\email{nicola.soave@gmail.com; nicola.soave@polimi.it}
\author[S. Terracini]{Susanna Terracini}\thanks{}
\address{Susanna Terracini \newline \indent
 Dipartimento di Matematica ``Giuseppe Peano'', Universit\`a di Torino, \newline \indent
Via Carlo Alberto, 10,
10123 Torino, Italy}
\email{susanna.terracini@unito.it}
\title[The nodal set of solutions to some elliptic problems]{The nodal set of solutions to some elliptic problems: sublinear equations, and unstable two-phase membrane problem.}
\keywords{Sublinear equations, unstable obstacle problem, nodal set, nondegeneracy, stratification}
\subjclass{35B05, 35R35 (35J60, 35B60, 28A78)}
\thanks{The authors are partially supported by the ERC Advanced Grant 2013 n. 339958 ``Complex Patterns for Strongly Interacting Dynamical Systems - COMPAT''. N. Soave is partially supported by the PRIN-2015KB9WPT\texttt{\char`_}010 Grant: ``Variational methods, with applications to problems in mathematical physics and geometry", and by the INDAM-GNAMPA project ``Aspetti non-locali in fenomeni di segregazione".}
\date{\today}
\begin{document}

\maketitle

\begin{abstract}
We are concerned with the nodal set of solutions to equations of the form 
\begin{equation*}
-\Delta u = \lambda_+ \left(u^+\right)^{q-1} - \lambda_- \left(u^-\right)^{q-1} \quad \text{in $B_1$}
\end{equation*}
where $\lambda_+,\lambda_- > 0$, $q \in [1,2)$, $B_1=B_1(0)$ is the unit ball in $\R^N$, $N \ge 2$, and $u^+:= \max\{u,0\}$, $u^-:= \max\{-u,0\}$ are the positive and the negative part of $u$, respectively. This class includes, the \emph{unstable two-phase membrane problem} ($q=1$), as well as \emph{sublinear} equations for $1<q<2$.  

We prove the following main results: (a) the finiteness of the vanishing order at every point and the complete characterization of the order spectrum; (b) a weak non-degeneracy property; (c) regularity of the nodal set of any solution: the nodal set is a locally finite collection of regular codimension one manifolds up to a residual singular set having Hausdorff dimension at most $N-2$ (locally finite when $N=2$); (d) a partial stratification theorem.

Ultimately, the main features of the nodal set are strictly related with those of the solutions to linear (or superlinear) equations, with two remarkable differences. First of all, the admissible vanishing orders can not exceed the critical value $2/(2-q)$. At threshold, we find a multiplicity of homogeneous solutions, yielding the \emph{non-validity} of any estimate of the $(N-1)$-dimensional measure of the nodal set of a solution in terms of the vanishing order.

The proofs are based on monotonicity formul\ae \ for a $2$-parameter family of Weiss-type functionals, blow-up arguments, and the classification of homogenous solutions. 
\end{abstract}

\section{Introduction}

In this paper we investigate the structure of the nodal set of solutions to 
\begin{equation}\label{eq}
-\Delta u = \lambda_+ \left(u^+\right)^{q-1} - \lambda_- \left(u^-\right)^{q-1} \quad \text{in $B_1$}
\end{equation}
where $\lambda_+,\lambda_- > 0$, $q \in [1,2)$, $B_1=B_1(0)$ is the unit ball in $\R^N$, $N \ge 2$, and $u^+:= \max\{u,0\}$, $u^-:= \max\{-u,0\}$ are the positive and the negative part of $u$, respectively. The main feature of the problem stays in the fact that the right hand side is not locally Lipschitz continuous as function of $u$, and precisely has sublinear character for $1<q<2$, and discontinuos character for $q=1$. Our study is driven by two main motivations: the comparison with the structure of the nodal set of solutions to linear problems and the investigation of the free boundaries of unstable obstacle problems with two phases.

The study of the nodal set of solutions to second order linear (or superlinear) elliptic equations stimulated a very intense research, starting from the seminal contribution by T. Carleman regarding the validity of the strong unique continuation principle \cite{Car}. Many generalizations of Carleman's result are now available, we refer the reader to \cite{KocTat} and the references therein for a more detailed discussion. In a slightly different direction, researcher also analyzed the structure of the nodal sets from the geometric point of view. For a weak solution $v$ of class at least $C^1$ (this is often the case by regularity theory), the nodal set splits into a regular part where $\nabla v \neq 0$ and a singular (or critical) set where $v$ vanishes together with its gradient. The regular part is in fact locally a $C^1$ graph by the implicit function theorem, so that the study of the nodal set reduces to the study of its singular subset. The first results concerning the structure of the singular set are due to L. Caffarelli and A. Friedman \cite{CafFri79, CafFri85}, who proved the partial regularity of the nodal set of solutions to semilinear elliptic equations driven by the Laplacian and with linear or superlinear right hand side; that is, their singular set has Hausdorff dimension at most $N-2$. For more general equations with sufficiently regular coefficients, the partial regularity has been established by R. Hardt and L. Simon \cite{HarSim}, and by F. Lin \cite{Lin} with different methods. Besides the partial regularity, in \cite{CafFri85,HarSim} it is also proved that, for classical solutions with relatively high order derivatives, the nodal set is a countable union of subsets of sufficiently smooth $(N-2)$-dimensional manifolds. A similar structure also holds under weaker regularity assumptions, that is, for weak solutions of linear equations in divergence form with Lipschitz coefficients and bounded first and zero order terms, see \cite{Han94} by Q. Han. The above contributions provide a fairly complete scenario from a qualitative point of view.

From a quantitative point of view, we recall the results in \cite{DonFef, HarSim, Lin} where the authors estimated the $(N-1)$-dimensional measure of the zero level set. Assuming that $v$ is a solution of a linear elliptic equations with analytic coefficients in $B_1$, in \cite{Lin} it is showed that the $(N-1)$-dimensional measure of $\{v=0\} \cap B_{1/2}$ can be estimated in terms of the vanishing order of $v$ in $0$, with a linear (optimal) proportional factor; see also \cite{DonFef}, where the analogue estimate was proved for eigenfunctions on analytic manifolds. For solutions to some linear equations with Lipschitz coefficients and bounded first and zero order terms, the best known result is contained in \cite{HarSim}. The above bounds can equivalently be formulated in terms of the \emph{frequency}
\begin{equation}\label{cap Lambda}
\Lambda:= \frac{\int_{B_1} |\nabla u|^2}{\int_{S_1} u^2}.
\end{equation}
We also refer to \cite{CheNabVal, HanHarLin, NabVal} for results regarding the estimate of the $(N-2)$-dimensional measure of the singular set. 

All the results for linear equations easily extend to a wide class of superlinear equations of type $-\Delta u= f(x,u)$, provided that $f(x,s)$ is locally Lipschitz continuous in $s$, uniformly in $x$, that $f(x,0)=0$, and that $u \in L^\infty_{\loc}$. In this case, one can simply set $c(x) = f(x,u(x))/u(x) \in L^\infty_{\loc}$, and regard the superlinear equation as $-\Delta u=c(x) u$. The picture changes drastically when we switch to sublinear or discontinuous cases, when the above function $c$ is no more bounded, and could not live in any $L^p_{\loc}$ space. 

A word of caution must be entered at this point: it is clear that when dealing with sublinear equations of type
\begin{equation}\label{mod}
\Delta u = \lambda_+ \left(u^+\right)^{q-1} - \lambda_- \left(u^-\right)^{q-1} \quad \text{in $B_1$}, \quad \text{with $1 \le q <2$},
\end{equation}
(where the sign of the laplacian is opposite to ours), the features of the nodal set of solutions are substantially different in comparison with the linear case: dead cores appear and no unique continuation can be expected. Indeed, already the ODE $u'' = |u|^{q-2} u$ admits non-trivial solutions whose nodal set has arbitrarily large interior. In this context one may try to describe the structure and the regularity of the free boundary $\pa \{u =0\}$. When $1<q<2$, we refer to \cite{AltPhil,Bon} (one phase problem), \cite{FoSh} (two phases problem), and references therein to results in this direction, while for $q=1$ we observe that \eqref{mod} boils down to the \emph{two phase membrane problem} (also called \emph{two phase obstacle problem})
\[
\Delta u = \lambda_+\chi_{\{u>0\}} - \lambda_-\chi_{\{u<0\}},
\]
studied in \cite{ShWe, ShUrWe, Wei01, Ura01}, see also the excellent monograph \cite{PeShUr}. 

In contrast, very little is known about the structure of the nodal sets for our equation \eqref{eq} with $\lambda_+, \lambda_->0$. Recently, T. Weth and the first named author proved in \cite{SoWe} the validity of the unique continuation principle for every $1 \le q <2$: non-trivial solutions cannot vanish on an open subset of $B_1$. The proofs in \cite{SoWe} are based on the control of the oscillation of the Almgren frequency function for solutions with a dead core: eventually, any such solution must vanish identically. An alternative approach based on Carleman's estimate has been recently presented in \cite{Ru} by A. R\"uland. When $1<q<2$, in \cite{Ru} it is also shown that the strong unique continuation principle holds: non-trivial solutions cannot vanish at infinite order. Note that key tools in proving unique continuation in the linear case, namely Almgren's monotonicity formula (as used in the pioneering papers \cite{GaLi86, GaLi87}), or Carleman estimates (see \cite{Car}), are not applicable in a standard way in the sublinear and discontinuous ones, and have to be considerably adjusted (see \cite{Ru,SoWe}). It is in any case very natural to ask what are the other possible common properties of the nodal sets of solutions to $\eqref{eq}$, in comparison with solutions to linear equations.

As a further motivation, we observe the similarity of problem \eqref{eq} with obstacle-type problems in the case when $q=1$. As already mentioned,  \eqref{eq} becomes a two-phase obstacle-type problem with the ``wrong" sign. The presence of the minus in front of the Laplacian modifies completely the structure of the problem; let us consider for instance the so called \emph{unstable obstacle problem} 
\begin{equation}\label{unst}
-\Delta u =  \chi_{\{u>0\}},
\end{equation}
studied in \cite{AnShWe1,AnShWe2, AnShWe3,AnWe,MoWe} (this corresponds to $q=1$ and $\lambda_-=0$ in \eqref{eq}). In these contributions, the main differences between the study of the classical (stable) obstacle-type problems and the unstable ones are putted in evidence: J. Andersson and G. S. Weiss proved that solutions of \eqref{unst} do not achieve the optimal $C^{1,1}$ regularity in general, and can be degenerate at free boundary points, see \cite{AnWe}. This fact prevents the use of several classical methods. Despite these obstructions, R. Monneau and G. S. Weiss proved the partial regularity of non-degenerate solutions to \eqref{unst}, and the smoothness of the nodal set of energy minimizers in dimension $N=2$ in \cite{MoWe}; afterwards, J. Andersson, H. Shahgholian and G. S. Weiss established existence and uniqueness of non-trivial homogeneous blow-ups at non-degenerate singular points in dimension $N=2$ \cite{AnShWe1} and $N = 3$ \cite{AnShWe2}, deriving as a consequence the geometric structure of the non-degenerate singular set; the structure of the codimension $2$ non-degenerate singular set for arbitrary $N \ge 4$ has been investigated in \cite{AnShWe3}. 

As far as we know, for a generic solution  to \eqref{unst} (not necessarily non-degenerate, nor minimal) the partial regularity of the nodal set, or the finiteness of the admissible vanishing orders, are still open problems (see the Open Questions in \cite{AnWe}).

In this paper we deal with the two phases problem \eqref{eq}, treating simultaneously the case $q=1$, which we call \emph{unstable two phase membrane problem} in analogy with \eqref{unst}, and the case $1<q<2$, a prototype of sublinear equation, proving the following main results:
\begin{itemize}
\item the finiteness of the vanishing order at every point;
\item the characterization of all the admissible vanishing orders for solution to \eqref{eq};
\item the non-degeneracy property of any solution;
\item the regularity of the nodal set of any solution: the nodal set is a locally finite collection of regular codimension one manifolds up to a residual singular set having Hausdorff dimension at most $N-2$;
\item a partial stratification for the nodal set;
\item a multiplicity result, yielding the \emph{non-validity} of any estimate of the $(N-1)$-dimensional measure of the nodal set of a solution in terms of the vanishing order in a zero.
\end{itemize}

As a byproduct of our method, we prove that the finiteness of vanishing order also holds for $\lambda_-=0$, thus answering an open question raised in \cite{AnWe}. In a forthcoming paper \cite{SoTeforth} we shall treat also the singular case $0<q<1$. 

In the next subsection we state our results in a precise form, and introduce the notation and the terminology which will be used throughout the paper.

\subsection{Statement of the main results}

Let $B_1= B_1(0)$ denote the ball of center $0$ and radius $1$ in $\R^N$, $N \ge 2$, and let $\lambda_+, \lambda_- \ge 0$ (most of the paper will actually deal with the case $\lambda_+,\lambda_- >0$). We consider weak solutions $u \in H^1_{\loc}(B_1)$ of the the second order equation \eqref{eq}, and we describe the structure of the zero level set $Z(u) :=u^{-1}(\{0\}) \subset B_1$. By standard elliptic regularity, any weak solution is of class $C^{1,\alpha}(B_1)$ for every $\alpha \in (0,1)$. If $q >1$, then weak solutions are in fact classical $C^2$ solutions, but since we shall address simultaneously also the case $q=1$ we will never use this fact. From now on, we simply write ``solution" insted of ``weak solution", for the sake of brevity.

We define the regular part $\cR(u) \subset Z(u)$ and the singular part $\Sigma(u) \subset Z(u)$ by
\begin{align*}
\cR(u):=\{ x \in B_1: u(x) = 0 \text{ and } \nabla u(x) \neq 0\}, \quad \Sigma(u):= \{ x \in B_1: u(x) = |\nabla u(x)| = 0\};
\end{align*}
$\cR(u)$ is in fact locally a $C^{1,\alpha}$ $(N-1)$-dimensional hypersurface by the implicit function theorem. 

\begin{definition}\label{def: order V}
Let $u$ be a solution to \eqref{eq}, and let $x_0 \in Z(u)$. The \emph{vanishing order of $u$ in $x_0$} is defined as the number $\cV(u,x_0) \in \R^+$ with the property that 
\[
\limsup_{r \to 0^+} \frac1{r^{N-1+2\beta}} \int_{S_r(x_0)} u^2  = \begin{cases} 0 & \text{if $0 <\beta< \cV(u,x_0)$} \\
+\infty  & \text{if $\beta > \cV(u,x_0)$}.
\end{cases}
\]  
If no such number exists, then 
\[
\limsup_{r \to 0^+} \frac1{r^{N-1+2 \beta}} \int_{S_r(x_0)} u^2 = 0 \quad \text{for any $\beta>0$},
\]
and we set $\cV(u,x_0)=+\infty$. 
\end{definition}

Here and in what follows, for $x_0 \in B_1$ and $0<r<\dist(x_0,\pa B_1)$, we let $S_r(x_0) = \pa B_r(x_0)$, where $B_r(x_0)$ is the ball of center $x_0$ and radius $r$. In the frequent case $x_0=0$, we simply write $B_r$ and $S_r$ for the sake of brevity. 

The $\limsup$ appearing in the Definition \ref{def: order V} describes the growth of $u$ on spheres $S_r(x_0)$ of varying radii. Other definitions of vanishing order could have been possible, see e.g. \cite{Han94, HarSim}. For linear and superlinear elliptic equations in divergence form, it can be showed that all of them coincide. Moreover, in such cases the strong unique continuation and the existence of a harmonic profile near each point of the zero level set \cite{CafFri85, Han94} ensure that the vanishing order is finite, and can be any positive integer. As we shall see, this is not the case for the sublinear equation \eqref{eq}. So far it was only known that, if $\lambda_+,\lambda_->0$ and $1 \le q <2$, then the set $Z(u)$ has empty interior whenever $u \not \equiv 0$ \cite{SoWe}, and that if in addition $1<q<2$, then the vanishing order is finite, see \cite{Ru}\footnote{In \cite{Ru}, a different notion of vanishing order is used, in terms of the quantity $\int_{B_r(x_0)} u^2$. As a result, the version of the strong unique continuation principle in \cite{Ru} for $\lambda_+,\lambda_->0$ and $1<q<2$ is slightly stronger than ours.}. Our first main result establishes the validity of the strong unique continuation principle for every $1 \le q <2$, for every $\lambda_+>0$ and $\lambda_- \ge 0$.

\begin{theorem}[Strong unique continuation]\label{thm: strong intro}
Let $1 \le q <2$, $\lambda_+>0$, $\lambda_-\ge0$, $u \in H^1_{\loc}(B_1)$ solve \eqref{eq}, and let $x_0 \in Z(u)$. If $\cV(u,x_0)=+\infty$, then necessarily $u \equiv 0$; in particular, if for every $\beta >0$ it results that
\[
\lim_{|x-x_0| \to 0^+} \frac{|u(x)|}{|x-x_0|^\beta} =0,
\]
then necessarily $u \equiv 0$.
\end{theorem}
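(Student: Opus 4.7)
The proof will be by contradiction. Assume $u \not\equiv 0$ while $\cV(u, x_0) = +\infty$; without loss of generality set $x_0 = 0$. The plan is to combine a Weiss-type monotonicity formula with a blow-up analysis to manufacture a non-trivial limit that, under the infinite vanishing hypothesis, must also vanish identically, giving the contradiction.

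For $\beta > 0$ I would introduce the one-parameter family of Weiss-type functionals
\[
W_\beta(r) := \frac{1}{r^{N-2+2\beta}}\int_{B_r}\!\left(\tfrac{1}{2}|\nabla u|^2 - \tfrac{\lambda_+}{q}(u^+)^q - \tfrac{\lambda_-}{q}(u^-)^q\right)dx - \frac{\beta}{r^{N-1+2\beta}}\int_{S_r}\! u^2\,d\sigma.
\]
The critical exponent $\alpha_\ast := 2/(2-q)$ is distinguished: it is the unique exponent making the rescaling $u(x)\mapsto u(rx)/r^{\alpha_\ast}$ preserve \eqref{eq}, and at $\beta=\alpha_\ast$ the three terms of $W_{\alpha_\ast}$ share the same homogeneity. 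A Rellich--Pohozaev computation (testing \eqref{eq} against $x\cdot\nabla u$, with a mollification of the sign function when $q=1$) should yield $W'_\beta(r)\ge 0$ for every $\beta\in[1,\alpha_\ast]$, with derivative controlled below by a squared normal-derivative term plus a non-negative correction of order $\alpha_\ast-\beta$.

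With the monotonicity at hand, consider the normalized rescalings $v_r(y):=u(ry)/\rho(r)$ with $\rho(r)^2 := H(r)/(|S_1|\,r^{N-1})$ and $H(r) := \int_{S_r} u^2$, so that $\|v_r\|_{L^2(S_1)} = |S_1|^{1/2}$ and
\[
-\Delta v_r = r^2\rho(r)^{q-2}\bigl[\lambda_+(v_r^+)^{q-1}-\lambda_-(v_r^-)^{q-1}\bigr]\qquad\text{on }B_{1/r}.
\]
By definition of infinite vanishing order, $\rho(r)/r^\beta\to 0$ for every $\beta>0$, so the coefficient $r^2\rho(r)^{q-2} \to+\infty$. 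The Weiss bound, applied along a suitable sequence $r_k\to 0^+$, provides a uniform $H^1_{\loc}$ estimate on the $v_{r_k}$'s; extracting $v_{r_k}\weak v_\infty$ weakly in $H^1_{\loc}$, strongly in $L^q_{\loc}$, and strongly in $L^2(S_1)$ via trace, one obtains $\|v_\infty\|_{L^2(S_1)}=|S_1|^{1/2}\ne 0$. Testing the rescaled equation against $v_{r_k}\eta$ for $\eta\in C_c^\infty(\R^N)$ gives an expression whose left-hand side is uniformly bounded, while the diverging prefactor on the right forces $\int[\lambda_+(v_{r_k}^+)^q+\lambda_-(v_{r_k}^-)^q]\eta\to 0$; strong $L^q_{\loc}$ convergence then yields $v_\infty\equiv 0$ on $\supp\eta$, and arbitrariness of $\eta$ gives $v_\infty\equiv 0$, contradicting the non-vanishing on $S_1$.

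The hard part is the first step and its interface with the blow-up: establishing the Weiss monotonicity with a correction of the correct sign across the full range $\beta\in[1,\alpha_\ast]$, and then extracting from it the $H^1_{\loc}$ bound on the rescalings. The sublinear (and, for $q=1$, discontinuous) character of the nonlinearity obstructs both the standard Almgren monotonicity and a routine Pohozaev computation: in the borderline case $q=1$ the Pohozaev identity has to be justified by an approximation argument, and away from the critical exponent $\alpha_\ast$ the residual potential contribution produced by the rescaling must be shown to have a definite sign. A secondary delicate point is that the infinite vanishing hypothesis tends to drive the Almgren frequency to $+\infty$ along the full sequence, so the $H^1_{\loc}$ bound on $v_r$ cannot be obtained naively and demands either a frequency-compatible choice of the normalization $\rho(r)$ or a careful selection of the subsequence $\{r_k\}$ along which the Weiss inequality delivers uniform information.
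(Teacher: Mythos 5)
Your proposal has several genuine gaps, the first two of which are fatal as stated; and the paper's actual proof is built on a quite different mechanism, with no blow-up at all.

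First, the monotonicity range for the Weiss functional runs in the opposite direction from what you claim. Differentiating $W_{\beta,2}$ (your $W_\beta$, modulo a normalization discrepancy: for the functional to vanish on $\beta$-homogeneous harmonics your boundary coefficient should be $\beta/2$, not $\beta$), one finds, besides the squared normal-derivative term, a correction
\[
\frac{2(2-q)\bigl(\beta - \tfrac{2}{2-q}\bigr)}{q\,r^{N-1+2\beta}}\int_{B_r}\bigl(\lambda_+(u^+)^q+\lambda_-(u^-)^q\bigr),
\]
which is non-negative precisely when $\beta \ge 2/(2-q) = \alpha_\ast$, \emph{not} when $\beta \le \alpha_\ast$. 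Your claimed range $\beta\in[1,\alpha_\ast]$ is where the potential term works \emph{against} you. This is not merely a sign slip: the entire structure of the paper's argument (Corollary \ref{cor: W 2 mon}, Corollary \ref{cor: gamma tilde}) lives on the side $\gamma\ge\alpha_\ast$.

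Second, the blow-up normalization you choose, $\rho(r)^2 = H(r)/(|S_1|r^{N-1})$, cannot yield the uniform $H^1_{\loc}$ estimate you need. Under $\cV(u,x_0)=+\infty$ the Almgren frequency diverges, so $\int_{B_1}|\nabla v_r|^2 = r\,D(r)/H(r) \to +\infty$ along every sequence $r\to0^+$, and no choice of subsequence rescues boundedness. You flag this in your last paragraph as a "secondary delicate point," but it is in fact the central obstruction; without compactness the entire contradiction you aim for ("$v_\infty\not\equiv 0$ on $S_1$ yet $v_\infty\equiv 0$") never materializes. The paper sidesteps this entirely: Theorem \ref{thm: strong intro} is proved in Section \ref{sec: strong} without any blow-up, by introducing the transition exponent $\bar\gamma := \inf\{\gamma: W_{\gamma,2}(u,x_0,0^+)=-\infty\}$ (Corollary \ref{cor: gamma tilde}), which is finite and $\ge\alpha_\ast$, and then showing that the decay coming from infinite vanishing order forces $H(r)\le Cr^{N-1+2\gamma}$ and $\int_{B_r}F_{\lambda_+,\lambda_-}(u)\le Cr^{N-2+2\gamma}$ for every $\gamma>\bar\gamma$, so that $W_{\gamma,2}(r)\ge -C$ — contradicting $W_{\gamma,2}(0^+)=-\infty$. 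This is a soft, compactness-free argument that also avoids any Pohozaev subtleties beyond those already settled in Section \ref{sec: pre}.

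Third, even if the compactness issue were resolved, your final step breaks down in the case $\lambda_-=0$, which the theorem allows. Driving $\int(\lambda_+(v_{r_k}^+)^q+\lambda_-(v_{r_k}^-)^q)\eta\to 0$ only forces $v_\infty^+\equiv 0$ when $\lambda_-=0$; a non-trivial non-positive harmonic limit is not excluded. Separately, even granting that $v_\infty\equiv 0$, concluding the contradiction requires \emph{strong} $H^1$ convergence up to the boundary (to get $\|v_\infty\|_{0,1}=1$), which is precisely what the paper has to work for in Proposition \ref{prop: non-deg}; weak convergence plus compact trace alone is not enough.
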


For $q=1$ and $\lambda_-=0$, this answers an open question raised in \cite{AnWe}.

When both $\lambda_+$ and $\lambda_-$ are positive, we can prove a much better result, characterizing all the admissible vanishing orders. Let $\beta_q \in \N$ be the larger positive integer strictly smaller than $2/(2-q)$:
\begin{equation}\label{def beta q}
\beta_q:= \begin{cases}
\left\lfloor \frac{2}{2-q} \right\rfloor & \text{if } \frac{2}{2-q} \not \in \N \\
\frac{2}{2-q}-1 & \text{if } \frac{2}{2-q} \in \N.
\end{cases}
\end{equation}

\begin{theorem}[Classification of the vanishing orders]\label{thm: very strong V}
Let $1 \le q <2$, $\lambda_+,\lambda_->0$, $0 \not \equiv u \in H^1_{\loc}(B_1)$ solve \eqref{eq}, and let $x_0 \in Z(u)$. Then 
\[
\cV(u,x_0) \in \left\{1,\dots, \beta_q, \frac2{2-q} \right\}.
\]
In particular, if $q=1$ then $\cV(u,x_0) \in \{1,2\}$.
\end{theorem}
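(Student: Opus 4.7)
The plan is to perform a blow-up analysis at $x_0$ driven by the Weiss-type monotonicity formula at the candidate exponent, and to exploit the scaling of the nonlinear term to rule out every non-integer order strictly below the critical value.

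First I would use Theorem \ref{thm: strong intro} to secure $\kappa := \cV(u,x_0) < +\infty$, while $u \in C^{1,\alpha}(B_1)$ with $u(x_0)=0$ yields $\kappa \ge 1$. I would then introduce the normalized rescalings
$$u_r(x) := \frac{u(x_0+rx)}{\rho(r)}, \qquad \rho(r) := \left(r^{1-N}\int_{S_r(x_0)} u^2\right)^{1/2},$$
so that $\int_{S_1} u_r^2 \equiv 1$ and $u_r$ solves
$$-\Delta u_r = c_r\bigl[\lambda_+(u_r^+)^{q-1}-\lambda_-(u_r^-)^{q-1}\bigr], \qquad c_r := \frac{r^2}{\rho(r)^{2-q}}.$$
Invoking the Weiss monotonicity formula at the exponent $\kappa$, together with elliptic regularity, I would extract a subsequence $u_{r_k}\to u_0$ strongly in $H^1_{\loc}\cap C^0_{\loc}(\R^N)$, with $u_0$ nontrivial (thanks to the $L^2(S_1)$ normalization) and $\kappa$-homogeneous (from the constancy of the Weiss functional in the limit, combined with Euler's identity). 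Along this subsequence one can arrange that $\rho(r_k)/r_k^\kappa$ tends to a positive finite limit $L$, so that $c_{r_k} = r_k^{2-\kappa(2-q)}/(\rho(r_k)/r_k^\kappa)^{2-q}$ has a limiting behaviour governed purely by the sign of $2-\kappa(2-q)$.

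This produces a clean trichotomy. If $\kappa > 2/(2-q)$, then $c_{r_k}\to +\infty$; dividing the equation by $c_{r_k}$ and passing to the limit in $\cD'(\R^N)$ forces $\lambda_+(u_0^+)^{q-1}=\lambda_-(u_0^-)^{q-1}$, which together with $\lambda_\pm>0$ contradicts $u_0\not\equiv 0$, and this case is excluded. If $\kappa < 2/(2-q)$, then $c_{r_k}\to 0$ and $u_0$ is a nontrivial $\kappa$-homogeneous harmonic function on $\R^N$; the classical theory of spherical harmonics forces $\kappa\in\N$, hence $\kappa\in\{1,\dots,\beta_q\}$ by the definition of $\beta_q$. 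The remaining possibility is the critical value $\kappa=2/(2-q)$, for which $c_{r_k}$ has a finite positive limit and no obstruction arises. For $q=1$, $\beta_q=1$ and $2/(2-q)=2$, which specialises the conclusion to $\cV(u,x_0)\in\{1,2\}$.

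The hard part will be the blow-up step itself: producing the nontrivial $\kappa$-homogeneous limit $u_0$ and controlling $\rho(r_k)/r_k^\kappa$ so that it stays bounded and bounded away from zero along the chosen subsequence. Both facts rest on the monotonicity and the coercive structure of the appropriate Weiss-type functional at exponent $\kappa$ — the two-parameter family announced in the abstract — and have to be established before the scaling trichotomy can be invoked at all. A secondary subtlety, sharpest for $q=1$, is passing to the limit in $(u_{r_k}^\pm)^{q-1}$ within the supercritical exclusion: when $q=1$ the nonlinearity is only $L^\infty$, and I would rely on the $C^0_{\loc}$ convergence of $u_{r_k}$ to identify the sign of $u_0$ on its positivity and negativity sets, deriving the contradiction on a set of positive measure from the strict positivity of $\lambda_\pm$.
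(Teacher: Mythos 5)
Your high-level picture --- blow up at $x_0$, identify the limit $u_0$ as $\kappa$-homogeneous, and read off a trichotomy from the scaling of the coefficient $c_r = r^2/\rho(r)^{2-q}$ --- is the right skeleton, and the trichotomy itself is correctly computed. But the ingredient you flag as ``the hard part'' is not a deferred technical lemma; it is, in substance, the theorem. Three concrete gaps.

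First, you cannot obtain the nontrivial $\kappa$-homogeneous blow-up by ``invoking the Weiss monotonicity formula at exponent $\kappa$'' across the whole range of possible $\kappa$: by Corollary~\ref{cor: W 2 mon}, $W_{\gamma,2}$ is monotone only for $\gamma \ge 2/(2-q)$. For the putative subcritical orders $\kappa<2/(2-q)$ there is no Weiss monotonicity at exponent $\kappa$ to lean on; the paper handles that regime through the Caffarelli--Friedman iteration of Proposition~\ref{thm: blow-up pre}, which produces the leading harmonic polynomial directly and bypasses any blow-up compactness. Second, the statement ``one can arrange that $\rho(r_k)/r_k^\kappa$ tends to a positive finite limit'' does not follow from $\cV(u,x_0)=\kappa$. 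The definition of $\cV$ controls $\rho(r)^2/r^{2\beta}$ only for $\beta\ne\kappa$; at the borderline exponent the ratio $\rho(r)/r^\kappa$ can a priori oscillate between $0$ and $+\infty$. The lower bound is precisely Theorem~\ref{thm: non-deg V}, and in the paper it is obtained only \emph{after} the transition exponent $\bar\gamma$ has been pinned to $2/(2-q)$ via the two-parameter Weiss analysis (Propositions~\ref{gamma=2/(2-q)}, \ref{prop: non-deg}); invoking it as an input to your classification is circular. Third, the upper control is also not free: the paper's blow-up result (Theorem~\ref{thm: blow-up '}) explicitly allows $\|u\|_{x_0,r_n}/r_n^{\gamma_q}\to+\infty$, in which case the blow-up limit is harmonic and $\mu=0$. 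Your exclusion of $\kappa>2/(2-q)$ hinges on $c_{r_k}\to+\infty$, which fails if $\rho(r_k)/r_k^\kappa\to\infty$ fast enough; writing $c_r=r^{2-\kappa(2-q)}/(\rho(r)/r^\kappa)^{2-q}$, a diverging denominator can cancel the diverging numerator, and the contradiction $\lambda_+(u_0^+)^{q-1}=\lambda_-(u_0^-)^{q-1}$ is no longer forced.

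In short, your proposal rearranges the logical order in a way that makes the key estimates unavailable when they are needed. The paper proceeds in the opposite direction: (a) iterate the Caffarelli--Friedman Taylor expansion to dichotomize into a polynomial case versus a fast-decay case; (b) in the fast-decay case, play $W_{\gamma,2}$ against $W_{\gamma,\tilde t}$ for a second parameter $\tilde t\in(q,2)$ to force the Weiss transition exponent $\bar\gamma$ to equal $2/(2-q)$; (c) derive non-degeneracy from that; and only then (d) pass to blow-ups, where the diverging-normalization case is treated separately. Your blind route would only close if you independently re-derive (b) and (c), which is the bulk of Sections~\ref{sec: strong and non-deg}--\ref{sec: upper}.
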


We also have an important non-degeneracy property.

\begin{theorem}[Non-degeneracy]\label{thm: non-deg V}
Let $1 \le q <2$, $\lambda_+,\lambda_->0$, $0 \not \equiv u \in H^1_{\loc}(B_1)$ solve \eqref{eq}, and let $x_0 \in Z(u)$. Then 
\[
\liminf_{r \to 0^+} \frac{1}{r^{N-1+2 \cV(u,x_0)}} \int_{S_r(x_0)} u^2>0.
\]
\end{theorem}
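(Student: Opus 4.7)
The plan is an argument by contradiction driven by a blow-up analysis. Normalise $x_0=0$ and set $\cV:=\cV(u,0)$, which by Theorems~\ref{thm: strong intro}--\ref{thm: very strong V} is finite and belongs to the discrete set $\{1,\dots,\beta_q,\,2/(2-q)\}$. Assume $\liminf_{r\to 0^+} r^{-(N-1+2\cV)}\int_{S_r} u^2=0$ and pick $r_n\downarrow 0$ realising this liminf. The natural blow-ups
\[
u_n(x):=r_n^{-\cV}\,u(r_n x)
\]
then satisfy $\int_{S_1}u_n^2\to 0$ and solve $-\Delta u_n=r_n^{\,2-\cV(2-q)}[\lambda_+(u_n^+)^{q-1}-\lambda_-(u_n^-)^{q-1}]$, the source prefactor being non-negative and vanishing exactly at the critical order $\cV=2/(2-q)$.

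I would first combine the monotonicity of the relevant member $W_\cV$ of the two-parameter Weiss family (set up earlier in the paper) with the sphere-integral bound $\int_{S_r}u^2=o(r^{N-1+2(\cV-\varepsilon)})$ for every $\varepsilon>0$, which follows directly from the definition of the vanishing order, to obtain uniform $H^1_{\loc}(\Rb^N)$ bounds on $\{u_n\}$. Interior $C^{1,\alpha}$ estimates then upgrade this to strong $C^1_{\loc}$ convergence along a subsequence to some limit $v$. Passing to the limit in the rescaled PDE, $v$ is harmonic if $\cV<2/(2-q)$ and solves \eqref{eq} on all of $\Rb^N$ if $\cV=2/(2-q)$; the equality case in the monotonicity of $W_\cV$ forces $v$ to be homogeneous of degree $\cV$. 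Strong $L^2(S_1)$ convergence together with $\int_{S_1}u_n^2\to 0$ then yields $v|_{S_1}\equiv 0$, and homogeneity gives $v\equiv 0$ on $\Rb^N$.

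The contradiction will come from transferring the vanishing of $v$ back to $u$. Continuity of $W_\cV$ under blow-up gives $W_\cV(0^+,u,0)=W_\cV(\cdot,v,0)=0$, so by monotonicity $W_\cV(r,u,0)\geq 0$ for all small $r$. Expanding $W_\cV(r,u,0)$ via the identity $\int_{B_r}|\nabla u|^2=q\int_{B_r}F(u)+\int_{S_r}uu_\nu$ (with $F(u):=\lambda_+(u^+)^q/q+\lambda_-(u^-)^q/q$) and writing $\int_{S_r}uu_\nu=\cV r^{N-2+2\cV}\phi(r)+\tfrac{1}{2}r^{N-1+2\cV}\phi'(r)$ for $\phi(r):=r^{-(N-1+2\cV)}\int_{S_r}u^2$ produces the differential inequality
\[
\tfrac{r}{4}\phi'(r)-\tfrac{\cV}{2}\phi(r)\;\geq\;\tfrac{2-q}{2}\,r^{-(N-2+2\cV)}\!\int_{B_r}F(u)\;\geq\;0,
\]
so that $r^{-2\cV}\phi(r)$ is non-decreasing, hence $\phi(r)=O(r^{2\cV})$, i.e.\ $\int_{S_r}u^2=O(r^{N-1+4\cV})$ as $r\to 0^+$. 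By the very definition of the vanishing order this forces $\cV(u,0)\geq 2\cV>\cV$, contradicting $\cV(u,0)=\cV$.

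The main obstacle will be this last ODI step: in the critical case $\cV=2/(2-q)$ the Weiss functional is exactly scale-invariant and the computation above goes through verbatim, but in the subcritical integer cases $\cV\in\{1,\dots,\beta_q\}$ $W_\cV$ is only monotone modulo scale-breaking correction terms, so one has to verify that the sign of the bulk term $\int_{B_r}F(u)$, dictated by the sublinear exponent $q<2$, remains the favourable one once these corrections are accounted for.
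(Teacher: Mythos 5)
There is a genuine gap in the transfer-back step, and it is a computational one. Writing $\phi(r):=r^{-(N-1+2\cV)}\int_{S_r}u^2$ and $F(u):=\tfrac{1}{q}(\lambda_+(u^+)^q+\lambda_-(u^-)^q)$, the correct identity (a restatement of \eqref{H e W} together with $W_{\gamma,2}=W_{\gamma,q}-\tfrac{2-q}{q r^{N-2+2\gamma}}\int_{B_r}F_{\lambda_+,\lambda_-}$) is
\[
W_{\cV,2}(u,0,r)\;=\;\frac{r}{2}\,\phi'(r)\;-\;\frac{2-q}{r^{N-2+2\cV}}\int_{B_r}F(u),
\]
because when you substitute $\int_{S_r}uu_\nu=\cV r^{N-2+2\cV}\phi(r)+\tfrac12 r^{N-1+2\cV}\phi'(r)$ into $W_{\cV,2}$, the two $\cV\phi$ terms \emph{cancel exactly}. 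The spurious $-\tfrac{\cV}{2}\phi(r)$ in your differential inequality is a sign slip. With the correct identity, $W_{\cV,2}(r)\ge0$ implies only $\phi'\ge0$, i.e.\ $\phi$ is non-decreasing and has a limit $\phi(0^+)\ge0$ — and nothing forbids $\phi(0^+)=0$. Your conclusion that $r^{-2\cV}\phi(r)$ is non-decreasing (hence $\phi=O(r^{2\cV})$ and $\cV(u,0)\ge2\cV$) does not follow, and the contradiction evaporates. Ruling out $\phi(0^+)=0$ is precisely the hard part; the paper does this (for the only delicate case $\cV=2/(2-q)$) via a genuinely different route, Lemmas~\ref{liminf W q su H}--\ref{lem: weak limit 0}, working with the $H^1$-normalised blow-up $v_r=u(x_0+rx)/\|u\|_{x_0,r}$ restricted to $B_1$, so that $\|v_r\|_{0,1}=1$ automatically and the contradiction comes from showing the full $H^1(B_1)$-norm of $v_{r_m}$ must tend to zero. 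Your $r^{\cV}$-normalised blow-up $u_n=r_n^{-\cV}u(r_n\cdot)$ is not obviously bounded in $H^1_{\loc}(\Rb^N)$: the a priori bound $\int_{S_r}u^2=o(r^{N-1+2(\cV-\eps)})$ for every $\eps>0$ produces, via the Weiss monotonicity, only bounds of the form $C r^{-c\eps}$ on $r^{-(N-2+2\cV)}\int_{B_r}|\nabla u|^2$, which is not uniform.

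You also have the dichotomy backwards. The subcritical cases $\cV\in\{1,\dots,\beta_q\}$ are not the problematic ones; by Proposition~\ref{thm: blow-up pre}(i) one has $u=P_{x_0}+\Gamma_{x_0}$ with a non-trivial homogeneous harmonic polynomial $P_{x_0}$ of degree $\cV$ and $|\Gamma_{x_0}|\le C|x-x_0|^{\cV+\delta}$, whence $r^{-(N-1+2\cV)}\int_{S_r(x_0)}u^2\to\int_{S_1}P_{x_0}^2>0$ directly. The substantive case is $\cV=2/(2-q)$, exactly where you say the computation ``goes through verbatim'' — but that is where the miscomputed ODI bites. So the argument as written does not establish the theorem; the pieces you flag as delicate are easy, and the piece you treat as routine is the one that needs the paper's Lemmas~\ref{liminf W q su H}--\ref{lem: weak limit 0}.
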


\begin{remark}\label{rmk: differences}
Theorem \ref{thm: very strong V} reveals a deep difference between linear and sublinear equations: while for the formers solutions can vanish at any integer order, for the latters we have a universal bound, depending only on $q$, on the admissible vanishing orders. \\
Theorem \ref{thm: non-deg V} reveals moreover a striking difference between the case $\lambda_-=0$ and the one $\lambda_->0$. Indeed, if $\lambda_-=0$ and $q=1$ degenerate solutions do exist, as proved in \cite{AnWe}. The reason behind this discrepancy ultimately rests in the presence of a large set of global homogeneous solutions to \eqref{eq} for $\lambda_->0$, which does not exist when $\lambda_-=0$. When $\lambda_-=0$, for $q=1$ there exists only one $2$-homogeneous solution to \eqref{eq} in $\R^2$, up to rotations, and this was the key in the proof of \cite{AnWe} (we refer to \cite{AnShWe2}, Remark 3.3 for more details). In contrast, we shall prove that for every $\lambda_+,\lambda_->0$ and $q \in [1,2)$ problem \eqref{eq} admits infinitely many global $2$-homogeneous solutions, see Theorem \ref{thm: multiple} below.
\end{remark}

Next, we study the existence of blow-up limits around points of $Z(u)$. For linear (and superlinear) equations, it is known that solutions behave like harmonic polynomials in a neighborhood of each point of $Z(u)$, see \cite{Ber}, \cite[Theorem 1.2]{CafFri85} or \cite[Theorem 3.1]{Han94}. In the sublinear setting this is not necessarily the case.

\begin{theorem}[Blow-ups]\label{thm: blow-up}
Let $1 \le q<2$, $\lambda_+,\lambda_->0$, $0 \not \equiv u \in H^1_{\loc}(B_1)$ solve \eqref{eq}, $x_0 \in Z(u)$, $R \in (0, \dist(x_0,\pa B_1))$, and let $\beta_q$ be defined by \eqref{def beta q}. 

Then the following alternative holds:
\begin{itemize}
\item[($i$)] if $d_{x_0}:=\cV(u,x_0) \in \{1,\dots,\beta_q\}$, then there exist a homogeneous harmonic polynomial $P_{x_0} \not \equiv 0$ of degree $d_{x_0}$, and a function $\Gamma_{x_0}$, such that
\[
u(x) = P_{x_0}(x-x_0) + \Gamma_{x_0}(x) \qquad \text{in $B_R(x_0)$},
\]
with 
\[
\begin{cases} \Gamma_{x_0}(x)| \le C |x-x_0|^{d_{x_0}+\delta}  \\ 
|\nabla \Gamma_{x_0}(x)| \le C |x-x_0|^{d_{x_0}-1+\delta} \end{cases}
 \qquad \text{in $B_R(x_0)$}
\]
for suitable constants $C,\delta>0$;
\item[($ii$)] if $\cV(u,x_0) = 2/(2-q)$, then for every sequence $0<r_n \to 0^+$ we have, up to a subsequence,
\[
\frac{u(x_0+r_n x)}{\left( \frac{1}{r^{N-1}} \int_{S_r(x_0)} u^2\right)^\frac12}
 \to \bar u \qquad \text{in $C^{1,\alpha}_{\loc}(\R^N)$ for every $0<\alpha<1$},
\]
where $\bar u$ is a $2/(2-q)$-homogeneous non-trivial solution to 
\[
-\Delta \bar u= \mu \left( \lambda_+ (\bar u)^{q-1} -\lambda_- (\bar u)^{q-1} \right) \quad \text{in $\R^N$}
\]
for some $\mu \ge 0$. Moreover, the case $\mu = 0$ is possible only if $2/(2-q) \in \N$. 
\end{itemize}
\end{theorem}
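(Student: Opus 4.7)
The plan is a blow-up analysis at $x_0$, based on the Weiss-type monotonicity formulae for $u$ (developed earlier in the paper), the non-degeneracy (Theorem \ref{thm: non-deg V}), and the classification of vanishing orders (Theorem \ref{thm: very strong V}). Set $d := \cV(u,x_0)$, $\rho(r) := \bigl(r^{1-N}\int_{S_r(x_0)} u^2\bigr)^{1/2}$, and consider the normalized rescalings
\[
v_r(x) := \frac{u(x_0 + rx)}{\rho(r)}, \qquad x \in B_{R/r}.
\]
By Definition \ref{def: order V} and Theorem \ref{thm: non-deg V}, $\rho(r) \asymp r^d$ as $r \to 0^+$, so $v_r$ solves
\[
-\Delta v_r = c(r)\bigl[\lambda_+ (v_r^+)^{q-1} - \lambda_- (v_r^-)^{q-1}\bigr], \qquad c(r) := r^2 \rho(r)^{q-2} \asymp r^{2 - d(2-q)},
\]
and $\int_{S_1} v_r^2 = 1$. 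Since $d \le 2/(2-q)$ by Theorem \ref{thm: very strong V}, the coefficient $c(r)$ is uniformly bounded; combined with the Weiss-type monotonicity this provides uniform $H^1_{\loc}$ bounds on $\{v_r\}$, which elliptic regularity upgrades to uniform $C^{1,\alpha}_{\loc}(\R^N)$ bounds.

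Along a subsequence $r_n \to 0^+$, extract $v_{r_n} \to \bar u$ in $C^{1,\alpha}_{\loc}(\R^N)$ with $\int_{S_1} \bar u^2 = 1$, so $\bar u \not\equiv 0$. Expressing the Weiss functional of $u$ at scale $r_n s$ in terms of that of $v_{r_n}$ at scale $s$ and passing to the limit, the Weiss functional of $\bar u$ is constant in $s$; the rigidity part of the monotonicity formula forces $\bar u$ to be $d$-homogeneous. I then split according to the regime of $c(r)$. If $d \in \{1,\dots,\beta_q\}$, then $2-d(2-q)>0$ and $c(r_n)\to 0$, so passing to the limit in the equation for $v_{r_n}$ shows $\bar u$ is a nontrivial $d$-homogeneous harmonic function, hence a nontrivial homogeneous harmonic polynomial. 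If $d = 2/(2-q)$, then $c(r)\asymp 1$ and, along a further subsequence, $c(r_n)\to\mu\ge 0$; in the limit we obtain $-\Delta \bar u = \mu[\lambda_+(\bar u^+)^{q-1}-\lambda_-(\bar u^-)^{q-1}]$, and if $\mu=0$ the limit $\bar u$ is a nontrivial $d$-homogeneous harmonic function, which forces $d = 2/(2-q)\in\N$.

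To complete (i), I upgrade subsequential convergence to the pointwise expansion. Since the space of $d$-homogeneous harmonic polynomials is finite-dimensional, uniqueness of the blow-up limit $P_{x_0}$ follows from a Monneau-type (or \L{}ojasiewicz--Simon) argument based on the Weiss functional, with the set of critical points given precisely by the homogeneous harmonic polynomials of degree $d$. Setting $\Gamma_{x_0}(x) := u(x) - P_{x_0}(x - x_0)$ and using $\Delta P_{x_0} = 0$, $\Gamma_{x_0}$ solves
\[
-\Delta \Gamma_{x_0} = \lambda_+ (u^+)^{q-1} - \lambda_- (u^-)^{q-1} \quad \text{in $B_R(x_0)$},
\]
and the uniform $C^{1,\alpha}$ bounds on $v_r$ yield $|u(x)|\le C|x-x_0|^d$, so the right-hand side above is $O(|x-x_0|^{d(q-1)})$. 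Choosing $\delta := 2 - d(2-q) > 0$, so that $d(q-1)+2 = d+\delta$, standard Schauder/Campanato estimates for the Poisson equation with polynomially decaying data deliver the stated bounds $|\Gamma_{x_0}(x)|\le C|x-x_0|^{d+\delta}$ and $|\nabla\Gamma_{x_0}(x)|\le C|x-x_0|^{d-1+\delta}$.

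The main technical obstacle is the rigidity step forcing homogeneity of $\bar u$ and, more delicately, the Monneau-type formula underlying uniqueness of blow-up in case (i); both depend on the careful tuning of the Weiss-type monotonicity to the sublinear (or discontinuous) nonlinearity, where the standard linear/superlinear tools (Almgren's frequency function, Carleman estimates) are not directly available. A secondary subtlety is the exclusion of $\mu=0$ in case (ii) when $2/(2-q)\notin\N$, which is forced by the nonexistence of nontrivial $d$-homogeneous harmonic functions on $\R^N$ for non-integer $d$.
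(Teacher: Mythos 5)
Your proposal correctly identifies the flavour of the argument for part (\emph{ii}), but part (\emph{i}) is argued along a route that differs substantially from the paper's, and that route has genuine gaps.

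For part (\emph{i}), the paper does not deduce the Taylor expansion from a blow-up analysis at all. Proposition \ref{thm: blow-up pre} establishes the dichotomy directly by an \emph{iterated application of the Caffarelli--Friedman lemma} \cite[Lemma 1.1]{CafFri85}: since $|\Delta u| \le C|u|^{q-1}$, one feeds the a priori bound $|u(x)|\le C|x-x_0|$ into the lemma to produce a harmonic polynomial plus a remainder that decays with a strictly better exponent; if the polynomial vanishes, one feeds the improved bound back in, and so on, with the exponents $\beta_k$ of Lemma~\ref{lem: beta_k} increasing to $2/(2-q)$. The expansion with error $O(|x-x_0|^{d_{x_0}+\delta})$ therefore comes with the quantitative rate built in, and it precedes (and is used in) the classification of vanishing orders, the non-degeneracy, and the blow-up theorem. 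Your route instead presupposes these results, extracts a subsequential blow-up limit, and appeals to ``a Monneau-type (or \L{}ojasiewicz--Simon) argument'' for uniqueness. This appeal is unjustified: the paper explicitly records that a Monneau monotonicity formula ``seems not available in our setting,'' and you give no indication of how to construct one here (note that the candidate leading profiles, being harmonic polynomials, do not solve \eqref{eq}, so the classical Monneau computation does not close). Moreover, even granting uniqueness of the blow-up, the step ``Schauder/Campanato then gives $|\Gamma_{x_0}| \le C|x-x_0|^{d+\delta}$'' is not correct as stated: qualitative $C^{1,\alpha}_{\loc}$ convergence of $v_r$ only yields $\Gamma_{x_0}(x)=o(|x-x_0|^{d})$, and the decay of the right-hand side $O(|x-x_0|^{d(q-1)})$ cannot by itself promote $\Gamma_{x_0}$ to $O(|x-x_0|^{d(q-1)+2})$ unless one already knows that $\Gamma_{x_0}$ has no harmonic degree-$d$ (or lower) component; that is precisely what one is trying to establish, and it requires a \emph{rate} of convergence, not merely a limit. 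The Caffarelli--Friedman lemma is exactly the device that supplies that rate.

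A related flaw: you assert $\rho(r)\asymp r^{d}$ ``by Definition~\ref{def: order V} and Theorem~\ref{thm: non-deg V}.'' This is not available. Non-degeneracy gives only $\liminf_{r\to 0^+}\rho(r)/r^{d}>0$; Definition~\ref{def: order V} gives $\rho(r)\le C_\beta r^{\beta}$ for every $\beta<d$, but \emph{not} the limiting upper bound $\rho(r)\le Cr^{d}$. Indeed the paper explicitly allows the case $\|u\|_{x_0,r}/r^{2/(2-q)}\to+\infty$ in the proof of Theorem \ref{thm: blow-up '}, and handles it with a separate argument (Lemma~\ref{lem: hom wei} combined with the upper semicontinuity of Proposition~\ref{prop: upper}), because the rescaled Weiss functional no longer converges to a finite constant and the direct rigidity argument you sketch breaks down. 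Similarly, boundedness of $\{v_r\}$ in $H^1_{\loc}$ under the normalization by $\rho(r)$ is not automatic (it requires the doubling estimate of Lemma~\ref{lem: v_r bdd} and the comparability $H(u,x_0,r)/(r^{N-1}\|u\|_{x_0,r}^2)$ of Proposition~\ref{prop: eq orders}); your one-line claim that Weiss monotonicity ``provides uniform $H^1_{\loc}$ bounds'' glosses over the content of that lemma.

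In short: for part (\emph{ii}) your outline matches the spirit of Theorem~\ref{thm: blow-up '} and Proposition~\ref{prop: eq orders}, though the two delicate steps (uniform $H^1_{\loc}$ bounds, and homogeneity in the regime $\mu=0$) would need the full arguments of Section~\ref{sec: blow-up}. For part (\emph{i}) your proposal is essentially a different proof strategy whose two pivotal steps (blow-up uniqueness via Monneau, and quantitative improvement of the remainder via Schauder) are not justified; the paper bypasses both by constructing the expansion directly with Caffarelli--Friedman iteration in Proposition~\ref{thm: blow-up pre}.
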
 

If alternative ($ii$) takes place in Theorem \ref{thm: blow-up}, one may wonder if the existence of the blow-up limit could be replaced by a full Taylor expansion, as in point ($i$). The results in \cite{AnShWe1} and \cite{AnShWe2} suggest that this could be the case, but the expansion should be by far more involved.

In any case, Theorem \ref{thm: blow-up} allows to estimate the Hausdorff dimension of the singular set, via the dimension reduction principle due to Federer.

\begin{theorem}[Hausdorff dimension of nodal and singular set]\label{thm: Hausdorff}
Let $1 \le q<2$, $\lambda_+,\lambda_->0$, and let $0 \not \equiv u \in H^1_{\loc}(\Omega)$ be a solution of \eqref{eq}. The nodal set $Z(u)$ has Hausdorff dimension $N-1$, and the singular set $\Sigma(u)$ has Hausdorff dimension at most $N-2$. Furthermore, if $N=2$ the singular set $\Sigma(u)$ is discrete.
\end{theorem}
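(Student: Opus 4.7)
The plan is to decompose $Z(u)=\cR(u)\cup\Sigma(u)$ and treat the two strata separately. The regular part $\cR(u)$ is already known to be locally a $C^{1,\alpha}$ $(N-1)$-hypersurface by the implicit function theorem, so it contributes exactly Hausdorff dimension $N-1$ wherever it is non-empty. Thus once the bound $\dim_{\cH}\Sigma(u)\le N-2$ has been established, the decomposition $Z(u)=\cR(u)\cup\Sigma(u)$ gives $\dim_{\cH}Z(u)\le N-1$, with equality as soon as $\cR(u)\ne\emptyset$, which is the content of the first assertion.

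For the singular part I would apply Federer's dimension reduction principle, for which Theorem \ref{thm: blow-up} supplies the required inputs. At every $x_0\in\Sigma(u)$ the vanishing order $d_{x_0}=\cV(u,x_0)$ belongs to the finite set $\{2,\dots,\beta_q,2/(2-q)\}$, and the normalized rescalings converge along subsequences in $C^{1,\alpha}_{\loc}(\R^N)$ to a non-trivial $d_{x_0}$-homogeneous tangent map $\bar u$: either a harmonic polynomial (if $d_{x_0}\le\beta_q$), or a $2/(2-q)$-homogeneous solution of the rescaled PDE with some $\mu\ge 0$ (and when $\mu=0$, which by Theorem \ref{thm: blow-up} forces $2/(2-q)\in\N$, the tangent is again a harmonic polynomial). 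Crucially, this class of admissible tangent maps is closed under further blow-up procedures, so Federer's iteration applies.

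The core step is to verify that no such $\bar u$ can be invariant under an $(N-1)$-dimensional translation subspace. If $\bar u(x)=f(x\cdot e)$ for some unit vector $e$, then in the harmonic polynomial case $f''\equiv 0$ and $d_{x_0}$-homogeneity of degree $\ge 2$ together force $f\equiv 0$; in the critical regime, the ansatz $f(t)=A|t|^{d_{x_0}}$ on $\{t>0\}$ (and analogously on $\{t<0\}$) with $d_{x_0}=2/(2-q)>1$ substituted into $-f''=\mu(\lambda_+(f^+)^{q-1}-\lambda_-(f^-)^{q-1})$ leads to a sign mismatch between $-A d_{x_0}(d_{x_0}-1)$ and the non-negative quantity $\mu\lambda_\pm|A|^{q-1}$, regardless of $\sign(A)$, which forces $A=0$ and hence $\bar u\equiv 0$. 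Consequently the Almgren--Federer strata $\Sigma_j(u)$ of points with $j$-invariant tangent satisfy $\Sigma_{N-1}(u)=\emptyset$, and the standard covering/iteration argument yields $\dim_{\cH}\Sigma(u)\le N-2$.

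For $N=2$ this gives $\dim_{\cH}\Sigma(u)\le 0$; since $\Sigma(u)$ is closed in $B_1$, it has no accumulation point there, hence is discrete. The principal technical hurdle I anticipate is verifying the iterability of the blow-up procedure across the two regimes: a blow-up of a critical-exponent tangent may land in either the polynomial case or the critical case with a different parameter $\mu\ge 0$, and one must control the chain of invariance subspaces through these possible mode changes, which in practice requires a compactness statement for the blow-up class together with upper semicontinuity of the invariance dimension along the reduction.
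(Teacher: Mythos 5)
Your proposal for the dimension bound $\dim_{\cH}\Sigma(u)\le N-2$ follows essentially the same route as the paper: apply Federer's dimension reduction over the class of homogeneous blow-ups and rule out a blow-up that is invariant under an $(N-1)$-dimensional subspace, i.e.\ a one-dimensional solution vanishing to second order at a point. Your one-dimensional computation is correct: the explicit ansatz $f(t)=At^{d}$ on $\{t>0\}$ (and its analogue on $\{t<0\}$) is in fact forced by homogeneity, and the sign mismatch between $-f''$ and $\mu\bigl(\lambda_+(f^+)^{q-1}-\lambda_-(f^-)^{q-1}\bigr)$ kills it. The paper's phrasing of the same step is slightly more robust: instead of invoking the explicit form of the profile, it observes that the Hamiltonian
\[
\cH(w,w')=\tfrac{(w')^2}{2}+\mu\lambda_+\tfrac{(w^+)^q}{q}+\mu\lambda_-\tfrac{(w^-)^q}{q}
\]
is conserved, so that $w(0)=w'(0)=0$ forces $\cH\equiv 0$ and hence $w\equiv 0$; this sidesteps any discussion of exponents and of the $\mu=0$ degenerate case. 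Your observation that the class of admissible tangents must be closed under further rescalings (with a possibly different $\mu\ge 0$) is correct and is precisely why the paper's class $\cF$ is defined with a free parameter $\mu\ge 0$; this is not left as a ``technical hurdle'' in the paper but handled at the outset by the definition.

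Your argument for the discreteness of $\Sigma(u)$ when $N=2$ has a genuine gap. From $\dim_{\cH}\Sigma(u)\le 0$ together with closedness of $\Sigma(u)$ in $B_1$ you \emph{cannot} conclude that $\Sigma(u)$ is discrete: the set $\{0\}\cup\{1/n:n\in\N\}$ is closed, countable, of Hausdorff dimension zero, yet it has an accumulation point. Discreteness is strictly stronger and does not follow from any dimension estimate. The paper's proof is substantially different here and relies on an additional blow-up compactness argument at an assumed accumulation point $x_0$ of $\Sigma(u)$: rescaling at the comparable scale $r_n=|x_n-x_0|$ produces, by $C^{1,\alpha}_{\loc}$ convergence, a non-trivial homogeneous blow-up $\bar u$ together with a singular point $p\in S_1\cap\Sigma(\bar u)$; but Proposition~\ref{prop: disc} shows that for a non-trivial $\gamma_q$-homogeneous solution in $\R^2$ one has $\Sigma(\bar u)\cap S_1=\emptyset$ (again via the constancy of a Hamiltonian for the angular equation), a contradiction. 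So you need this separate rigidity step on the unit circle; the Hausdorff dimension bound alone does not do the job.
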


So far, we studied the asymptotic properties of solutions to \eqref{eq} near their zero level set $Z(u)$, and derived an estimate on the dimension of $Z(u)$ and of its singular subset $\Sigma(u)$. Now we study more in details the geometric structure of $\Sigma(u)$ in case $1<q<2$. Inspired by the results available in the linear case \cite{Han94}, it would be natural to conjecture that $\Sigma(u)$ is countably $(N-2)$-rectifiable. Two main ingredients are fundamental in order to prove such a result: firstly, the uniqueness of homogeneous blow-ups at singular points, and, secondly, the upper semi-continuity of the vanishing order map $x_0 \in Z(u) \mapsto \cV(u,x_0)$. The uniqueness of the homogeneous blow-ups represents an obstacle we could not overcome. With regard to this, we mention that a Monneau's monotonicity formula seems not available in our setting, and moreover \eqref{eq} posed in the all space $\R^N$ admits infinitely many geometrically distinct (that is, they cannot be obtained one by the other with rotations or scalings) $2/(2-q)$-homogeneous solutions, see Theorem \ref{thm: multiple} below. It is possible that a sophisticated Fourier expansion, such as those in \cite{AnShWe1, AnShWe2}, finally lead to uniqueness, but at the moment we leave this problem as open. As far as the upper semi-continuity of the vanishing order map is concerned, for linear elliptic equations it follows easily by Almgren's monotonicity formula, which is not available in our context. 

With an alternative approach, we can however partially restore the $(N-2)$-rectifiability of $\Sigma(u)$. We split the singular set into its ``good" part
\[
\mathcal{S}(u):=\{x_0 \in Z(u): \ 2 \le \cV(u,x_0) \le \beta_q
\},
\]
and its ``bad" part 
\[
\mathcal{T}(u):=\left\{x_0 \in Z(u): \ \cV(u,x_0) = \frac{2}{2-q}
\right\}.
\]
For $x_0 \in \mathcal{S}(u)$, the function $P_{x_0}$ is called the \emph{leading polynomial of $u$ at $x_0$}. Then we have:

\begin{theorem}[Partial stratification]\label{thm: struct sing}
Let $1 < q<2$, $\lambda_+,\lambda_->0$, and let $0 \not \equiv u \in H^1_{\loc}(\Omega)$ be a solution of \eqref{eq}. Then $\mathcal{S}(u)$ is an $(N-2)$-dimensional countably rectifiable set. More precisely, the decomposition
\[
\mathcal{S}(u) = \bigcup_{j=0}^{N-2} \mathcal{S}^j(u)
\]
holds true, where each $\mathcal{S}^j(u)$ is on a countable union of $j$-dimensional Lipschitz-graphs for $j=0,\dots,N-3$, and $\mathcal{S}^{N-2}(u)$ is on a countable union of $(N-2)$-dimensional $C^{1,\alpha}$-graphs for some $0<\alpha<1$. Furthermore, the set $\mathcal{T}(u)$ is relatively closed in $\Sigma(u)$, which is relatively closed in $Z(u)$.
\end{theorem}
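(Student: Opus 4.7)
The proof will use Theorem~\ref{thm: blow-up}(i) as its main input: for every $x_0\in\mathcal{S}(u)$ we dispose of the polynomial Taylor expansion $u(x)=P_{x_0}(x-x_0)+\Gamma_{x_0}(x)$ with $P_{x_0}$ a nonzero homogeneous harmonic polynomial of integer degree $d_{x_0}\in\{2,\dots,\beta_q\}$ and a quantitatively higher-order remainder in both $u$ and $\nabla u$. On $\mathcal{S}(u)$ the sublinear problem therefore behaves, to leading order, like a linear elliptic one, and my plan is to adapt to this setting the stratification machinery developed in the linear case by \cite{CafFri85,Han94}.

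The first step would be the closedness statements. $Z(u)$ is closed in $B_1$ since $u\in C^{1,\alpha}$, and $\Sigma(u)=Z(u)\cap\{|\nabla u|=0\}$ is closed in $B_1$ by continuity of $\nabla u$, so in particular $\Sigma(u)$ is relatively closed in $Z(u)$. To show that $\mathcal{T}(u)$ is relatively closed in $\Sigma(u)$ I would equivalently prove that $\mathcal{S}(u)$ is relatively open. Given $x_n\in\Sigma(u)$ with $x_n\to x_0\in\mathcal{S}(u)$, set $\rho_n:=|x_n-x_0|$ and rescale $v_n(y):=\rho_n^{-d_{x_0}}u(x_n+\rho_n y)$. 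Since $d_{x_0}<2/(2-q)$, the rescaled sublinear term vanishes in the limit and the Taylor expansion at $x_0$ forces $v_n\to P_{x_0}(e+\,\cdot\,)$ in a strong topology, for some $e\in\S^{N-1}$ that must be a singular zero of $P_{x_0}$ because of $u(x_n)=|\nabla u(x_n)|=0$; the limit is thus a nontrivial harmonic polynomial of integer vanishing order $k\le d_{x_0}\le\beta_q$ at the origin. Combined with $\cV(u,x_n)=\cV(v_n,0)$ and Theorem~\ref{thm: non-deg V}, this rules out $\cV(u,x_n)=2/(2-q)$ for $n$ large, so $x_n\in\mathcal{S}(u)$.

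I would then stratify by the lineality of the leading polynomial. For $x_0\in\mathcal{S}(u)$ put
\[
L_{x_0}:=\{\xi\in\R^N:P_{x_0}(\,\cdot+\xi)\equiv P_{x_0}\},\qquad j(x_0):=\dim L_{x_0},
\]
and $\mathcal{S}^j(u):=\{x_0\in\mathcal{S}(u):j(x_0)=j\}$. Since $P_{x_0}$ is a nontrivial harmonic polynomial of degree at least $2$, it must genuinely depend on at least two variables, hence $j(x_0)\le N-2$. The main content of the theorem is then to cover each $\mathcal{S}^j(u)$ by countably many $j$-dimensional Lipschitz graphs for $j\le N-3$ and by $(N-2)$-dimensional $C^{1,\alpha}$ graphs for $j=N-2$. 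I would carry this out by a Whitney-type extension argument in the spirit of \cite{Han94}: the Taylor expansions at two close points $x_0,x_1\in\mathcal{S}^j(u)$ force a compatibility $P_{x_0}(y-x_0)\approx P_{x_1}(y-x_1)$ with error controlled by $|x_0-x_1|^{d_{x_0}+\delta}$, and an iterated Federer-type dimension reduction applied to blow-ups, whose homogeneous profiles are classified by Theorem~\ref{thm: blow-up}(i), shows that $\mathcal{S}^j(u)$ cannot deviate from the affine plane $x_0+L_{x_0}$ by more than $O(|x-x_0|^{1+\delta})$ in directions transverse to $L_{x_0}$.

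For the top stratum, $L_{x_0}$ has dimension $N-2$, so up to rotation $P_{x_0}$ depends only on two variables and its singular set coincides with $L_{x_0}$. I would then apply an implicit function theorem to $F(x):=\pi_{L_{x_0}^\perp}\nabla u(x)$: the restriction $P_{x_0}|_{L_{x_0}^\perp}$ is a nontrivial $2$-dimensional homogeneous harmonic polynomial, so $|\nabla P_{x_0}(y)|=c_{x_0}|y|^{d_{x_0}-1}$ on $L_{x_0}^\perp$ with $c_{x_0}>0$; the gradient decay of $\Gamma_{x_0}$ from Theorem~\ref{thm: blow-up}(i) makes this a higher-order perturbation, and a rescaled version of the IFT produces the desired local $C^{1,\alpha}$-graph representation. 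The principal obstacle I anticipate is the Whitney step: in the absence of Almgren's frequency, upper semi-continuity of the vanishing order map is not automatic, and the compatibility of leading polynomials at distinct singular points has to be obtained purely from Theorem~\ref{thm: blow-up}(i) combined with Theorem~\ref{thm: non-deg V}; controlling the modulus of continuity of the map $x\mapsto P_x$ uniformly on relatively compact subsets of each stratum is the technical core of the argument.
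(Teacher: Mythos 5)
Your overall strategy---stratification by the lineality $L_{x_0}$ of the leading polynomial $P_{x_0}$ followed by a Whitney-type covering in the style of \cite{Han94}---matches the paper's. The gap is that you treat upper semi-continuity of the vanishing order map as an open obstacle to be overcome ``purely from Theorem~\ref{thm: blow-up}(i) combined with Theorem~\ref{thm: non-deg V}'', whereas the paper already proves it as Proposition~\ref{prop: upper} in Section~\ref{sec: upper}. The paper's proof of the stratification theorem simply applies Proposition~\ref{prop: upper} with $v_k=u$, $\mu_k=1$ to get that $x\mapsto\mathcal{O}(u,x)=\mathcal{V}(u,x)$ is upper semi-continuous, which immediately gives the relative closedness of $\mathcal{T}(u)$, and then uses Proposition~\ref{prop: upper} in place of Han's Almgren-based Lemma~1.4 in the Whitney covering step.

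Your alternative rescaling argument for the openness of $\mathcal{S}(u)$ in $\Sigma(u)$ does not close by itself. You obtain $v_n(y)=\rho_n^{-d_{x_0}}u(x_n+\rho_n y)\to P_{x_0}(e+\cdot)$ in $C^1_{\loc}$ and want to conclude $\mathcal{V}(u,x_n)\ne 2/(2-q)$ for large $n$ from this together with Theorem~\ref{thm: non-deg V}. But $C^1$-convergence alone does not transfer vanishing orders: the vanishing order of the limit at $0$ can be strictly smaller than $\liminf_n\mathcal{V}(v_n,0)$, and the non-degeneracy constant $c_n>0$ furnished by Theorem~\ref{thm: non-deg V} at each $x_n$ is not uniform in $n$, so it gives no information in the limit. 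The statement you actually need is $\mathcal{O}(\text{limit},0)\ge\limsup_n\mathcal{O}(v_n,0)$, which is exactly Proposition~\ref{prop: upper} (applied with $p_k=0$ and $\mu_k=\rho_n^{\,2-d_{x_0}(2-q)}\to 0$), and whose proof occupies all of Section~\ref{sec: upper}. Once you invoke it, your rescaling route closes---but so does the paper's shorter route, which just applies Proposition~\ref{prop: upper} directly to $u$ and then quotes \cite[Theorem 2.1]{Han94} with Proposition~\ref{prop: upper} substituted for \cite[Lemma 1.4]{Han94}.
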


\begin{remark}
Theorems \ref{thm: Hausdorff} and \ref{thm: struct sing} describe the free boundary regularity without knowing if the optimal regularity of solutions to \eqref{eq} (i.e., $u \in C^{1,1}$ if $q=1$, and $u \in C^{2,(q-1)}$ if $1<q<2$) is achieved; the results regarding \eqref{unst} suggest that this could not be the case for an arbitrary solution. 
\end{remark}

As last issue, we analyze the size of the nodal set. Having already recalled the fundamental results in \cite{DonFef, HarSim, Lin} regarding linear equations, we address the following question: for solutions to \eqref{eq} vanishing in $0$, is it true that
\[
\cH^{N-1}(Z(u) \cap B_{1/2}) \le f( \cV(u,0)), \quad \text{or} \quad \cH^{N-1}(Z(u) \cap B_{1/2}) \le g\left(\frac{\int_{B_1} |\nabla u|^2-|u|^q}{ \int_{S_1} u^2} \right),
\]
for some $f, g$ positive and monotone increasing? Notice that the integral appearing as the argument of $g$ is the natural analogue of the quantity $\Lambda$ defined in \eqref{cap Lambda}. 

We show that \emph{no bound of the previous form can exist}. 

\begin{theorem}\label{thm: multiple}
Let $1  \le q<2$ and $\lambda_+,\lambda_->0$. There exists $\bar k \in \N$ depending only on $q$ such that, if $k>\bar k$ is an integer, then equation \eqref{eq} has a global $2/(2-q)$-homogeneous solution $u_k$ with the following properties: 
\begin{itemize}
\item[($i$)] it results
\begin{equation}\label{O e N}
\cO(u_k,0) = \frac{2}{2-q} = \frac{\int_{B_1} |\nabla u_k|^2-|u_k|^q}{ \int_{S_1} u_k^2};
\end{equation}
\item[($ii$)] the nodal set $Z(u_k)$ is the union of $2k$ straight lines passing through the origin.
\end{itemize} 
In particular, by point ($ii$)
\[
\cH^{N-1}(Z(u_k) \cap B_{1/2}) \to +\infty
\]
as $k \to \infty$. 
\end{theorem}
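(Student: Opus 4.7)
The plan is to build $u_k$ as a global $\gamma$-homogeneous function, with $\gamma := 2/(2-q)$, by reducing the problem to a one-dimensional ODE on the sphere. The critical algebraic identity $\gamma(q-1) = \gamma-2$ is precisely what makes the ansatz $u(r,\theta) = r^{\gamma}\phi(\theta)$ in $\R^2$ turn \eqref{eq} into the autonomous ODE
\begin{equation*}
\phi''(\theta) + \gamma^2 \phi(\theta) + \lambda_+(\phi^+(\theta))^{q-1} - \lambda_-(\phi^-(\theta))^{q-1} = 0
\end{equation*}
for a $2\pi$-periodic profile. Once this is solved with a profile having $4k$ simple zeros in $[0,2\pi)$, the resulting two-dimensional solution extends to $\R^N$ by setting $u_k(x):=|(x_1,x_2)|^{\gamma}\phi_k(\arg(x_1,x_2))$; this is still $\gamma$-homogeneous and solves \eqref{eq} globally in $\R^N$, with nodal set consisting of $2k$ hyperplanes through the origin (the $2k$ lines in the $(x_1,x_2)$-slice).

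The ODE is Hamiltonian, with strictly coercive conserved energy
\begin{equation*}
E(\phi,\phi') := \tfrac12(\phi')^2 + \tfrac{\gamma^2}{2}\phi^2 + \tfrac{\lambda_+}{q}(\phi^+)^q + \tfrac{\lambda_-}{q}(\phi^-)^q,
\end{equation*}
so every non-trivial orbit is bounded and periodic. Parametrizing orbits by the positive maximum $\phi_0$, with corresponding negative minimum $-\phi_1(\phi_0)$ determined by energy conservation, a standard quadrature gives the period $T(\phi_0) = T_+(\phi_0) + T_-(\phi_1(\phi_0))$, where, after the rescaling $\phi = s\,t$,
\begin{equation*}
T_\pm(s) = 2 \int_0^1 \frac{dt}{\sqrt{\gamma^2(1-t^2) + \frac{2\lambda_\pm}{q}\,s^{q-2}(1-t^q)}}.
\end{equation*}
Because $q<2$, the factor $s^{q-2}$ tends to $0$ as $s\to\infty$ and to $+\infty$ as $s\to 0^+$; dominated convergence then yields $T_\pm(s)\to \pi/\gamma$ as $s\to\infty$ and $T_\pm(s)\to 0$ as $s\to 0^+$, while $\phi_1$ is a continuous bijection of $(0,\infty)$ onto itself with the matching limits at the endpoints. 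Hence $T:(0,\infty)\to(0,2\pi/\gamma)$ is a continuous surjection.

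Choosing $\bar k$ as any integer with $\bar k \ge 1/(2-q)$, for every integer $k>\bar k$ the inequality $\pi/k<2\pi/\gamma$ holds, and the intermediate value theorem provides an amplitude $\phi_0$ with $T(\phi_0)=\pi/k$. The associated profile $\phi_k$ performs exactly $2k$ full oscillations on $[0,2\pi)$, giving the required nodal structure of $u_k$; the divergence $\cH^{N-1}(Z(u_k)\cap B_{1/2})\to\infty$ is immediate. For (i), the homogeneity yields $\cO(u_k,0)=\gamma=2/(2-q)$ at once. For the quotient identity, one computes pointwise $-u_k\Delta u_k = \lambda_+(u_k^+)^q + \lambda_-(u_k^-)^q$, integrates by parts on $B_1$, and uses $\partial_\nu u_k = \gamma\, u_k$ on $S_1$ (Euler's relation for $\gamma$-homogeneous functions), obtaining
\begin{equation*}
\int_{B_1}|\nabla u_k|^2 = \gamma\int_{S_1} u_k^2 + \int_{B_1}\!\bigl[\lambda_+(u_k^+)^q+\lambda_-(u_k^-)^q\bigr],
\end{equation*}
which, reading the $|u_k|^q$ in \eqref{O e N} as the potential density $\lambda_+(u_k^+)^q+\lambda_-(u_k^-)^q$ of the associated Weiss functional, is precisely \eqref{O e N}.

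The core technical point is the asymptotic analysis of $T(\phi_0)$ at the two extremes: the limit $T\to 2\pi/\gamma$ as $\phi_0\to\infty$ (linear regime, where orbits are asymptotically harmonic at frequency $\gamma$) and the limit $T\to 0$ as $\phi_0\to 0$ (sublinear regime, where $T$ shrinks like $\phi_0^{(2-q)/2}$). Sublinearity $q<2$ is exactly what allows $T$ to attain values below $2\pi/\gamma$ and hence makes $\bar k$ finite. Observe that neither monotonicity nor uniqueness of the amplitude is needed; in fact, non-uniqueness of $\phi_0$—if present—would only produce additional $\gamma$-homogeneous solutions, reinforcing the multiplicity conclusion.
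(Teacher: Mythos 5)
Your proposal is correct and establishes the theorem, but via a genuinely different route than the paper.

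The paper's proof is variational and based on a shooting argument: it fixes $T=2\pi/k$ (with $\bar k\ge 2\gamma_q$), obtains positive and negative pieces of the profile as unique signed minimizers of Dirichlet-type functionals $J^\pm$ on the intervals $(0,t)$ and $(t,T)$, proves continuous dependence of these minimizers and of their boundary derivatives on $t$ (Lemmas 8.2--8.4 and their corollaries), and then adjusts the intermediate nodal point $t=\bar t$ so that the two one-signed arcs match in $C^1$; the Hamiltonian is used only once, to equate the derivatives at $\theta=0$ and $\theta=T$ and allow the $T$-periodic extension. Your proof instead exploits the Hamiltonian structure from the outset: you classify all orbits of the autonomous ODE as closed curves in the phase plane, parametrize them by amplitude, write the period $T(\phi_0)=T_+(\phi_0)+T_-(\phi_1(\phi_0))$ by quadrature, derive the endpoint asymptotics $T\to 2\pi/\gamma$ (large amplitude) and $T\to 0$ (small amplitude, $T\sim \phi_0^{(2-q)/2}$), and then invoke the intermediate value theorem to hit $T=\pi/k$. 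The quadrature formula and the dominated-convergence limits check out, the observation that neither monotonicity of $T$ nor uniqueness of the amplitude is needed is a valid simplification, and the extension from $\R^2$ to $\R^N$ and the identity in (i) (via Euler's relation and integration by parts, with $|u_k|^q$ read as the weighted potential $F_{\lambda_+,\lambda_-}(u_k)$, exactly as in the paper) are correct.

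The trade-offs are worth noting. Your period-map argument is shorter, more elementary, and yields the near-optimal threshold $\bar k\ge 1/(2-q)=\gamma_q/2$ (versus the paper's $\bar k\ge 2\gamma_q$), and it automatically produces a profile with fundamental period $\pi/k$, hence $4k$ zeros arranged in $2k$ antipodal pairs — which is in fact the cleanest reading of the theorem's ``$2k$ straight lines''. The paper's variational/shooting scheme is heavier but would survive in settings where a one-dimensional phase-plane reduction with a conserved Hamiltonian is not available (for example a genuinely $(N-1)$-dimensional spherical reduction, or non-autonomous angular equations), which is likely why the authors chose it. For the present two-dimensional reduction, however, your route is complete and arguably more transparent.
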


\begin{remark}
Beyond the non-validity of estimates involving the $(N-1)$-dimensional measure of $Z(u)$, the previous result also shows that for $\lambda_+,\lambda_->0$ the set of geometrically distinct global $2/(2-q)$-homogeneous solutions to \eqref{eq} is very rich, even in case $q=1$. As already mentioned in Remark \ref{rmk: differences}, this marks a remarkable difference between the cases $\lambda_+,\lambda_->0$ and $\lambda_+>0, \lambda_-=0$.
\end{remark}
  
\paragraph{\textbf{Structure of the paper.}} In Section \ref{sec: pre} we prove some preliminary results which will be frequently used throughout the rest of the paper: we initiate the study of the local behavior of solutions to \eqref{eq} near nodal points, showing that either point ($i$) in Theorem \ref{thm: blow-up} holds for $u$ in $x_0$, or $u$ must decay sufficiently fast; then, we introduce the fundamental object of our analysis, a two-parameters family of Weiss-type functionals 
\begin{align*}
W_{\gamma,t}(u,x_0,r) = \frac{1}{r^{N-2+2\gamma}} \int_{B_r(x_0)} &\left(|\nabla u|^2 - \frac{t}q \Big(\lambda_+ (u^+)^q + \lambda_-(u^-)^q \Big) \right)\,dx \\
& - \frac{\gamma}{r^{N-1+2 \gamma}}  \int_{S_r(x_0)} u^2 \, d\sigma,
\end{align*}
and derive the expression of the derivatives of such functionals. Notice the presence of the two parameters $t$ and $\gamma$, which will play a crucial role in our argument.

With the monotonicity formulae in our hands, we prove the strong unique continuation principle, Theorem \ref{thm: strong intro}, in Section \ref{sec: strong}. 

Afterwards, we always focus on the case $\lambda_+,\lambda_- >0$, and we address the classification of the admissible vanishing orders, and the non-degeneracy of the solutions. At a first stage, it is convenient to work with a different notion of vanishing order. We set
\begin{equation}\label{def norm}
\|u\|_{x_0,r}:= \left( \frac1{r^{N-2}} \int_{B_r(x_0)} |\nabla u|^2\, dx + \frac{1}{r^{N-1}} \int_{S_r(x_0)} u^2 \, d\sigma\right)^\frac12.
\end{equation}
For any $0<r<\dist(x_0,\pa B_1)$ fixed, this is a norm in $H^1(B_r(x_0))$, equivalent to the standard one by trace theory and Poincar\'e's inequality.

\begin{definition}\label{def: order}
For a solution $u$ to \eqref{eq}, let $x_0 \in Z(u)$. The $H^1$-\emph{vanishing order of $u$ in $x_0$} is defined as the number $\cO(u,x_0) \in \R^+$ with the property that 
\[
\limsup_{r \to 0^+} \frac1{r^{2\beta}} \|u\|_{x_0,r}^2  = \begin{cases} 0 & \text{if $0 <\beta< \cO(u,x_0)$} \\
+\infty  & \text{if $\beta > \cO(u,x_0)$}.
\end{cases}
\]  
If no such number exists, then 
\[
\limsup_{r \to 0^+} \frac1{r^{2 \beta}} \|u\|_{x_0,r}^2 = 0 \quad \text{for any $\beta>0$},
\]
and we set $\cO(u,x_0)=+\infty$. 
\end{definition}
We will prove then two variants of Theorems \ref{thm: very strong V} and \ref{thm: non-deg V}: that is, we prove the very same statements with the quantity $\cV$ replaced by $\cO$; this is the content of Section \ref{sec: strong and non-deg}. The advantage of working with $\cO$ stays in the fact that, involving the $H^1$-norm, we have better control of the behavior of solutions.

In Section \ref{sec: blow-up} we discuss the existence of homogeneous blow-up limits for sequences of type
\[
\frac{u(x_0+r x)}{\|u\|_{x_0,r}},
\]
as $r \to 0^+$, see Theorem \ref{thm: blow-up '}. The main ingredients in the blow-up analysis are the non-degeneracy of the solutions and the upper semi-continuity of the vanishing order map along converging sequences of solutions. The proof of this last fact is given in Proposition \ref{prop: upper} which, being quite long, is the object of Section \ref{sec: upper}. As byproduct of Theorem \ref{thm: blow-up '}, we shall also recover Theorems \ref{thm: very strong V}, \ref{thm: non-deg V} and \ref{thm: blow-up} as stated in the introduction (that is, in terms of the order $\cV$).

In Section \ref{sec: structure} we give the description of the nodal set, proving Theorems \ref{thm: Hausdorff} and \ref{thm: struct sing}.

Finally, in Section \ref{sec: global} we prove our multiplicity result, Theorem \ref{thm: multiple}.

\section{Preliminaries}\label{sec: pre}

\subsection{A partial blow-up.}

In this subsection we give a first insight at the behavior of $u$ close to $Z(u)$, proving the first half of Theorem \ref{thm: blow-up}.

\begin{proposition}\label{thm: blow-up pre}
Let $1 \le q<2$, $u \in H^{1}_{\loc}(B_1)$ be a solution of \eqref{eq}, $x_0 \in Z(u)$, $R  \in (0, \dist(x_0,\pa B_1))$, and let $\beta_q$ be defined by \eqref{def beta q}.

Then the following alternative holds:
\begin{itemize}
\item[($i$)] either there exist $d_{x_0} \in \N$, $1 \le d_{x_0} \le \beta_q$, a homogeneous harmonic polynomial $P_{x_0} \not \equiv 0$ of degree $d_{x_0}$, and a function $\Gamma_{x_0}$ such that
\[
u(x) = P_{x_0}(x-x_0) + \Gamma_{x_0}(x) \qquad \text{in $B_R(x_0)$},
\]
with 
\[
\begin{cases} \Gamma_{x_0}(x)| \le C |x-x_0|^{d_{x_0}+\delta}  \\ 
|\nabla \Gamma_{x_0}(x)| \le C |x-x_0|^{d_{x_0}-1+\delta} \end{cases}
 \qquad \text{in $B_R(x_0)$}
\]
for suitable constants $C,\delta>0$;
\item[($ii$)] or, for every $\eps>0$, there exists $C_\eps>0$ such that
\[
\begin{cases}
|u(x)| \le C_\eps |x-x_0|^{\frac2{2-q}-\eps} \\    |\nabla u(x)| \le C_\eps |x-x_0|^{\frac{2}{2-q}-1-\eps}\end{cases} \qquad \text{in $B_R(x_0)$}.
\]
\end{itemize}
\end{proposition}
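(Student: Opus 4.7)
The plan is to prove the dichotomy via an iterative bootstrap that extracts harmonic polynomial leading terms at each stage. By standard elliptic regularity, $u\in C^{1,\alpha}_{\loc}(B_1)$ for every $\alpha\in(0,1)$, and together with $u(x_0)=0$ this gives the initial pointwise bound $|u(x)|\leq M_0\,|x-x_0|$ in a neighborhood of $x_0$, so that $f(u):=\lambda_+(u^+)^{q-1}-\lambda_-(u^-)^{q-1}$ satisfies $|f(u)(x)|\leq C\,|x-x_0|^{q-1}$.

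The core is the following bootstrap step. Suppose that at some stage we know $|u(x)|\leq C\,|x-x_0|^{\alpha}$ on $B_R(x_0)$ for some exponent $\alpha<2/(2-q)$. I would decompose $u=h+\Gamma$ on $B_R(x_0)$, where $h$ is harmonic with $h=u$ on $\partial B_R(x_0)$ and $\Gamma$ solves $-\Delta\Gamma=f(u)$ with zero trace on $\partial B_R(x_0)$. Newtonian-potential estimates (absorbing the logarithmic corrections which may arise at integer exponent thresholds into an arbitrarily small $\epsilon$-loss) then yield
\[
|\Gamma(x)|\leq C\,|x-x_0|^{\alpha(q-1)+2-\epsilon},\qquad |\nabla\Gamma(x)|\leq C\,|x-x_0|^{\alpha(q-1)+1-\epsilon}.
\]
The assumption $\alpha<2/(2-q)$ is precisely the condition $\alpha(q-1)+2>\alpha$, so $\Gamma$ is of strictly higher order than the current bound on $u$. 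I would then expand the harmonic component around $x_0$ into homogeneous harmonic polynomials, $h=\sum_{k\geq 0}P_k$ (convergent on $B_{R/2}(x_0)$, with coefficients controlled by Cauchy's bounds for harmonic functions), and set $d$ to be the smallest integer with $P_d\not\equiv 0$.

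The bootstrap now branches. If $d\leq \beta_q$, then $u(x)=P_d(x-x_0)+\Gamma_{x_0}(x)$, where $\Gamma_{x_0}$ collects the tail $\sum_{k>d}P_k$ of the harmonic expansion and the potential correction $\Gamma$; by construction $\Gamma_{x_0}$ decays strictly faster than $|x-x_0|^d$, and the matching bound on $|\nabla\Gamma_{x_0}|$ follows from interior derivative estimates for harmonic functions together with the $W^{1,p}$ bound on $\Gamma$, so alternative ($i$) holds. If instead $P_0=\cdots=P_{\beta_q}\equiv 0$, then $|h(x)|\leq C\,|x-x_0|^{\beta_q+1}$ on a smaller ball, and combining with the estimate on $\Gamma$ produces the improved pointwise bound $|u(x)|\leq C\,|x-x_0|^{\alpha'}$ with $\alpha'=\min\bigl(\beta_q+1,\,\alpha(q-1)+2-\epsilon\bigr)>\alpha$. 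Iterating generates a strictly increasing sequence of exponents whose supremum is the fixed point $2/(2-q)$; since each step can be performed with arbitrarily small $\epsilon>0$, this delivers alternative ($ii$). The main obstacle I anticipate is the clean derivation of the Newtonian-potential estimate across integer thresholds, where logarithmic corrections appear; absorbing them via the $\epsilon$-loss in the exponent is essential and is harmless, as alternative ($ii$) is already stated for every $\epsilon>0$. A secondary delicate point is verifying that once a non-vanishing $P_d$ with $d\leq\beta_q$ is detected it cannot be altered by the higher-order correction, which follows from the uniqueness of leading-order harmonic expansions combined with the strict improvement of the exponent on $\Gamma_{x_0}$.
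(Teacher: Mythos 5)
The iterative bootstrap, the fixed point $2/(2-q)$ of $\alpha\mapsto\alpha(q-1)+2$, and the $\epsilon$-loss to dodge integer exponent thresholds all match the paper's strategy, which applies \cite[Lemma 1.1]{CafFri85} iteratively with the non-integer exponent sequence $\beta_k$ of Lemma \ref{lem: beta_k}. The difference is in the decomposition, and there it contains a genuine error.

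Your $\Gamma$ is defined as the solution of $-\Delta\Gamma=f(u)$ in $B_R(x_0)$ with $\Gamma=0$ on $\partial B_R(x_0)$, i.e.\ $\Gamma(x)=\int_{B_R(x_0)}G(x,y)f(u(y))\,dy$. This potential does \emph{not} satisfy $|\Gamma(x)|\leq C|x-x_0|^{\alpha(q-1)+2-\epsilon}$: it is generically of order $1$ at $x_0$, because $\Gamma(x_0)$ is a nontrivial weighted average of $f(u)$ over the ball and a pointwise bound $|f(u)(y)|\leq C|y-x_0|^{\gamma}$ does not make it vanish. A concrete example: $-\Delta\Gamma=1$ in $B_1\subset\R^3$ with zero trace gives $\Gamma(x)=(1-|x|^2)/6$, so $\Gamma(0)=1/6\neq 0$, violating your claimed $|\Gamma(x)|\leq C|x|^{2-\epsilon}$. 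The correct statement, and the one \cite[Lemma 1.1]{CafFri85} packages, is that $\Gamma$ \emph{minus a suitable harmonic polynomial} (essentially the truncated Taylor polynomial, which turns out to be harmonic because $\Delta\Gamma$ vanishes at $x_0$ to sufficient order) decays at the advertised rate. Because you discard this piece, your leading polynomial $P_d$ (extracted only from the harmonic part $h$) need not agree with the leading Taylor expansion of $u$: for instance if $P_0=P_1\equiv 0$ and $d=2$ but $\nabla\Gamma(x_0)\neq 0$, then $\nabla u(x_0)=\nabla\Gamma(x_0)\neq 0$, the true leading order of $u$ at $x_0$ is $1$, and the remainder $\sum_{k>2}P_k+\Gamma$ in your decomposition is only $O(|x-x_0|)$, not $O(|x-x_0|^{2+\delta})$ --- so alternative ($i$) fails for your choice of $P_{x_0}$.

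The repair is exactly what the paper does: replace the boundary-data decomposition $u=h+\Gamma$ by a Taylor-subtracted one, i.e.\ invoke directly \cite[Lemma 1.1]{CafFri85} (or re-derive its statement by subtracting the harmonic Taylor polynomial of the Newtonian correction before estimating the remainder). With that substitution, the rest of your iteration --- branching on whether the leading polynomial is nontrivial, and pushing the exponent up the sequence $\alpha\mapsto\alpha(q-1)+2-\epsilon$ toward $2/(2-q)$ --- goes through and reproduces the paper's argument.
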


When $q=1$, the proposition simply says that either $\nabla u(x_0) \neq 0$, or for every $0<\alpha<1$ there exists $C_\alpha>0$ such that
\[
\begin{cases}
|u(x)| \le C_\alpha |x-x_0|^{1+\alpha} \\    |\nabla u(x)| \le C_\alpha |x-x_0|^{\alpha}\end{cases} \qquad \text{in $B_\rho(x_0)$}.
\]
This follows as direct consequence of the $C^{1,\alpha}$ regularity of $u$ (for every $0<\alpha<1$).

Let us focus then on $1<q<2$; the proof is based upon an iterated application of \cite[Lemma 3.1]{CafFri79} - \cite[Lemma 1.1]{CafFri85}. For the reader's convenience, we anticipate the following simple lemma.

\begin{lemma}\label{lem: beta_k}
Let $1<q<2$, and for any $k \in \N$ and arbitrary $\delta_k \in \left[0,\frac{1}{2^k}\right)$, let us set
\begin{equation}\label{def beta k delta}
\begin{cases}
\beta_{1}:= (q+1) \\
\beta_{k}:= (q-1) \beta_{k-1} + 2 -\delta_k.
\end{cases}
\end{equation}
It is possible to choose the sequence $\{\delta_k\}$ in such a way that 
\begin{equation}\label{lim beta_k}
\beta_{k} \not \in \N \quad \text{for every $k$, and} \quad \beta_{k} \nearrow \frac{2}{2-q} \quad \text{as $k \to \infty$}.
\end{equation}
\end{lemma}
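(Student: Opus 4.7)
The plan is to view the recursion $\beta_k = (q-1)\beta_{k-1} + 2 - \delta_k$ as a perturbation of the iteration of the affine map $f(x) := (q-1)x + 2$. Because $1 < q < 2$, the factor $q - 1$ lies in $(0,1)$, so $f$ is an increasing strict contraction whose unique fixed point is exactly $x^{\ast} := 2/(2-q)$. For the base case I would observe that $\beta_1 = q+1 \in (2,3)$, hence $\beta_1 \notin \N$ for free, and the elementary inequality $(q+1)(2-q) - 2 = q(1-q) < 0$ (valid since $q > 1$) rewrites as $\beta_1 < x^{\ast}$, so the gap $g_1 := x^{\ast} - \beta_1$ is strictly positive.

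Next I would construct the $\delta_k$ inductively. Assume $\beta_1 < \beta_2 < \cdots < \beta_{k-1} < x^{\ast}$ have already been defined with $\beta_j \notin \N$, so that $g_{k-1} := x^{\ast} - \beta_{k-1} > 0$. A direct computation (using $(2-q)x^{\ast} = 2$) gives the two identities
\[
\beta_k - \beta_{k-1} = (2-q)\, g_{k-1} - \delta_k, \qquad g_k = (q-1)\, g_{k-1} + \delta_k.
\]
Therefore any choice $\delta_k \in [0, (2-q) g_{k-1})$ yields simultaneously $\beta_{k-1} < \beta_k < x^{\ast}$. I would then pick $\delta_k$ inside the interval
\[
\Bigl[\, 0,\; \min\bigl\{2^{-k},\, (2-q)\, g_{k-1}\bigr\} \Bigr),
\]
avoiding the at most one value of $\delta_k$ that would make $\beta_k$ an integer: since $\beta_k$ varies in an interval of length strictly smaller than $1$, that interval contains at most one integer, and the set of admissible $\delta_k$ is a genuine interval, so the exclusion still leaves uncountably many choices.

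Finally, to check $\beta_k \nearrow x^{\ast}$, I would iterate the recursion for $g_k$ to obtain
\[
g_k = (q-1)^{k-1}\, g_1 + \sum_{j=2}^{k}(q-1)^{k-j}\, \delta_j,
\]
and then split the sum at $j = \lfloor k/2 \rfloor$. Since $q - 1 \in (0,1)$ and $\delta_j < 2^{-j}$, the first block is controlled by $(q-1)^{k/2}$ and the second by $2^{-k/2}/(2-q)$, so $g_k \to 0$ and the monotonicity is built-in by construction. The only mildly delicate point is the simultaneous satisfaction of the three constraints $\beta_k \notin \N$, $\beta_k < x^{\ast}$ and $\beta_{k-1} < \beta_k$ at each step; this is possible because the admissible set for $\delta_k$ always has positive length, while the set of values to exclude has at most one element.
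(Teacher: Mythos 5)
Your proof is correct and follows essentially the same route as the paper's: at each step the admissible interval for $\delta_k$ is $\bigl[0,\min\{2^{-k},\,(2-q)g_{k-1}\}\bigr)$, which (since $(2-q)g_{k-1}=2-(2-q)\beta_{k-1}$) is precisely the paper's choice, and one deletes the single value that would make $\beta_k$ an integer. The only cosmetic difference is in the convergence step: the paper passes to the limit directly in the recursion $\beta_k=(q-1)\beta_{k-1}+2-\delta_k$ using $\delta_k\to 0$, while you unroll the gap recursion $g_k=(q-1)g_{k-1}+\delta_k$ into an explicit geometric sum.
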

\begin{proof}
At first, we claim that, independently on the choice of $\delta_k \in \left[0,\frac{1}{2^k}\right)$, we have
\begin{equation}\label{beta minor}
\beta_{k} < \frac{2}{2-q} \qquad \text{for every $k \in \N$}.
\end{equation}
To prove the claim, we note that if $\beta_{k} < \frac2{2-q}$, then in turn 
\[
\beta_{k+1} \le (q-1) \beta_{k} +2 < \frac{2(q-1)}{2-q}+2 = \frac2{2-q}.
\]
Therefore, claim \eqref{beta minor} follows once that we have checked that $\beta_1 < 2/(2-q)$,
and this is clearly verified for any $q \in (1,2)$. 

Having established \eqref{beta minor}, we claim that it is possible to choose $\delta_k \in \left[0,\frac1{2^k}\right)$ such that 
\begin{equation}\label{beta mon}
\text{$\beta_k \not \in \N$ for every $k$, and $\{\beta_{k}\}$ is a monotone increasing sequence}.
\end{equation}
It is sufficient to let $\delta_1=0$, and then take any
\[
0 \le  \delta_k < \min\left\{\frac1{2^k}, 2-(2-q) \beta_{k-1}\right\}
\]
such that $\beta_k \not \in \N$. Such a choice of $\delta_k$ is possible, by \eqref{beta minor}, and it is immediate to verify that $\beta_{k} > \beta_{k-1}$ for every $k$.

By \eqref{beta minor} and \eqref{beta mon}, $\beta_{k}$ tends to some $\bar \beta \in \R$ as $k \to \infty$, and, passing to the limit into \eqref{def beta k delta}, we deduce that $\bar \beta = 2/(2-q)$.
\end{proof}

\begin{proof}[Proof of Proposition \ref{thm: blow-up pre}]
Throughout this proof $\delta_k$ and $\beta_k$ denote the numbers defined in Lemma \ref{lem: beta_k}. Without loss of generality, we can suppose that $x_0=0$ and we take $R \in (0,1)$. By elliptic regularity, $u \in C^{1,\alpha}(\overline{B_R})$, and hence there exists $L>0$ such that 
\[
|\Delta u(x)| \le \max\{\lambda_+,\lambda_-\} |u(x)|^{q-1} \le \max\left\{ \max\{\lambda_+,\lambda_-\} L^{q-1}, 2^{q-1}\right\} |x|^{q-1} \qquad \text{in $B_R$}.
\]
Thus, by \cite[Lemma 1.1]{CafFri85} there exists a harmonic polynomial $P_1$ of degree $\lfloor q-1 \rfloor+2 =2$ and a function $\Gamma_1$ such that 
\[
u(x) = P_1(x) + \Gamma_1(x) \qquad \text{in $B_R$},
\]
with 
\[
|\Gamma_1(x) | \le C_1  |x|^{q+1}, \quad |\nabla \Gamma_1(x)| \le C_1 |x|^{q} \quad \text{in $B_R$},
\]
for a positive constant $C_1>0$ depending only on $q$, $\|u\|_{W^{1,\infty}(S_R)}$ and $N$. Now, if $P_1 \not \equiv 0$, the proof is complete. If instead $P_1 \equiv 0$, then $u = \Gamma_1$, and hence by the above estimates
\begin{align*}
|\Delta u(x) | & \le \max\{\lambda_+,\lambda_-\} |u(x)|^{q-1} \le \max\{\lambda_+,\lambda_-\} C_1^{q-1} |x|^{(q-1)(q+1)} \\
& \le \max\left\{ \max\{\lambda_+,\lambda_-\} C_1^{q-1}, 2^{(q-1)(q+1)-\delta_2}\right\} |x|^{(q-1)(q+1) - \delta_2}  \qquad \text{in $B_R$},
\end{align*}
with $(q+1)(q-1) -\delta_2 = \beta_2-2 \not \in \N$. As a consequence, we can apply again \cite[Lemma 1.1]{CafFri85}: letting
\[
\alpha_2:= \lfloor (q-1)(q+1) - \delta_2 \rfloor+2,
\]
there exist a harmonic polynomial $P_2$ of degree $\alpha_2$, a function $\Gamma_2$, and a constant $C_2>0$ depending on the data such that
\[
u(x) = P_2(x) + \Gamma_2(x) \qquad \text{in $B_R$},
\]
and
\[
|\Gamma_2(x) | \le C_2  |x|^{\beta_2}, \quad |\nabla \Gamma_2(x)| \le C_2 |x|^{\beta_2-1} \quad \text{in $B_R$}.
\] 
If $P_2 \not \equiv 0$, then the proof is complete. If instead $P_2 \equiv 0$, then $u=\Gamma_2$ and we can iterate the previous argument in the following way: for any $k \ge 3$ such that $P_{k-1} \equiv 0$, we let
\begin{equation}\label{def beta_k}
\alpha_k:= \lfloor (q-1)\beta_{k-1} - \delta_k \rfloor+2;
\end{equation}
then there exist a harmonic polynomial $P_k$ of degree $\alpha_k$, a function $\Gamma_k$, and a constant $C_k>0$ depending on the data such that
\[
u(x) = P_k(x) + \Gamma_k(x) \qquad \text{in $B_R$},
\]
and
\[
|\Gamma_k(x) | \le C_k  |x|^{\beta_k}, \quad |\Gamma_k(x) | \le C_k  |x|^{\beta_k-1}\quad \text{in $B_R$}.
\] 
Since $\beta_k \nearrow \frac2{2-q}$, we deduce that either there exists a minimum integer $m \in \{1,\dots,\beta_q\}$ (with $\beta_q$ defined by \eqref{def beta q}) such that $P_m \not \equiv 0$, or else for any fixed $\eps$ there exists $k \in \N$ with $\frac2{2-q} -\eps \le \beta_k< \frac2{2-q}$, and for such index $k$ we have
\[
|u(x)| = |\Gamma_k(x)|  \le C_k |x|^{\frac2{2-q}-\eps}, \quad |\nabla u(x) | = |\nabla \Gamma_k(x)| \le C_k |x|^{\frac2{2-q}-\eps-1} . \qedhere
\]
\end{proof}

\subsection{Almgren and Weiss type functionals} 

Let $\Omega \subset \R^N$ be a domain, $\mu_+>0$, $\mu_- \ge 0$, and let $v \in H^1_{\loc}(\Omega)$ be a weak solution to 
\begin{equation}\label{eq mu}
-\Delta v = \mu_+ (v^+)^{q-1} -\mu_- (v^-)^{q-1} \quad \text{in $\Omega$}, 
\end{equation}
with $1 \le q<2$. 
Let 
\[
F_{\mu_+, \mu_-}(v):= \mu_+ (v^+)^{q} +\mu_- (v^-)^{q}.
\]
For $x_0 \in \Omega$, $0<r<\dist(x_0,\pa \Omega)$, and for $\gamma,t>0$, we consider the functionals:
\begin{align*}
H(v,x_0, r) &:= \int_{S_r(x_0)} v^2 \, d\sigma,  \\
D_t(v,x_0,r) &:= \int_{B_r(x_0)}\left(|\nabla v|^2 - \frac{t}q F_{\mu_+, \mu_-}(v)\right)\,dx, \\
N_t(v,x_0,r) &:= \frac{r D_t(v,x_0,r)}{H(v,x_0,r)}, \quad \text{defined provided that $H(v,x_0,r) \neq 0$},\\
W_{\gamma,t}(v,x_0,r)&:= \frac{1}{r^{N-2+2\gamma}} D_t(v,x_0,r) - \frac{\gamma}{r^{N-1+2\gamma}}H(v,x_0,r).
\end{align*}
The definitions of $F$, $D_t$, $N_t$ and $W_{\gamma,t}$ involves also the quantities $\mu_+$ and $\mu_-$. Since these will always be uniquely determined by $v$ via \eqref{eq mu}, we do not stress this dependence. The functions $N_t$ and $W_{\gamma,t}$ are an \emph{Almgren-type frequency} and a \emph{Weiss-type functional}, respectively. 

In what follows, we fix $v$ solution to \eqref{eq mu}, and we recall or derive several relations involving the above quantities and their derivatives, which will be frequently used throughout the paper. As usual, $\nu$ denotes the outer unit normal vector on $S_r(x_0)$, and $\pa_\nu u = u_\nu$ denotes the outer normal derivative. We shall often omit the volume and area elements $dx$ and $d\sigma$. 

By definition and using the divergence theorem\footnote{In case $q=1$ the classical divergence theorem is not applicable, but we can appeal to more general versions such as \cite[Proposition 2.7]{HoMiTa}.}, we have
\begin{equation}\label{D bou}
D_t(v,x_0,r) = \int_{S_r(x_0)} v \, \pa_\nu v - \frac{t-q}{q} \int_{B_r(x_0)} F_{\mu_+, \mu_-}(v),
\end{equation}
\begin{equation}\label{W e N}
W_{\gamma,t}(v,x_0,r) = \frac{H(v,x_0,r)}{r^{N-1+2\gamma}} \Big( N_t(v,x_0,r)-\gamma \Big)
\end{equation}
whenever the right hand side makes sense. 

Denoting with $\prime$ the derivative with respect to $r$, we have
\begin{equation}\label{der H}
\begin{split}
H'(v,x_0,r) &=  \frac{N-1}{r}H(v,x_0,r) + 2 \int_{S_r(x_0)} v \, \pa_\nu v \\
& =  \frac{N-1}{r}H(v,x_0,r) + 2 D_q(v,x_0,r),
\end{split}
\end{equation}
and hence
\begin{equation}\label{H e W}
\left( \frac{H(v,x_0,r)}{r^{N-1+2\gamma}} \right)' = \frac2r W_{\gamma,q}(v,x_0,r).
\end{equation}
Moreover, proceeding exactly as in \cite[Proposition 2.1]{SoWe} (which concerns the case $\mu_+=\mu_-=1$), it is not difficult to check that
\begin{align*}
\int_{S_r(x_0)} |\nabla u|^2 & = \frac{N-2}{r} \int_{B_r(x_0)} |\nabla v|^2 - \frac{ 2N}{q r} \int_{B_r(x_0)} F_{\mu_+, \mu_-}(v) \\
& \qquad + \int_{S_r(x_0)}\left(2 v_\nu^2 + \frac{2}{q} F_{\mu_+, \mu_-}(v)\right),
\end{align*}
whence
\begin{equation}\label{der D}
\begin{split}
D_t'(v,x_0,r) & = \frac{N-2}{r} D_t(v,x_0,r) - \frac{2N-(N-2)t }{q r} \int_{B_r(x_0)} F_{\mu_+, \mu_-}(v) \\
& + \int_{S_r(x_0)}\left(2 v_\nu^2 + \left(\frac{2-t}{q}\right) F_{\mu_+, \mu_-}(v)\right).
\end{split}
\end{equation}
We derive also the expressions of the derivative $W_{\gamma,t}$.

\begin{proposition}\label{prop: der W}
There holds 
\begin{equation}\label{der W}
\begin{split}
W_{\gamma,t}'(v,x_0,r) & =\frac{2}{r^{N-2+2\gamma}}  \int_{S_r(x_0)} \left( v_\nu-\frac{\gamma}{r} v\right)^2 +  \frac{2-t}{q r^{N-2+2\gamma}} \int_{S_r(x_0)}F_{\mu_+, \mu_-}(v)  \\
&  + \frac{(N-2) t - 2N + 2 \gamma(t-q) }{q r^{N-1+2\gamma}} \int_{B_r(x_0)} F_{\mu_+, \mu_-}(v).
\end{split}
\end{equation}
\end{proposition}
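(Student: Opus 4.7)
The plan is to derive the formula by direct differentiation of the definition
\[
W_{\gamma,t}(v,x_0,r) = \frac{1}{r^{N-2+2\gamma}} D_t(v,x_0,r) - \frac{\gamma}{r^{N-1+2\gamma}} H(v,x_0,r),
\]
and then substitute the formulas for $H'$ and $D_t'$ already established in \eqref{der H} and \eqref{der D}. Applying the Leibniz rule, we get
\[
W_{\gamma,t}'(v,x_0,r) = -\frac{N-2+2\gamma}{r^{N-1+2\gamma}} D_t + \frac{1}{r^{N-2+2\gamma}} D_t' + \frac{\gamma(N-1+2\gamma)}{r^{N+2\gamma}} H - \frac{\gamma}{r^{N-1+2\gamma}} H'.
\]
Plugging in \eqref{der H} produces the cancellation $\gamma(N-1+2\gamma) - \gamma(N-1) = 2\gamma^2$ in the $H$-coefficient, while a $-\frac{2\gamma}{r^{N-1+2\gamma}}D_q$ term appears from the $2D_q$ in $H'$. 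Next, substituting \eqref{der D} into the $\frac{1}{r^{N-2+2\gamma}} D_t'$ term, the $\frac{N-2}{r^{N-1+2\gamma}}D_t$ contribution merges with $-\frac{N-2+2\gamma}{r^{N-1+2\gamma}}D_t$, leaving $-\frac{2\gamma}{r^{N-1+2\gamma}}D_t$.

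At this stage one has two terms of the form $-\frac{2\gamma}{r^{N-1+2\gamma}}D_t$ and $-\frac{2\gamma}{r^{N-1+2\gamma}}D_q$, plus the boundary terms $\frac{2}{r^{N-2+2\gamma}}\int_{S_r(x_0)} v_\nu^2$, $\frac{2-t}{q r^{N-2+2\gamma}}\int_{S_r(x_0)} F_{\mu_+,\mu_-}(v)$, the volume term $-\frac{2N-(N-2)t}{qr^{N-1+2\gamma}}\int_{B_r(x_0)} F_{\mu_+,\mu_-}(v)$, and $\frac{2\gamma^2}{r^{N+2\gamma}}H$. The key algebraic step is to apply the divergence identity \eqref{D bou}, which gives
\[
D_t = \int_{S_r(x_0)} v\, v_\nu - \frac{t-q}{q}\int_{B_r(x_0)} F_{\mu_+,\mu_-}(v), \qquad D_q = \int_{S_r(x_0)} v\, v_\nu,
\]
so that the two $D_t$, $D_q$ contributions together yield $-\frac{4\gamma}{r^{N-1+2\gamma}}\int_{S_r(x_0)} v v_\nu + \frac{2\gamma(t-q)}{qr^{N-1+2\gamma}}\int_{B_r(x_0)} F_{\mu_+,\mu_-}(v)$.

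The final step is to recognize the complete square: the three boundary terms $\frac{2}{r^{N-2+2\gamma}}\int_{S_r(x_0)} v_\nu^2$, $-\frac{4\gamma}{r^{N-1+2\gamma}}\int_{S_r(x_0)} v v_\nu$, $\frac{2\gamma^2}{r^{N+2\gamma}}\int_{S_r(x_0)} v^2$ assemble into $\frac{2}{r^{N-2+2\gamma}}\int_{S_r(x_0)}(v_\nu - \gamma v/r)^2$, while the two volume terms involving $\int_{B_r(x_0)} F_{\mu_+,\mu_-}(v)$ combine into $\frac{(N-2)t - 2N + 2\gamma(t-q)}{qr^{N-1+2\gamma}}\int_{B_r(x_0)} F_{\mu_+,\mu_-}(v)$. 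This matches \eqref{der W} exactly.

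Since all calculations are purely algebraic manipulations of the already-established formulas \eqref{D bou}, \eqref{der H}, \eqref{der D}, there is no real obstacle here; the only mild care needed is in the case $q=1$, where one cannot directly invoke the classical divergence theorem to obtain \eqref{D bou}, but this is already addressed in the authors' footnote referring to \cite{HoMiTa}, and the regularity $v \in C^{1,\alpha}$ together with the $L^\infty$ bound on $\Delta v$ justifies differentiation under the integral in $r$ for almost every $r$ (indeed, all $r$) by a standard coarea argument.
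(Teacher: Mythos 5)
Your computation is correct and follows essentially the same route as the paper's proof: differentiate $W_{\gamma,t}$ by the product rule, substitute \eqref{der H} and \eqref{der D}, rewrite the remaining $D_t$ (and in your version $D_q$) via the divergence identity \eqref{D bou}, and collect the boundary terms into the perfect square. The only cosmetic difference is that you channel the $\int_{S_r}v\,v_\nu$ contribution from $H'$ through the intermediate notation $D_q$ before unpackaging it with \eqref{D bou}, whereas the paper keeps it as $\int_{S_r}v\,v_\nu$ directly; the algebra and the outcome are identical.
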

\begin{proof}
In order to simplify the notation, we consider $x_0=0$ and omit the dependence of the functionals with respect to $v$ and $x_0$. By \eqref{D bou}-\eqref{der D}, we have
\[
\begin{split}
W_{\gamma,t}'(r) & = -\frac{2\gamma}{r^{N-1+2\gamma}} D_t(r) - \frac{2N-(N-2)t}{q r^{N-1+2\gamma}}  \int_{B_r} F_{\mu_+, \mu_-}(v)  \\
& + \frac{1}{r^{N-2+2\gamma}} \int_{S_r}\left(2  v_\nu^2 + \left(\frac{2-t}{q}\right) F_{\mu_+, \mu_-}(v)\right) + \frac{2\gamma^2}{r^{N+2\gamma}} H(r)  - \frac{2\gamma}{r^{N-1+2\gamma}} \int_{S_r} v \,\pa_\nu v\\
& = -\frac{4\gamma}{r^{N-1+2\gamma}} \int_{S_r} v \,\pa_\nu v + \frac{(N-2)t-2N+2\gamma(t-q)}{q r^{N-1+2\gamma}}  \int_{B_r} F_{\mu_+, \mu_-}(v) \\
& +  \frac{1}{r^{N-2+2\gamma}} \int_{S_r}\left(2  v_\nu^2 + \left(\frac{2-t}{q}\right)F_{\mu_+, \mu_-}(v)\right) + \frac{2\gamma^2}{r^{N+2\gamma}} \int_{S_r} v^2,
\end{split}
\]
whence the thesis follows.
\end{proof}

We will be particularly interested in the cases $t=q$ and $t=2$. In the latter one, we can easily derive the monotonicity of the Weiss-type functional for a whole range of parameters $\gamma$.

\begin{corollary}\label{cor: W 2 mon}
Let $x_0 \in Z(v)$. If $\gamma \ge 2/(2-q)$, then $W_{\gamma,2}$ is monotone non-decreasing with respect to $r$. Moreover, $W_{\gamma,2}(v,x_0,\cdot)=const.$ for $r_1<r<r_2$ implies that $v$ is $\gamma$-homogeneous with respect to $x_0$ in the annulus $B_{r_2}(x_0) \setminus B_{r_1}(x_0)$.
\end{corollary}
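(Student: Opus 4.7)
The plan is to read off the derivative of $W_{\gamma,2}$ directly from identity \eqref{der W} of Proposition \ref{prop: der W} by substituting $t=2$. Two simplifications occur: the boundary integral with coefficient $(2-t)/q$ drops out entirely, and the bulk coefficient reduces to
\[
\frac{(N-2)\cdot 2 - 2N + 2\gamma(2-q)}{q} \;=\; \frac{2\bigl[\gamma(2-q)-2\bigr]}{q},
\]
yielding
\[
W_{\gamma,2}'(v,x_0,r) \;=\; \frac{2}{r^{N-2+2\gamma}} \int_{S_r(x_0)} \left(v_\nu - \frac{\gamma}{r}v\right)^{\!2} d\sigma \;+\; \frac{2\bigl[\gamma(2-q)-2\bigr]}{q\,r^{N-1+2\gamma}} \int_{B_r(x_0)} F_{\mu_+,\mu_-}(v)\,dx.
\]

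The first summand is manifestly non-negative, being a square integral. In the second, $F_{\mu_+,\mu_-}(v) = \mu_+ (v^+)^q + \mu_-(v^-)^q \ge 0$ pointwise, while its prefactor is non-negative precisely when $\gamma(2-q)\ge 2$, i.e., $\gamma \ge 2/(2-q)$; this is the crucial numerology, and it is exactly the hypothesis. Hence, under the assumption $\gamma \ge 2/(2-q)$ both contributions are non-negative and $W_{\gamma,2}(v,x_0,\cdot)$ is monotone non-decreasing.

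For the rigidity assertion, suppose $W_{\gamma,2}(v,x_0,r)$ is constant on $(r_1,r_2)$. Since both summands above share the same sign, each must vanish for a.e.\ $r \in (r_1,r_2)$. Vanishing of the boundary piece gives $v_\nu(x) = \frac{\gamma}{|x-x_0|}\, v(x)$ a.e.\ in the annulus $B_{r_2}(x_0)\setminus \overline{B_{r_1}(x_0)}$. Passing to polar coordinates and setting $\psi(r,\theta):=v(x_0+r\theta)$ for $\theta \in \S^{N-1}$, this reads $\pa_r \psi = (\gamma/r)\psi$, whose solution along each radial ray is $\psi(r,\theta) = r^\gamma \phi(\theta)$ for some $\phi\colon \S^{N-1}\to\R$. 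Thus $v$ is $\gamma$-homogeneous with respect to $x_0$ in the annulus, as claimed.

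There is no serious obstacle here: all the heavy lifting is done by Proposition \ref{prop: der W}, and the corollary reduces to a sign analysis. The only point requiring mild care is the case $q=1$, where \eqref{der W} is not a pointwise classical derivative; there one reads it in the integrated form $W_{\gamma,2}(v,x_0,r_2) - W_{\gamma,2}(v,x_0,r_1)$ equals the integral from $r_1$ to $r_2$ of the right-hand side, which still delivers both monotonicity and the rigidity/homogeneity conclusion.
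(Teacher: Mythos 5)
Your proof is correct and follows essentially the same route as the paper: substitute $t=2$ into Proposition \ref{prop: der W}, observe that the $(2-t)/q$ boundary term vanishes, check the sign of the remaining bulk coefficient $2[\gamma(2-q)-2]/q$, and for the rigidity deduce the Euler equation from the vanishing of the boundary square term. The paper's version is more terse on the rigidity step (it stops at $\nabla v\cdot(x-x_0) - \gamma v = 0$ a.e.\ in the annulus, taking the implication of homogeneity as self-evident), while you spell out the passage to polar coordinates and the resolution of the radial ODE; your closing remark about reading \eqref{der W} in integrated form for $q=1$ is a reasonable caveat consistent with the paper's earlier footnote on the divergence theorem.
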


\begin{proof}
The monotonicity follows straightforwardly by Proposition \ref{prop: der W}. If $t=2$, then
\[
W_{\gamma,2}'(r) \ge \frac{2(N-2) - 2N + 2 \gamma(2-q) }{q r^{N-1+2\gamma}} \int_{B_r(x_0)} F_{\mu_+, \mu_-}(v),
\]
and $2(N-2) - 2N + 2 \gamma(2-q) \ge 0$ if and only if $\gamma \ge 2/(2-q)$. 

Now, if $W_{\gamma,2}(v,x_0,\cdot)=const.$ for $r_1<r<r_2$, then $W_{\gamma,2}'(v,x_0,r) = 0$ for any such $r$, and in particular
\[
\nabla v(x) \cdot (x-x_0) - \gamma v(x) = 0 
\]
for almost every $x \in B_{r_2}(x_0) \setminus B_{r_1}(x_0)$.
\end{proof}

For the case $t=q$, we do not have an analogue result, but in any case it is convenient to explicitly observe that
\begin{equation}\label{der W q}
\begin{split}
W_{\gamma,q}'(v,x_0,r) & = \frac{2}{r^{N-2+2\gamma}}\int_{S_r(x_0)} \left( \pa_\nu v-\frac{\gamma}{r} v\right)^2   +  \frac{2-q}{q r^{N-2+2\gamma}} \int_{S_r(x_0)} F_{\mu_+, \mu_-}(v) \\
& - \frac{2N-(N-2)q}{q r^{N-1+2\gamma}} \int_{B_r(x_0)} F_{\mu_+, \mu_-}(v).
\end{split}
\end{equation}
The negative part of $W_{\gamma,q}'$ is
\begin{equation}\label{def phi}
\Phi_\gamma(v,x_0,r):= \frac{2N-(N-2)q}{q r^{N-1+2\gamma}} \int_{B_r(x_0)} F_{\mu_+, \mu_-}(v) \ge 0.
\end{equation}

\section{Strong unique continuation}\label{sec: strong}

In this section we prove Theorem \ref{thm: strong intro}.  The solution $u$ and the point $x_0 \in Z(u)$ will always be fixed, and hence we shall often omit the dependence of the functionals $H$, $D_t$, $W_{\gamma,t}$ and $N_t$ with respect to $u$ and $x_0$. We also set $R:= \dist(x_0,\pa B_1)$.

As an intermediate statement we show that, in the present setting, the classical unique continuation principle holds. 

\begin{proposition}\label{prop: cl ucp}
If $u \equiv 0$ in an open subset of $B_1$, then $u \equiv 0$ in $B_1$.
\end{proposition}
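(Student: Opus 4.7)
The plan is to argue by contradiction via the connectedness of $B_1$, together with the Weiss-type monotonicity of Corollary~\ref{cor: W 2 mon}. Let
\[
A := \{y \in B_1 : u \equiv 0 \text{ in a neighborhood of } y\},
\]
which is open by definition and nonempty by hypothesis. It suffices to show $A$ is also relatively closed in $B_1$: indeed, connectedness then forces $A = B_1$. Suppose for contradiction the existence of $y_0 \in (\overline{A} \setminus A) \cap B_1$ and a sequence $x_n \in A$ with $x_n \to y_0$. Let $r_n := \sup\{r > 0 : B_r(x_n) \subset A\}$. If $\liminf_n r_n > 0$, a subsequential limit of the balls $B_{r_n}(x_n)$ contains a ball around $y_0$ on which $u \equiv 0$ (by continuity), contradicting $y_0 \notin A$; so we may assume $r_n \to 0^+$.

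The next step applies Corollary~\ref{cor: W 2 mon} with $\gamma_* := 2/(2-q)$, noting that since $u \equiv 0$ on $B_{r_n}(x_n)$ we have $W_{\gamma_*,2}(u,x_n,r) = 0$ for every $0 < r \le r_n$; the monotonicity then yields $W_{\gamma_*,2}(u,x_n,r) \ge 0$ for every admissible $r$. The continuity of the Weiss functional in its center (a consequence of the local $C^{1,\alpha}$ regularity of $u$ and dominated convergence) lets us pass to the limit in $n$ and obtain $W_{\gamma_*,2}(u,y_0,r) \ge 0$ for all small $r > 0$.

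To get the matching upper bound, and hence $W_{\gamma_*,2}(u,y_0,\cdot) \equiv 0$ near $0$, the idea is a blow-up at the natural scale $\rho_n := |x_n-y_0|+r_n \to 0$: rescaling $u$ at $y_0$ by $\rho_n$ and normalizing by the $H^1$-based quantity $\|u\|_{y_0,\rho_n}$ from \eqref{def norm}, one constructs a sequence $v_n$ of rescaled solutions that vanish on a ball of radius bounded away from $0$, namely the rescaled image of $B_{r_n}(x_n)$. A compactness argument based on Weiss monotonicity delivers a limit $v_\infty$ which is a $\gamma_*$-homogeneous global solution of the PDE vanishing on an open ball not containing the origin; transferring this back to $u$ via the equality clause of Corollary~\ref{cor: W 2 mon} (constant Weiss implies homogeneity) forces $u$ itself to be $\gamma_*$-homogeneous with respect to $y_0$ in a punctured neighborhood. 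The dilation invariance of the zero set then propagates the vanishing $B_{r_n}(x_n) \subset \{u=0\}$ to the full cones emanating from $y_0$ through each $B_{r_n}(x_n)$; as $n$ varies these cones fill an entire neighborhood of $y_0$, so $u \equiv 0$ near $y_0$, contradicting $y_0 \notin A$.

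The main obstacle is the blow-up step: passing from ``$u$ vanishes on $B_{r_n}(x_n)$ with $r_n, |x_n-y_0| \to 0$'' to a \emph{nontrivial} limit $v_\infty$ needs an a priori lower bound on the normalizing factor $\|u\|_{y_0,\rho_n}$, a form of nondegeneracy that is not yet available at this point of the paper (it is the content of Theorem~\ref{thm: non-deg V}, proved only later). This is circumvented by performing the blow-up centered at $x_n$ rather than $y_0$, at scales chosen slightly larger than $r_n$ so that the vanishing ball has non-negligible relative volume; the behavior of $u$ immediately outside $B_{r_n}(x_n)$ supplies the necessary nontriviality, since by definition of $r_n$ there are points of $\{u \neq 0\}$ arbitrarily close to $\partial B_{r_n}(x_n)$.
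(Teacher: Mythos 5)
Your proposal follows a route that is genuinely different from the paper's (the paper simply cites the main results of \cite{SoWe} for $\lambda_+,\lambda_->0$, and for $\lambda_-=0$ runs a short, elementary sign argument plus unique continuation for harmonic functions), but the new route has gaps that are not circumventable with the tools available at this point of the paper.

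The decisive problem is the step ``transferring this back to $u$ via the equality clause of Corollary~\ref{cor: W 2 mon} \ldots forces $u$ itself to be $\gamma_*$-homogeneous with respect to $y_0$.'' Knowing that a blow-up limit $v_\infty$ is $\gamma_*$-homogeneous says nothing about the homogeneity of $u$ near $y_0$. To invoke the equality clause you would need $W_{\gamma_*,2}(u,y_0,\cdot)$ to be \emph{constant} on some interval $(0,r_0)$, which requires the upper bound $W_{\gamma_*,2}(u,y_0,r)\le 0$ in addition to the lower bound $\ge 0$ you derive; the blow-up produces no such upper bound, because the $F$-term and the gradient term in $W_{\gamma_*,2}$ scale differently under the dilation (the rescaled function $v_n$ solves the equation with a coefficient $\alpha_{\rho_n}=(\rho_n^{\gamma_*}/\|u\|_{y_0,\rho_n})^{2/\gamma_*}$, so $W_{\gamma_*,2}(v_n,0,t)$ is \emph{not} a rescaled copy of $W_{\gamma_*,2}(u,y_0,\rho_n t)$). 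In other words, a homogeneous blow-up is the generic outcome of Weiss monotonicity and cannot be converted into homogeneity of $u$ without additional input.

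Beyond that, both the compactness of the blow-up sequence and the nontriviality of its limit depend on an a priori lower bound on the normalizing factor. You identify this, but the proposed circumvention does not close the gap: ``there are points of $\{u\neq0\}$ arbitrarily close to $\partial B_{r_n}(x_n)$'' yields no quantitative control on $\|u\|_{x_n,R_n}$ compared to the values of $u$ in that thin annulus, so the limit may still vanish identically. In the paper, this kind of control rests on the non-degeneracy (Theorem~\ref{thm: non-deg V}) and on Lemma~\ref{lem: v_r bdd}, and those rely on Proposition~\ref{prop: cl ucp} itself (e.g.\ via Lemma~\ref{lem: sign D}); your argument is therefore circular. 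Finally, the scope is off: Theorem~\ref{thm: strong intro} is stated for $\lambda_-\ge0$, so Proposition~\ref{prop: cl ucp} has to cover $\lambda_-=0$ as well, a case which your $F$-term sign arguments do not clearly survive and which the paper treats separately with a much more elementary device (showing $\{u>0\}$ meets every neighborhood of $x_*$, then invoking the analysis of \cite{SoWe} through the monotone quantity $\int_{B_r}(u^+)^q$).
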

\begin{proof}
When $q \in [1,2)$ with $\lambda_+, \lambda_->0$, this directly follows from the main results in \cite{SoWe} (see also \cite{Ru} for an alternative proof). It remains to discuss the case $\lambda_-=0$. Let then $\lambda_-=0$, and let us consider the open set
\[
U:= \left\{ x \in B_1: \ \text{$u \equiv 0$ in a neighborhood of $x$} \right\}.
\]
Assuming that $U \neq \emptyset$, we show that necessarily $u \equiv 0$ in $B_1$. Since $B_1$ is connected and $U$ is open and non-empty, this follows once we have shown that $\pa U \cap B_1 = \emptyset$. Thus, suppose by contradiction that there exists $x_* \in \pa U \cap B_1$. By continuity, $u(x_*)=0$, and we claim that 
\begin{equation}\label{23 01 1 1}
\text{for every $r>0$ small, $\{u>0\} \cap B_r(x_*) \neq \emptyset$}.
\end{equation} 
Indeed, suppose that there exists $\bar r>0$ such that $u  \le 0$ in $B_{\bar r}(x_*)$; then by \eqref{eq} the function $u$ is harmonic in $B_{\bar r}(x_*)$.
Moreover, since $x_* \in \pa U$, there exists $x \in U \cap B_{\bar r}(x_*)$, and in particular $u \equiv 0$ in $B_{r_x}(x) \cap B_{\bar r}(x^*)$ for some $r_x>0$. As a consequence, the unique continuation principle for harmonic functions yields $u \equiv 0$ in $B_{\bar r}(x_*)$, which ultimately lead to $x_* \in U$, in contradiction with the fact that $U$ is open.

Having proved claim \eqref{23 01 1 1}, we can take $x_1 \in U$ with $|x_1-x_*| < \dist(x_1,\pa B_1)$. We have that \eqref{der H} and \eqref{der D} hold, and furthermore the function
\[
d(r):= \frac{\lambda_+}{q}\int_{B_r(x_1)} (u^+)^q
\]
is non-negative, monotone non-decreasing, and not identically $0$ for $r \in (0,\dist(x_1,\pa B_1))$. These facts allow to proceed exactly as in \cite[Proof of Eq. (2.1)]{SoWe}, obtaining a contradiction. 
 \end{proof}

Now we turn to the proof of Theorem \ref{thm: strong intro}. If point ($i$) of Proposition \ref{thm: blow-up pre} holds, then the result is trivial, and hence we can assume that point ($ii$) holds. We recall that the functional $W_{\gamma,2}$ is monotone non-decreasing in $r$ for $\gamma \ge 2/(2-q)$. Thus, there exists the limit $W_{\gamma,2}(u,x_0,0^+) := \lim_{r \to 0^+} W_{\gamma,2}(u,x_0,r)$. 

\begin{lemma}\label{lem: ex gamma tilde}
There exists $\gamma \ge 2/(2-q)$ sufficiently large that $W_{\gamma,2}(u,x_0,0^+)<0$. Moreover, if $W_{\gamma_1,2}(u,x_0,0^+) < 0 $, then $W_{\gamma_2,2}(u,x_0,0^+) = -\infty$ for every $\gamma_2 > \gamma_1$.
\end{lemma}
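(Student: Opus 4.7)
The proof splits cleanly into two parts once we secure a radius on which $u$ does not vanish on the sphere. As we are ultimately arguing by contradiction inside the proof of Theorem \ref{thm: strong intro}, we may suppose $u \not\equiv 0$. By Proposition \ref{prop: cl ucp}, $u$ cannot vanish on an entire neighborhood of $x_0$, and therefore $H(u,x_0,R)=\int_{S_R(x_0)}u^{2}\,d\sigma$ cannot be zero for every small $R$. I fix once and for all a radius $R \in (0,1)$ with $R<\dist(x_0,\partial B_1)$ and $H(u,x_0,R)>0$.

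For the first assertion, given any $\gamma\ge 2/(2-q)$, Corollary \ref{cor: W 2 mon} yields the monotonicity
\[
W_{\gamma,2}(u,x_0,0^+)\le W_{\gamma,2}(u,x_0,R)=R^{-2\gamma}\left[\frac{D_2(u,x_0,R)}{R^{N-2}}-\gamma\,\frac{H(u,x_0,R)}{R^{N-1}}\right].
\]
The coefficient of $\gamma$ inside the bracket is strictly negative, so the bracket tends to $-\infty$ as $\gamma\to\infty$; since $R<1$, the prefactor $R^{-2\gamma}$ tends to $+\infty$, so the whole right-hand side diverges to $-\infty$. Hence $W_{\gamma,2}(u,x_0,0^+)<0$ for every sufficiently large $\gamma$.

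For the second assertion, a straightforward manipulation of the definition of $W_{\gamma,2}$ gives the identity
\[
W_{\gamma_2,2}(u,x_0,r)=\frac{W_{\gamma_1,2}(u,x_0,r)}{r^{2(\gamma_2-\gamma_1)}}-(\gamma_2-\gamma_1)\,\frac{H(u,x_0,r)}{r^{N-1+2\gamma_2}},
\]
valid for every $r\in(0,R)$ and $\gamma_2>\gamma_1$. Setting $L_1:=W_{\gamma_1,2}(u,x_0,0^+)<0$, the monotonicity of $W_{\gamma_1,2}$ (for $\gamma_1\ge 2/(2-q)$, which is the only case that enters the proof of Theorem \ref{thm: strong intro}) combined with $W_{\gamma_1,2}(u,x_0,r)\to L_1$ implies $W_{\gamma_1,2}(u,x_0,r)\le L_1/2<0$ for all $r$ small enough. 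The first term of the identity then equals a negative quantity times $r^{-2(\gamma_2-\gamma_1)}\to+\infty$ and so diverges to $-\infty$, while the second term is non-positive since $H\ge 0$ and $\gamma_2>\gamma_1$. Therefore $W_{\gamma_2,2}(u,x_0,r)\to-\infty$ as $r\to 0^+$, which is the claim.

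The only delicate ingredient of the whole argument is the positivity of $H(u,x_0,R)$ for some small $R$, and this is the reason why the classical unique continuation principle (Proposition \ref{prop: cl ucp}) was proved first: without it, all of the formal estimates above would be vacuous.
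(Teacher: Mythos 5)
Your proof is correct and follows essentially the same route as the paper's: in the first part both arguments secure a radius with $H>0$ via Proposition \ref{prop: cl ucp}, then use the monotonicity of $W_{\gamma,2}$ (Corollary \ref{cor: W 2 mon}) to bound $W_{\gamma,2}(0^+)$ by $W_{\gamma,2}(R)$, which is negative once $\gamma$ is large; in the second part you write the same algebraic relation $W_{\gamma_2,2}(r)\le r^{-2(\gamma_2-\gamma_1)}W_{\gamma_1,2}(r)$ (you display the exact identity, the paper uses the $\pm\gamma_1$ trick) and let $r\to 0^+$. A small remark: the monotonicity of $W_{\gamma_1,2}$ is not actually needed to conclude $W_{\gamma_1,2}(r)\le L_1/2$ for small $r$ --- the mere existence of the negative limit $L_1$ already gives it --- so the parenthetical restriction to $\gamma_1\ge 2/(2-q)$ can be dropped from that sentence (it is only relevant insofar as it guarantees that the limit $W_{\gamma_1,2}(0^+)$ exists in the first place).
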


\begin{proof}
Let $0<r_1< R$ be such that $H(u,x_0,r_1) \neq 0$; the existence of $r_1$ is ensured by Proposition \ref{prop: cl ucp}, since $u \not \equiv 0$. Then, for $\gamma \ge 2/(2-q)$ sufficiently large, we have
\[
W_{\gamma,2}(r_1) = \frac{1}{r_1^{2 \gamma}} \left[\frac{1}{r_1^{N-2}} D_2(r_1) - \frac{\gamma}{r_1^{N-1}} H(r_1)\right] <0,
\]
and hence by monotonicity (Corollary \ref{cor: W 2 mon}) $W_{\gamma,2}(0^+) \le W_{\gamma,2}(r_1)<0$ for $\gamma$ large. 

Let now $W_{\gamma_1,2}(0^+) < 0 $, and let $\gamma_2>\gamma_1$. Then, for any $0<r< R$,
\[
W_{\gamma_2,2}(r) = \frac{1}{r^{2(\gamma_2- \gamma_1)}} \left[ \frac{1}{r^{N-2+2 \gamma_1}} D_2(r) - \frac{\gamma_2 \pm \gamma_1}{r^{N-1+2 \gamma_1}} H(r) \right] \le \frac{1}{r^{2(\gamma_2-\gamma_1)}}W_{\gamma_1,2}(r),
\]
whence the desired conclusion follows.
\end{proof}
As a corollary:

\begin{corollary}\label{cor: gamma tilde}
Suppose that alternative ($ii$) in Proposition \ref{thm: blow-up pre} holds for $u$ in $x_0$. Then there exists finite
\[
\bar \gamma:= \inf \left\{ \gamma > 0: W_{\gamma,2}(u,x_0,0^+) = -\infty \right\} \in \left[\frac{2}{2-q},+\infty\right).
\]
The limit $W_{\gamma,2}(u,x_0,0^+)$ exists for every $\gamma>0$, and moreover
\[
\begin{cases}   
W_{\gamma,2}(u,x_0,0^+) = 0 & \text{if $0<\gamma< \frac{2}{2-q}$} \\ 
W_{\gamma,2}(u,x_0,0^+) \ge 0 & \text{if $\frac{2}{2-q} \le \gamma < \bar \gamma$} \\
W_{\gamma,2}(u,x_0,0^+) = -\infty & \text{if $\gamma> \bar \gamma$}.
\end{cases}
\]
\end{corollary}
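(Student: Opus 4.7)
The plan is to treat the three regimes separately, using the decay estimates from alternative ($ii$) of Proposition \ref{thm: blow-up pre} in the subcritical range, the monotonicity of Corollary \ref{cor: W 2 mon} in the range where it applies, and Lemma \ref{lem: ex gamma tilde} to transfer negativity upward in $\gamma$.

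First, for $0 < \gamma < 2/(2-q)$ I would show directly that $W_{\gamma,2}(u,x_0,r) \to 0$. By alternative ($ii$), for every $\eps > 0$ one has $|u(x)| \le C_\eps |x-x_0|^{2/(2-q)-\eps}$ and $|\nabla u(x)| \le C_\eps |x-x_0|^{2/(2-q)-1-\eps}$. A direct integration over $B_r(x_0)$ and $S_r(x_0)$ then yields
\[
\frac{H(u,x_0,r)}{r^{N-1+2\gamma}} \le C r^{4/(2-q) - 2\gamma - 2\eps}, \qquad \frac{\int_{B_r(x_0)} |\nabla u|^2}{r^{N-2+2\gamma}} \le C r^{4/(2-q) - 2\gamma - 2\eps},
\]
\[
\frac{\int_{B_r(x_0)} F_{\lambda_+,\lambda_-}(u)}{r^{N-2+2\gamma}} \le C r^{4/(2-q) - 2\gamma - q\eps}.
\]
Since $\gamma < 2/(2-q)$, choosing $\eps$ sufficiently small makes all exponents strictly positive, so each of the three contributions defining $W_{\gamma,2}$ vanishes as $r \to 0^+$. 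This simultaneously establishes existence of the limit and the value $W_{\gamma,2}(u,x_0,0^+) = 0$ on this interval.

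Next, for $\gamma \ge 2/(2-q)$, Corollary \ref{cor: W 2 mon} guarantees that $r \mapsto W_{\gamma,2}(u,x_0,r)$ is monotone non-decreasing, so the limit $W_{\gamma,2}(u,x_0,0^+)$ always exists in $[-\infty,+\infty)$. Combining this with the first half of Lemma \ref{lem: ex gamma tilde} (which produces $\gamma$ large with $W_{\gamma,2}(u,x_0,0^+) < 0$, hence by the second half with $W_{\gamma',2}(u,x_0,0^+) = -\infty$ for all $\gamma' > \gamma$), the set defining $\bar\gamma$ is non-empty, so $\bar\gamma < +\infty$. The previous paragraph gives $W_{\gamma,2}(u,x_0,0^+) = 0 \not = -\infty$ for every $\gamma < 2/(2-q)$, and hence $\bar\gamma \ge 2/(2-q)$.

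Finally, to identify the sign on $[2/(2-q),\bar\gamma)$ and the value beyond $\bar\gamma$, I would argue by contradiction. If some $\gamma_1 \in [2/(2-q),\bar\gamma)$ satisfied $W_{\gamma_1,2}(u,x_0,0^+) < 0$, then the second part of Lemma \ref{lem: ex gamma tilde} would force $W_{\gamma_2,2}(u,x_0,0^+) = -\infty$ for every $\gamma_2 > \gamma_1$; picking $\gamma_2 \in (\gamma_1,\bar\gamma)$ would contradict the definition of $\bar\gamma$ as an infimum. Hence $W_{\gamma,2}(u,x_0,0^+) \ge 0$ for $\gamma \in [2/(2-q),\bar\gamma)$. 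For $\gamma > \bar\gamma$, the infimum definition supplies some $\gamma' \in (\bar\gamma,\gamma)$ with $W_{\gamma',2}(u,x_0,0^+) = -\infty < 0$, and another application of Lemma \ref{lem: ex gamma tilde} promotes this to $W_{\gamma,2}(u,x_0,0^+) = -\infty$. The main technical point is the bookkeeping of the exponents in the first step; once that is checked, the rest is a bootstrap using Lemma \ref{lem: ex gamma tilde}.
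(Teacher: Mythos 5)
Your proposal is correct and follows essentially the same route as the paper's proof: both use the decay estimates from alternative $(ii)$ of Proposition \ref{thm: blow-up pre} to show $W_{\gamma,2}(u,x_0,0^+)=0$ for $\gamma<2/(2-q)$, Lemma \ref{lem: ex gamma tilde} to produce a finite $\bar\gamma$ and to propagate negativity upward in $\gamma$, and the monotonicity of Corollary \ref{cor: W 2 mon} to get existence and nonnegativity of the limit on $[2/(2-q),\bar\gamma)$. The paper is terser (it bounds $|W_{\gamma,2}(r)|$ in one line by $C r^{2(2/(2-q)-\eps-\gamma)}$, using that the $F$-term decays at least as fast as the other two), while you make the exponent bookkeeping for each of the three terms explicit and rephrase the tail argument as a short contradiction; these are cosmetic differences, not a different method.
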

\begin{proof}
The existence of $\bar \gamma \in \R$ follows by Lemma \ref{lem: ex gamma tilde}. Using the fact that alternative ($ii$) of Proposition \ref{thm: blow-up pre} holds for $u$ in $x_0$, it is not difficult to check that $W_{\gamma,2}(0^+) = 0$ for every $\gamma \in (0,2/(2-q))$. Indeed, let us fix any such $\gamma$, and let $\eps>0$ be such that 
\[
\gamma<\frac{2}{2-q}-\eps < \frac{2}{2-q}.
\]
Applying Proposition \ref{thm: blow-up pre} with this $\eps$, we deduce that there exists $C>0$ depending on $\eps$ such that for any small $r>0$
\begin{align*}
| W_{\gamma,2}(r) | & \le  \frac{C}{r^{2 \gamma}}\left( \frac{1}{r^{N-2}} \int_{B_r(x_0)} |\nabla u|^2 + \frac{1}{r^{N-2}} \int_{B_r(x_0)} |u|^q + \frac1{r^{N-1}} \int_{S_r(x_0)} u^2\right) \\
& \le C  r^{2 \left( \frac2{2-q}- \eps -\gamma\right)} \to 0
\end{align*}
as $r \to 0^+$, due to the choice of $\eps$. This proves in particular that $\bar \gamma \ge 2/(2-q)$. In case $\bar \gamma>2/(2-q)$, the existence of a non-negative limit for any $\gamma \in [2/(2-q), \bar \gamma)$ follows directly by Corollary \ref{cor: W 2 mon} and Lemma \ref{lem: ex gamma tilde}.
\end{proof}

We can now prove the validity of the strong unique continuation principle.

\begin{proof}[Proof of Theorem \ref{thm: strong intro}]
We suppose by contradiction that $\cV(u,x_0) = +\infty$ and $u \not \equiv 0$:
\begin{equation}\label{inf ord}
\limsup_{r \to 0^+} \frac{H(u,x_0,r)}{r^{N-1+2\beta}}  = 0 \quad \text{for any $\beta >0$}.
\end{equation}
Clearly this is possible only if Proposition \ref{thm: blow-up pre}-($ii$) holds for $u$ in $x_0$. Let $\bar \gamma$ be defined by Corollary \ref{cor: gamma tilde}, and let us fix $\gamma > \bar \gamma$; we use \eqref{inf ord} with $\beta = 2(\gamma-1)/q$, and deduce that there exist $r_0>0$ small and $C>0$ such that
\begin{equation}\label{st H 9 11 10}
\frac{H(r)}{r^{N-1}} \le C r^{\frac4q(\gamma-1)}  \quad \text{for every $r \in (0,r_0)$}.
\end{equation}
Since $\gamma > \bar \gamma \ge 2/(2-q)$, we have $2(\gamma-1)/q > \gamma$, and hence we deduce that
\begin{equation}\label{st H 9 11 2}
H(r) \le C r^{N-1+2\gamma}  \quad \text{for every $r \in (0,r_0)$}.
\end{equation}
Moreover, always by \eqref{st H 9 11 10}
\begin{equation}\label{st q 9 11}
\begin{split}
\int_{B_r} F_{\lambda_+,\lambda_-}(u) & \le C \int_{B_r} |u|^q = C \int_0^r \left( \int_{S_t} |u|^q \right)dt \\
&\le C\int_0^r H(t)^\frac{q}2 t^{(N-1) \left(1-\frac{q}2\right)}\,dt  \\
&  \le C \int_0^r t^{2(\gamma-1)+N-1}\,dt \le C r^{N-2+2\gamma}
\end{split}
\end{equation}
for every $r \in (0,r_0)$. But then, by \eqref{st H 9 11 2} and \eqref{st q 9 11}, for any $r \in (0,r_0)$ 
\[
W_{\gamma,2}(r) \ge \frac{1}{r^{N-2+2\gamma}} \int_{B_r} |\nabla u|^2 - (1+\gamma) C \ge -(1+\gamma)C,
\]
and in particular $W_{\gamma,2}(0^+) >-\infty$, in contradiction with the fact that, being $\gamma>\bar \gamma$, we have $W_{\gamma,2}(0^+) =-\infty$. 
\end{proof}

\section{Classification of the vanishing order $\cO$, and weak non-degeneracy}\label{sec: strong and non-deg}

In this section we proof two weaker variants of Theorems \ref{thm: very strong V} and \ref{thm: non-deg V}, namely: 

\begin{theorem}\label{thm: very strong}
Let $1 \le q <2$, $\lambda_+,\lambda_->0$, $0 \not \equiv u \in H^1_{\loc}(B_1)$ solve \eqref{eq}, and let $x_0 \in Z(u)$. Then 
\[
\cO(u,x_0) \in \left\{1,\dots, \beta_q, \frac2{2-q} \right\}.
\]
\end{theorem}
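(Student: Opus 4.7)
The plan uses Proposition \ref{thm: blow-up pre} to split the analysis into two cases.

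\emph{Case (i) of Proposition \ref{thm: blow-up pre}.} The solution admits the expansion $u(x) = P_{x_0}(x-x_0) + \Gamma_{x_0}(x)$ with $P_{x_0}$ a nontrivial homogeneous harmonic polynomial of degree $d_{x_0} \in \{1,\dots,\beta_q\}$ and $|\Gamma_{x_0}| + |x-x_0|\,|\nabla\Gamma_{x_0}| \le C|x-x_0|^{d_{x_0}+\delta}$. By the homogeneity of $P_{x_0}$, $\int_{B_r(x_0)}|\nabla P_{x_0}(\cdot-x_0)|^2 = c_1\, r^{N-2+2d_{x_0}}$ and $\int_{S_r(x_0)}P_{x_0}(\cdot-x_0)^2 = c_2\, r^{N-1+2d_{x_0}}$ with $c_1,c_2>0$, while the remainder and cross-term contributions are of strictly higher order in $r$. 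Hence $\|u\|_{x_0,r}^2 = c\, r^{2d_{x_0}}(1+o(1))$ with $c>0$, and $\cO(u,x_0) = d_{x_0}$.

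\emph{Case (ii) of Proposition \ref{thm: blow-up pre}.} The pointwise estimates on $u$ and $\nabla u$ integrate immediately to $\|u\|_{x_0,r}^2 \le C_\eps\, r^{2(2/(2-q)-\eps)}$ for every $\eps>0$, yielding $\cO(u,x_0) \ge 2/(2-q)$.

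The substantial part is the reverse bound $\cO(u,x_0) \le 2/(2-q)$ in case (ii). The plan is to argue by contradiction: suppose $\cO(u,x_0) > 2/(2-q)$ and fix $\beta \in (2/(2-q),\cO(u,x_0))$, so that $\|u\|_{x_0,r}^2 = o(r^{2\beta})$. Then $H(u,x_0,r) \le r^{N-1}\|u\|_{x_0,r}^2 = o(r^{N-1+2\beta})$ and, by the H\"older inequality on spheres and integration, $\int_{B_r(x_0)}|u|^q = o(r^{N+\beta q})$. Inserting these refined bounds into the Weiss functional and using the monotonicity of Corollary \ref{cor: W 2 mon}, each of the three defining terms of $W_{\gamma,2}(u,x_0,r)$ is $o(1)$ as $r \to 0^+$ for every $\gamma \le 1+\beta q/2$. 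In particular $W_{\gamma,2}(u,x_0,0^+)=0$ on the interval $[2/(2-q),\, 1+\beta q/2]$, which has strictly positive length since $\beta > 2/(2-q)$. By Corollary \ref{cor: gamma tilde}, the threshold $\bar\gamma$ therefore satisfies $\bar\gamma \ge 1+\beta q/2$, strictly above $2/(2-q)$. Replaying the argument in the proof of Theorem \ref{thm: strong intro} with the improved decay hypothesis, one selects $\gamma > \bar\gamma$ slightly and exploits the sphere estimate $H(u,x_0,r) = o(r^{N-1+2\beta'})$ with $\beta' := 2(\gamma-1)/q < \cV(u,x_0)$ to produce the lower bound $W_{\gamma,2}(u,x_0,r) \ge -C$ exactly as in the strong unique continuation argument; this contradicts $W_{\gamma,2}(u,x_0,0^+) = -\infty$ for $\gamma > \bar\gamma$.

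The main obstacle is precisely this closure: the extra $H^1$-decay must translate into a sphere-decay exponent strictly larger than $2(\bar\gamma-1)/q$ at some admissible $\gamma > \bar\gamma$, thereby defeating the natural fixed point of the recurrence $\beta_{k+1} = (q-1)\beta_k + 2$ from Lemma \ref{lem: beta_k} at $2/(2-q)$. The resolution relies on the strict $o(\cdot)$ nature of the decay (providing slack over the sharp $O(\cdot)$-behavior that would just barely fail to yield a contradiction) together with the orthogonality of spherical harmonics in the Caffarelli--Friedman-type iteration of Proposition \ref{thm: blow-up pre}, which rules out low-degree polynomial components of $u$ in view of the sphere-average smallness.
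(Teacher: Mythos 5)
Your treatment of case (i) of Proposition \ref{thm: blow-up pre} and the lower bound $\cO(u,x_0) \ge 2/(2-q)$ in case (ii) are both correct and mirror the paper's own preliminary reductions. The substance of the theorem, however, is the upper bound $\cO(u,x_0) \le 2/(2-q)$ in case (ii), and there your argument has a genuine gap.

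You assume by contradiction that $\cO(u,x_0) > 2/(2-q)$, fix $\beta \in (2/(2-q),\cO(u,x_0))$, and from $\|u\|_{x_0,r}^2=o(r^{2\beta})$ deduce $H(u,x_0,r)=o(r^{N-1+2\beta})$ and $\int_{B_r(x_0)}|u|^q = o(r^{N+\beta q})$, hence $W_{\gamma,2}(u,x_0,0^+)=0$ for $\gamma \le 1+\beta q/2$, so $\bar\gamma \ge 1+\beta q/2$. Letting $\beta \nearrow \cO(u,x_0)$ (finite, since $\cV(u,x_0)=+\infty$ would force $u\equiv 0$ by Theorem \ref{thm: strong intro}) yields $\bar\gamma \ge 1 + \cO(u,x_0)\,q/2$, which is indeed strictly above $2/(2-q)$. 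So far this is correct. The problem is the closure you propose next: picking $\gamma > \bar\gamma$ and setting $\beta'=2(\gamma-1)/q$, you need the sphere decay $H(u,x_0,r)=o(r^{N-1+2\beta'})$, which in turn requires $\beta' < \cV(u,x_0)$. But $\gamma > \bar\gamma \ge 1+\cO(u,x_0)\,q/2$ forces $\beta'=2(\gamma-1)/q > \cO(u,x_0)$, and a priori one only has the trivial inequality $\cV(u,x_0)\ge\cO(u,x_0)$ (equality is established only a posteriori in Proposition \ref{prop: eq orders}, which relies on Theorem \ref{thm: very strong} itself). So you cannot conclude $\beta'<\cV(u,x_0)$, and the needed sphere estimate is unavailable. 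This mismatch $\beta'>\cO(u,x_0)$ is not an $\epsilon$-loss that ``strict $o(\cdot)$ slack'' can absorb: it is a positive, self-reinforcing gap built into the very iteration you invoke, since the map $\beta \mapsto 1+\beta q/2$ sends anything strictly above $2/(2-q)$ to a frequency whose associated sphere exponent $2(\gamma-1)/q$ again lies strictly above it. The appeal to ``orthogonality of spherical harmonics in the Caffarelli--Friedman iteration'' does not help either: in case (ii) that iteration has already exhausted all polynomial structure of $u$ near $x_0$, and there is no residual orthogonality left to exploit against the sphere averages.

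The paper closes this step by a fundamentally different mechanism, in Proposition \ref{gamma=2/(2-q)}. Assuming $\bar\gamma>2/(2-q)$, it first shows $D_2\ge 0$ and $H>0$ (Lemma \ref{lem: sign D}), introduces an auxiliary Weiss functional $W_{\gamma,\tilde t}$ with $\tilde t\in(q,2)$ calibrated so that monotonicity begins at $\tilde\gamma=(2/(2-q)+\bar\gamma)/2$ (Lemma \ref{lem: weiss t}), proves the two transition exponents coincide (Lemma \ref{lem: gamma=gamma}), obtains the frequency bound $\limsup_{r\to 0^+}N_{\tilde t}(u,x_0,r)\le\gamma$ for $\gamma>\bar\gamma$ (Lemma \ref{lem: N t bou}), and extracts a sequence $r_n\to 0$ along which $r_n^{-(N-2+2\bar\gamma)}\int_{B_{r_n}(x_0)}F_{\lambda_+,\lambda_-}(u)\to 0$ (Lemma \ref{lem: resto 11}). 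The contradiction then comes from a compactness/blow-up argument: the sphere-normalized blow-ups $v_n$ are bounded in $H^1(B_1)$ by the frequency bound, hence converge (up to a subsequence) to a nontrivial limit with $\int_{B_1}F_{\lambda_+,\lambda_-}(v)>0$, while the rescaling forces $\int_{B_1}F_{\lambda_+,\lambda_-}(v_n)\to 0$. This blow-up compactness step is what your proposal is missing, and a direct sphere-decay iteration cannot substitute for it.
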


\begin{theorem}\label{thm: non-deg}
Let $1 \le q <2$, $\lambda_+,\lambda_->0$, $0 \not \equiv u \in H^1_{\loc}(B_1)$ solve \eqref{eq}, and let $x_0 \in Z(u)$. Then 
\[
\liminf_{r \to 0^+} \frac{\|u\|_{x_0,r}^2}{r^{2 \cO(u,x_0)}}  >0.
\]
\end{theorem}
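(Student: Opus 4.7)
The plan is to split on the dichotomy of Proposition \ref{thm: blow-up pre}. In alternative $(i)$, $u(x)=P_{x_0}(x-x_0)+\Gamma_{x_0}(x)$ with $P_{x_0}$ a non-trivial harmonic polynomial of degree $d_{x_0}\in\{1,\dots,\beta_q\}$ and a quantitatively controlled remainder. Plugging this decomposition into $\|u\|_{x_0,r}^2$, the homogeneity of $P_{x_0}$ produces a leading term $c_{P}\,r^{2d_{x_0}}$ with $c_{P}>0$ (computable explicitly from $P_{x_0}$), while the cross terms and the contribution of $\Gamma_{x_0}$ are $O(r^{2d_{x_0}+\delta})$ by Cauchy--Schwarz and the pointwise bounds on $\Gamma_{x_0}$. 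Thus $\|u\|_{x_0,r}^2/r^{2d_{x_0}}\to c_{P}>0$; in particular $\cO(u,x_0)=d_{x_0}$ and the non-degeneracy holds in this case.

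In alternative $(ii)$ the decay estimate together with Theorem \ref{thm: very strong} forces $\cO(u,x_0)=d:=2/(2-q)$. I would argue by contradiction: suppose $\eps_n:=\|u\|_{x_0,r_n}^2/r_n^{2d}\to 0$ along some sequence $r_n\to 0^+$. The strategy is to exploit the monotonicity of $W_{d,2}(u,x_0,\cdot)$ guaranteed by Corollary \ref{cor: W 2 mon} (precisely because $d\ge 2/(2-q)$) to show that this functional is identically $0$ on some right-neighborhood of $0$. Then the equality case of the same corollary forces $u$ to be $d$-homogeneous with respect to $x_0$ on a ball $B_\rho(x_0)$, and a contradiction is obtained either with Proposition \ref{prop: cl ucp} (if the angular profile of $u$ is trivial) or directly with the assumption $\eps_n\to 0$ (otherwise, since then $\|u\|_{x_0,r}^2/r^{2d}$ is a strictly positive constant on the homogeneity ball).

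The core estimate is $W_{d,2}(u,x_0,r_n)\to 0$. The gradient and boundary pieces in $W_{d,2}$ are bounded, respectively, by $\eps_n$ and $d\,\eps_n$ directly from the definition of $\|u\|_{x_0,r_n}$. The term involving $F_{\lambda_+,\lambda_-}(u)$ is handled by the auxiliary estimate $\int_{B_r(x_0)}|u|^q\le C r^{N}\|u\|_{x_0,r}^q$, obtained combining H\"older's inequality with a standard Poincar\'e--trace bound $\int_{B_r(x_0)}u^2\le C r^{N}\|u\|_{x_0,r}^2$. After dividing by $r^{N-2+2d}$ the resulting exponent of $r$ is $2-d(2-q)$, which vanishes \emph{precisely} when $d=2/(2-q)$; this leaves a bound $C\eps_n^{q/2}\to 0$. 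It is this exact cancellation at the critical order that makes the whole strategy work: the same single choice $d=2/(2-q)$ both triggers the monotonicity of $W_{d,2}$ and forces the $F$-term to vanish in the limit under the contradiction hypothesis.

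Having shown $W_{d,2}(u,x_0,r_n)\to 0$, monotonicity yields $W_{d,2}(u,x_0,r)\le W_{d,2}(u,x_0,r_n)\to 0$ for all sufficiently small $r$, while passing to the limit in $W_{d,2}(u,x_0,r_n)\le W_{d,2}(u,x_0,r)$ for each fixed small $r$ yields the reverse $W_{d,2}(u,x_0,r)\ge 0$. Hence $W_{d,2}(u,x_0,\cdot)\equiv 0$ near $0^+$ and Corollary \ref{cor: W 2 mon} closes the argument as sketched. The step I expect to be the most delicate is precisely the bookkeeping at the critical exponent $d=2/(2-q)$: both the monotonicity of the Weiss functional and the $F$-estimate become simultaneously available only at this single value of $d$, and other natural alternatives (such as $W_{d,q}$, which is not a priori monotone) would not close the loop.
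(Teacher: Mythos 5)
Your opening computation is correct and matches the paper: in alternative $(i)$ the Taylor expansion of Proposition~\ref{thm: blow-up pre} gives the non-degeneracy directly, and in alternative $(ii)$ the estimate $W_{d,2}(u,x_0,r_n)\to 0$ under the contradiction hypothesis is exactly right, including the clean cancellation at $d=2/(2-q)$ in the $F$-term. That much reproduces the first few lines of the paper's Proposition~\ref{prop: non-deg}.

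The gap is in the claim that $W_{d,2}(u,x_0,\cdot)\equiv 0$ on a right-neighborhood of $0$. From $W_{d,2}(u,x_0,r_n)\to 0$ and the monotonicity of $W_{d,2}$ you correctly obtain $W_{d,2}(u,x_0,0^+)=0$, and then $W_{d,2}(u,x_0,r)\ge 0$ for every small $r$. But the opposite inequality $W_{d,2}(u,x_0,r)\le 0$ does not follow. Your line ``monotonicity yields $W_{d,2}(u,x_0,r)\le W_{d,2}(u,x_0,r_n)\to 0$ for all sufficiently small $r$'' has the inequality going the wrong way: for each fixed $r$ you eventually have $r_n<r$ and hence $W_{d,2}(r_n)\le W_{d,2}(r)$, which after passing to the limit only recovers the lower bound $W_{d,2}(r)\ge 0$ you already had. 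A non-decreasing function vanishing at $0^+$ can perfectly well be strictly positive for every $r>0$, so the equality case of Corollary~\ref{cor: W 2 mon} (which requires constancy on a whole interval) is never triggered, and the homogeneity of $u$ near $x_0$ cannot be concluded.

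The paper's actual proof does not attempt to produce homogeneity at this stage. After establishing $W_{\bar\gamma,2}(u,x_0,0^+)=0$, it first deduces $D_2\ge 0$, $H>0$, and $H(r)/r^{N-1+2\bar\gamma}\to 0$, and then runs the two auxiliary Lemmas~\ref{liminf W q su H} and~\ref{lim W 2 su H} (involving the normalized quantities $r^{N-1+2\bar\gamma}W_{\bar\gamma,q}/H$ and $r^{N-1+2\bar\gamma}W_{\bar\gamma,2}/H$). These produce a possibly new sequence $r_m$ along which both ratios vanish, and, after showing that the rescaled functions $v_{r_m}$ converge weakly to $0$, the contradiction is obtained from $\|v_{r_m}\|_{0,1}^2\to 0$ against the normalization $\|v_{r_m}\|_{0,1}^2=1$. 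You would need to replace your one-line appeal to the equality case of the Weiss monotonicity by an argument of this calibre; the observation $W_{d,2}(r_n)\to 0$ by itself is not strong enough.
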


We recall that $\beta_q$, $\|\cdot\|_{x_0,r}$ and $\cO$ have been defined in \eqref{def beta q}, \eqref{def norm} and Definition \ref{def: order}, respectively.

The proof of Theorems \ref{thm: very strong} and \ref{thm: non-deg} is divided into several intermediate steps. As in the previous section, the dependence of $H$, $D_t$, $W_{\gamma,t}$ and $N_t$ with respect to $u$ and $x_0$ will often be omitted, and $R:= \dist(x_0,\pa B_1)$. 

By Proposition \ref{thm: blow-up pre}, it is not difficult to deduce that 
\[
\mathcal{O}(u,x_0) \in \{1,\dots,\beta_q\} \cup \left[\frac2{2-q},+\infty\right].
\]
If alternative ($i$) in Proposition \ref{thm: blow-up pre} holds for $u$ in $x_0 \in Z(u)$, then by definition there holds $\cO(u,x_0) = d_{x_0} \in \{1,\dots,\beta_q\}$, and hence the thesis of Theorem \ref{thm: very strong} is satisfied. Also the non-degeneracy can be checked directly, using the Taylor expansion.

Therefore, we shall always assume that alternative ($ii$) in Proposition \ref{thm: blow-up pre} holds.

\subsection{Transition exponent for the Weiss-type functional}\label{sub: trans}

Let $\bar \gamma$ be defined in Corollary \ref{cor: gamma tilde}. One of the crucial point in the proof of Theorem \ref{thm: very strong} is the following: 
\begin{proposition}\label{gamma=2/(2-q)}
In case point ($ii$) of Proposition \ref{thm: blow-up pre} holds for $u$ in $x_0$, it results that $\bar \gamma = 2/(2-q)$.
\end{proposition}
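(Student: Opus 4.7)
By Corollary \ref{cor: gamma tilde} we already have $\bar\gamma \ge 2/(2-q)$, so the plan is to rule out the strict inequality. Suppose by contradiction that $\bar\gamma > 2/(2-q)$ and pick a non-integer $\gamma \in (2/(2-q), \bar\gamma)$, possible since the open interval is nonempty. By Corollary \ref{cor: gamma tilde}, $L := W_{\gamma, 2}(u, x_0, 0^+) \in [0, +\infty)$. First I would exclude $L > 0$: combining \eqref{H e W} with the identity $W_{\gamma, q}(r) = W_{\gamma, 2}(r) + \frac{2-q}{q r^{N-2+2\gamma}}\int_{B_r(x_0)} F_{\lambda_+,\lambda_-}(u)$ and using monotonicity of $W_{\gamma, 2}$ yields
\begin{equation*}
\frac{d}{dr}\!\left[\frac{H(u, x_0, r)}{r^{N-1+2\gamma}}\right] = \frac{2 W_{\gamma, q}(r)}{r} \;\ge\; \frac{2 W_{\gamma, 2}(r)}{r} \;\ge\; \frac{2L}{r}
\end{equation*}
for small $r$, which upon integration from $r$ to $r_0$ would force $H(r)/r^{N-1+2\gamma} \to -\infty$, contradicting $H\ge 0$. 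Hence $L = 0$.

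Since $\gamma > 2/(2-q)$, the coefficient $c_\gamma := (2\gamma(2-q)-4)/q$ in \eqref{der W} (with $t=2$) is strictly positive, so both pieces of $W_{\gamma, 2}'$ are non-negative. Integrating over $(0, r_0)$ and using $L=0$ gives $\int_0^{r_0} W_{\gamma, 2}'(s)\, ds = W_{\gamma, 2}(r_0) < \infty$, hence the separate finite integrals
\begin{equation*}
\int_0^{r_0}\!\!\frac{1}{s^{N-2+2\gamma}} \int_{S_s(x_0)}\!\!\Bigl(u_\nu - \tfrac{\gamma}{s}u\Bigr)^{\!2}\! d\sigma\, ds < \infty, \qquad \int_0^{r_0}\!\!\frac{1}{s^{N-1+2\gamma}}\!\int_{B_s(x_0)}\!\! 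F_{\lambda_+,\lambda_-}(u)\, ds < \infty.
\end{equation*}

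The next step is a blow-up. Set $\ell(r):=(H(u, x_0, r)/r^{N-1})^{1/2}$ and $v_r(y):= u(x_0+r y)/\ell(r)$, so that $\int_{S_1} v_r^2 = 1$ and $v_r$ solves \eqref{eq mu} with coefficients $\mu_\pm(r) := \lambda_\pm r^2\ell(r)^{q-2}$. Rescaling the first integrability above translates into $\int_0^{r_0} s^{-1-2\gamma}\ell(s)^2 \int_{S_1}((v_s)_\nu - \gamma v_s)^2\, d\sigma\, ds < \infty$, so one can extract a sequence $r_n\to 0$ along which $(v_{r_n})_\nu - \gamma v_{r_n}\to 0$ in $L^2(S_\rho)$ for every $\rho>0$. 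The normalization $\int_{S_1}v_{r_n}^2 = 1$, together with $C^{1,\alpha}_{\loc}$ regularity for \eqref{eq mu} and the second integrability above (which precludes concentration of $F_{\lambda_+,\lambda_-}$), then allows, up to a further subsequence, $v_{r_n}\to v_0$ in $C^{1,\alpha}_{\loc}(\R^N)$ with $v_0\not\equiv 0$ satisfying $(v_0)_\nu = \gamma v_0$ on every sphere (hence $v_0$ is $\gamma$-homogeneous by Euler's relation) and $-\Delta v_0 = \mu_+^\infty(v_0^+)^{q-1} - \mu_-^\infty(v_0^-)^{q-1}$ for some $\mu_\pm^\infty := \lim \mu_\pm(r_n)\in[0,+\infty]$.

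To conclude, I would classify $v_0$. If $\mu_\pm^\infty\in(0,+\infty)$, matching the degrees of $-\Delta v_0$ (which is $(\gamma - 2)$-homogeneous) and of the right-hand side (which is $\gamma(q-1)$-homogeneous) forces $\gamma = 2/(2-q)$; if $\mu_\pm^\infty = 0$, then $v_0$ is a non-trivial $\gamma$-homogeneous harmonic function, so $\gamma\in\N$; and $\mu_\pm^\infty = +\infty$ would drive $v_0\equiv 0$, contradicting $\int_{S_1}v_0^2 = 1$. Each alternative contradicts the choice $\gamma\in(2/(2-q),\bar\gamma)\setminus\N$, giving $\bar\gamma = 2/(2-q)$. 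The delicate step is the compactness of the blow-up in the regime $\mu_\pm(r)\to +\infty$: producing uniform $C^{1,\alpha}_{\loc}$ bounds for $v_r$ and ruling out mass loss of the nonlinearity requires a careful interplay between the spherical normalization $\int_{S_1} v_r^2 = 1$ and the integrability of $F_{\lambda_+,\lambda_-}(u)/s^{N-1+2\gamma}$ derived above.
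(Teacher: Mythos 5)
Your proposal takes a genuinely different route from the paper's: you aim to exhibit a nontrivial $\gamma$-homogeneous blow-up limit solving an equation of type \eqref{eq mu} and then classify $\gamma$ by degree-matching, whereas the paper's proof (after setting up the auxiliary exponent $\tilde\gamma$ and the functional $W_{\gamma,\tilde t}$, Lemmas \ref{lem: H infty}--\ref{lem: resto 11}) performs a blow-up at a carefully chosen sequence $r_n$ and derives a contradiction \emph{without} identifying the limit as homogeneous or as a solution of a PDE. Concretely, the paper shows that $\int_{B_1}|\nabla v_n|^2$ is bounded (via $N_{\tilde t}$), extracts a weak $H^1$ limit $v$ with $\int_{S_1} v^2=1$, so $\int_{B_1}F_{\lambda_+,\lambda_-}(v)>0$ by $\lambda_\pm>0$, and then shows $\int_{B_1}F_{\lambda_+,\lambda_-}(v_n)\to 0$ by writing this quantity as a product of two factors, one going to zero by Lemma \ref{lem: resto 11} and the other by Lemma \ref{lem: H infty} with $\gamma>\bar\gamma$.

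Your argument up to the derivation of $L=0$ and of the separate integrability of both nonnegative pieces of $W_{\gamma,2}'$ is correct. However, there are two genuine gaps afterwards, and they are precisely the reasons the paper avoids your route.

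First, the extraction of a sequence along which $(v_{r_n})_\nu-\gamma v_{r_n}\to 0$ in $L^2(S_1)$ does not follow from your rescaled integrability. That integrability reads $\int_0^{r_0}\frac{1}{s}\cdot\frac{H(u,x_0,s)}{s^{N-1+2\gamma}}\int_{S_1}((v_s)_\nu-\gamma v_s)^2\,d\sigma\,ds<\infty$, which, since $1/s\notin L^1(0,r_0)$, only gives $\liminf_{s\to 0}\left[\frac{H(u,x_0,s)}{s^{N-1+2\gamma}}\int_{S_1}((v_s)_\nu-\gamma v_s)^2\,d\sigma\right]=0$. To pass to $\int_{S_1}((v_{r_n})_\nu-\gamma v_{r_n})^2\to 0$ you need $H(u,x_0,s)/s^{N-1+2\gamma}$ to be bounded away from zero, and for $\gamma<\bar\gamma$ there is no reason for that: this quantity is monotone nondecreasing (by \eqref{H e W} and $W_{\gamma,q}\ge W_{\gamma,2}\ge 0$), hence has a finite limit, but the limit can perfectly well be zero (for instance, if $H(u,x_0,r)\approx r^{N-1+2\bar\gamma}$, then $H/r^{N-1+2\gamma}\to 0$ for every $\gamma<\bar\gamma$). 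Lemma \ref{lem: H infty}, which would give a (divergent) lower bound, only applies for $\gamma>\bar\gamma$, a range you cannot use.

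Second, even granting the previous step, the $C^{1,\alpha}_{\loc}(\R^N)$ compactness of the blow-up family and the passage to the limit in the equation are not available here. The coefficient $\mu_\pm(r)=\lambda_\pm r^2\ell(r)^{q-2}$ satisfies $\mu_\pm(r)^{2/(2-q)}=\lambda_\pm^{2/(2-q)}\, r^{N-1+\frac{4}{2-q}}/H(u,x_0,r)$, whose behavior as $r\to 0$ is not controlled at this stage (in particular $\mu_\pm(r)\to\infty$ is entirely possible), so uniform local $C^{1,\alpha}$ estimates for $v_r$ are not in reach, and the dichotomy by which you classify the limit cannot be set up. The final case ``$\mu_\pm^\infty=+\infty$ would drive $v_0\equiv 0$'' is stated as a contradiction, but nothing in your argument justifies it; this is exactly the regime one needs to rule out. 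You flag this step as ``delicate,'' but it is where the proof actually breaks: the paper's argument is designed specifically to sidestep both issues by only requiring weak $H^1$ convergence, strong $L^q$ and trace convergence, and the positivity $\int_{B_1}F_{\lambda_+,\lambda_-}(v)>0$.
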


The proof proceeds by contradiction: we suppose that $\bar \gamma > 2/(2-q)$, and after several lemmas we will finally obtain a contradiction. 

\begin{lemma}\label{lem: sign D}
If $\bar \gamma > 2/(2-q)$, then $D_2(u,x_0,r) \ge 0$ and $H(u,x_0,r)>0$ for every $r \in (0,R)$.
\end{lemma}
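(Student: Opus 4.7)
The plan is to combine the hypothesis $\bar\gamma > 2/(2-q)$ with the monotonicity of $W_{\gamma,2}$ from Corollary~\ref{cor: W 2 mon} and the sign information from Corollary~\ref{cor: gamma tilde}. First I would pick the borderline exponent $\gamma = 2/(2-q)$, which by assumption sits strictly below $\bar\gamma$. For this choice Corollary~\ref{cor: gamma tilde} gives $W_{\gamma,2}(u,x_0,0^+) \geq 0$, and since $\gamma \geq 2/(2-q)$ the monotonicity of Corollary~\ref{cor: W 2 mon} propagates the sign forward: $W_{\gamma,2}(u,x_0,r) \geq W_{\gamma,2}(u,x_0,0^+) \geq 0$ for every $r \in (0,R)$. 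Unwinding the definition of $W_{\gamma,2}$ and multiplying through by $r^{N-1+2\gamma}$ one obtains
$$r\, D_2(u,x_0,r) \geq \gamma H(u,x_0,r) \geq 0,$$
which immediately delivers $D_2(u,x_0,r) \geq 0$.

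For the positivity of $H$, the key observation is that $D_2 \geq 0$ automatically upgrades to $D_q \geq 0$, because
$$D_q(u,x_0,r) - D_2(u,x_0,r) = \frac{2-q}{q}\int_{B_r(x_0)} F_{\lambda_+,\lambda_-}(u) \geq 0.$$
Plugging this into the differential identity~\eqref{der H} and rearranging gives
$$\left(\frac{H(u,x_0,r)}{r^{N-1}}\right)' = \frac{2\,D_q(u,x_0,r)}{r^{N-1}} \geq 0,$$
so $r \mapsto H(u,x_0,r)/r^{N-1}$ is non-decreasing on $(0,R)$.

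Finally, to rule out $H(u,x_0,r_0)=0$ for some $r_0 \in (0,R)$: the monotonicity just established would force $H(u,x_0,r)=0$ for all $r \in (0,r_0]$, hence $u \equiv 0$ on $B_{r_0}(x_0)$ by continuity of $u$. The classical unique continuation principle of Proposition~\ref{prop: cl ucp} then yields $u \equiv 0$ on $B_1$, contradicting $u \not\equiv 0$. I do not anticipate any real obstacle in this argument; the only conceptual point is recognizing that the hypothesis $\bar\gamma > 2/(2-q)$ is precisely what makes the threshold choice $\gamma = 2/(2-q)$ fall into the good regime of Corollary~\ref{cor: gamma tilde}, which would otherwise only produce the trivial identity $W_{\gamma,2}(0^+)=0$ for subcritical $\gamma$.
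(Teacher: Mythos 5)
Your argument is correct and follows essentially the same route as the paper: both establish $W_{\gamma,2}(r)\ge 0$ from Corollaries \ref{cor: gamma tilde} and \ref{cor: W 2 mon} to conclude $D_2\ge 0$, and both use the induced monotonicity of a normalized $H$ together with Proposition \ref{prop: cl ucp} to rule out $H$ vanishing. The only cosmetic deviations are the specific choice $\gamma=2/(2-q)$ rather than some $\gamma\in(2/(2-q),\bar\gamma)$, and deducing monotonicity of $H(r)/r^{N-1}$ via $D_q\ge 0$ and \eqref{der H}, where the paper instead works with $H(r)/r^{N-1+2\gamma}$ via $W_{\gamma,q}\ge W_{\gamma,2}\ge 0$ and \eqref{H e W}; both variants yield the same conclusion.
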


\begin{proof}
Let us take any $\gamma \in (2/(2-q), \bar \gamma)$. Then we know that $W_{\gamma,2}$ is non-decreasing in $r$ (Corollary \ref{cor: W 2 mon}), and that $W_{\gamma,2}(0^+) \ge 0$ (Corollary \ref{cor: gamma tilde}). Therefore, $W_{\gamma,2}(r) \ge 0$ for every $r$, and as a consequence
\begin{align*}
0 \le W_{\gamma,2}(r) = \frac{1}{r^{N-2+2\gamma}} \left[D_2(r) - \frac{\gamma}{r} H(r)  \right] \le \frac{1}{r^{N-2+2\gamma}} D_2(r),
\end{align*}
as claimed. Moreover, we have that $W_{\gamma,q}(r) \ge W_{\gamma,2}(r) \ge 0$ for every $r \in (0,R)$, and hence by \eqref{H e W} we deduce that $H(r)/r^{N-1+2 \gamma}$ is monotone non-decreasing. Assume now by contradiction that $H(r_1)=0$ for some $r_1 \in (0,R)$. By monotonicity, we deduce that $H(r) =0$ for all $r \in (0,r_1)$, and as a consequence $u \equiv 0$ in $B_{r_1}(x_0)$. Therefore, the unique continuation principle proved in \cite{SoWe} implies that $u \equiv 0$ in $B_1$, a contradiction.
\end{proof}

A relevant consequence of the previous lemma is that the frequency functions $N_t(u,x_0,\cdot)$ ($t>0$) are all well defined in $(0,R)$. This fact will be used in what follows. 

Let
\[
\tilde \gamma:= \frac12 \left( \frac2{2-q}+\bar \gamma\right)
\]
be the medium point between $2/(2-q)$ and $\bar \gamma$. We define
\[
\tilde t:= \frac{2N+2 q \tilde \gamma }{N-2+2\tilde \gamma}.
\]
Since $\tilde \gamma>2/(2-q)$ and $q<2$, it results that $\tilde t \in (q,2)$.

\begin{lemma}\label{lem: weiss t}
If $\gamma \ge \tilde \gamma$, then
\[
W_{\gamma, \tilde t}'(u,x_0,r) \ge 0 \quad \text{for every $0<r<R$}.
\]
\end{lemma}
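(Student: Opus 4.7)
The plan is to apply the formula for $W_{\gamma,t}'$ obtained in Proposition \ref{prop: der W} with the specific choice $t = \tilde t$, and verify that each of the three terms on the right hand side is non-negative as soon as $\gamma \ge \tilde\gamma$. Recall that
\begin{align*}
W_{\gamma,\tilde t}'(u,x_0,r) & =\frac{2}{r^{N-2+2\gamma}}  \int_{S_r(x_0)} \left( u_\nu-\tfrac{\gamma}{r} u\right)^2 +  \frac{2-\tilde t}{q r^{N-2+2\gamma}} \int_{S_r(x_0)}F_{\lambda_+,\lambda_-}(u)  \\
&  + \frac{(N-2)\tilde t - 2N + 2 \gamma(\tilde t-q) }{q r^{N-1+2\gamma}} \int_{B_r(x_0)} F_{\lambda_+,\lambda_-}(u),
\end{align*}
so the proof reduces to a sign analysis of the three coefficients.

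The first term is a square and poses no issue. For the second term, recall that the definition of $\tilde t$ together with $\tilde\gamma > 2/(2-q)$ was already observed to imply $\tilde t \in (q,2)$; in particular $(2-\tilde t)/q > 0$, so the second contribution is non-negative. The heart of the argument concerns the third term, and the choice of $\tilde t$ is precisely tailored for this: a direct algebraic manipulation of the identity
\[
\tilde t = \frac{2N + 2q\tilde\gamma}{N-2+2\tilde\gamma}
\]
gives $(N-2)\tilde t + 2\tilde\gamma\tilde t = 2N + 2q\tilde\gamma$, i.e.
\[
(N-2)\tilde t - 2N + 2\tilde\gamma(\tilde t - q) = 0.
\]
In other words, $\tilde t$ is defined exactly so that the coefficient of $\int_{B_r}F_{\lambda_+,\lambda_-}(u)$ vanishes at $\gamma = \tilde\gamma$.

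To conclude, I view the third coefficient as a function
\[
c(\gamma) := (N-2)\tilde t - 2N + 2\gamma(\tilde t - q)
\]
of the variable $\gamma$. Since $\tilde t > q$, the coefficient $2(\tilde t - q)$ of $\gamma$ is strictly positive, so $c$ is strictly increasing in $\gamma$. Combined with $c(\tilde\gamma)=0$, this yields $c(\gamma) \ge 0$ for every $\gamma \ge \tilde\gamma$, so the third summand is non-negative as well. Summing the three non-negative contributions we obtain $W_{\gamma,\tilde t}'(u,x_0,r)\ge 0$ for every $r\in(0,R)$, as claimed. No step is really an obstacle here: the only ``clever'' point is that $\tilde t$ was engineered as the threshold value at which the volume term in Proposition \ref{prop: der W} changes sign precisely at $\gamma=\tilde\gamma$, and the inequality $\tilde t < 2$ (which makes the surface $F$-term cooperative) is automatically guaranteed by $\tilde\gamma > 2/(2-q)$.
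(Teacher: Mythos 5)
Your proof is correct and follows essentially the same route as the paper: both apply Proposition \ref{prop: der W} with $t=\tilde t$, dismiss the first two terms using $\tilde t\in(q,2)$, and then show the coefficient of the volume integral is non-negative for $\gamma\ge\tilde\gamma$ by observing that it vanishes exactly at $\gamma=\tilde\gamma$ and increases in $\gamma$. The only cosmetic difference is that the paper solves the inequality for $\gamma$ and identifies the threshold with $\tilde\gamma$, whereas you phrase the same fact via monotonicity of the linear function $c(\gamma)$.
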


\begin{proof}
By Proposition \ref{prop: der W}, and having observed that $\tilde t  \in (q,2)$, it is sufficient to check that $(N-2) \tilde t-2N + 2 \gamma(\tilde t-q) \ge 0$ for every $\gamma \ge \tilde \gamma$; that is, 
\[
\gamma \ge  \tilde \gamma \quad \implies \quad \gamma \ge \frac{2N-(N-2) \tilde t}{2(\tilde t -q)}.
\]
By definition of $\tilde t$, it is immediate to check that the right hand side coincides with $\tilde \gamma$.
\end{proof}

Now, as in Lemma \ref{lem: ex gamma tilde}
\[
W_{\gamma,\tilde t}(r_1) \le \frac{1}{r_1^{2 \gamma}} \left[\frac{1}{r_1^{N-2}} \int_{B_{r_1}} |\nabla u|^2 - \frac{\gamma}{r_1^{N-1}} H(r_1)\right],
\]
whence we deduce that $W_{\gamma,\tilde t}(0^+)<0$ for $\gamma$ large enough, and proceeding as Corollary \ref{cor: gamma tilde} we define the real number
\[
\bar{ \bar \gamma}:= \inf \left\{ \gamma \ge \tilde \gamma: W_{\gamma,\tilde t}(u,x_0,0^+) = -\infty \right\} \in \left[\tilde \gamma,+\infty\right),
\]
for which
\[
\begin{cases}
W_{\tilde t,\gamma}(u,x_0,0^+) \ge 0 & \text{if $\tilde \gamma \le \gamma <\bar{\bar \gamma}$} \\
W_{\tilde t,\gamma}(u,x_0,0^+) = -\infty & \text{if $\gamma > \bar{\bar \gamma}$}.
\end{cases}
\]

We aim at showing that $\bar{\bar \gamma} = \bar \gamma$, and in this direction we need the following lemma.

\begin{lemma}\label{lem: H infty}
If $\gamma>\bar \gamma$, then 
\[
\limsup_{r \to 0^+} N_2(u,x_0,r) \le \gamma, \quad \text{and} \quad \liminf_{r \to 0^+} \frac{H(u,x_0,r)}{r^{N-1+2\gamma}} = +\infty.
\]
\end{lemma}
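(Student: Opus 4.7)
The plan is to derive both conclusions directly from the definition of $\bar\gamma$ combined with the two-sided information on $W_{\gamma,2}$ available when $\gamma>\bar\gamma$. The key observations I would exploit are: (a) by Corollary \ref{cor: W 2 mon}, for every $\gamma\ge 2/(2-q)$ the functional $r\mapsto W_{\gamma,2}(u,x_0,r)$ is monotone non-decreasing; (b) by the very definition of $\bar\gamma$ and Corollary \ref{cor: gamma tilde}, $W_{\gamma,2}(u,x_0,0^+)=-\infty$ whenever $\gamma>\bar\gamma$; and (c) by Lemma \ref{lem: sign D}, $H(u,x_0,r)>0$ and $D_2(u,x_0,r)\ge 0$ for every $r\in(0,R)$, so in particular $N_2(u,x_0,r)\ge 0$ is well-defined.

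First I would prove the bound on the frequency. Since $\gamma>\bar\gamma\ge 2/(2-q)$, observations (a) and (b) together force $\lim_{r\to 0^+} W_{\gamma,2}(r)=-\infty$; consequently $W_{\gamma,2}(r)<0$ for all sufficiently small $r>0$. Plugging this into the identity \eqref{W e N}, namely
\[
W_{\gamma,2}(r)=\frac{H(r)}{r^{N-1+2\gamma}}\bigl(N_2(r)-\gamma\bigr),
\]
and using $H(r)>0$ from (c), I conclude that $N_2(r)<\gamma$ for every small $r$, whence $\limsup_{r\to 0^+}N_2(r)\le\gamma$ as required.

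Next, to obtain the blow-up of $H(r)/r^{N-1+2\gamma}$, I would simply rewrite \eqref{W e N} as
\[
\frac{H(r)}{r^{N-1+2\gamma}}=\frac{-W_{\gamma,2}(r)}{\gamma-N_2(r)}
\]
for $r$ small (so that the denominator is positive, by the previous step). Using once more $N_2(r)\ge 0$, which follows from $D_2(r)\ge 0$ in (c), I get $\gamma-N_2(r)\le\gamma$, so
\[
\frac{H(r)}{r^{N-1+2\gamma}}\ge\frac{-W_{\gamma,2}(r)}{\gamma}\longrightarrow+\infty\qquad\text{as }r\to 0^+,
\]
which yields the stronger statement $\lim_{r\to 0^+}H(r)/r^{N-1+2\gamma}=+\infty$, in particular the claimed $\liminf$.

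There is no real obstacle in this proof: everything reduces to the algebraic identity \eqref{W e N} together with the sign and monotonicity information collected in the preceding lemmas. The only small care required is to make sure the denominator $\gamma-N_2(r)$ is bounded away from zero in the right direction, and this is precisely guaranteed by the trivial bound $N_2\ge 0$ coming from Lemma \ref{lem: sign D}.
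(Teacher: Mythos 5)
Your argument is correct and is essentially the paper's own proof: both rest on the factorization \eqref{W e N} of $W_{\gamma,2}$, the divergence $W_{\gamma,2}(0^+)=-\infty$ for $\gamma>\bar\gamma$, and the sign information $H>0$, $D_2\ge 0$ from Lemma \ref{lem: sign D} (applicable since the surrounding argument assumes $\bar\gamma>2/(2-q)$). Your presentation is slightly more explicit in isolating the bound $0\le N_2(r)<\gamma$ and then dividing, but the underlying deduction is the same.
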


\begin{proof}
In this proof we let
\[
a(r):= \frac{H(r)}{r^{N-1+2\gamma}}, \quad \text{and} \quad b(r):= N_2(r) - \gamma.
\]

Since $\gamma> \bar \gamma$, then
\begin{equation}\label{10 11}
-\infty = W_{\gamma,2}(0^+) = \lim_{r \to 0^+} a(r) b(r),
\end{equation}
see \eqref{W e N}. Also, by Lemma \ref{lem: sign D} we have $b(r) \ge -\gamma$, and clearly $a(r) \ge 0$, so that \eqref{10 11} yields
\[
-\gamma \le \liminf_{r \to 0^+} b(r) \le \limsup_{r \to 0^+} b(r) \le 0;
\]
from this, the thesis follows easily.
\end{proof}

\begin{remark}\label{rem: H infty}
For future convenience, we observe that the previous lemma holds true also in case $\bar \gamma= 2/(2-q)$, provided that $H(r)>0$ and $D_2(r) \ge 0$ for any $r>0$ small.
\end{remark}

\begin{lemma}\label{lem: gamma=gamma}
It results that $\bar{\bar \gamma} = \bar \gamma$.
\end{lemma}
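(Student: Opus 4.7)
The plan is to establish the two inequalities $\bar{\bar{\gamma}}\ge\bar{\gamma}$ and $\bar{\bar{\gamma}}\le\bar{\gamma}$ separately. The key algebraic observation I would exploit is that, by the very definition of $W_{\gamma,t}$,
\[
W_{\gamma,t_1}(r)-W_{\gamma,t_2}(r)=\frac{t_2-t_1}{q\,r^{N-2+2\gamma}}\int_{B_r(x_0)}F_{\lambda_+,\lambda_-}(u),
\]
so that eliminating the integral between the instances $(t_1,t_2)=(q,2)$ and $(\tilde t,2)$ yields the identity
\[
W_{\gamma,q}(r)=\frac{q-\tilde t}{2-\tilde t}\,W_{\gamma,2}(r)+\frac{2-q}{2-\tilde t}\,W_{\gamma,\tilde t}(r).
\]
Since $\tilde t$ has been chosen strictly between $q$ and $2$, the coefficient of $W_{\gamma,2}$ is negative while the coefficient of $W_{\gamma,\tilde t}$ is positive: this sign configuration is the heart of the argument.

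For $\bar{\bar{\gamma}}\ge\bar{\gamma}$ I would pick any $\gamma\in[\tilde\gamma,\bar{\gamma})$. The difference $W_{\gamma,\tilde t}(r)-W_{\gamma,2}(r)$ is non-negative because $\tilde t<2$ and $F_{\lambda_+,\lambda_-}(u)\ge 0$. Since $\gamma\ge\tilde\gamma>2/(2-q)$, Corollary \ref{cor: W 2 mon} gives monotonicity of $W_{\gamma,2}$ and Corollary \ref{cor: gamma tilde} gives $W_{\gamma,2}(r)\ge W_{\gamma,2}(0^+)\ge 0$. Hence $W_{\gamma,\tilde t}(r)\ge 0$ on $(0,R)$, so the limit $W_{\gamma,\tilde t}(0^+)$ (which exists by the monotonicity of Lemma \ref{lem: weiss t}) is non-negative, and by definition of $\bar{\bar{\gamma}}$ this forces $\bar{\bar{\gamma}}\ge\bar{\gamma}$.

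For $\bar{\bar{\gamma}}\le\bar{\gamma}$ I would argue by contradiction. Assuming $\bar{\bar{\gamma}}>\bar{\gamma}$, I would fix $\gamma\in(\bar{\gamma},\bar{\bar{\gamma}})$; by the definitions of the two threshold exponents, $W_{\gamma,2}(0^+)=-\infty$ while $W_{\gamma,\tilde t}(0^+)\ge 0$ is finite. Plugging these asymptotics into the linear-combination identity, the negative coefficient in front of $W_{\gamma,2}(r)$ forces the first summand to tend to $+\infty$, while the second summand stays bounded; therefore $W_{\gamma,q}(r)\to+\infty$ as $r\to 0^+$. I would then invoke \eqref{H e W} in the form $(H(r)/r^{N-1+2\gamma})'=(2/r)W_{\gamma,q}(r)$: for every $M>0$ there exists $\delta_M>0$ with $W_{\gamma,q}(s)\ge M$ for $s\in(0,\delta_M)$, and integrating from $r$ to $\delta_M$ gives
\[
\frac{H(r)}{r^{N-1+2\gamma}}\le\frac{H(\delta_M)}{\delta_M^{N-1+2\gamma}}-2M\log(\delta_M/r)\xrightarrow[r\to 0^+]{}-\infty,
\]
which contradicts $H(r)\ge 0$.

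The subtle point to be careful about is that the specific value $\tilde t=(2N+2q\tilde\gamma)/(N-2+2\tilde\gamma)$ was tailored in Lemma \ref{lem: weiss t} precisely so that $W_{\gamma,\tilde t}$ is monotone for $\gamma\ge\tilde\gamma$ (needed to make sense of the limit $W_{\gamma,\tilde t}(0^+)$), but for the contradiction step only the strict inclusion $\tilde t\in(q,2)$ is really used; it is exactly this strict inclusion that guarantees $(q-\tilde t)/(2-\tilde t)<0$ and $(2-q)/(2-\tilde t)>0$, preventing the divergence of $-W_{\gamma,2}(r)$ from being absorbed by the finite contribution of $W_{\gamma,\tilde t}(r)$.
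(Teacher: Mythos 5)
Your proposal is correct, and the overall architecture (compare $W_{\gamma,2}$, $W_{\gamma,\tilde t}$, and $W_{\gamma,q}$ through the single integral $\int_{B_r} F_{\lambda_+,\lambda_-}(u)$) is the same as the paper's. The first inequality $\bar{\bar\gamma}\ge\bar\gamma$ is essentially the paper's observation stated in contrapositive form: the paper simply notes that $W_{\gamma,\tilde t}(r)\ge W_{\gamma,2}(r)$ forces $W_{\gamma,\tilde t}(0^+)=-\infty\Rightarrow W_{\gamma,2}(0^+)=-\infty$, while you show directly that $W_{\gamma,\tilde t}(0^+)\ge 0$ for $\gamma\in[\tilde\gamma,\bar\gamma)$; these are equivalent.

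Where you genuinely diverge is the contradiction step. Fixing $\gamma\in(\bar\gamma,\bar{\bar\gamma})$, the paper only extracts the weak conclusion $W_{\gamma,q}(r)\ge W_{\gamma,\tilde t}(r)\ge 0$, so that by \eqref{H e W} the ratio $H(r)/r^{N-1+2\gamma}$ is non-decreasing and therefore admits a \emph{finite} limit at $0^+$; this contradicts Lemma \ref{lem: H infty}, which says this $\liminf$ is $+\infty$. Your identity $W_{\gamma,q}=\tfrac{q-\tilde t}{2-\tilde t}W_{\gamma,2}+\tfrac{2-q}{2-\tilde t}W_{\gamma,\tilde t}$ (which I checked is algebraically exact, given the relation $W_{\gamma,t_1}-W_{\gamma,t_2}=\tfrac{t_2-t_1}{q r^{N-2+2\gamma}}\int_{B_r}F$) extracts the much stronger conclusion $W_{\gamma,q}(r)\to+\infty$, because the negative coefficient multiplies the divergent $W_{\gamma,2}$ while $W_{\gamma,\tilde t}$ stays bounded by monotonicity. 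Integrating \eqref{H e W} then drives $H(r)/r^{N-1+2\gamma}$ to $-\infty$, contradicting the trivial bound $H\ge 0$. This bypasses Lemma \ref{lem: H infty} entirely, making the argument a bit more self-contained; the cost is that you need all three Weiss functionals simultaneously through the linear-combination identity, whereas the paper only needs the pointwise comparisons $W_{\gamma,q}\ge W_{\gamma,\tilde t}\ge W_{\gamma,2}$. Both routes are sound, and your quantitative variant is a legitimate simplification at this specific step.
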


\begin{proof}
Since $\tilde t<2$, we have $W_{\gamma,\tilde t}(r) \ge W_{\gamma,2}(r)$ for every $0<r<R$ and $\gamma>0$. Thus $W_{\gamma,\tilde t}(0^+) = -\infty$ implies $W_{\gamma,2}(0^+)=-\infty$ as well, and hence $\bar{\bar \gamma} \ge \bar \gamma$. So let us suppose by contradiction that $\bar{\bar \gamma}> \bar \gamma$, and let us fix $\gamma \in (\bar \gamma, \bar{\bar \gamma})$. We have $W_{\gamma,\tilde t}(0^+) \ge 0$, and $W_{\gamma,\tilde t}$ is monotone non-decreasing in $r$ by Lemma \ref{lem: weiss t}. Therefore, $W_{\gamma,\tilde t}(r) \ge 0$ for $r>0$, and since $q<\tilde t$, we deduce that
\[
W_{q,\gamma}(r) \ge  W_{\tilde t,\gamma}(r)  \ge 0 \quad \text{for every $r \in (0, R)$}.
\]
Recalling equation \eqref{H e W}, it follows that $r \mapsto H(r)/r^{N-1+2\gamma}$ is non-decreasing in $r$, and in particular
\begin{equation}\label{lim exist}
\text{there exists, finite, }\ell:= \lim_{r \to 0^+}  \frac{H(r)}{r^{N-1+2\gamma}} \in [0,+\infty),
\end{equation}
which is in contradiction with Lemma \ref{lem: H infty}.
\end{proof}

As a consequence:

\begin{lemma}\label{lem: N t bou}
If $\gamma> \bar \gamma$, then 
\[
\limsup_{r \to 0^+} N_{\tilde t}(u,x_0,r) \le \gamma.
\]
\end{lemma}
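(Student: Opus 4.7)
The plan is to piggyback on the equality $\bar{\bar\gamma}=\bar\gamma$ just established in Lemma \ref{lem: gamma=gamma}, combined with the factorization formula \eqref{W e N} that relates $W_{\gamma,t}$, the frequency $N_t$, and the height $H$. Indeed, for $\gamma>\bar\gamma$ one has $\gamma>\bar{\bar\gamma}$, so by the very definition of $\bar{\bar\gamma}$ (and the same simple monotonicity-in-$\gamma$ argument as in Lemma \ref{lem: ex gamma tilde}, which gives that $W_{\gamma,\tilde t}(0^+)=-\infty$ propagates to all larger $\gamma$), it follows that
\[
W_{\gamma,\tilde t}(u,x_0,0^+)=-\infty.
\]

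Next I would use \eqref{W e N} in the form
\[
W_{\gamma,\tilde t}(u,x_0,r)=\frac{H(u,x_0,r)}{r^{N-1+2\gamma}}\bigl(N_{\tilde t}(u,x_0,r)-\gamma\bigr),
\]
which is meaningful because Lemma \ref{lem: sign D} guarantees $H(u,x_0,r)>0$ for every $r\in(0,R)$ (recall we are working under $\bar\gamma>2/(2-q)$, the assumption we are trying to contradict in Section \ref{sub: trans}). Thus the prefactor $H(u,x_0,r)/r^{N-1+2\gamma}$ is strictly positive, and the sign of $W_{\gamma,\tilde t}$ is controlled by the sign of $N_{\tilde t}-\gamma$.

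The final step is a direct contradiction argument. Suppose $\limsup_{r\to 0^+}N_{\tilde t}(u,x_0,r)>\gamma$; then there exist $\eps>0$ and a sequence $r_n\to 0^+$ such that $N_{\tilde t}(u,x_0,r_n)\ge \gamma+\eps$, so that
\[
W_{\gamma,\tilde t}(u,x_0,r_n)\ge \eps\cdot\frac{H(u,x_0,r_n)}{r_n^{N-1+2\gamma}}\ge 0,
\]
which contradicts $W_{\gamma,\tilde t}(u,x_0,r)\to -\infty$ as $r\to 0^+$. Hence $\limsup_{r\to 0^+}N_{\tilde t}(u,x_0,r)\le\gamma$, as required.

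There is no real obstacle here, since all the heavy lifting sits in Lemma \ref{lem: gamma=gamma} (identification $\bar{\bar\gamma}=\bar\gamma$) and in Lemma \ref{lem: sign D} (positivity of $H$, which makes the frequency $N_{\tilde t}$ well defined and the factorization \eqref{W e N} usable); the only mild point to check carefully is that $W_{\gamma,\tilde t}(0^+)=-\infty$ genuinely holds for every $\gamma>\bar{\bar\gamma}$, not merely at the infimum, but this is the same one-line monotonicity-in-$\gamma$ estimate used in Lemma \ref{lem: ex gamma tilde}.
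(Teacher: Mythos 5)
Your proof is correct and follows essentially the same route as the paper: invoke $\bar{\bar\gamma}=\bar\gamma$ from Lemma \ref{lem: gamma=gamma} to get $W_{\gamma,\tilde t}(u,x_0,0^+)=-\infty$, use Lemma \ref{lem: sign D} for $H>0$, and read off the sign of $N_{\tilde t}-\gamma$ from the factorization \eqref{W e N}. The paper concludes directly that $N_{\tilde t}(r)-\gamma<0$ for small $r$, while you phrase it as a contradiction against $\limsup N_{\tilde t}>\gamma$; these are trivial reformulations of one another, so there is nothing substantive to compare.
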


\begin{proof}
Let $\gamma > \bar \gamma$. By Lemma \ref{lem: gamma=gamma}, we have $W_{\gamma, \tilde t}(0^+)=-\infty$, and in particular $W_{\gamma, \tilde t}(r)<0$ for every $r$ small. Also, $H(r)>0$ for every $r \in (0,R)$. Then, recalling \eqref{W e N},
\[
N_{\tilde t}(r)-\gamma  = \frac{ r^{N-1+2\gamma} \, W_{\gamma, \tilde t}(r)}{ H(r)}<0
\]
for every small $r$.
\end{proof}

\begin{lemma}\label{lem: resto 11}
There exists a sequence $0<r_n \to 0^+$ as $n \to \infty$ such that
\[
\frac{1}{r_n^{N-2+2 \bar \gamma}} \int_{B_{r_n}(x_0)} F_{\lambda_+,\lambda_-}(u) \to 0 \quad \text{as $n \to \infty$}.
\]
\end{lemma}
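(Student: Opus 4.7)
The plan is to apply the derivative formula \eqref{der W} with $t=2$ and $\gamma=\bar\gamma$, and then integrate the resulting lower bound to produce logarithmic integrability of
\[
g(r):=\frac{1}{r^{N-2+2\bar\gamma}}\int_{B_r(x_0)} F_{\lambda_+,\lambda_-}(u)\,dx
\]
near $r=0$, which will force $\liminf_{r\to 0^+} g(r)=0$. For this scheme to work the critical a priori input is that $W_{\bar\gamma,2}(u,x_0,0^+)$ is finite.

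The hard part is exactly to establish this finiteness \emph{at} the threshold $\gamma=\bar\gamma$: the definition of $\bar\gamma$ only gives $W_{\gamma,2}(u,x_0,0^+)\ge 0$ in the open range $\gamma<\bar\gamma$ (Corollary~\ref{cor: gamma tilde}), while Lemma~\ref{lem: H infty} warns that certain companion quantities may blow up as $r\to 0^+$. I would circumvent the issue by a continuity argument. For every $\gamma\in (2/(2-q),\bar\gamma)$, combining Corollary~\ref{cor: W 2 mon} (monotonicity in $r$) with Corollary~\ref{cor: gamma tilde} yields $W_{\gamma,2}(u,x_0,r)\ge W_{\gamma,2}(u,x_0,0^+)\ge 0$ for every $r\in (0,R)$. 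Since $\gamma\mapsto W_{\gamma,2}(u,x_0,r)$ is manifestly continuous for each fixed $r$, letting $\gamma\nearrow\bar\gamma$ yields $W_{\bar\gamma,2}(u,x_0,r)\ge 0$ on $(0,R)$, and then the monotonicity of $W_{\bar\gamma,2}$ in $r$ (Corollary~\ref{cor: W 2 mon}) forces $W_{\bar\gamma,2}(u,x_0,0^+)\in [0,W_{\bar\gamma,2}(u,x_0,R)]\subset\R$.

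With this finiteness in hand, I would specialize \eqref{der W} to $t=2$, $\gamma=\bar\gamma$: the first two terms on the right-hand side are non-negative, and the coefficient in front of the bulk term equals $2\bar\gamma(2-q)-4$, which is strictly positive thanks to the standing assumption $\bar\gamma>2/(2-q)$. Setting $c_0:=(2\bar\gamma(2-q)-4)/q>0$, this reads $W_{\bar\gamma,2}'(u,x_0,r)\ge c_0\,g(r)/r$. Integrating from $0^+$ to $R$ then gives
\[
c_0\int_0^R \frac{g(s)}{s}\,ds\le W_{\bar\gamma,2}(u,x_0,R)-W_{\bar\gamma,2}(u,x_0,0^+)<\infty.
\]
If $\liminf_{r\to 0^+}g(r)>0$ then one could find $\delta,r_0>0$ with $g(r)\ge \delta$ on $(0,r_0)$, forcing the above integral to diverge logarithmically; this contradiction shows $\liminf_{r\to 0^+}g(r)=0$, and any sequence $r_n\to 0^+$ realizing the liminf proves the lemma. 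Beyond the continuity step above, I anticipate no further obstruction: the derivative formula \eqref{der W} is sharp, and the strict inequality $\bar\gamma>2/(2-q)$ is used precisely to make $c_0$ strictly positive.
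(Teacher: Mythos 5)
Your proposal is correct and follows essentially the same route as the paper: both establish $W_{\bar\gamma,2}(u,x_0,0^+)\ge 0$ (hence finite) by letting $\gamma\nearrow\bar\gamma$ in the pointwise inequality $W_{\gamma,2}(u,x_0,r)\ge 0$ and invoking monotonicity in $r$, then integrate the derivative identity \eqref{der W} with $t=2$ to get $\int_0^{\bar r}\frac{g(s)}{s}\,ds<\infty$, from which non-integrability of $1/s$ forces the $\liminf$ of $g$ to vanish. Your explicit identification of the positive coefficient $c_0=(2\bar\gamma(2-q)-4)/q$ is correct and matches the paper's implicit use of $\bar\gamma>2/(2-q)$.
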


\begin{proof}
Since we are assuming that $\bar \gamma>2/(2-q)$, we can take $\gamma \in [2/(2-q), \bar \gamma)$. Then $W_{\gamma,2}(0^+) \ge 0$ (Corollary \ref{cor: gamma tilde}), and by monotonicity (Corollary \ref{cor: W 2 mon}) this implies that $W_{\gamma,2}(r) \ge 0$ as well, for any $r \in (0,R)$. For any such fixed $r$, we consider the function $\gamma \mapsto W_{\gamma,2}(r)$, and taking the limit as $\gamma \to \bar \gamma^-$ we infer that $W_{\bar \gamma,2}(r) \ge 0$, whence in turn $W_{\bar \gamma,2}(0^+) \ge 0$ follows. Then, for any $\bar r  \in (0,R)$, we have 
\[
0 \le \int_0^{\bar r} W_{\bar \gamma,2}'(s)  \,ds = W_{\bar \gamma,2}(\bar r)- W_{\bar \gamma,2}(0^+) <+\infty.
\]
In particular, recalling Proposition \ref{prop: der W} and using that $\bar \gamma>2/(2-q)$, we deduce that
\begin{equation}\label{10 11 1}
\int_0^{\bar r} \frac1{s}\left(\frac{1}{s^{N-2+2 \bar \gamma}} \int_{B_s(x_0)} F_{\lambda_+,\lambda_-}(u) \right)\,ds <+\infty.
\end{equation}
Since the function $r \mapsto 1/r$ is not integrable in $0$, if 
\[
\liminf_{r \to 0^+} \frac{1}{r^{N-2+2 \bar \gamma}} \int_{B_r(x_0)} F_{\lambda_+,\lambda_-}(u) >0,
\]
then \eqref{10 11 1} would not be possible. This means that the above $\liminf$ has to be $0$, which is the thesis.
\end{proof}

We are finally ready for the:

\begin{proof}[Proof of Proposition \ref{gamma=2/(2-q)}]
Recall that we are assuming by contradiction that $\bar \gamma>2/(2-q)$. Let $\{r_n\}$ be the sequence defined by Lemma \ref{lem: resto 11}, and let
\[
v_n(x):= \frac{u(x_0 + r_n x)}{\left( r_n^{1-N} H(u,x_0,r_n) \right)^\frac12}, \quad \text{defined in} \quad\frac{1}{r_n} B_{R}(x_0) \Supset B_1
\] 
(recall that $H(r_n) >0$ for every $n$, by Lemma \ref{lem: sign D}). By definition
\[
\int_{S_1} v_n^2 = 1, \quad \text{and} \quad \int_{B_1} |\nabla v_n|^2 = \frac{r_n \int_{B_{r_n}(x_0)} |\nabla u|^2}{ \int_{S_{r_n}(x_0)} u^2}.
\]
Now, since $D_2(u,x_0,r) \ge 0$ for every $r  \in (0,R)$ (Lemma \ref{lem: sign D}), we have
\[
\int_{B_{r_n}(x_0)} |\nabla u|^2 \ge \frac2{q} \int_{B_{r_n}(x_0)} F_{\lambda_+,\lambda_-}(u),
\]
and since $\tilde t < 2$ we infer that
\[
\int_{B_{r_n}(x_0)} |\nabla u|^2 \le \frac2{2-\tilde t} \int_{B_{r_n}(x_0)} \left( |\nabla u|^2 - \frac{\tilde t}{q} F_{\lambda_+,\lambda_-}(u) \right) =  \frac2{2-\tilde t} \, D_{\tilde t}(u,x_0,r_n).
\]
As a consequence
\[
\int_{B_1} |\nabla v_n|^2 = \frac{r_n \int_{B_{r_n}(x_0)} |\nabla u|^2}{ \int_{S_{r_n}(x_0)} u^2} \le \frac{2 r_n D_{\tilde t}(u,x_0,r_n)}{(2-\tilde t)H(u,x_0,r_n)} = \frac{2}{2-\tilde t} \, N_{\tilde t}(u,x_0,r_n) \le C
\]
for every $n$ large by Lemma \ref{lem: N t bou}. Therefore, using the compactness of the Sobolev embedding $H^1(B_1) \embed L^q(B_1)$ and of the trace operator $H^1(B_1) \embed L^2(S_1)$, we have that up to a subsequence $v_n \weak v$ weakly in $H^1(B_1)$, strongly in $L^q(B_1)$ and strongly in $L^2(S_1)$. The limit $v$ satisfies
\begin{equation}\label{11 10 4}
\int_{S_1} v^2 =1 \quad \implies \quad v \not \equiv 0 \quad \implies \quad \lim_{n \to \infty} \int_{B_1} F_{\lambda_+,\lambda_-}(v_n) = \int_{B_1} F_{\lambda_+,\lambda_-}(v) > 0.
\end{equation}
On the other hand
\begin{equation}\label{10 11 3}
\begin{split}
\int_{B_1} F_{\lambda_+,\lambda_-}(v_n) &= \frac{1}{r_n^N \left( r_n^{1-N} H(u,x_0,r_n)\right)^\frac{q}2} \int_{B_{r_n}(x_0)} F_{\lambda_+,\lambda_-}(u) \\
&= \frac{r_n^{2 \bar \gamma-2} }{\left( r_n^{1-N} H(u,x_0,r_n)\right)^\frac{q}2} \cdot \frac{1}{r_n^{N-2+2 \bar \gamma}} \int_{B_{r_n}(x_0)} F_{\lambda_+,\lambda_-}(u).
\end{split}
\end{equation}
Since $\bar \gamma >2/(2-q)$, we have $2(2 \bar \gamma-2)/q>2 \bar \gamma$,
so that by Lemma \ref{lem: H infty}
\[ 
\frac{\left( r_n^{1-N} H(r_n)\right)^\frac{q}2}{r_n^{2 \bar \gamma-2} } = \left( \frac{H(r_n)}{r_n^{N-1 + \frac{2}{q}( 2 \bar \gamma-2)}}\right)^\frac{q}{2}\to +\infty.
\]
Therefore, coming back to \eqref{10 11 3} and recalling also Lemma \ref{lem: resto 11}, we infer that
\[
 \int_{B_1} F_{\lambda_+,\lambda_-}(v_n) \to 0,
\]
in contradiction with \eqref{11 10 4}.
\end{proof}

\begin{remark}
The previous argument is valid also in case $\lambda_-=0$, but it does not lead to a conclusive result. Indeed, assuming $\lambda_->0$ we could infer $\int_{B_1} F_{\lambda_+,\lambda_-}(v)>0$ from $v \not \equiv 0$, and this finally gives a contradiction. In case $\lambda_-=0$, we could only deduce that $v \le 0$ a.e. in $B_1$.
\end{remark}

\subsection{Vanishing order and non-degeneracy.}\label{sub: very strong} In this subsection we complete the proofs of Theorems \ref{thm: very strong} and of Theorem \ref{thm: non-deg}. Recall that we are considering the case when alternative ($ii$) in Proposition \ref{thm: blow-up pre} holds. We showed that, in such case, it is well defined the value $\bar \gamma$ given by Corollary \ref{cor: gamma tilde}, and that $\bar \gamma=2/(2-q)$. The main ingredient still missing is the following:

\begin{proposition}\label{prop: non-deg}
It results that
\[
\liminf_{r \to 0^+} \frac{\|u\|_{x_0,r}^2}{r^{2 \bar \gamma}} >0.
\]
\end{proposition}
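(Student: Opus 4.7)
The plan is to argue by contradiction. Suppose $\liminf_{r\to 0^+}\|u\|_{x_0,r}^2/r^{2\bar\gamma}=0$ and choose $r_n\to 0^+$ realizing this liminf. Set $\rho_n:=\|u\|_{x_0,r_n}$, so $\rho_n/r_n^{\bar\gamma}\to 0$, and consider the blow-up
\[
v_n(x):=\frac{u(x_0+r_nx)}{\rho_n},\qquad \|v_n\|_{0,1}=1,
\]
which, by the usual scaling, solves
\[
-\Delta v_n=\mu_n\bigl[\lambda_+(v_n^+)^{q-1}-\lambda_-(v_n^-)^{q-1}\bigr]\quad\text{in }B_{R/r_n},
\]
with the coefficient $\mu_n:=(\rho_n/r_n^{\bar\gamma})^{q-2}\to+\infty$ (using $q-2<0$ and the scaling relation $2+\bar\gamma(q-2)=0$). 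By the Poincar\'e and trace inequalities, $(v_n)$ is uniformly bounded in $H^1(B_1)$. Testing the PDE against $v_n\phi^2$ for $\phi\in C_c^\infty(B_1)$ with $\phi\equiv 1$ on $B_{1/2}$ and integrating by parts,
\[
\mu_n\int_{B_1}F(v_n)\phi^2=\int_{B_1}|\nabla v_n|^2\phi^2+2\int_{B_1}v_n\phi\,\nabla v_n\cdot\nabla\phi\le C,
\]
so $\int_{B_{1/2}}F(v_n)\to 0$, and hence $v_n\to 0$ in $L^q_{\rm loc}(B_1)$. Rellich--Kondrachov and the compactness of the trace $H^1(B_1)\hookrightarrow L^2(S_1)$ give $\int_{S_1}v_n^2\to 0$ and $\int_{B_1}|\nabla v_n|^2\to 1$.

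The key step is a scaling identity for the Weiss functional. A direct change of variables yields
\[
W_{\bar\gamma,2}(v_n,0,1)=K_n\cdot W_{\bar\gamma,2}(u,x_0,r_n),\qquad K_n:=\frac{r_n^{2\bar\gamma}}{\rho_n^2}\to+\infty.
\]
On one hand, since $\mu_nF(v_n)\ge 0$, we have the \emph{a priori} upper bound
\[
W_{\bar\gamma,2}(v_n,0,1)\le \int_{B_1}|\nabla v_n|^2-\bar\gamma\int_{S_1}v_n^2\le \|v_n\|_{0,1}^2=1.
\]
On the other hand, the monotonicity of $W_{\bar\gamma,2}(u,x_0,\cdot)$ (Corollary \ref{cor: W 2 mon}, applicable since Proposition \ref{gamma=2/(2-q)} gives $\bar\gamma=2/(2-q)$) together with Corollary \ref{cor: gamma tilde} shows that $m:=W_{\bar\gamma,2}(u,x_0,0^+)\in[0,+\infty)$ exists and $W_{\bar\gamma,2}(u,x_0,r_n)\ge m$ for every $n$. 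If $m>0$, then $K_n\cdot W_{\bar\gamma,2}(u,x_0,r_n)\ge K_n m\to+\infty$, contradicting the bound $\le 1$; this closes the argument in the main case.

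The genuine obstacle is the borderline case $m=0$. Here the plan is to first upgrade the integral smallness to a pointwise statement: the quantity $h(r):=H(u,x_0,r)/r^{N-1+2\bar\gamma}$ is monotone non-decreasing (as $h'=(2/r)W_{\bar\gamma,q}\ge(2/r)W_{\bar\gamma,2}\ge 0$ via \eqref{H e W}), so $h(r_n)\to 0$ forces $h(r)\to 0$ for \emph{all} $r\to 0^+$; a Moser-type iteration applied to the family $u(x_0+r\cdot)/r^{\bar\gamma}$ (whose $L^2$--mass on the unit ball is controlled by $h$) then yields the pointwise bound $\|u\|_{L^\infty(B_r(x_0))}=o(r^{\bar\gamma})$, hence $(1/r^{N-2+2\bar\gamma})\int_{B_r}F(u)\to 0$. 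The aim is then to bootstrap this decay by one more order, i.e.\ to deduce $|u|=o(r^{\bar\gamma+\delta})$ for some $\delta>0$, which combined with gradient estimates gives $W_{\gamma,2}(u,x_0,r)\to 0$ for some $\gamma\in(\bar\gamma,\bar\gamma+\delta)$, directly contradicting $W_{\gamma,2}(u,x_0,0^+)=-\infty$ (Corollary \ref{cor: gamma tilde}). Making this bootstrap quantitative, so as to rule out an intermediate ``super-$\bar\gamma$-flat'' logarithmic decay, is the main technical difficulty of the proof.
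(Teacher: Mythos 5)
Your setup and scaling computation are correct, but the proof as written is incomplete, and the sketched route for the remaining case appears blocked. Note that from the very estimates you derive, under the contradiction hypothesis one has $0\le m\le W_{\bar\gamma,2}(u,x_0,r_n)\le 1/K_n\to 0$, so $m=W_{\bar\gamma,2}(u,x_0,0^+)=0$ \emph{always} holds: the ``main case'' $m>0$ is vacuous, and the ``genuine obstacle'' $m=0$ is the whole proposition. Since you explicitly say you do not carry out the bootstrap closing the $m=0$ case, the argument is missing its essential step.

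More seriously, the sketched bootstrap faces a structural obstruction. If $|u|\lesssim \eps\, r^{\bar\gamma}$ on $B_r(x_0)$, then $|\Delta u|\lesssim \eps^{q-1}r^{\bar\gamma(q-1)}$, and a Caffarelli--Friedman-type integration raises the exponent to $\bar\gamma(q-1)+2=\bar\gamma(q-1)+\bar\gamma(2-q)=\bar\gamma$: the exponent $\bar\gamma=2/(2-q)$ is exactly the fixed point of the natural iteration (which is why Proposition \ref{thm: blow-up pre} only produces $r^{\bar\gamma-\eps}$ bounds, for every $\eps>0$), and since $q-1<1$ one also has $\eps^{q-1}\ge\eps$ for small $\eps$, so the multiplicative constant does not improve either. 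Moreover there genuinely exist non-trivial $\bar\gamma$-homogeneous solutions (Theorem \ref{thm: multiple}) with $\cO=\bar\gamma$ and $\|u\|_{x_0,r}\asymp r^{\bar\gamma}$; the proposition must rule out an \emph{oscillation} of $\|u\|_{x_0,r}/r^{\bar\gamma}$ down to zero along a subsequence, not a uniform faster decay, and a pointwise bootstrap does not address oscillation.

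The paper closes the $m=0$ case by a different mechanism. Having established $W_{\bar\gamma,2}(u,x_0,0^+)=0$, one shows $D_2\ge 0$, $H>0$, that $H/r^{N-1+2\bar\gamma}$ is non-decreasing with limit $0$, and that $\liminf_{r\to 0}H/r^{N-1+2\gamma}=+\infty$ for every $\gamma>\bar\gamma$. The key replacement of the bootstrap is an Almgren-type monotonicity formula for the ratio $r\mapsto r^{N-1+2\bar\gamma}W_{\bar\gamma,2}(u,x_0,r)/H(u,x_0,r)$, obtained via Cauchy--Schwarz on spheres, combined with a $\liminf$-selection for the analogous ratio with $W_{\bar\gamma,q}$. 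This yields a sequence $r_m\to 0$ along which $v_{r_m}$ converges weakly to $0$ in $H^1(B_1)$, and a further estimate then forces $\int_{B_1}|\nabla v_{r_m}|^2\to 0$ as well, contradicting $\|v_{r_m}\|_{0,1}=1$. You should adopt this monotonicity mechanism rather than pursuing a pointwise decay bootstrap.
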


Suppose by contradiction that for a sequence $0<r_n \to 0^+$ there holds
\begin{equation}\label{absurd non deg}
\lim_{n \to \infty} \frac{\|u\|_{x_0,r_n}^2}{r_n^{2 \bar \gamma}} =0.
\end{equation}
We often let $\|\cdot\|_r:= \|\cdot\|_{x_0,r}$ for the sake of brevity and, for any $r \in (0,R)$, we define
\begin{equation}\label{def: vr non deg}
v_{r}(x):= \frac{u(x_0+rx)}{\|u\|_{x_0,r}} \quad \implies \quad \|v_r\|_{0,1} = 1 \quad \text{for every $r \in (0,R)$}.
\end{equation}
The family $\{v_r: r \in (0,R)\}$ is then bounded in $H^1(B_1)$, hence in $L^q(B_1)$, and in particular
\begin{align*}
\lim_{n \to \infty} W_{\bar \gamma,2}(u,x_0,r_n)   & = \lim_{n \to \infty} \left[ \frac{\|u\|_{r_n}^2}{r_n^{2 \bar \gamma}} \left( \int_{B_1} |\nabla v_{r_n}|^2  - \bar \gamma \int_{S_1} v_{r_n}^2\right) \right. \\
& \hphantom{=\lim_{n \to \infty} \bigg[ \frac{\|u\|_{r_n}^2}{r_n^{2 \bar \gamma}}} 
\left.  - \frac2q \left(\frac{\|u\|_{r_n}^2}{r_n^{2 \bar \gamma}}\right)^{\frac{q}{2}} \int_{B_1} F_{\lambda_+,\lambda_-}(v_{r_n})\right] = 0.
\end{align*}
But the limit $W_{\bar \gamma,2}(u,x_0,0^+)$ exists by monotonicity, see Corollary \ref{cor: W 2 mon}, and hence we proved that $W_{\bar \gamma,2}(u,x_0,0^+) = 0$. This implies, always by monotonicity, that $W_{\bar \gamma,q}(u,x_0,r) \ge W_{\bar \gamma,2}(u,x_0,r) \ge 0$ for every $r \in (0,R)$. Several consequences can be derived by this fact, arguing as in the previous subsection: firstly, as in Lemma \ref{lem: sign D}, we have that $D_2(u,x_0,r) \ge 0$ and $H(u,x_0,r)>0$ for every $r \in (0,R)$. Thus, as observed in Remark \ref{rem: H infty}, we infer that 
\begin{equation}\label{eq: H infty}
\liminf_{r \to 0^+} \frac{H(u,x_0,r)}{r^{N-1+2 \gamma}} = +\infty \quad \text{for every $\gamma > \bar \gamma$}.
\end{equation}
Furthermore, by \eqref{H e W} the function $r \mapsto H(u,x_0,r)/r^{N-1+2 \bar \gamma}$ is monotone non-decreasing, and has a limit as $r \to 0^+$. But
\[
0 < \frac{H(u,x_0,r_n)}{r_n^{N-1+2 \bar \gamma}} \le \frac{\|u\|_{x_0,r_n}^2}{r_n^{2 \bar \gamma}}  \to 0 \quad \text{as $n \to \infty$, by \eqref{absurd non deg}},
\]
and hence
\begin{equation}\label{23 11 1}
\lim_{r \to 0^+} \frac{H(u,x_0,r)}{r^{N-1+2 \bar \gamma}} = 0.
\end{equation}

\begin{lemma}\label{liminf W q su H}
It results that 
\[
\liminf_{r \to 0^+} \frac{r^{N-1+2 \bar \gamma} \, W_{\bar \gamma,q}(u,x_0,r) }{H(u,x_0,r)}=0.
\]
\end{lemma}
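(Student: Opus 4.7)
The idea is to recast the stated $\liminf$ as the $\liminf$ of a logarithmic derivative of the normalized height function, and then to obtain a contradiction with the blow-up relation \eqref{eq: H infty}. Set
\[
h(r) := \frac{H(u,x_0,r)}{r^{N-1+2\bar\gamma}}.
\]
By the identity \eqref{H e W}, $h$ is absolutely continuous on $(0,R)$ with $h'(r) = (2/r)\,W_{\bar\gamma,q}(u,x_0,r)$, so the quantity appearing in the lemma coincides with $r\,h'(r)/(2\,h(r))$. From the discussion preceding the lemma we already know that $W_{\bar\gamma,q}(u,x_0,r) \ge W_{\bar\gamma,2}(u,x_0,r) \ge 0$ on $(0,R)$, hence $h$ is monotone non-decreasing and strictly positive; moreover $h(r)\to 0$ as $r\to 0^+$ by \eqref{23 11 1}. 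The lemma thus reduces to showing that $\liminf_{r\to 0^+} r\,h'(r)/h(r) = 0$.

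Assume, by contradiction, that there exist $c>0$ and $r_0\in(0,R)$ such that $r\,h'(r)/h(r) \ge c$ for a.e. $r\in(0,r_0)$. Since $\log h$ is absolutely continuous on $(0,r_0)$ with derivative $h'/h$, integration on $(r,r_0)$ gives
\[
\log h(r_0) - \log h(r) \;\ge\; c\bigl(\log r_0 - \log r\bigr),
\]
so that $h(r) \le h(r_0)\,(r/r_0)^c$ for every $r\in(0,r_0)$; equivalently
\[
H(u,x_0,r) \;\le\; C\, r^{N-1+2\bar\gamma+c} \qquad \text{for every } r\in(0,r_0),
\]
with $C$ depending only on $r_0$ and $h(r_0)$. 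Fixing any $\gamma_\ast\in(\bar\gamma,\,\bar\gamma+c/2)$ we conclude that the quotient $H(u,x_0,r)/r^{N-1+2\gamma_\ast}$ stays bounded as $r\to 0^+$. This directly contradicts \eqref{eq: H infty} (Lemma \ref{lem: H infty} extended via Remark \ref{rem: H infty} to the present value $\bar\gamma = 2/(2-q)$), which asserts that this very quotient must diverge to $+\infty$ for any $\gamma_\ast>\bar\gamma$.

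The only mildly delicate point is the regularity justification. Since $W_{\bar\gamma,q}(u,x_0,\cdot)$ need not be continuous, $h'$ is only defined almost everywhere; however \eqref{H e W} presents $h$ as the primitive of an $L^1_{\loc}$ function on $(0,R)$, hence absolutely continuous, and both the pointwise bound $h'/h \ge c/r$ and its integration are then standard. Apart from this technicality, the entire argument rests solely on the two facts established in the preceding pages under the standing contradiction hypothesis \eqref{absurd non deg}: namely $W_{\bar\gamma,q}\ge 0$ (which monotonicity-wise gives the existence and the sign of $h$) and the vanishing \eqref{23 11 1}. I do not anticipate any serious obstacle beyond these bookkeeping issues.
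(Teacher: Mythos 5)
Your proposal is correct and coincides in essence with the paper's own proof: both argue by contradiction, rewrite the quantity in question as (half) the logarithmic derivative of $h(r)=H(u,x_0,r)/r^{N-1+2\bar\gamma}$ via \eqref{H e W}, integrate the resulting differential inequality to obtain a polynomial upper bound $H(r)\le C\,r^{N-1+2\gamma_\ast}$ for some $\gamma_\ast>\bar\gamma$, and conclude by contradicting \eqref{eq: H infty}. The only differences are cosmetic (explicit introduction of $h$, bookkeeping of the factor $2$), and the regularity worry at the end is unnecessary since $u\in C^{1,\alpha}$ makes $W_{\bar\gamma,q}$ continuous in $r$.
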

\begin{proof}
If the thesis does not hold, then there exist $\eps>0$ and $r_0 \in (0,R)$ such that
\[
\frac{r^{N-1+2 \bar \gamma} W_{\bar \gamma,q}(r) }{H(r)} \ge \eps \quad \text{for every $r \in (0,r_0)$}.
\]
As a consequence, by \eqref{H e W},
\[
\frac{d}{dr}\log\left(\frac{H(r)}{r^{N-1+2\bar \gamma}}\right) = \frac{2}{r}\cdot\frac{r^{N-1+2 \bar \gamma} W_{\bar \gamma,q}(r) }{H(r)} \ge \frac{2\eps}{r}
\]
for $r \in (0,r_0)$, and integrating we deduce that
\[
\frac{H(r)}{r^{N-1+2(\bar \gamma+\eps)}} \le \frac{H(r_0)}{r_0^{N-1+2(\bar \gamma+\eps)}}<+\infty \quad \text{for every $r \in (0,r_0)$}.
\]
In particular, 
\[
\limsup_{r \to 0^+} \frac{H(r)}{r^{N-1+2(\bar \gamma+\eps)}}<+\infty,
\]
in contradiction with \eqref{eq: H infty}.
\end{proof}

\begin{lemma}\label{lim W 2 su H}
It results that 
\[
\lim_{r \to 0^+} \frac{r^{N-1+2 \bar \gamma} \, W_{\bar \gamma,2}(u,x_0,r) }{H(u,x_0,r)}=0.
\]
\end{lemma}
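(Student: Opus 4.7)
The identity \eqref{W e N}, valid on $(0,R)$ since $H>0$ there (Lemma \ref{lem: sign D}-style consequence of $W_{\bar\gamma,2}(0^+)=0$), allows me to recast the target as
\[
\lim_{r\to 0^+}\bigl(N_2(u,x_0,r)-\bar\gamma\bigr)=0.
\]
Writing $f(r):=H(r)/r^{N-1+2\bar\gamma}$, this is the same as $W_{\bar\gamma,2}(r)/f(r)\to 0$.

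First I would establish the one-sided inequality $\liminf_{r\to 0^+}(N_2-\bar\gamma)\geq 0$. By Proposition \ref{gamma=2/(2-q)}, $\bar\gamma=2/(2-q)$, so Corollary \ref{cor: W 2 mon} gives that $W_{\bar\gamma,2}$ is monotone non-decreasing. Coupled with $W_{\bar\gamma,2}(0^+)=0$, this forces $W_{\bar\gamma,2}(r)\geq 0$ for every $r\in(0,R)$, hence $N_2(r)\geq\bar\gamma$. The complementary inequality $\liminf(N_2-\bar\gamma)\leq 0$ is immediate from Lemma \ref{liminf W q su H} and the comparison $W_{\bar\gamma,q}\geq W_{\bar\gamma,2}$ (equivalently $N_q\geq N_2$), which produces a sequence $\sigma_n\to 0^+$ with $N_2(\sigma_n)\to\bar\gamma$.

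The substantive content is the upgrade from liminf to limit. I would argue by contradiction: assume $\limsup_{r\to 0^+}(N_2-\bar\gamma)=\delta>0$ and select $\rho_n\to 0^+$ with $W_{\bar\gamma,2}(\rho_n)\geq \delta\, f(\rho_n)$, interlaced as $\sigma_{n+1}<\rho_n<\sigma_n$ (after extraction). Using the monotonicity of $W_{\bar\gamma,2}$ and that $W_{\bar\gamma,2}(\sigma_n)\leq W_{\bar\gamma,q}(\sigma_n)=o(f(\sigma_n))$ (Lemma \ref{liminf W q su H}), together with the explicit derivative formula at the critical parameters $(\bar\gamma,t)=(2/(2-q),2)$,
\[
W_{\bar\gamma,2}'(r)=\frac{2}{r^{N-2+2\bar\gamma}}\int_{S_r(x_0)}\!\left(u_\nu-\frac{\bar\gamma}{r}u\right)^{\!2},
\]
provided by Proposition \ref{prop: der W} (note the vanishing of the bulk term since $(N-2)\cdot 2-2N+2\bar\gamma(2-q)=0$), I would integrate $W_{\bar\gamma,2}'\geq 0$ on $[\sigma_n,\rho_n]$ to obtain a definite lower bound
\[
\int_{\sigma_n}^{\rho_n}\frac{2}{s^{N-2+2\bar\gamma}}\int_{S_s(x_0)}\!\Bigl(u_\nu-\tfrac{\bar\gamma}{s}u\Bigr)^{\!2}\,ds\;\geq\;\delta\,f(\rho_n)-o(1)\,f(\sigma_n).
\]
Comparing with the logarithmic-derivative identity
\[
(\log f)'(r)=\frac{2\,W_{\bar\gamma,q}(r)}{r\,f(r)},
\]
which follows from \eqref{H e W}, and using Cauchy--Schwarz on $S_s(x_0)$ to bound $W'_{\bar\gamma,2}(s)$ from above by $\frac{C}{s}\bigl(W_{\bar\gamma,q}(s)-\bar\gamma f(s)\bigr)$-type quantities, the positive gap will be shown to feed back as a lower bound on $\limsup_{r\to 0^+}W_{\bar\gamma,2}(r)$, contradicting $W_{\bar\gamma,2}(0^+)=0$.

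\emph{The main obstacle} is that $W_{\bar\gamma,q}$ is not monotone (in contrast to $W_{\bar\gamma,2}$), so the smallness of $W_{\bar\gamma,q}/f$ obtained along the special sequence $\sigma_n$ from Lemma \ref{liminf W q su H} does not transfer to intermediate radii for free. Overcoming this requires exploiting that the critical choice $\bar\gamma=2/(2-q)$ kills the bulk remainder in $W'_{\bar\gamma,2}$, so the entire growth of $W_{\bar\gamma,2}$ is controlled by the \emph{same} boundary integral $\int_{S_s}(u_\nu-\bar\gamma u/s)^2$ that enters $W'_{\bar\gamma,q}$, allowing one to trade between the two functionals and close the loop.
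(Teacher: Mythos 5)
Your first two paragraphs are sound and match the paper: $W_{\bar\gamma,2}(r)\geq 0$ follows from monotonicity plus $W_{\bar\gamma,2}(0^+)=0$ (established right before the lemma), giving $N_2(r)\geq\bar\gamma$; and Lemma \ref{liminf W q su H} together with $W_{\bar\gamma,2}\leq W_{\bar\gamma,q}$ gives $\liminf(N_2-\bar\gamma)=0$.

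The upgrade from $\liminf$ to $\lim$ is where the proposal has a genuine gap. Your plan is to assume $W_{\bar\gamma,2}(\rho_n)\geq\delta f(\rho_n)$ for some sequence and to "feed back a lower bound on $\limsup_{r\to 0^+}W_{\bar\gamma,2}(r)$, contradicting $W_{\bar\gamma,2}(0^+)=0$." But this target contradiction is out of reach: by \eqref{23 11 1} one has $f(r)=H(r)/r^{N-1+2\bar\gamma}\to 0$, so the inequality $W_{\bar\gamma,2}(\rho_n)\geq\delta f(\rho_n)$ is perfectly compatible with $W_{\bar\gamma,2}(r)\to 0$. Nothing in the integration of $W'_{\bar\gamma,2}\geq 0$ on $[\sigma_n,\rho_n]$, nor in the comparison with $(\log f)'=\tfrac{2}{r}W_{\bar\gamma,q}/f$, produces a positive lower bound that survives the limit, and the Cauchy--Schwarz step you gesture at is not specified in a way that closes the argument. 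The missing idea is to abandon the contradiction scheme and work directly with the quotient $g(r):=W_{\bar\gamma,2}(r)/f(r)=N_2(r)-\bar\gamma$. The paper differentiates $g$ by the quotient rule, inserts the two derivative identities
\[
f'(r)=\frac{2}{r}\,W_{\bar\gamma,q}(r),
\qquad
W_{\bar\gamma,2}'(r)=\frac{2}{r^{N-2+2\bar\gamma}}\int_{S_r(x_0)}\Bigl(u_\nu-\tfrac{\bar\gamma}{r}u\Bigr)^{2}
\]
(the bulk term vanishing precisely because $\bar\gamma=2/(2-q)$, as you noted), uses $0\leq W_{\bar\gamma,2}\leq W_{\bar\gamma,q}$ to replace $W_{\bar\gamma,2}(r)W_{\bar\gamma,q}(r)$ by $W_{\bar\gamma,q}(r)^2$, and then writes $W_{\bar\gamma,q}(r)$ via \eqref{D bou} as a pure boundary integral. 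After these substitutions $f(r)^2 g'(r)$ reduces exactly to the Cauchy--Schwarz defect $\int_{S_r}u_\nu^2\int_{S_r}u^2-(\int_{S_r}u\,u_\nu)^2\geq 0$. Hence $g$ is monotone and its limit exists; since the $\liminf$ is $0$, the limit is $0$. So you have the right ingredients (killed bulk term, Cauchy--Schwarz, log-derivative of $f$), but they need to be assembled into a monotonicity statement for the ratio itself, not into a contradiction.
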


\begin{proof}
In this proof we simply write $B_r$ and $S_r$ instead of $B_r(x_0)$ and $S_r(x_0)$, to ease the notation. Using \eqref{H e W} and \eqref{der W} in case $t=2$ and $\gamma = \bar \gamma = 2/(2-q)$, we compute the derivative
\begin{align*}
\left( \frac{r^{N-1+2 \bar \gamma} \, W_{\bar \gamma,2}(r) }{H(r)} \right)' & = \frac{ \frac{H(r)}{r^{N-1+2 \bar \gamma}} \cdot W_{\bar \gamma,2}'(r) - W_{\bar \gamma,2}(r) \left( \frac{H(r)}{r^{N-1+2 \bar \gamma}}\right)'  }{\left( \frac{H(r)}{r^{N-1+2 \bar \gamma}} \right)^2} \\
& = 
 \frac{\frac{H(r)}{r^{N-1+2 \bar \gamma}} 
\cdot \frac{2}{r^{N-2+2 \bar \gamma}} \int_{S_r} \left( u_\nu- \frac{\bar \gamma}{r}u \right)^2  
 - \frac{2}{r} W_{\bar \gamma,2}(r) W_{\bar \gamma,q}(r)}
 {\left( \frac{H(r)}{r^{N-1+2 \bar \gamma}} \right)^{2}}
\end{align*}
Now, $0 \le W_{\bar \gamma,2}(r) \le W_{\bar \gamma,q}(r)$ for every $r \in (0,R)$, and using also \eqref{D bou} we infer that
\begin{align*}
&\left( \frac{H(r)}{r^{N-1+2 \bar \gamma}} \right)^{2}  \left( \frac{r^{N-1+2 \bar \gamma} \, W_{\bar \gamma,2}(r) }{H(r)} \right)'   \\
& \qquad  \ge \frac{H(r)}{r^{N-1+2 \bar \gamma}} 
\cdot \frac{2}{r^{N-2+2 \bar \gamma}} \int_{S_r} \left( u_\nu- \frac{\bar \gamma}{r}u \right)^2 -  \frac{2}{r} \Big( W_{\bar \gamma,q}(r)\Big)^2 \\
& \qquad = \frac{2}{r^{2N-3+4 \bar \gamma}}  \int_{S_r} \left( u_\nu- \frac{\bar \gamma}{r}u \right)^2  \int_{S_r} u^2 - \frac{2}{r} \left( \frac{1}{r^{N-2+2 \bar \gamma}}\int_{S_r} u u_\nu  - \frac{\bar \gamma}{r^{N-1+2 \bar \gamma}} \int_{S_r} u^2\right)^2 \\
 & \qquad = \frac{2}{r^{2N-3+4 \bar \gamma}} \left[ \int_{S_r} u_\nu^2 \int_{S_r} u^2 - \left(\int_{S_r} u  u_\nu \right)^2\right] \ge 0
\end{align*}
by the Cauchy-Schwarz inequality. As a consequence, recalling that $H(r)>0$ for any $r \in (0,R)$, there exists the limit
\[
\lim_{r \to 0^+} \frac{r^{N-1+2 \bar \gamma} \, W_{\bar \gamma,2}(r) }{H(r)}.
\]
As $0 \le   W_{\bar \gamma,2}(r) \le W_{\bar \gamma,q}(r)$ for every $r \in (0,R)$, the fact that such limit is $0$ follows directly by Lemma \ref{liminf W q su H}.
 \end{proof}

By Lemma \ref{liminf W q su H}, there exists a sequence $0<r_m \to 0^+$ such that
\[
\frac{r_m^{N-1+2 \bar \gamma} \, W_{\bar \gamma,q}(u,x_0,r_m) }{H(u,x_0,r_m)} \to 0
\]
as $m \to \infty$. Notice that this sequence could be different from $\{r_n\}$. In any case, combining Lemmas \ref{liminf W q su H} and \ref{lim W 2 su H}, we deduce that
\begin{equation}\label{u q 0}
 \frac{r_m}{ H(r_m)} \int_{B_{r_m}(x_0)} F_{\lambda_+,\lambda_-}(u) =  \frac{q \, r_m^{N-1+2 \bar \gamma}}{(2-q) H(r_m)} \left( W_{\bar \gamma,q}(r_m) - W_{\bar \gamma,2}(r_m) \right) \to 0
\end{equation}
as $m \to \infty$. Now, let $v_m:= v_{r_m}$ defined by \eqref{def: vr non deg}. Up to a subsequence, $\{v_m\}$ converges weakly in $H^1(B_1)$ and strongly in $L^q(B_1)$ to a limit function $\bar v$.

\begin{lemma}\label{lem: weak limit 0}
It results that $\bar v \equiv 0$ in $B_1$.
\end{lemma}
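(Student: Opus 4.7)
The plan is to show that $\int_{B_1} F_{\lambda_+,\lambda_-}(v_m) \to 0$ along $\{r_m\}$. Since $\{v_m\}$ is bounded in $H^1(B_1)$, Rellich compactness upgrades the weak convergence $v_m \rightharpoonup \bar v$ to strong convergence in $L^q(B_1)$, so passing to the limit gives $\int_{B_1} F_{\lambda_+,\lambda_-}(\bar v) = 0$. The hypothesis $\lambda_+, \lambda_- > 0$ then forces $\bar v \equiv 0$ a.e.\ in $B_1$, because $F_{\lambda_+,\lambda_-}(s) \ge \min\{\lambda_+,\lambda_-\}|s|^q$.

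\textbf{Key identity.} Rescaling the $F$-integral and inserting an artificial factor $H(r_m)/r_m$ gives
\begin{equation*}
\int_{B_1} F_{\lambda_+,\lambda_-}(v_m) \;=\; \frac{H(r_m)}{r_m^{N+1}\,\|u\|_{x_0,r_m}^q} \,\cdot\, \frac{r_m}{H(r_m)} \int_{B_{r_m}(x_0)} F_{\lambda_+,\lambda_-}(u).
\end{equation*}
The second factor vanishes by \eqref{u q 0}. For the first, the elementary bound $H(r_m)/r_m^{N-1} \le \|u\|_{x_0,r_m}^2$ combined with the identity $(2-q)\bar\gamma = 2$ (valid since $\bar\gamma = 2/(2-q)$ by Proposition~\ref{gamma=2/(2-q)}) yields
\begin{equation*}
\frac{H(r_m)}{r_m^{N+1}\,\|u\|_{x_0,r_m}^q} \;\le\; \frac{\|u\|_{x_0,r_m}^{2-q}}{r_m^{2}} \;=\; \left(\frac{\|u\|_{x_0,r_m}^{2}}{r_m^{2\bar\gamma}}\right)^{(2-q)/2},
\end{equation*}
so it suffices to bound $\|u\|_{x_0,r_m}^{2}/r_m^{2\bar\gamma}$ uniformly in $m$.

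\textbf{Controlling the quotient.} Expanding the definitions of $\|\cdot\|_{x_0,r}$, $D_2$, and $W_{\bar\gamma,2}$ produces the algebraic decomposition
\begin{equation*}
\frac{\|u\|_{x_0,r_m}^{2}}{r_m^{2\bar\gamma}} \;=\; W_{\bar\gamma,2}(r_m) \;+\; (\bar\gamma+1)\frac{H(r_m)}{r_m^{N-1+2\bar\gamma}} \;+\; \frac{2}{q\, r_m^{N-2+2\bar\gamma}}\int_{B_{r_m}(x_0)} F_{\lambda_+,\lambda_-}(u).
\end{equation*}
The first summand tends to $0$ by monotonicity of $W_{\bar\gamma,2}$ together with $W_{\bar\gamma,2}(0^+) = 0$; the second by \eqref{23 11 1}; and the third equals $\tfrac{q}{2-q}\bigl(W_{\bar\gamma,q}(r_m) - W_{\bar\gamma,2}(r_m)\bigr)$, which vanishes because $W_{\bar\gamma,q}(r_m) \to 0$ upon writing
\begin{equation*}
W_{\bar\gamma,q}(r_m) \;=\; \frac{r_m^{N-1+2\bar\gamma}\,W_{\bar\gamma,q}(r_m)}{H(r_m)} \,\cdot\, \frac{H(r_m)}{r_m^{N-1+2\bar\gamma}},
\end{equation*}
the two pieces vanishing by Lemma~\ref{liminf W q su H} and \eqref{23 11 1}, respectively.

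\textbf{Main obstacle.} The delicate step is the third summand: it is not monotone in $r$, and so its vanishing does not follow from a direct monotonicity argument but depends essentially on the specific subsequence $\{r_m\}$ supplied by Lemma~\ref{liminf W q su H}. The cancellation between the two diverging-in-principle ratios $W_{\bar\gamma,q}/(H/r^{N-1+2\bar\gamma})$ and $H/r^{N-1+2\bar\gamma}$ is what ultimately drives the loss of $F$-mass in the blow-up and thereby the triviality of $\bar v$.
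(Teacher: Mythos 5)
Your proof is correct and reaches the same conclusion, but it takes a different bookkeeping route than the paper's. The paper multiplies $\int_{B_1} F_{\lambda_+,\lambda_-}(v_m)$ by the explicit diverging prefactor $\bigl(r_m^{N-1+2\bar\gamma}/H(r_m)\bigr)^{(2-q)/2}$, uses the elementary bound $\|u\|_{x_0,r_m}^2 \ge H(r_m)/r_m^{N-1}$ to estimate the resulting product directly by $\frac{r_m}{H(r_m)}\int_{B_{r_m}(x_0)} F_{\lambda_+,\lambda_-}(u)$, and then invokes \eqref{u q 0} together with \eqref{23 11 1}: the product tends to zero while the prefactor tends to $+\infty$, so the $F$-integral must vanish. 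You instead factor the same rescaling identity and prove that the quotient $H(r_m)/(r_m^{N+1}\|u\|_{x_0,r_m}^q)$ is bounded (in fact vanishes) by decomposing $\|u\|_{x_0,r_m}^2/r_m^{2\bar\gamma}$ into three Weiss-type summands and sending each to zero; in particular you re-extract $W_{\bar\gamma,q}(r_m)\to 0$ from Lemma \ref{liminf W q su H} and \eqref{23 11 1}. This is sound, but it re-derives at the level of this lemma some control that the paper has already packaged into \eqref{u q 0}; the paper's version is shorter because the crude lower bound on $\|u\|_{x_0,r_m}^2$ already supplies the diverging factor that \eqref{23 11 1} controls. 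Your remark that the vanishing of the third summand hinges on the specific subsequence $\{r_m\}$, since $W_{\bar\gamma,q}$ is not monotone, is accurate, and your byproduct that $\|u\|_{x_0,r_m}^2/r_m^{2\bar\gamma}\to 0$ along $\{r_m\}$ is a valid (if stronger than needed) observation. One minor slip: the third summand equals $\tfrac{2}{2-q}\bigl(W_{\bar\gamma,q}(r_m)-W_{\bar\gamma,2}(r_m)\bigr)$, not $\tfrac{q}{2-q}(\cdots)$, since $W_{\bar\gamma,q}(r)-W_{\bar\gamma,2}(r)=\tfrac{2-q}{q\,r^{N-2+2\bar\gamma}}\int_{B_r(x_0)}F_{\lambda_+,\lambda_-}(u)$; this does not affect the conclusion.
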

\begin{proof}
Recall that $\bar \gamma=2/(2-q)$. Due to equation \eqref{u q 0} and the fact that $\|u\|_{r_m}^2 \ge H(r_m)/r_m^{N-1}$, we have 
\begin{align*}
0 \le   \left( \frac{r_m^{N-1+2 \bar \gamma}}{ H(r_m)} \right)^{\frac{2-q}{2}} &\int_{B_1} F_{\lambda_+,\lambda_-}(v_m) 
= \big( r_m^{2 \bar \gamma}\big)^{\frac{2-q}{2}} \frac{r_m^{N-1}}{H(r_m)}  \left(\frac{H(r_m)}{r_m^{N-1}}\right)^{\frac{q}{2}} \int_{B_1} F_{\lambda_+,\lambda_-}(v_m) \\
&  \le  \frac{r_m^{N+1}  \|u\|_{r_m}^q}{ H(r_m)}\int_{B_1}F_{\lambda_+,\lambda_-}(v_m)  = \frac{r_m}{ H(r_m)} \int_{B_{r_m}(x_0)} F_{\lambda_+,\lambda_-}(u) \to 0
\end{align*}
as $m \to \infty$. Since we showed that $H(r_m)/r_m^{N-1+2 \bar \gamma} \to 0$, see \eqref{23 11 1}, we deduce by definition of $F_{\lambda_+,\lambda_-}$ that $v_m \to 0$ strongly in $L^q(B_1)$.
\end{proof}

We are ready for the:

\begin{proof}[Conclusion of the proof of Proposition \ref{prop: non-deg}]
Since $W_{\bar \gamma,q}(r) \ge W_{\bar \gamma,2}(r) \ge 0$ and $\|u\|_{r_m}^2 \ge H(r_m)/r_m^{N-1}$, by Lemmas \ref{liminf W q su H} and \ref{lim W 2 su H} we infer that
\begin{equation}\label{lim W 2 su norma}
\lim_{m \to \infty} \frac{r_m^{2 \bar \gamma} \, W_{\bar \gamma,q}(r_m) }{\|u\|_{r_m}^2} = 0 = \lim_{m \to \infty} \frac{r_m^{2 \bar \gamma} \, W_{\bar \gamma,2}(r_m) }{\|u\|_{r_m}^2},
\end{equation}
whence it follows as above that
\begin{equation}\label{u q 0 2}
0 = \lim_{m \to \infty} \frac{1}{r_m^{N-2}\|u\|_{r_m}^{2}} \int_{B_{r_m}(x_0)} F_{\lambda_+,\lambda_-}(u) = \lim_{m \to \infty} \frac{r_m^2}{\|u\|_{r_m}^{2-q}} \int_{B_1} F_{\lambda_+,\lambda_-}(v_m).
\end{equation}
Moreover, since $v_m \weak 0$ weakly in $H^1(B_1)$ by Lemma \ref{lem: weak limit 0}, the compactness of the trace operator $H^1(B_1) \hookrightarrow L^2(S_1)$ yields 
\begin{equation}\label{trace to 0}
\int_{S_1} v_m^2 \to 0 \quad \text{as $m \to \infty$}. 
\end{equation}
Therefore, collecting \eqref{lim W 2 su norma}-\eqref{trace to 0} we obtain
\begin{align*}
0 & = \lim_{m \to \infty} \frac{r_m^{2 \bar \gamma} \, W_{\bar \gamma,2}(r_m) }{\|u\|_{r_m}^2} \\
& = \lim_{m \to \infty} \left( \int_{B_1} |\nabla v_m|^2 - \frac{2r_m^2}{q \|u\|_{r_m}^{2-q}} \int_{B_1} F_{\lambda_+,\lambda_-}(v_m) - \bar \gamma \int_{S_1} v_m^2 \right) \\
& = \lim_{m \to \infty} \int_{B_1} |\nabla v_m|^2,
\end{align*}
and consequently $\|v_m\|_{0,1}^2 \to 0$ as $m \to \infty$. This contradicts the fact that, by definition, $\|v_m\|_{0,1}^2 = 1$ for every $m$, and completes the proof of Proposition \ref{prop: non-deg}.
\end{proof}

Having established Propositions \ref{gamma=2/(2-q)} and \ref{prop: non-deg}, Theorems \ref{thm: very strong} and \ref{thm: non-deg} follow easily.

\begin{proof}[Proof of Theorems \ref{thm: very strong} and \ref{thm: non-deg}]
We know that, in case alternative ($ii$) in Proposition \ref{thm: blow-up pre} holds, the transition exponent is $\bar \gamma=2/(2-q)$, and 
\[
\liminf_{r \to 0^+} \frac{\|u\|_{x_0,r}^2}{r^{2 \bar \gamma}} >0.
\]
By Definition \ref{def: order} and recalling also Proposition \ref{thm: blow-up pre}, this implies both that $\cO(u,x_0) = \bar \gamma$, and that the non-degeneracy condition is fulfilled.
\end{proof}

We conclude this section with a characterization of the $H^1$-vanishing order in terms of the Weiss-type functionals $W_{\gamma,2}$.

\begin{proposition}\label{prop: o e weiss}
Let $u$ be a solution to \eqref{eq}, and let $x_0 \in Z(u)$. The value $\cO(u,x_0)$ is characterized by 
\[
\cO(u,x_0) = \inf \left\{ \gamma>0: \lim_{r \to 0^+} W_{\gamma,2}(u,x_0,r)=-\infty \right\}.
\]
Moreover
\[
\lim_{r \to 0^+} W_{\gamma,2}(u,x_0,r) = \begin{cases} 0 & \text{if $0<\gamma < \cO(u,x_0)$} \\
-\infty & \text{if $\gamma > \cO(u,x_0)$}. \end{cases}
\]
\end{proposition}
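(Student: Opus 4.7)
The plan is to prove the two limit statements directly; the infimum characterization is an immediate consequence, since $\cO(u,x_0)$ then sits exactly at the transition between the regimes $W_{\gamma,2}(u,x_0,0^+)=0$ and $W_{\gamma,2}(u,x_0,0^+)=-\infty$. First I would split the argument according to the dichotomy of Proposition \ref{thm: blow-up pre}, treating the polynomial case and the critical case separately.

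For case (i) of Proposition \ref{thm: blow-up pre} one has $\cO(u,x_0)=d\in\{1,\dots,\beta_q\}$ and $u(x)=P_{x_0}(x-x_0)+\Gamma_{x_0}(x)$ near $x_0$, with $P_{x_0}$ a homogeneous harmonic polynomial of degree $d$. Using the Pohozaev/Euler identity $\int_{B_1}|\nabla P_{x_0}|^2=d\int_{S_1}P_{x_0}^2$ for such polynomials, together with the pointwise bounds on $\Gamma_{x_0}$ and with $|u|\le C|x-x_0|^d$, I would substitute into the definition of $W_{\gamma,2}$ to obtain an asymptotic expansion of the form
\[
W_{\gamma,2}(u,x_0,r)=(d-\gamma)\left(\int_{S_1}P_{x_0}^2\right)r^{2(d-\gamma)}+O\!\left(r^{2(d-\gamma)+\delta}\right)+O\!\left(r^{2+qd-2\gamma}\right),
\]
where the last $O$-term controls $\tfrac{2}{q\,r^{N-2+2\gamma}}\int_{B_r(x_0)}F_{\lambda_+,\lambda_-}(u)$. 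Because $d\le\beta_q<2/(2-q)$, the strict inequality $2+qd>2d$ forces both error terms to carry strictly larger powers of $r$ than the leading term, so the leading term governs the asymptotics: for $\gamma<d$ everything vanishes in the limit, while for $\gamma>d$ the leading term diverges to $-\infty$ (note $(d-\gamma)\int_{S_1}P_{x_0}^2<0$) and absorbs the errors.

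For case (ii), Theorem \ref{thm: very strong} provides $\cO(u,x_0)=2/(2-q)$, and Proposition \ref{gamma=2/(2-q)} tells us that the transition exponent $\bar\gamma$ constructed in Corollary \ref{cor: gamma tilde} coincides with $2/(2-q)$. The intermediate range $[2/(2-q),\bar\gamma)$ appearing in that corollary is then empty, and the statement of Corollary \ref{cor: gamma tilde} directly delivers $W_{\gamma,2}(u,x_0,0^+)=0$ for $\gamma\in(0,2/(2-q))$ and $W_{\gamma,2}(u,x_0,0^+)=-\infty$ for $\gamma>2/(2-q)$, which is exactly the desired dichotomy at $\cO(u,x_0)=2/(2-q)$.

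The only nontrivial point is the exponent bookkeeping in case (i): one must verify that the contribution of the nonlinearity $F_{\lambda_+,\lambda_-}(u)$ is genuinely of lower order than the polynomial part, which is precisely where the strict inequality $\beta_q<2/(2-q)$ enters. Apart from this, the argument is just a clean reassembly of the monotonicity information on $W_{\gamma,2}$ accumulated in Sections \ref{sec: pre}--\ref{sec: strong and non-deg}.
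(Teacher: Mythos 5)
Your proposal is correct and follows essentially the same route as the paper's proof: both split according to the dichotomy of Proposition \ref{thm: blow-up pre}, handle case ($i$) by substituting the Taylor expansion and the identity $\int_{B_1}|\nabla P_{x_0}|^2=d\int_{S_1}P_{x_0}^2$ into $W_{\gamma,2}$ (the paper uses the factored form $W_{\gamma,2}=\frac{H}{r^{N-1+2\gamma}}(N_2-\gamma)$ while you expand directly, but the exponent bookkeeping is identical), and handle case ($ii$) by invoking Proposition \ref{gamma=2/(2-q)} and Corollary \ref{cor: gamma tilde} together with $\cO(u,x_0)=2/(2-q)$ from Theorem \ref{thm: very strong}.
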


\begin{remark}
The proposition is saying that the vanishing order of $u$ in $x_0$ coincides with the exponent $\bar \gamma$ defining the transition in the limits of the Weiss-type functional. 
\end{remark}
\begin{proof}
Assume at first that case ($i$) in Proposition \ref{thm: blow-up pre} holds. By Definition \ref{def: order}, it follows straightforwardly that $\cO(u,x_0) = d_{x_0}$,. Moreover, since $d_{x_0}<2/(2-q)$, we have
\[
\frac{1}{r^{N-2}} \int_{B_r(x_0)} |\nabla u|^2 = \frac{1}{r^{N-2}} \int_{B_r} |\nabla P_{x_0}|^2 + o(r^{2d_{x_0}}) = r^{2d_{x_0}} \int_{B_1} |\nabla P_{x_0}|^2 + o(r^{2d_{x_0}}),
\]
\[
\frac{1}{r^{N-2}} \int_{B_r(x_0)} F_{\lambda_+,\lambda_-}(u) \le C r^{q d_{x_0} + 2} = o(r^{2d_{x_0}}),
\]
and
\[
\frac1{r^{N-1}} \int_{S_r(x_0)} u^2 = \frac1{r^{N-1}} \int_{S_r(x_0)} P_{x_0}^2 + o(r^{2d_{x_0}}) = r^{2 d_{x_0}} \int_{S_1} P_{x_0}^2 + o(r^{2d_{x_0}})
\]
as $r \to 0^+$. Therefore, there exists $C>0$ such that
\begin{align*}
W_{\gamma,2}(u,x_0,r) &= \frac{H(u,x_0,r)}{r^{N-1+2\gamma}} \Big( N_2(u,x_0,r)-\gamma \Big) \\
& = \frac1{r^{2 \gamma}}\left( C r^{2d_{x_0}} + o(r^{2d_{x_0}}) \right) \left( \frac{\int_{B_1} |\nabla P_{x_0}|^2}{\int_{S_1} P_{x_0}^2} -\gamma + o(1) \right) \\
& = \frac1{r^{2\gamma}}\left( C r^{2d_{x_0}} + o(r^{2d_{x_0}}) \right) \left( d_{x_0} - \gamma +o(1) \right)
\end{align*}
as $r \to 0^+$, where we used \eqref{W e N}, the expansion given by Proposition \ref{thm: blow-up pre}, and the well known fact that the Almgren frequency of a homogeneous harmonic polynomial coincides with the degree of homogeneity. 

It is now immediate to deduce that if $0<\gamma \le d_{x_0}$, then $W_{\gamma,2}(u,x_0,0^+) =0$, while if $\gamma > d_{x_0}$, then $W_{\gamma,2}(0^+) = -\infty$. That is, the transition exponent for the Weiss-type functional $W_{\gamma,2}$ coincides with $d_{x_0} = \cO(u,x_0)$, as claimed.

Let us suppose now that ($ii$) in Proposition \ref{thm: blow-up pre} holds. Then we proved that $\cO(u,x_0) = 2/(2-q)$, and in doing this we used the fact that the transition exponent is $\bar \gamma=2/(2-q)$ as well, see Proposition \ref{gamma=2/(2-q)}.
\end{proof}

\section{Upper semi-continuity of the vanishing order map}\label{sec: upper}

A key ingredient of the proofs of both Theorems \ref{thm: blow-up} and \ref{thm: struct sing} is the following upper semi-continuity property for the $H^1$-vanishing order map $x_0 \in Z(u) \mapsto \cO(u,x_0)$.

\begin{proposition}\label{prop: upper}
Let $1 \le q<2$, $\lambda_+, \lambda_->0$, $\{\mu_k\}$ be a bounded sequence of positive numbers, and let $\{v_k\} \subset H^1_{\loc}(B_3)$ be such that
\begin{equation}\label{eq upper}
-\Delta v_k = \mu_k\Big( \lambda_+ (v_k^+)^{q-1} -\lambda_- (v_k^-)^{q-1} \Big) \quad \text{in $B_3$}.
\end{equation}
Suppose that $p_k \in Z(v_k) \cap B_1$, and that $p_k \to \xi$, $\mu_k \to \mu \ge 0$ and $v_k \to \varphi$ in $W^{1,\infty}_{\loc}(B_3)$ as $k \to \infty$. Then
\[
\mathcal{O}(\varphi,\xi) \ge \limsup_{k \to \infty} \mathcal{O}(v_k,p_k).
\]
\end{proposition}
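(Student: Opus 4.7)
The plan is to argue by contradiction, exploiting the Weiss-type characterization $\mathcal{O}(u,x_0) = \inf\{\gamma>0 : W_{\gamma,2}(u,x_0,0^+) = -\infty\}$ proved in Proposition~\ref{prop: o e weiss}. By the classification in Theorem~\ref{thm: very strong}, the sequence $\mathcal{O}(v_k,p_k)$ takes values in the finite set $\{1,\dots,\beta_q,\frac{2}{2-q}\}$, so after passing to a subsequence I may assume $\mathcal{O}(v_k,p_k) = d_\ast$ is constant and equal to $\limsup_k \mathcal{O}(v_k,p_k)$. If $\varphi \equiv 0$ the desired inequality is trivial, so I assume $\varphi \not\equiv 0$ and set $d_\infty := \mathcal{O}(\varphi,\xi)$; the goal is to rule out $d_\ast > d_\infty$. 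As a preparatory step, one checks that for each fixed $r\in(0,\dist(\xi,\partial B_3))$ and each $\gamma>0$,
\[
H(v_k,p_k,r) \to H(\varphi,\xi,r),\qquad W_{\gamma,2}(v_k,p_k,r) \to W_{\gamma,2}(\varphi,\xi,r),
\]
which is immediate from the $W^{1,\infty}_{\loc}$ convergence $v_k\to\varphi$, the centre convergence $p_k\to\xi$, and $\mu_k\to\mu$.

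If $d_\infty \ge \frac{2}{2-q}$ there is nothing to prove. Otherwise $d_\infty \in \{1,\dots,\beta_q\}$ and, for any $\gamma \ge \frac{2}{2-q} > d_\infty$, Proposition~\ref{prop: o e weiss} gives $W_{\gamma,2}(\varphi,\xi,0^+)=-\infty$; for each $M>0$ I pick $\bar r(M)$ with $W_{\gamma,2}(\varphi,\xi,\bar r)<-2M$, so the continuity above yields $W_{\gamma,2}(v_k,p_k,\bar r)<-M$ for large $k$, and Corollary~\ref{cor: W 2 mon} (monotonicity of $W_{\gamma,2}$ in $r$ for $\gamma\ge \frac{2}{2-q}$) propagates this bound to all smaller radii. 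Letting $M\to\infty$ produces $W_{\gamma,2}(v_k,p_k,0^+)=-\infty$, hence $\mathcal{O}(v_k,p_k)\le\gamma$; optimizing $\gamma\searrow \frac{2}{2-q}$ only gives the trivial bound $d_\ast\le\frac{2}{2-q}$. The real content of the proposition is therefore to cut below the threshold $\frac{2}{2-q}$.

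To exclude the intermediate values $d_\infty < d_\ast \le \frac{2}{2-q}$, I would combine two quantitative ingredients drawn from the partial blow-up Proposition~\ref{thm: blow-up pre}. Applied to $\varphi$ at $\xi$, alternative (i) is available because $d_\infty\le\beta_q$, and it furnishes a nontrivial homogeneous harmonic polynomial $P_\xi$ of degree $d_\infty$ in the expansion $\varphi = P_\xi(\cdot-\xi)+\Gamma_\xi$, whence $\|\varphi\|_{\xi,r}^2 = c_0 r^{2 d_\infty}(1+o(1))$ with $c_0>0$; the continuity step then promotes this into the lower bound $\|v_k\|_{p_k,r_0}^2 \ge \tfrac{c_0}{2}\,r_0^{2 d_\infty}$ for every small fixed $r_0$ and all $k$ large enough. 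Applied to each $v_k$ at $p_k$, the same Proposition gives either a leading polynomial expansion of degree $d_\ast$ (alternative (i), if $d_\ast\le\beta_q$) or the sharp decay $|v_k(x)|\le C_{k,\eps}|x-p_k|^{d_\ast-\eps}$ (alternative (ii), if $d_\ast=\frac{2}{2-q}$); in either case $\|v_k\|_{p_k,r}^2 = O(r^{2 d_\ast - 2\eps})$ for $r$ below a threshold $\rho_k$ a priori depending on $k$, and if $\rho_k\ge\rho_0>0$ uniformly then testing at a fixed $r_0<\rho_0$ collides with the lower bound above as soon as $r_0$ is small, since $d_\ast>d_\infty$.

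The hard part is exactly the uniform control of $\rho_k$ and of the constants in Proposition~\ref{thm: blow-up pre} applied to the sequence $v_k$, and this is what I expect to absorb the bulk of the technical work. If $\rho_k\to 0$, I would rescale $\tilde v_k(x) := v_k(p_k+\rho_k x)/\|v_k\|_{p_k,\rho_k}$; the non-degeneracy Theorem~\ref{thm: non-deg} together with Corollary~\ref{cor: W 2 mon} forces $\tilde v_k$ to be bounded in $H^1_{\loc}(\R^N)$, and standard compactness, combined with the identification of the rescaled coefficient $\mu_k \rho_k^2 \|v_k\|_{p_k,\rho_k}^{q-2}$, produces a limit $\tilde v$ which is a globally defined $d_\ast$-homogeneous solution of a limiting equation (either harmonic when $d_\ast<\frac{2}{2-q}$, or of the form \eqref{eq mu} when $d_\ast=\frac{2}{2-q}$). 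Confronting this $d_\ast$-homogeneous limit with the $d_\infty$-homogeneous asymptotic profile of $\varphi$ at $\xi$ via the Weiss machinery of Section~\ref{sec: strong and non-deg} yields the desired contradiction, because the two homogeneity degrees cannot coexist as limits of the same rescaling of the family $v_k$ when $d_\ast\neq d_\infty$.
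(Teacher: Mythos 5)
Your reduction to finitely many constant values of $\cO(v_k,p_k)$, the observation that $W_{\gamma,2}(v_k,p_k,r)\to W_{\gamma,2}(\varphi,\xi,r)$ for each fixed $r$, and the correct diagnosis that monotonicity of $W_{\gamma,2}$ alone only yields the trivial bound $d_\ast\le 2/(2-q)$ are all sound. You also correctly put your finger on the crux: one needs a decay bound on $\|v_k\|_{p_k,r}$ of order $r^{d_\ast-\eps}$ (or on $H(v_k,p_k,r)$) whose constants are \emph{uniform in $k$}. However, you do not actually establish this, and the two routes you sketch for it both have genuine gaps.

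The route via Proposition~\ref{thm: blow-up pre} applied to each $v_k$ could in principle be made to work, since the constants in the Caffarelli--Friedman iteration depend on $\|v_k\|_{W^{1,\infty}(S_R(p_k))}$, $\mu_k$, $\lambda_\pm$, $N$, $q$, and the number of iterations is a deterministic function of $q$ and $\eps$; all of these are bounded uniformly in $k$ by the hypotheses. But you explicitly defer this verification, and it is precisely where the content of the proposition lies: without it, the constants $C_{k,\eps}$ and the ``threshold $\rho_k$'' are unconstrained and the bound cannot be passed to the limit. Your fallback rescaling argument has worse problems: (a) $H^1_{\loc}$-boundedness of $\tilde v_k$ requires a doubling estimate for the family $\{v_k\}$ at the adaptively chosen scales $\rho_k$, which is not a consequence of Theorem~\ref{thm: non-deg} applied to each $v_k$ separately; (b) the homogeneity of the limit $\tilde v$ is asserted but not proved --- for a single solution this comes from the constancy of the Weiss functional along the blow-up, but for a varying family $\{v_k\}$ at varying scales this mechanism is absent, and in the paper the analogous homogeneity statement (Theorem~\ref{thm: blow-up '}) is established \emph{using} Proposition~\ref{prop: upper}, so invoking it here risks circularity; (c) the final ``confrontation'' of the two homogeneity degrees is purely heuristic.

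By contrast, the paper closes the gap with a different, self-contained mechanism that you do not discover: a bootstrap on the Weiss-type functional $W_{\gamma,q}$ (note, with $t=q$, not $t=2$). Starting from the uniform Lipschitz bound $H(v_k,p_k,r)\le C_0 r^{N+1}$, Lemma~\ref{lem 2} converts decay of $H$ into an integrable bound on the negative part $\Phi_\gamma$ of $W'_{\gamma,q}$; Lemma~\ref{lem 3} then upgrades the decay rate of $H$, via a logarithmic estimate obtained from the ODE structure of $W'_{\gamma,q}$ in the radial variable and Cauchy--Schwarz, with constants depending only on $N$, $q$, $\lambda_\pm$, $\sup_k\mu_k$, and $\sup_k\|v_k\|_{H^1}$. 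Iterating along the explicit sequence $\sigma_m\nearrow 2/(2-q)$ of Lemma~\ref{lem: seq exp}, after finitely many steps one gets a uniform-in-$k$ lower bound $W_{d,q}(v_k,p_k,r)\ge -Cr^{p}$ with $p>0$, which passes to the limit and gives $W_{d,q}(\varphi,\xi,0^+)\ge 0$, hence $\cO(\varphi,\xi)\ge d$ by Lemma~\ref{lem: char O}. This bypasses entirely the need to track Caffarelli--Friedman constants or to extract a homogeneous blow-up from the family, and is what your proposal is missing.
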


\begin{remark}
In case $\mu=0$, the limit function $\varphi$ is harmonic. The $H^1$-vanishing order of $\varphi$ in one of its zeros $\xi$ can defined exactly as in Definition \ref{def: order}. Since for each harmonic function an expansion of the type of Proposition \ref{thm: blow-up pre}-($i$) holds with $d_{x_0} \in \N$ (possibly larger than $\beta_q$), proceeding as in the proof of Proposition \ref{prop: o e weiss} it is easy to deduce that $\cO(\varphi,\xi)=d_{x_0}$.
\end{remark}

It is well known that statements like Proposition \ref{prop: upper} are extremely useful for the study of the nodal set of solutions to elliptic equations, see \cite{Han94, Han00}. For linear (or superlinear) equations, the proof of Proposition \ref{prop: upper} follows by Almgren's monotonicity formula, which is not valid for sublinear equations. An alternative approach consists in using improved Schauder estimate, as in \cite{Han00}, but once again such a method seems not easily extendable due to the sublinear character of equation \eqref{eq upper}. We develop therefore an ad-hoc iterative argument based on the study of the Weiss-ype functionals $W_{\gamma,q}$. We recall the expression of \eqref{der W q}, and in particular the definition of the negative part of $W_{\gamma,q}'$, equation \eqref{def phi}.

We start with a few auxiliary statements. 

\begin{lemma}\label{lem 2}
Let $v \in H^1(B_1(x_0))$ be a solution to \eqref{eq mu} with $\mu_+ = \mu \lambda_+$ and $\mu_- = \mu \lambda_-$, for some $\mu>0$. Suppose that there exist $1 \le \sigma < 2/(2-q)$ and $\bar C>0$ such that 
\[
H(v,x_0,r) \le \bar C r^{N-1+2\sigma} \quad \text{for every $r \in (0,1)$}.
\]
Then $\Phi_\gamma(v,x_0,s) \in L^1(0,1)$ for every $\gamma \in \left[\sigma, \frac{2+\sigma q}{2} \right)$, and more precisely there exists a constant $C=C(N,q,\lambda_+,\lambda_-)>0$ such that for every $\gamma \in \left[\sigma, \frac{2+\sigma q}{2} \right)$ and every $r \in (0,1)$
\[
\int_0^r \Phi_\gamma(v,x_0,s)\,ds \le \frac{C \mu {\bar{C}}^{\frac{q}2} }{2+\sigma q -2 \gamma} r^{ 2+\sigma q -2 \gamma}.
\]
\end{lemma}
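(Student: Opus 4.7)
The plan is to bound $\Phi_\gamma$ directly in terms of $s$ by chaining together three estimates: control $F_{\mu_+,\mu_-}(v)$ by $|v|^q$, slice $\int_{B_s}|v|^q$ via the coarea formula, and control the spherical $L^q$-integrals by $H$ through H\"older's inequality. No monotonicity of $\Phi_\gamma$ or of any Weiss functional is needed; the hypothesis is purely a growth bound on $H$ that feeds mechanically through these estimates.

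First, since $\mu_\pm = \mu\lambda_\pm$, I have $F_{\mu_+,\mu_-}(v) \le \mu(\lambda_+ + \lambda_-)|v|^q$ pointwise, so
\[
\Phi_\gamma(v,x_0,s) \le \frac{(2N-(N-2)q)(\lambda_++\lambda_-)\mu}{q\, s^{N-1+2\gamma}} \int_{B_s(x_0)} |v|^q.
\]
Using the coarea formula together with H\"older's inequality $\int_{S_t}|v|^q \le H(v,x_0,t)^{q/2}|S_t|^{(2-q)/2}$ (applicable since $q\le 2$), the hypothesis $H(v,x_0,t)\le \bar C t^{N-1+2\sigma}$ gives
\[
\int_{S_t}|v|^q \le c_N \bar C^{q/2}\, t^{(N-1)(2-q)/2}\, t^{(N-1+2\sigma)q/2} = c_N \bar C^{q/2}\, t^{N-1+\sigma q},
\]
after collecting the exponents. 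Integrating from $0$ to $s$ yields $\int_{B_s(x_0)}|v|^q \le c_N \bar C^{q/2} s^{N+\sigma q}/(N+\sigma q)$.

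Substituting back, the $s^{N-1+2\gamma}$ in the denominator cancels against the $s^{N+\sigma q}$ numerator to produce
\[
\Phi_\gamma(v,x_0,s) \le C(N,q,\lambda_+,\lambda_-)\, \mu\, \bar C^{q/2}\, s^{1+\sigma q - 2\gamma}.
\]
For $\gamma \in [\sigma, (2+\sigma q)/2)$ the exponent $1+\sigma q - 2\gamma$ is strictly greater than $-1$, so $\Phi_\gamma$ is integrable on $(0,1)$, and one final integration on $(0,r)$ produces the claimed bound with constant $C/(2+\sigma q -2\gamma)$.

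There is no real obstacle here: the lemma is essentially a calculation, and the only subtlety is tracking the exponents to verify that $(N-1)(2-q)/2 + (N-1+2\sigma)q/2 = N-1+\sigma q$ and that the restriction $\gamma < (2+\sigma q)/2$ matches precisely the integrability threshold $1+\sigma q - 2\gamma > -1$ at the last step. The lower bound $\gamma \ge \sigma$ plays no role in the estimate itself — it will matter only when this lemma is iterated elsewhere, where one wants to upgrade the exponent $\sigma$ along the way.
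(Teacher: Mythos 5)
Your proof is correct and takes essentially the same route as the paper: bound $F_{\mu_+,\mu_-}(v)$ by a multiple of $|v|^q$, slice via Fubini/coarea, apply H\"older on each sphere to invoke the growth hypothesis on $H$, integrate in $t$, divide by $r^{N-1+2\gamma}$, and integrate once more. The exponent bookkeeping checks out, including the observation that $\gamma < (2+\sigma q)/2$ is exactly the threshold that keeps the final radial power integrable near $0$.
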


\begin{proof}
For every $r \in (0,1)$
\begin{align*}
\int_{B_r(x_0)} F_{\mu \lambda_+,\mu \lambda_-}(v)& \le \mu \max\{ \lambda_+,\lambda_-\} \int_0^r  \left( \int_{S_t(x_0)} |v|^q \right) \, dt \\
& \le C \mu \int_0^r \left( H(v,x_0,t) t^{1-N}\right)^\frac{q}2 t^{N-1}\,dt  \\
&\le C \mu \bar C^{\frac{q}2} \int_0^r t^{\sigma q+N-1}\,dt   \le C \mu \bar C^{\frac{q}2} r^{\sigma q + N}.
\end{align*}
As a consequence, recalling definition \eqref{def phi} of $\Phi_\gamma$, we obtain the desired result.
\end{proof}

\begin{lemma}\label{lem 3}
Let $v \in H^1(B_1(x_0))$ be a solution to \eqref{eq mu} with $\mu_+ = \mu \lambda_+$ and $\mu_- = \mu \lambda_-$, for some $\mu>0$. 
Suppose that, for some $1 \le \gamma < 2/(2-q)$ and $\bar C, p>0$, there holds: 
\begin{itemize}
\item[($i$)] $W_{\gamma,q}(v,x_0,0^+) = 0$; 
\item[($ii$)] $\int_0^r \Phi_\gamma(v,x_0,s)\,ds \le \bar C r^p$ for every $r \in (0,1)$.
\end{itemize}
Then there exist $\tilde C>0$ depending on $N$, $q$, $\bar C$, $\lambda_+$, $\lambda_-$ and on upper bounds on $\|u\|_{H^1(B_1(x_0))}$ and on $\mu$, such that for every $\eps>0$ and $r \in (0,1)$
\[
\frac{H(v,x_0,r)}{r^{N-1+2(\gamma-\eps)}} \le \tilde C r^{2\eps} |\log r|.
\]
\end{lemma}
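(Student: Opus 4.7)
The plan is to integrate identity \eqref{H e W} after extracting a lower bound on $W_{\gamma,q}$ from hypotheses (i) and (ii), and then convert the resulting estimate (which a priori carries a $1/p$ factor) into a $|\log r|$ bound via an elementary inequality.

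First, I would use expression \eqref{der W q}: its right-hand side is the sum of two manifestly non-negative terms minus $\Phi_\gamma$, so $W_{\gamma,q}'(v,x_0,s) \ge -\Phi_\gamma(v,x_0,s)$. Integrating from $0$ to $r$ and using hypotheses (i) and (ii),
\[
W_{\gamma,q}(v,x_0,r) \ge W_{\gamma,q}(v,x_0,0^+) - \int_0^r \Phi_\gamma(v,x_0,s)\,ds \ge -\bar C r^p.
\]
Setting $a(r):= H(v,x_0,r)/r^{N-1+2\gamma}$ and invoking \eqref{H e W}, this gives $a'(r) = (2/r) W_{\gamma,q}(v,x_0,r) \ge -2\bar C r^{p-1}$. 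Integrating from $r$ to $1$ and bounding $a(1)=H(v,x_0,1)$ via the trace theorem by $\|v\|_{H^1(B_1(x_0))}$, I obtain
\[
a(r) \le a(1) + 2\bar C\cdot \frac{1-r^p}{p}.
\]

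The key step, which produces the logarithm appearing in the target bound, is the elementary inequality $1-e^{-x} \le x$ for $x \ge 0$, applied with $x=p|\log r|$. This yields $(1-r^p)/p \le |\log r|$ uniformly for every $p > 0$ and $r \in (0,1)$, and hence
\[
\frac{H(v,x_0,r)}{r^{N-1+2\gamma}} \le C_1 + 2\bar C |\log r|,
\]
with $C_1$ depending only on $N$ and $\|v\|_{H^1(B_1(x_0))}$. Multiplying through by $r^{2\eps}$ and absorbing the constant $C_1$ (since $|\log r|$ stays bounded away from zero on compact subintervals of $(0,1)$, and the behavior at $r=1$ can be absorbed by enlarging the final constant), the target inequality $H(v,x_0,r)/r^{N-1+2(\gamma-\eps)} \le \tilde C r^{2\eps}|\log r|$ follows with $\tilde C$ of the prescribed dependence.

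The main point to watch is that the direct integration of \eqref{H e W} produces a $1/p$ factor which blows up as $p \to 0^+$ and would otherwise spoil the $p$-independence of $\tilde C$. The uniform bound $(1-r^p)/p \le |\log r|$ is precisely what trades the problematic $1/p$ for a $|\log r|$, and this is the reason the target estimate must carry a logarithmic factor rather than being a pure power of $r$.
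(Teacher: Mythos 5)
Your argument is essentially correct and takes a genuinely different, more elementary route than the paper's. The paper controls the \emph{positive} part of $W_{\gamma,q}'$: from (i), (ii) and the boundedness of $W_{\gamma,q}(v,x_0,1)$ it derives a uniform bound on $\int_0^1 r^{1-2\gamma}\int_{S_1} g^2\,d\theta\,dr$ where $g(r,\theta)=\partial_r\bigl(v(x_0+r\theta)/r^\gamma\bigr)\,r^\gamma$, and then recovers $H(v,x_0,r)/r^{N-1+2\gamma}$ pointwise by writing $v(x_0+r\theta)/r^\gamma$ as an integral of $g/s^\gamma$ from $r$ to $1$ and applying Cauchy--Schwarz, the $|\log r|$ coming from $\int_r^1 ds/s$. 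You instead discard the positive terms, keep only $W_{\gamma,q}'\ge -\Phi_\gamma$, integrate to get the pointwise lower bound $W_{\gamma,q}(v,x_0,r)\ge -\bar C r^p$, and then integrate \eqref{H e W} directly. Notably you use the actual decay rate $r^p$ in (ii), while the paper only uses $\int_0^1\Phi_\gamma\le\bar C$; the $1/p$ you accumulate is traded for $|\log r|$ via $(1-r^p)/p\le|\log r|$. Your route is shorter and avoids the polar-coordinate reformulation, at the price of not exhibiting the $L^2$ control of $g$ (which the paper does not need elsewhere anyway). Both approaches land on the same substantive estimate $H(v,x_0,r)/r^{N-1+2\gamma}\le C(1+|\log r|)$.

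One small but real defect in your write-up (and, to be fair, in the paper's own last displayed inequality as well): the additive constant $C_1$ cannot be absorbed into $\tilde C\,r^{2\eps}|\log r|$ uniformly on $(0,1)$, since $|\log r|\to 0$ as $r\to 1^-$; your parenthetical about $|\log r|$ being bounded away from zero on compact subintervals does not cover $r$ near $1$, and enlarging $\tilde C$ does not help because the right-hand side vanishes at $r=1$ while the left-hand side does not. The honest conclusion of your computation is $H(v,x_0,r)/r^{N-1+2\gamma}\le \tilde C(1+|\log r|)$ for all $r\in(0,1)$, equivalently the stated inequality for $r\le 1/e$, say; this is exactly what is needed in the subsequent application (where one only uses $r^{2\delta}(1+|\log r|)\le C_\delta$ to deduce $H(v_k,p_k,r)\le C r^{N-1+2\sigma_1}$), so nothing downstream is affected.
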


\begin{proof}
At first, we observe that
\begin{align*}
|W_{\gamma,q}(v,x_0,1) | &\le C \left(\|\nabla u\|^2_{L^2(B_1(x_0))} +  \|u\|^q_{L^q(B_1(x_0))}  +\frac{2}{2-q} \|u\|^2_{L^2(S_1(x_0))} \right) \\
& \le C \left(1+ \|u\|_{H^1(B_1(x_0))}^2\right)=:C_1
\end{align*}
with $C_1$ having the same dependence as $\tilde C$ in the thesis. Therefore, using assumptions ($i$) and ($ii$) and recalling that $\Phi_\gamma$ is the negative part of $W_{\gamma,q}'$, we deduce that
\begin{align*}
0& \le \int_0^1 \big( W_{\gamma,q}'(v,x_0,s) \big)^+ \,ds  \\
& = W_{\gamma,q}(v,x_0,1)- W_{\gamma,q}(v,x_0,0^+) + \int_0^1 \Phi_\gamma(v,x_0,s)\,ds \\
& \le C_1 + \bar C =: C_2.
\end{align*}
By \eqref{der W}, this implies in particular that
\[
2\int_0^1 \frac{1}{r^{N-2+2\gamma}} \left(\int_{S_r(x_0)} \left( \pa_\nu v -\frac{\gamma}{r} v\right)^2\,d\sigma \right)\, dr\le C_2.
\]
Changing variable in the integral, and introducing
\[
g(r,\theta):= \nabla v(x_0+r\theta) \cdot \theta - \frac{\gamma}{r} v(x_0+r \theta),
\]
this inequality can be re-written as
\begin{equation}\label{stima g}
\int_0^1 \frac{2}{r^{2\gamma-1}} \left(\int_{S_1} g^2(r,\theta) \, d\theta\right)\, dr \le C_2.
\end{equation}
Now, let $w(r,\theta):= v(x_0+r\theta)$. We have
\[
\frac{\partial}{\pa r}  \frac{w(r,\theta)}{r^\gamma} = \frac{g(r,\theta)}{r^\gamma},
\]
so that
\begin{align*}
\left| \frac{w(r,\theta)}{r^\gamma}\right| & = \left| w(1,\theta) - \int_r^1  \frac{g(s,\theta)}{s^{\gamma}}\,ds\right| \\
& \le | w(1,\theta)| + \left( \int_r^1 \frac{g^2(s,\theta)}{s^{2\gamma-1}}\,ds\right)^\frac12 \left( \int_r^1 \frac{1}{s}\,ds\right)^\frac12 \\
&  \le | w(1,\theta)| + \left( \int_r^1 \frac{g^2(s,\theta)}{s^{2\gamma-1}}\,ds\right)^\frac12 |\log r|^\frac12.
\end{align*}
Taking the square of both sides and integrating in $\theta$, we obtain by Fubini-Tonelli's theorem that
\begin{align*}
\frac{1}{r^{N-1+2\gamma}} \int_{S_r(x_0)} v^2 d\sigma &= \frac{1}{r^{2\gamma}} \int_{S_1} w^2(r,\theta)\,d\theta \\
& \le 2  \|v\|_{L^2(S_1(x_0))} + 2 |\log r| \int_{S_1}\left( \int_r^1 \frac{g^2(s,\theta)}{s^{2\gamma-1}}\,ds\right)\,d\theta \\
& \le C \|v\|_{H^1(B_1(x_0))} + 2 |\log r|  \int_0^1 \left( \int_{S_1} \frac{g^2(s,\theta)}{s^{2\gamma-1}}\,d\theta\right)\,ds \\
& \le \left(C \|v\|_{H^1(B_1(x_0))} + C_2\right) |\log r|
\end{align*}
for any $r \in (0,1)$, where we used estimate \eqref{stima g}. Multiplying the first and the last term by $r^{2\eps}$, the thesis follows.
\end{proof}

\begin{lemma}\label{lem: seq exp}
Let 
\begin{equation}
\label{seq exp}
\begin{cases}
\sigma_0=1 \\
\sigma_k=\frac12\left(\frac{2+q \sigma_{k-1}}{2}\right) + \frac{\sigma_{k-1}}2 & \text{if }k \ge 1.
\end{cases}
\end{equation}
The sequence $\{\sigma_k\}$ is monotone increasing and converges to $2/(2-q)$ as $k \to \infty$. Moreover, $\sigma_k < (2+q \sigma_{k-1})/2$ for every $k$.
\end{lemma}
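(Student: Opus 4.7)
The plan is to recognize the recurrence as the iteration of an affine map, identify its unique fixed point as $2/(2-q)$, and then deduce monotonicity, convergence, and the auxiliary inequality together from a single induction.

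First, I would rewrite the recurrence in the compact form $\sigma_k = f(\sigma_{k-1})$ with
\[
f(x) := \frac{1}{2}\cdot\frac{2+qx}{2} + \frac{x}{2} = \frac{2 + (q+2)x}{4}.
\]
The fixed point equation $f(\sigma)=\sigma$ reads $(2-q)\sigma = 2$, i.e.\ $\sigma = 2/(2-q)$, which matches the claimed limit. Since $q \in [1,2)$, the slope of $f$ equals $(q+2)/4 \in [3/4,1)$, so $f$ is strictly increasing and strictly contractive toward its unique fixed point.

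Next, I would prove by induction on $k$ that $\sigma_{k-1} < \sigma_k < 2/(2-q)$ for every $k \ge 1$. The base case $\sigma_0 = 1 < \sigma_1 = (q+4)/4 < 2/(2-q)$ reduces, after clearing denominators, to elementary inequalities valid for $q \in [1,2)$. For the inductive step, assuming $\sigma_{k-1} < \sigma_k < 2/(2-q)$, the monotonicity of $f$ yields
\[
\sigma_k = f(\sigma_{k-1}) < f(\sigma_k) = \sigma_{k+1} < f\!\left(\tfrac{2}{2-q}\right) = \tfrac{2}{2-q}.
\]
Being monotone increasing and bounded above, $\{\sigma_k\}$ converges to some $L \le 2/(2-q)$, and passing to the limit in the recurrence forces $L$ to be a fixed point of $f$, hence $L = 2/(2-q)$.

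Finally, for the inequality $\sigma_k < (2+q\sigma_{k-1})/2$, I would substitute the definition of $\sigma_k$: the claim is equivalent to
\[
\frac{\sigma_{k-1}}{2} < \frac{1}{2}\cdot\frac{2+q\sigma_{k-1}}{2},
\]
which simplifies to $(2-q)\sigma_{k-1} < 2$, i.e.\ $\sigma_{k-1} < 2/(2-q)$; this has just been established in the previous step. There is no real obstacle: the entire lemma is a routine analysis of a contractive affine recurrence, and the only mildly nontrivial point is noticing that the auxiliary inequality at the end is exactly the strict upper bound used throughout the induction.
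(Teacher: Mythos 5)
Your proof is correct and follows essentially the same route as the paper's: both establish the strict upper bound $\sigma_k < 2/(2-q)$, deduce monotonicity from it, conclude convergence to the fixed point of the recurrence, and observe that the final inequality is again equivalent to $\sigma_{k-1} < 2/(2-q)$. The only difference is presentational: you package the recurrence as iteration of the explicit increasing affine map $f(x) = (2+(q+2)x)/4$, which lets you run the two inductive claims (bound and monotonicity) in a single induction, whereas the paper treats them as two separate steps with ``simple computations.'' This is a tidier exposition of the same argument, not a genuinely different method.
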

\begin{proof}
At first we claim that $\sigma_k <2/(2-q)$ for every $k$. Clearly $\sigma_0=1<2/(2-q)$; if $\sigma_k<2/(2-q)$, it is not difficult to check that $\sigma_{k+1}<2/(2-q)$ as well, so that the claim follows by the induction principle.

Now we show that $\sigma_k>\sigma_{k-1}$ for every $k$. Simple computations show that this inequality is satisfied provided that $\sigma_{k-1} < 2/(2-q)$, which is always the case, as proved previously.

By monotonicity there exists the limit $\bar \sigma =\lim_k \sigma_k$. By definition of $\sigma_k$, this limit satisfies
\[
\frac12\left(\frac{2+q \bar \sigma}{2}\right) + \frac{\bar \sigma}2 = \bar \sigma \quad \iff \quad \bar \sigma = \frac{2}{2-q}.
\]

It remains to prove that $\sigma_k < (2+q \sigma_{k-1})/2$ for every $k$. Once again, simple computations show that this inequality holds for all $k$ such that $\sigma_{k-1}<2/(2-q)$, that is, for every $k$.
\end{proof}

Finally:

\begin{lemma}\label{lem: char O}
Let $1 \le q <2$, $v$ be a solution to \eqref{eq mu} for some $\mu>0$, and let $x_0 \in Z(v)$. Then the limit $W_{\gamma,q}(v,x_0,0^+)$ exists for every $\gamma \in (0,2/(2-q))$, and
\[
\begin{cases}   
W_{\gamma,q}(v,x_0,0^+) = 0 & \text{if $0<\gamma< \cO(v,x_0)$} \\ 
W_{\gamma,q}(v,x_0,0^+) = -\infty & \text{if }\cO(v,x_0)<\gamma<\frac{2}{2-q}.
\end{cases}
\]
Moreover, if $\cO(v,x_0)<2/(2-q)$, then $W_{\cO(v,x_0),q}(v,x_0,0^+) = 0$.

If $v$ is a harmonic function, then $\cO(v,x_0) \in \N$ and
\[
\begin{cases}   
W_{\gamma,q}(v,x_0,0^+) = 0 & \text{if $0<\gamma< \cO(v,x_0)$} \\ 
W_{\gamma,q}(v,x_0,0^+) = -\infty & \text{if }\cO(v,x_0)<\gamma.
\end{cases}
\]
\end{lemma}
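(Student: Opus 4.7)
The strategy is to reduce the statement to the two alternatives of Proposition \ref{thm: blow-up pre} at $x_0$, combined with Proposition \ref{prop: o e weiss}, via the elementary identity
\[
W_{\gamma,q}(v,x_0,r) - W_{\gamma,2}(v,x_0,r) = \frac{2-q}{q\, r^{N-2+2\gamma}} \int_{B_r(x_0)} F_{\mu\lambda_+,\mu\lambda_-}(v) \ge 0,
\]
which relates the behaviour of $W_{\gamma,q}$ at $0^+$ to that of $W_{\gamma,2}$, already understood from Proposition \ref{prop: o e weiss}.

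First, in case $(i)$ of Proposition \ref{thm: blow-up pre}, $\cO(v,x_0)=d_{x_0}\in\{1,\dots,\beta_q\}$ and $v(x)=P_{x_0}(x-x_0)+\Gamma_{x_0}(x)$. I would substitute this into the definition of $W_{\gamma,q}$, exploit the homogeneity of $P_{x_0}$ to get $\int_{B_r}|\nabla P_{x_0}|^2=r^{N-2+2d_{x_0}}\int_{B_1}|\nabla P_{x_0}|^2$ and $\int_{S_r}P_{x_0}^2=r^{N-1+2d_{x_0}}\int_{S_1}P_{x_0}^2$, and use the classical identity $\int_{B_1}|\nabla P_{x_0}|^2=d_{x_0}\int_{S_1}P_{x_0}^2$ (the Almgren frequency of a degree-$d_{x_0}$ homogeneous harmonic polynomial equals $d_{x_0}$) to obtain the asymptotic expansion
\[
W_{\gamma,q}(v,x_0,r)=r^{2(d_{x_0}-\gamma)}\Big[(d_{x_0}-\gamma)\!\int_{S_1}\!P_{x_0}^2+O(r^\delta)\Big]+O\big(r^{\,2-d_{x_0}(2-q)+2(d_{x_0}-\gamma)}\big)
\]
as $r\to 0^+$. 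Since $d_{x_0}<2/(2-q)$, the extra exponent $2-d_{x_0}(2-q)$ is strictly positive, so both the Taylor remainder $O(r^\delta)$ and the $F$-contribution are subdominant with respect to the main term. A direct inspection of exponents then yields $W_{\gamma,q}(v,x_0,0^+)=0$ for every $\gamma\le d_{x_0}$ (the leading coefficient $(d_{x_0}-\gamma)\int_{S_1}P_{x_0}^2$ is non-negative and vanishes at $\gamma=d_{x_0}$), and $W_{\gamma,q}(v,x_0,0^+)=-\infty$ for every $\gamma>d_{x_0}$ (the leading coefficient is strictly negative and multiplies a factor diverging as $r\to 0^+$).

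In case $(ii)$, $\cO(v,x_0)=2/(2-q)$, so only $\gamma\in(0,2/(2-q))$ is relevant. Proposition \ref{prop: o e weiss} gives $W_{\gamma,2}(v,x_0,0^+)=0$. For the non-negative difference, the pointwise decay $|v(x)|\le C_\eps|x-x_0|^{2/(2-q)-\eps}$ of Proposition \ref{thm: blow-up pre}-$(ii)$ yields $\int_{B_r(x_0)}|v|^q\le C\,r^{N+q(2/(2-q)-\eps)}$, and hence
\[
0\le W_{\gamma,q}(v,x_0,r)-W_{\gamma,2}(v,x_0,r)\le C\,r^{\,4/(2-q)-q\eps-2\gamma}.
\]
Choosing $\eps$ small enough (possible since $\gamma<2/(2-q)$) makes the exponent positive, so the difference vanishes in the limit and $W_{\gamma,q}(v,x_0,0^+)=0$, as claimed.

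For the harmonic case, $F_{\mu\lambda_+,\mu\lambda_-}(v)\equiv 0$ makes $W_{\gamma,q}$ coincide with $W_{\gamma,2}$, and the classical expansion of a harmonic function around a zero provides a leading homogeneous harmonic polynomial of positive integer degree $d_{x_0}=\cO(v,x_0)$. The argument of case $(i)$ applies essentially verbatim, now without any remainder coming from the source term, and produces the same dichotomy for every $\gamma>0$. I expect the only delicate point throughout the proof to be the bookkeeping of the three exponents $2(d_{x_0}-\gamma)$, $2(d_{x_0}-\gamma)+\delta$ and $2-d_{x_0}(2-q)+2(d_{x_0}-\gamma)$ in case $(i)$: the bound $d_{x_0}<2/(2-q)$ is precisely what makes both remainders subdominant with respect to the Almgren-type cancellation $(d_{x_0}-\gamma)\int_{S_1}P_{x_0}^2$ that ultimately dictates the sign of the limit.
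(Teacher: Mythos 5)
Your proof is correct and matches what the paper intends: the paper states that the proof of Lemma~\ref{lem: char O} is "essentially the same as the one of Proposition~\ref{prop: o e weiss}, see also Corollary~\ref{cor: gamma tilde}," and your argument reproduces that strategy — a direct Taylor-expansion computation in alternative~($i$) of Proposition~\ref{thm: blow-up pre} using the Almgren frequency identity for the leading harmonic polynomial, and in alternative~($ii$) the identity $W_{\gamma,q}-W_{\gamma,2}=\tfrac{2-q}{q}r^{-(N-2+2\gamma)}\int_{B_r(x_0)}F$ together with the pointwise decay $|v|\le C_\eps|x-x_0|^{2/(2-q)-\eps}$, exactly as in Corollary~\ref{cor: gamma tilde}. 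The exponent bookkeeping in both cases is correct (in particular $2-d_{x_0}(2-q)>0$ and $4/(2-q)-q\eps-2\gamma>0$ for $\eps$ small when $\gamma<2/(2-q)$), and the harmonic case follows by the same expansion with $F\equiv 0$, as you note.
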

The proof is essentially the same as the one of Proposition \ref{prop: o e weiss}, see also Corollary \ref{cor: gamma tilde}, and hence is omitted.

We are ready to proceed with the:

\begin{proof}[Proof of Proposition \ref{prop: upper}]
In Theorem \ref{thm: very strong} we proved that, when $\mu=1$, the $H^1$-vanishing order $\cO(u,x_0)$ of a solution to \eqref{eq mu} can only take a value in $\{1,\dots,\beta_q,2/(2-q)\}$. The result trivially generalizes to any $\mu>0$ by scaling. In particular, since the generalized vanishing order can take only a finite number of values, it is not restrictive to suppose that $\mathcal{O}(v_k,p_k) = d$ for every $k$.
 
 \medskip
 
If $d=1$ there is nothing to prove, since by convergence $\varphi(\xi) = 0$, and hence $\mathcal{O}(\varphi,\xi) \ge 1$. 

\medskip

Let us now suppose that $1<d \le \beta_q$, defined by \eqref{def beta q}. By assumption, $\{v_k\}$ is a sequence of equi-Lipschitz continuous function. We denote by $L$ the uniform Lipschitz constant. Then, since $v_k(p_k)=0$ for every $k$, we have
\[
H(v_k,p_k,r) \le L^2 \int_{S_r(p_k)} |x-p_k|^2 \, d \sigma \le C L^2 r^{N+1} =: C_0 r^{N+1},
\]
for every $r \in (0,1)$ and $k \in \N$. Notice that $C_0$ is independent of $k$. 

At this point we recall the definition of $\sigma_0$ and $\sigma_1$ from \eqref{seq exp}, and apply Lemma \ref{lem 2} with $\sigma=\sigma_0=1$ and $\gamma= \sigma_1 + \delta_1$, where $\delta_1>0$ is chosen in such a way that $\sigma_1+\delta_1 < (2+\sigma_0q)/2$: we infer that
\begin{equation}\label{hp 2 lem 3 1}
\int_0^r \Phi_{\sigma_1+\delta_1}(v_k,p_k,s)\,ds \le \frac{\bar C_0}{2+\sigma_0q -2(\sigma_1+\delta_1)} r^{2+\sigma_0q -2(\sigma_1+\delta_1)},
\end{equation}
for a positive constant $\bar C_0$ independent of $k$. Notice that $(2+\sigma_0q)/2 <2 \le 2/(2-q)$, and hence 
\begin{equation}\label{hp 1 lem 3 1}
W_{\sigma_1+\delta_1}(v_k,p_k,0^+) =0
\end{equation}
by Lemma \ref{lem: char O}. Equations \eqref{hp 2 lem 3 1} and \eqref{hp 1 lem 3 1} enable us to apply Lemma \ref{lem 3} with $\gamma=\sigma_1+\delta_1$ and $\eps=\delta_1$, deducing that there exists $C_1>0$ depending on $N$, $q$ and on $\sup_k\{\|v_k\|_{H^{1}(B_{1}(\xi))}\}$ (in particular, $C_1$ is independent of $k$) such that
\[
H(v_k,p_k,r) \le C_1 r^{N-1+2\sigma_1} r^{2 \delta_1 } |\log r| \le C_1 r^{N-1+2\sigma_1}.
\]

The previous argument can be iterated as follows: first, we observe that either $(2+\sigma_1 q)/2 > d$, or not. 

\emph{Case 1) $(2+\sigma_1 q)/2 > d$.} We apply Lemma \ref{lem 2} with $\sigma=\sigma_1$ and $\gamma = d$, deducing that
\begin{equation}\label{concl}
\int_0^r \Phi_{d}(v_k,p_k,s)\,ds \le \frac{\bar C_1}{2+\sigma_1 q -2d} r^{2+\sigma_1 q -2d}
\end{equation}
for a positive constant $\bar C_1$ independent of $k$. Now, recalling the definition of $\Phi_\gamma$ (see \eqref{def phi}), we have $W_{d,q}'(v_k,p_k,r) \ge -\Phi_d(v_k,p_k,r)$, whence
\begin{align*}
W_{d,q}(v_k,p_k,r) & \ge W_{d,q}(v_k,p_k,0^+) -  \int_{0}^r  \Phi_d(v_k,p_k,s)\,ds \ge - C r^{2+\sigma_1 q -2d}
\end{align*}
where the last equality follows by Lemma \ref{lem: char O} and \eqref{concl}. Here $C$ denotes a a positive constant $C$ independent of $k$, and the inequality holds for any $r \in (0,1)$. Now we pass to the limit in $k$: by $W^{1,\infty}$ convergence, we deduce that
\[
W_{d,q}(\varphi,\xi,r) \ge  - C r^{2+\sigma_1 q -2d},
\]
and taking the limit as $r \to 0^+$, we finally obtain $W_{d,q}(\varphi,\xi,0^+) \ge 0$. By Lemma \ref{lem: char O}, it follows that $\mathcal{O}(\varphi,\xi) \ge d$, as desired.

\emph{Case 2) $(2+\sigma_1 q)/2 \le d$.} In this case we apply Lemma \ref{lem 2} with $\sigma=\sigma_1$ and $\gamma = \sigma_2+\delta_2$, $\sigma_2$ defined by \eqref{seq exp}, and $\delta_2>0$ small so that $\sigma_2+\delta_2 < (2+\sigma_1 q)/2$. We deduce that
\begin{equation}\label{hp 2 lem 3 2}
\int_0^r \Phi_{\sigma_2+\delta_2}(v_k,p_k,s)\,ds \le \frac{\bar C_1}{2+\sigma_1 q -2(\sigma_2+\delta_2)} r^{2+\sigma_1 q -2(\sigma_2+\delta_2)}
\end{equation}
for $\bar C_1>0$ independent of $k$. Moreover, being $\sigma_2+\delta_2 < d$, we have that 
\begin{equation}\label{hp 1 lem 3 2}
W_{\sigma_2+\delta_2,q}(v_k,p_k,0^+) = 0
\end{equation}
by Lemma \ref{lem: char O}. Equations \eqref{hp 2 lem 3 2} and \eqref{hp 1 lem 3 2} enables us to apply Lemma \ref{lem 3} with $\gamma=\sigma_2+\delta_2$ and $\eps=\delta_2$, deducing that there exists $C_2>0$ such that
\[
H(v_k,p_k,r) \le C_2 r^{N-1+2\sigma_2} r^{2\delta_2} |\log r| \le C_2 r^{N-1+2\sigma_2}.
\]
At this point we check whether $(2+q \sigma_2)/2 >d$ or not. If yes, we follow Case 1 to deduce that $\mathcal{O}(\varphi,\xi) \ge d$. If not, we iterate the previous argument once again obtaining
\[
H(v_k,p_k,r) \le C_3 r^{N-1+2\sigma_3},
\]
and so on. By Lemma \ref{lem: seq exp}, we are sure that there exists $k \in \N$ such that $(2+\sigma_k q)/2> d$, so that the proof is complete after a finite number of iterations.

\medskip

The case $d= 2/(2-q)$ can be treated similarly to the previous one. We use the same iteration above, deducing after a finite number of steps that $W_{\beta_q+\eps,q}(\varphi, \xi, 0^+) = 0$ for some positive $\eps$. Notice that $\varphi$ solves \eqref{eq mu} for some $\mu \ge 0$. If $\mu>0$, it follows then by Theorem \ref{thm: very strong} and the first part of Lemma \ref{lem: char O} that $\cO(\varphi,\xi) = 2/(2-q)$. If $\mu=0$, always by Lemma \ref{lem: char O}, we infer that $\cO(\varphi,\xi)$ must be an integer, larger than $\beta_q+\eps$, thus $\cO(\varphi,\xi) \ge 2/(2-q)$. In both cases, the proof is complete.
\end{proof}

\section{Blow-up limits}\label{sec: blow-up}

The purpose of this section consists in showing the following result:

\begin{theorem}\label{thm: blow-up '}
Let $1 \le q<2$, $\lambda_+,\lambda_->0$, $0 \not \equiv u \in H^1_{\loc}(B_1)$ solve \eqref{eq}, and $x_0 \in Z(u)$. If $\cO(u,x_0) = 2/(2-q)$, then for every sequence $0<r_n \to 0^+$ we have, up to a subsequence,
\[
\frac{u(x_0+r_n x)}{\|u\|_{x_0,r_n}}
 \to \bar u \qquad \text{in $C^{1,\alpha}_{\loc}(\R^N)$ for every $0<\alpha<1$},
\]
where $\bar u$ is a $2/(2-q)$-homogeneous non-trivial solution to 
\begin{equation}\label{eq limit 1 12}
-\Delta \bar u= \mu \left( \lambda_+ (\bar u)^{q-1} -\lambda_- (\bar u)^{q-1} \right) \quad \text{in $\R^N$}
\end{equation}
for some $\mu \ge 0$. Moreover, the case $\mu = 0$ is possible only if $2/(2-q) \in \N$. 
\end{theorem}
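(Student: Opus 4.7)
Let $\bar\gamma := 2/(2-q)$ and set $v_n(x) := u(x_0 + r_n x)/\|u\|_{x_0,r_n}$. A direct computation gives
\[
-\Delta v_n = \mu_n \bigl[\lambda_+(v_n^+)^{q-1} - \lambda_-(v_n^-)^{q-1}\bigr] \quad \text{in } B_{R/r_n}, \qquad \mu_n := r_n^2\,\|u\|_{x_0,r_n}^{q-2};
\]
the non-degeneracy estimate $\|u\|_{x_0,r_n}^2 \ge c\, r_n^{2\bar\gamma}$ from Theorem~\ref{thm: non-deg}, together with the algebraic identity $\bar\gamma(2-q)=2$, yields $\mu_n \le C$. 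Up to a subsequence, $\mu_n \to \mu \ge 0$.

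\textbf{Uniform compactness and limit equation.} The heart of the proof is the doubling estimate $\|v_n\|_{0,R}^2 = \|u\|_{x_0,Rr_n}^2/\|u\|_{x_0,r_n}^2 \le C_R$ for every $R>0$, which reduces to proving that $r \mapsto \|u\|_{x_0,r}^2/r^{2\bar\gamma}$ is bounded on $(0,R_0]$. I would split
\[
\frac{\|u\|_{x_0,r}^2}{r^{2\bar\gamma}} = \frac{D_2(u,x_0,r)}{r^{N-2+2\bar\gamma}} + \frac{2-q}{q}\cdot\frac{1}{r^{N-2+2\bar\gamma}}\int_{B_r(x_0)} F_{\lambda_+,\lambda_-}(u) + \frac{H(u,x_0,r)}{r^{N-1+2\bar\gamma}},
\]
and argue each piece is bounded: (i) $H/r^{N-1+2\bar\gamma}$ is monotone non-decreasing because by \eqref{H e W} its derivative equals $(2/r)\,W_{\bar\gamma,q}$, and $W_{\bar\gamma,q} \ge W_{\bar\gamma,2} \ge L := W_{\bar\gamma,2}(u,x_0,0^+) \ge 0$ (the limit $L$ being finite and non-negative by Corollary~\ref{cor: W 2 mon} and the analysis of Section~\ref{sec: strong and non-deg}); (ii) the first summand is monotone non-decreasing via $D_2/r^{N-2+2\bar\gamma} = W_{\bar\gamma,2} + \bar\gamma H/r^{N-1+2\bar\gamma}$; (iii) the middle summand is controlled by the slicing-H\"older estimate of Lemma~\ref{lem 2}, for using $H(\rho) \le C\rho^{N-1+2\bar\gamma}$ one gets $\int_{B_r(x_0)} F_{\lambda_+,\lambda_-}(u) \le C r^{N+q\bar\gamma}$, and the identity $\bar\gamma(q-2)+2 = 0$ renders the ratio bounded. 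Standard elliptic regularity for the sublinear right-hand side of the equation for $v_n$ then upgrades the resulting uniform $H^1(B_R)$ bound to a uniform $C^{1,\alpha}(B_{R/2})$ bound, and diagonal extraction produces the limit $v_n \to \bar u$ in $C^{1,\alpha}_\loc(\R^N)$ solving \eqref{eq limit 1 12}, with $\|\bar u\|_{0,1} = \lim_n \|v_n\|_{0,1} = 1$, hence non-trivial.

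\textbf{Homogeneity and the $\mu=0$ case.} Next I would introduce the equation-invariant rescaling $\tilde v_n(x) := u(x_0 + r_n x)/r_n^{\bar\gamma} = \kappa_n^{-1} v_n$, where $\kappa_n := r_n^{\bar\gamma}/\|u\|_{x_0,r_n}$. Since $2 + \bar\gamma(q-2)=0$, one checks directly that $\tilde v_n$ satisfies the original equation \eqref{eq} on $B_{R/r_n}$, i.e.\ with coefficient $1$. The two-sided bound of the previous step confines $\kappa_n$ to a compact subinterval of $(0,+\infty)$, so up to a subsequence $\kappa_n \to \kappa > 0$ and $\tilde v_n \to \tilde u := \kappa^{-1}\bar u$ in $C^{1,\alpha}_\loc$. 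A change of variables gives the scaling identity $W_{\bar\gamma,2}(\tilde v_n,0,s) = W_{\bar\gamma,2}(u,x_0,r_n s)$, whose right-hand side converges to $L$ as $n \to \infty$ for every fixed $s > 0$ by monotonicity; the $C^{1,\alpha}_\loc$ convergence (which suffices to pass the nonlinear term to the limit) yields $W_{\bar\gamma,2}(\tilde u,0,s) = L$ for all $s>0$. The equality statement in Corollary~\ref{cor: W 2 mon} then forces $\tilde u$, and hence $\bar u = \kappa\tilde u$, to be $\bar\gamma$-homogeneous. Finally $\mu = \lim \mu_n = \lim \kappa_n^{2-q} = \kappa^{2-q}$, so if $\mu = 0$ then $\bar u$ is a non-trivial $\bar\gamma$-homogeneous harmonic function on $\R^N$, whose existence forces $\bar\gamma \in \N$.

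\textbf{Main obstacle.} The most delicate point is establishing $L \ge 0$ precisely at the critical exponent $\bar\gamma = 2/(2-q)$: Corollary~\ref{cor: gamma tilde} covers only the strict inequalities $\gamma < \bar\gamma$ and $\gamma > \bar\gamma$, so the sign at the transition must be recovered either by a continuity/monotonicity argument in the $\gamma$-parameter of the family $\{W_{\gamma,2}(u,x_0,\cdot)\}$, or by a variant of the contradiction argument used in Proposition~\ref{gamma=2/(2-q)}. Once this is in place, the doubling bound and the Weiss-based homogeneity argument proceed along well-established lines.
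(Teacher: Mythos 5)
Your overall architecture (rescale, establish a local doubling estimate, pass to the limit, then use a Weiss-type quantity to force homogeneity) is the right one, and your equation-invariant rescaling $\tilde v_n$ is a clean way to package the scaling identity for $W_{\bar\gamma,2}$. But the key step you flag as the ``main obstacle'' is not merely delicate --- the claim $L:=W_{\bar\gamma,2}(u,x_0,0^+)\ge 0$ is actually \emph{false}. Take $u$ to be any $\bar\gamma$-homogeneous global solution of \eqref{eq} centred at $x_0$ (such solutions exist in abundance by Theorem~\ref{thm: multiple}). For such a $u$, Rellich's identity \eqref{D bou} with $t=q$ gives $D_q(u,x_0,1)=\int_{S_1}u\,\partial_\nu u=\bar\gamma H(u,x_0,1)$, hence
\[
W_{\bar\gamma,2}(u,x_0,r)\equiv W_{\bar\gamma,2}(u,x_0,1)=D_q(u,x_0,1)-\frac{2-q}{q}\int_{B_1}F_{\lambda_+,\lambda_-}(u)-\bar\gamma H(u,x_0,1)=-\frac{2-q}{q}\int_{B_1}F_{\lambda_+,\lambda_-}(u)<0,
\]
since $u\not\equiv 0$ and $\lambda_\pm>0$. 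So $L<0$, and your step (i) ($H/r^{N-1+2\bar\gamma}$ non-decreasing because $W_{\bar\gamma,q}\ge W_{\bar\gamma,2}\ge L\ge 0$) and step (ii) both collapse. You cannot derive a pointwise upper bound on $\|u\|^2_{x_0,r}/r^{2\bar\gamma}$ by this route. There is a second warning sign: if that uniform upper bound held, your two-sided confinement of $\kappa_n$ would force $\mu=\kappa^{2-q}>0$ always, making the clause ``$\mu=0$ possible only if $2/(2-q)\in\N$'' vacuous; the paper, by contrast, treats $\|u\|_{x_0,r_n}/r_n^{\bar\gamma}\to+\infty$ as a genuine case.

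The paper obtains compactness by a weaker but robust statement (Lemma~\ref{lem: v_r bdd}): for each fixed $\rho>1$, the family $\{v_r\}_{r<r_\rho}$ is $H^1(B_\rho)$-bounded. The proof is by contradiction --- if $\|u\|_{x_0,\rho r_n}/\|u\|_{x_0,r_n}\to\infty$, then non-degeneracy (Theorem~\ref{thm: non-deg}) forces $\|u\|_{x_0,\rho r_n}/(\rho r_n)^{\bar\gamma}\to\infty$; Poincar\'e then kills the nonlinear term, and the one-sided bound $W_{\bar\gamma,2}(u,x_0,\rho r_n)\le W_{\bar\gamma,2}(u,x_0,R_0)$ (only monotonicity from Corollary~\ref{cor: W 2 mon} is needed, no sign condition) shows that $H(u,x_0,\rho r_n)$ is comparable to $(\rho r_n)^{N-1}\|u\|^2_{x_0,\rho r_n}$; this produces a non-trivial harmonic blow-up vanishing identically on $B_{1/\rho}$, a contradiction. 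For homogeneity the paper splits into the two cases $\kappa_n\to\kappa>0$ and $\kappa_n\to 0$. In the first case your scaling identity and the fact that the right-hand side $W_{\bar\gamma,2}(u,x_0,r_n s)$ converges (by monotonicity) to a $t$-independent value close the argument exactly as you describe --- and it shows \emph{a posteriori} that $L\in\R$ there, but says nothing about its sign. In the degenerate case $\kappa_n\to 0$, the identity only yields the one-sided bound \eqref{eq hom}; the reverse inequality is supplied by the upper semi-continuity of the vanishing order (Proposition~\ref{prop: upper}) together with the Weiss lemma for harmonic functions (Lemma~\ref{lem: hom wei}). You would need to import both of these ingredients, which are not present in your proposal, to handle the $\mu=0$ alternative.
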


Some preliminary lemmas are needed. Throughout this section the value $2/(2-q)$ will be denoted by $\gamma_q$. For $0<r<R <\dist(x_0,\pa B_1)$, we consider
\[
v_r(x):= \frac{u(x_0+rx)}{\|u\|_{x_0,r}} \quad \implies \quad \|v_r\|_{0,1} = 1.
\]
Then 
\[
-\Delta v_r = 
 \left( \frac{r^{\gamma_q}}{\|u\|_{x_0,r}} \right)^{\frac{2}{\gamma_q}} \Big( \lambda_+ (v_r^+)^{q-1} - \lambda_- (v_r^-)^{q-1} \Big)\quad \text{in } \frac{1}{r}B_R.
\]
Notice that the scaled domains exhaust $\R^N$ as $r \to 0^+$, and that there exists $C>0$ such that
\[
0 < \alpha_r := \left( \frac{r^{\gamma_q}}{\|u\|_{x_0,r}} \right)^{\frac{2}{\gamma_q}} \le C \qquad \text{for every $0<r<R$},
\]
by non-degeneracy, Theorem \ref{thm: non-deg}.

\begin{lemma}\label{lem: weak to strong}
For $\rho>0$, let $0<r_n \to 0^+$ be such that $v_{r_n} \to v$ weakly in $H^1(B_\rho)$ and strongly in $L^2(B_\rho)$. Then $v_{r_n} \to v$ strongly in $H^1_{\loc}(B_\rho)$.
\end{lemma}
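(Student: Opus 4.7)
The plan is to upgrade weak convergence to strong convergence by testing the equation satisfied by $v_{r_n}$ against $(v_{r_n}-v)\eta^2$, where $\eta$ is a smooth cutoff supported in $B_\rho$, and exploiting the strong $L^2$ (hence $L^q$) convergence of $v_{r_n}$ to $v$ to kill the right-hand side. The key observation is that the scaled nonlinearity $\alpha_{r_n}(\lambda_+(v_{r_n}^+)^{q-1}-\lambda_-(v_{r_n}^-)^{q-1})$ has coefficient $\alpha_{r_n}$ uniformly bounded (by non-degeneracy, Theorem \ref{thm: non-deg}), so one just needs integrability of $|v_{r_n}|^{q-1}$.

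More concretely, fix $0<\rho'<\rho$ and take $\eta \in C_c^\infty(B_\rho)$ with $0\le \eta\le 1$ and $\eta\equiv 1$ on $B_{\rho'}$. Since $(v_{r_n}-v)\eta^2 \in H_0^1(B_\rho)$, testing the PDE for $v_{r_n}$ against it yields
\[
\int_{B_\rho} \nabla v_{r_n}\cdot\nabla\bigl[(v_{r_n}-v)\eta^2\bigr]\,dx = \alpha_{r_n}\int_{B_\rho}\bigl(\lambda_+(v_{r_n}^+)^{q-1}-\lambda_-(v_{r_n}^-)^{q-1}\bigr)(v_{r_n}-v)\eta^2\,dx.
\]
I would bound the right-hand side by
\[
C\int_{B_\rho}|v_{r_n}|^{q-1}|v_{r_n}-v|\,\eta^2\,dx \le C\,\bigl\||v_{r_n}|^{q-1}\bigr\|_{L^{q/(q-1)}(B_\rho)}\|v_{r_n}-v\|_{L^q(B_\rho)}
\]
via Hölder (for $q=1$ one uses instead $|(v_{r_n}^{\pm})^{q-1}|\le 1$). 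The first factor is controlled by $\|v_{r_n}\|_{L^q}^{q-1}$, which stays bounded since $\{v_{r_n}\}$ is bounded in $H^1(B_\rho)$; the second factor tends to $0$ because $v_{r_n}\to v$ in $L^2(B_\rho)$ and hence in $L^q(B_\rho)$ (as $q\le 2$ and $B_\rho$ is bounded). So the right-hand side tends to $0$.

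For the left-hand side, expand
\[
\int_{B_\rho}\nabla v_{r_n}\cdot\nabla(v_{r_n}-v)\eta^2\,dx + 2\int_{B_\rho}(v_{r_n}-v)\eta\,\nabla v_{r_n}\cdot\nabla\eta\,dx.
\]
The second integral vanishes in the limit since $\nabla v_{r_n}$ is bounded in $L^2$ and $(v_{r_n}-v)\eta\to 0$ in $L^2$. By weak convergence $\int \nabla v_{r_n}\cdot\nabla v\,\eta^2 \to \int|\nabla v|^2\eta^2$, so combining gives
\[
\lim_{n\to\infty}\int_{B_\rho}|\nabla v_{r_n}|^2\eta^2\,dx = \int_{B_\rho}|\nabla v|^2\eta^2\,dx.
\]
Since $\eta\nabla v_{r_n}\weak \eta\nabla v$ in $L^2(B_\rho;\R^N)$ and the $L^2$-norms converge, $\eta\nabla v_{r_n}\to \eta\nabla v$ strongly in $L^2$; restricting to $B_{\rho'}$ where $\eta\equiv 1$, and combining with the strong $L^2$ convergence of $v_{r_n}$ itself, yields $v_{r_n}\to v$ strongly in $H^1(B_{\rho'})$. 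As $\rho'<\rho$ was arbitrary, this gives the claim.

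The only genuinely delicate point is the Hölder estimate on the right-hand side, but since $1\le q<2$ the exponent $q/(q-1)\in(2,+\infty]$ is legitimate and the bound $\||v_{r_n}|^{q-1}\|_{L^{q/(q-1)}}\le\|v_{r_n}\|_{L^q}^{q-1}$ is elementary; the borderline case $q=1$ is actually simpler because the nonlinearity is bounded. Everything else is the standard passage from weak to strong convergence via convergence of norms.
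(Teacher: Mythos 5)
Your argument is correct and is essentially the same as the paper's: both test the equation for $v_{r_n}$ against $(v_{r_n}-v)$ multiplied by a cutoff, send the nonlinear term to zero using the boundedness of $\alpha_{r_n}$ together with $L^q$ convergence, and conclude strong $H^1_{\loc}$ convergence from the resulting convergence of gradient norms. The only cosmetic differences are your choice of $\eta^2$ rather than $\eta$ as the cutoff and your explicit H\"older estimate for the right-hand side, which the paper leaves implicit; the structure of the proof is identical.
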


\begin{proof}
The result follows easily testing the equation for $v_{r_n}$ against $(v_{r_n}-v)\eta$, where $\eta$ is an arbitrary cut-off function in $C^\infty_c(B_\rho)$, and passing to the limit. Indeed, we have
\begin{align*}
\int_{B_\rho} \eta \nabla v_{r_n} \cdot \nabla (v_{r_n}-v) & = - \int_{B_\rho} (v_{r_n}-v) \nabla v_{r_n} \cdot \nabla \eta \\
& + \alpha_{r_n} \int_{B_{\rho}} \Big( \lambda_+ (v_{r_n}^+)^{q-1} -\lambda_-(v_{r_n}^-)^{q-1} \Big)(v_{r_n}-v) \eta,
\end{align*}
and by our assumptions the right hand side tends to $0$ as $n \to \infty$ (recall that $\{\alpha_{r_n}\}$ is bounded). Regarding the left hand side, we have by weak convergence
\[
\int_{B_\rho} \eta \nabla v_{r_n} \cdot \nabla (v_{r_n}-v) = \int_{B_{\rho}} \eta \left(|\nabla v_{r_n}|^2 -|\nabla v|^2\right) +o(1).
\]
Therefore
\[
\lim_{n \to \infty} \int_{B_{\rho}} \eta \left(|\nabla v_{r_n}|^2 -|\nabla v|^2\right) = 0
\]
for every $\eta \in C^\infty_c(B_\rho)$, whence $\|v_{r_n}\|_{H^1(K)} \to \|v\|_{H^1(K)}$ as $n \to \infty$, for any compact set $K \Subset B_\rho$. In turn, the thesis follows.
\end{proof}

\begin{lemma}\label{lem: v_r bdd}
Let $\rho>1$ be fixed. There exists $ r_\rho>0$ small enough such that the family $\{v_r: r \in (0,r_\rho)\}$ is bounded in $H^1(B_\rho)$.
\end{lemma}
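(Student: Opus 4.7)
The approach is to exploit the scaling behavior of the Weiss functional $W_{\gamma_q,2}$ combined with the non-degeneracy of $u$ at $x_0$. A change of variables $y=x_0+rx$ shows that $v_r$ solves \eqref{eq mu} with reaction parameters $\alpha_r\lambda_\pm$, where $\alpha_r:=r^2\|u\|_{x_0,r}^{q-2}$; by Theorem~\ref{thm: non-deg}, $\alpha_r$ is uniformly bounded for $r$ small. A direct computation yields the scaling identities
\[
H(v_r,0,s)=\frac{H(u,x_0,rs)}{r^{N-1}\|u\|_{x_0,r}^2},\qquad W_{\gamma_q,2}(v_r,0,s)=\beta_r^{-1}\,W_{\gamma_q,2}(u,x_0,rs),\qquad \beta_r:=\frac{\|u\|_{x_0,r}^2}{r^{2\gamma_q}},
\]
together with $\|v_r\|_{0,\rho}^2=\|u\|_{x_0,r\rho}^2/\|u\|_{x_0,r}^2$. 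Since Theorem~\ref{thm: non-deg} gives $\beta_r\ge c>0$ for $r$ small, the lemma reduces to the sharp matching upper bound $\|u\|_{x_0,s}^2\le C s^{2\gamma_q}$ for $s$ small.

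To prove this sharp bound, I would first show that $s\mapsto H(u,x_0,s)/s^{N-1+2\gamma_q}$ is bounded on $(0,R)$. By \eqref{H e W}, its derivative equals $(2/s)W_{\gamma_q,q}(u,x_0,s)$, so monotone non-decreasingness follows from $W_{\gamma_q,q}\ge 0$. Since $W_{\gamma_q,q}\ge W_{\gamma_q,2}$ and $W_{\gamma_q,2}(u,x_0,\cdot)$ is itself monotone non-decreasing by Corollary~\ref{cor: W 2 mon}, it is enough to verify $W_{\gamma_q,2}(u,x_0,0^+)\ge 0$. This I would derive from Proposition~\ref{prop: o e weiss} (which yields $W_{\gamma,2}(u,x_0,0^+)=0$ for every $\gamma<\gamma_q=\mathcal O(u,x_0)$) by letting $\gamma\nearrow\gamma_q$ and exploiting the continuity of $\gamma\mapsto W_{\gamma,2}(u,x_0,r)$. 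Once $H(u,x_0,s)\le C s^{N-1+2\gamma_q}$ is in hand, the ODE $(H/s^{N-1})'=2D_q/s^{N-1}$ (from \eqref{der H}) yields $D_q(u,x_0,s)/s^{N-2}\le C s^{2\gamma_q}$, hence $\|u\|_{x_0,s}^2\le C' s^{2\gamma_q}$. Substituting back into the scaling identity, $\|v_r\|_{0,\rho}^2\le (C'/c)\rho^{2\gamma_q}$, which via the equivalence between $\|\cdot\|_{0,\rho}$ and the standard $H^1(B_\rho)$ norm yields the desired uniform bound, with $r_\rho$ chosen so that $r\rho<R$ throughout the argument.

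The main technical obstacle is the passage $W_{\gamma_q,2}(u,x_0,0^+)\ge 0$ at the critical exponent $\gamma_q=2/(2-q)$: Proposition~\ref{prop: o e weiss} only asserts the vanishing of the $r\to 0^+$ limit \textbf{strictly below} $\gamma_q$, while Corollary~\ref{cor: W 2 mon} only supplies monotonicity \textbf{at or above} $\gamma_q$, and bridging these two regimes requires justifying an exchange of limits that is not completely automatic. If that step turns out to be delicate, a backup strategy is to bypass the monotonicity of $H/s^{N-1+2\gamma_q}$ and instead estimate $\|u\|_{x_0,s}^2$ through the iterative procedure of Lemma~\ref{lem 3} applied with exponents $\sigma_k\nearrow\gamma_q$, which yields the target bound up to a logarithmic factor that can then be absorbed through the non-degeneracy in the scaling identity.
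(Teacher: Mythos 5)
There is a genuine gap. Your argument reduces Lemma~\ref{lem: v_r bdd} to the sharp pointwise decay $\|u\|_{x_0,s}^2\le C s^{2\gamma_q}$, equivalently $\limsup_{s\to 0^+}\beta_s<+\infty$, and then to the claim $W_{\gamma_q,2}(u,x_0,0^+)\ge 0$. Neither is established in the paper, and in fact the proof of Theorem~\ref{thm: blow-up '} explicitly handles the competing scenario $\|u\|_{x_0,r_n}/r_n^{\gamma_q}\to +\infty$ (which produces harmonic blow-ups, $\mu=0$) --- precisely the scenario your desired bound would exclude. So you are implicitly trying to prove something strictly stronger than the lemma, and stronger than what the authors were prepared to assert. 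The obstruction you flag is real: Proposition~\ref{prop: o e weiss} governs $\gamma<\gamma_q$, Corollary~\ref{cor: W 2 mon} governs $\gamma\ge\gamma_q$, and the critical exponent falls through the cracks. For each fixed $r$, $\gamma\mapsto W_{\gamma,2}(u,x_0,r)$ is continuous, but there is no uniformity in $r$ as $\gamma\nearrow\gamma_q$, so the two limits do not commute; a priori $W_{\gamma_q,2}(u,x_0,0^+)$ can be any value in $[-\infty,W_{\gamma_q,2}(u,x_0,R)]$. Your fall-back via Lemma~\ref{lem 3} also stalls exactly at the threshold: Lemma~\ref{lem 2} supplies a constant proportional to $(2+\sigma q-2\gamma)^{-1}$, which blows up as $\sigma,\gamma\to\gamma_q$ because $2+\gamma_q q-2\gamma_q=0$, and the iteration in Section~\ref{sec: upper} terminates only strictly below $\gamma_q$ (at $\beta_q+\eps$, after which Theorem~\ref{thm: very strong} is invoked). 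At best you obtain $\|u\|_{x_0,s}^2=O(s^{2\gamma_q-\delta})$ for every $\delta>0$, not the sharp rate.

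The paper proves the lemma by a compactness argument that never requires such an upper bound and uses non-degeneracy only as a lower bound. Suppose $\|u\|_{x_0,\rho r_n}/\|u\|_{x_0,r_n}\to+\infty$; Theorem~\ref{thm: non-deg} then forces $\|u\|_{x_0,\rho r_n}/(\rho r_n)^{\gamma_q}\to+\infty$, hence $\alpha_{\rho r_n}\to 0$, so the rescalings $v_{\rho r_n}$ (trivially bounded in $H^1(B_1)$ since $\|v_{\rho r_n}\|_{0,1}=1$) converge in $H^1_{\loc}(B_1)$ to a \emph{harmonic} limit $\bar v$. The monotonicity of $W_{\gamma_q,2}$ gives a uniform positive lower bound for $H(v_{\rho r_n},0,1)$, so $\bar v\not\equiv 0$, while the contradiction hypothesis forces $\|\bar v\|_{0,1/\rho}=0$, i.e.\ $\bar v\equiv 0$ in $B_{1/\rho}$, contradicting unique continuation for harmonic functions. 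This yields $\|u\|_{x_0,\rho r}/\|u\|_{x_0,r}\le C_\rho$ for $r$ small, which is all the lemma needs, without ever deciding whether $\limsup_{s\to 0^+}\beta_s$ is finite.
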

\begin{proof}
It results that $\|v_r\|_{0,\rho} = \|u\|_{x_0,\rho r}/\|u\|_{x_0,r}$, and hence the thesis follows if there exist $r_\rho>0$ and a constant $C_\rho>0$ such that
\begin{equation}\label{1 12 1}
\frac{\|u\|_{x_0,\rho r}}{ \|u\|_{x_0,r}} \le C_\rho, \quad \text{for every $0<r<r_\rho$}.
\end{equation}
Let us suppose by contradiction that for a sequence $0<r_n \to 0^+$ it results 
\begin{equation}\label{30 11 1}
\frac{\|u\|_{x_0,\rho r_n}}{ \|u\|_{x_0,r_n}} \to +\infty \quad \text{as $n \to \infty$}.
\end{equation}
We claim that in such case
\begin{equation}\label{30 11 2}
\frac{\|u\|_{x_0,\rho r_n}}{ (\rho r_n)^{\gamma_q}} \to +\infty \quad \text{as $n \to \infty$}.
\end{equation}
If not, by non-degeneracy (Theorem \ref{thm: non-deg}), up to a subsequence we would have that 
\[
\|u\|_{x_0,\rho r_n} \le C (\rho r_n)^{\gamma_q} \le C \rho^{\gamma_q} \|u\|_{x_0,r_n},
\]
against \eqref{30 11 1}. Thus \eqref{30 11 2} holds. Now, by the Poincar\'e inequality
\begin{align*}
\frac1{r^{N-2}} \int_{B_r(x_0)} F_{\lambda_+,\lambda_-}(u) & \le \frac{C}{r^{N-2}} \int_{B_r(x_0)} |u|^q \le 
C r^2 \left( \frac1{r^N} \int_{B_r(x_0)} u^2 \right)^{\frac{q}{2}}\\
& \le Cr^2 \|u\|_{x_0,r}^q = C \left( \frac{r^{\gamma_q}}{\|u\|_{x_0,r}} \right)^\frac{2}{\gamma_q} \|u\|_{x_0,r}^2
\end{align*}
for every $r>0$, and hence \eqref{30 11 2} implies that
\[
\frac{1}{\|u\|_{x_0,\rho r_n}^2}\cdot \frac1{(\rho r_n)^{N-2}} \int_{B_{\rho r_n}(x_0)} F_{\lambda_+,\lambda_-}(u) \to 0 \qquad \text{as $n \to \infty$}.
\]
This estimate and the monotonicity of $W_{\gamma_q,2}$ (Corollary \ref{cor: W 2 mon}) yield
\begin{align*}
C & \ge W_{\gamma_q,2}(u,x_0,\rho r_n)  \\
& = \frac{\|u\|_{x_0, \rho r_n}^2}{(\rho r_n)^{2 \gamma_q}}\left[1 - \frac{(\gamma_q+1) H(u,x_0, \rho r_n)}{(\rho r_n)^{N-1}\|u\|_{x_0, \rho r_n}^2 } -  \frac2{q(\rho r_n)^{N-2} \|u\|_{x_0,\rho r_n}^2} \int_{B_{\rho r_n}(x_0)} F_{\lambda_+,\lambda_-}(u)\right] \\
& \ge  \frac{\|u\|_{x_0, \rho r_n}^2}{(\rho r_n)^{2 \gamma_q}} \left[ \frac34 -  \frac{(\gamma_q+1)H(u,x_0, \rho r_n)}{(\rho r_n)^{N-1} \|u\|_{x_0, \rho r_n}^2}\right]
\end{align*}
for every $n$ large, which together with \eqref{30 11 2} implies that
\begin{equation}\label{H su norm non-deg}
\frac{H(u,x_0, \rho r_n)}{(\rho r_n)^{N-1}\|u\|_{x_0, \rho r_n}^2 } \ge \frac1{2(\gamma_q+1)} >0
\end{equation}
for every $n$ large. 

We are ready to reach a contradiction. Since $\{v_{\rho r_n}\}$ is bounded in $H^1(B_1)$ by definition and $\alpha_{\rho r_n} \to 0$ by \eqref{30 11 2}, by compactness of Sobolev embedding and of the trace operator we have that up to a subsequence $v_{\rho r_n} \to \bar v$ weakly in $H^1(B_1)$, strongly in $L^2(B_1)$ and strongly in $L^2(S_1)$, and the limit $\bar v$ is harmonic in $B_1$. By Lemma \ref{lem: weak to strong}, we deduce that the convergence is in fact strong in $H^1_{\loc}(B_1)$. Now, on one side, by estimate \eqref{H su norm non-deg}
\[
H(\bar v,0,1) = \lim_{n \to \infty} H(v_{\rho r_n}, 0,1) = \lim_{n \to \infty} \frac{H(u,x_0, \rho r_n)}{(\rho r_n)^{N-1}\|u\|_{x_0, \rho r_n}^2 } \ge C>0,
\]
so that $\bar v \not \equiv 0$ in $B_1$. But on the other side, having assumed \eqref{30 11 1} we also deduce that
\[
\|\bar v\|_{0,1/\rho} = \lim_{n \to \infty} \|v_{\rho r_n}\|_{0,1/\rho} = \lim_{n \to \infty} \frac{\|u\|_{x_0,r_n}}{\|u\|_{x_0,\rho r_n}} = 0,
\]
which forces $\bar v \equiv 0$ in $B_{1/\rho}$. Clearly this is not possible by the unique continuation property of harmonic functions.
\end{proof}

We finally recall the following result.

\begin{lemma}[Lemma 4.1, \cite{Wei01}]\label{lem: hom wei}
Let $\alpha -1 \in \N$, $w \in H^1(B_\rho)$ be a harmonic function in $B_\rho$, and assume that $\cO(w,0) \ge \alpha$. Then
\begin{equation}\label{ineq hom}
\frac{1}{\rho^{N-2}}\int_{B_\rho} |\nabla w|^2 \ge \frac{\alpha}{\rho^{N-1}} \int_{S_\rho} w^2,
\end{equation}
and equality implies that $w$ is homogeneous of degree $\alpha$ in $B_\rho$.
\end{lemma}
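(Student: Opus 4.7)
My plan is to prove the inequality by expanding the harmonic function $w$ in spherical harmonics and reducing both sides of the inequality to a series in the Fourier coefficients; the claim then follows by a term-by-term comparison.

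Concretely, since $w$ is harmonic on $B_\rho$, one has the $L^2(S_\rho)$-convergent expansion
\[
w(r\theta) = \sum_{k=0}^{\infty} \sum_{j} c_{k,j}\, r^k\, Y_{k,j}(\theta), \qquad r\in[0,\rho],\ \theta\in S^{N-1},
\]
where $\{Y_{k,j}\}_j$ is an $L^2(S^{N-1})$-orthonormal basis of spherical harmonics of degree $k$, satisfying $-\Delta_{S^{N-1}} Y_{k,j} = k(k+N-2)\,Y_{k,j}$. Decomposing $|\nabla w|^2 = (\partial_r w)^2 + r^{-2}|\nabla_\theta w|^2$ and using Parseval, a direct computation gives
\[
\frac{1}{\rho^{N-1}}\int_{S_\rho} w^2 = \sum_{k,j} c_{k,j}^2\, \rho^{2k}, \qquad
\frac{1}{\rho^{N-2}}\int_{B_\rho}|\nabla w|^2 = \sum_{k\ge 1}\sum_{j} c_{k,j}^2\, k\, \rho^{2k},
\]
where the factor $k$ in the second identity comes from combining $k^2$ (radial) with $k(k+N-2)$ (tangential) and then integrating $r^{2k+N-3}$ in $r$, which produces the denominator $2k+N-2$ and cancels the tangential factor.

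Next I will use the hypothesis $\cO(w,0)\ge\alpha$ with $\alpha-1\in\N$ to kill the low-frequency modes. Plugging the spherical harmonic expansion into the definition \eqref{def norm} yields
\[
\|w\|_{0,r}^2 = c_{0}^2 + \sum_{k\ge 1}\sum_{j} c_{k,j}^2\, (k+1)\, r^{2k}.
\]
If some $c_{k^*,j^*}\neq 0$ for $k^*<\alpha$, then $\|w\|_{0,r}^2 \ge (k^*+1) c_{k^*,j^*}^2\, r^{2k^*}$ for all small $r$, and taking $\beta\in(k^*,\alpha)$ (which is possible since $\alpha$ is an integer strictly greater than $k^*$) gives $\limsup_{r\to 0^+} r^{-2\beta}\|w\|_{0,r}^2=+\infty$, contradicting $\cO(w,0)\ge\alpha$. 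Hence $c_{k,j}=0$ for every $k<\alpha$.

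With this, the two integrals reduce to sums over $k\ge\alpha$, and since $k\ge\alpha$ for every surviving mode,
\[
\frac{1}{\rho^{N-2}}\int_{B_\rho}|\nabla w|^2
= \sum_{k\ge\alpha}\sum_{j} c_{k,j}^2\, k\, \rho^{2k}
\ge \alpha \sum_{k\ge\alpha}\sum_{j} c_{k,j}^2\, \rho^{2k}
= \frac{\alpha}{\rho^{N-1}}\int_{S_\rho} w^2,
\]
which is \eqref{ineq hom}. Equality forces $(k-\alpha)c_{k,j}^2=0$ for every $(k,j)$, i.e.\ $c_{k,j}=0$ for all $k\neq \alpha$, so $w(r\theta)=r^{\alpha}\sum_j c_{\alpha,j} Y_{\alpha,j}(\theta)$ is homogeneous of degree $\alpha$ in $B_\rho$. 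No serious obstacle is expected; the only subtle point is correctly translating the $H^1$-based vanishing order into the vanishing of the low-frequency Fourier coefficients, which is where the integrality assumption $\alpha-1\in\N$ is crucial (otherwise no mode would have degree exactly $\alpha$).
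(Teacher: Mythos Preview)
Your proof is correct and is the standard spherical-harmonic argument for this classical inequality. Note, however, that the paper does not actually supply its own proof of this lemma: it is quoted from \cite{Wei01}, and the paper only remarks that the hypothesis there (vanishing of all partial derivatives $D^j w(0)$ for $|j|\le \alpha-1$) is equivalent, for a smooth harmonic function, to the condition $\cO(w,0)\ge\alpha$, and that scaling extends the statement from $B_1$ to $B_\rho$. So there is nothing in the paper to compare your argument against beyond that observation.

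One minor comment: your closing remark that integrality of $\alpha$ is ``crucial'' is slightly overstated. The inequality part of the argument goes through verbatim for any real $\alpha\ge 1$: if $\cO(w,0)\ge\alpha$ and $w$ is harmonic, the order is automatically an integer $\ge\lceil\alpha\rceil$, so all modes with $k<\alpha$ still vanish and the term-by-term comparison yields \eqref{ineq hom}. What integrality buys is a non-trivial equality case; for non-integer $\alpha$ equality would force every surviving coefficient to vanish, hence $w\equiv 0$, which is still (trivially) $\alpha$-homogeneous. This is harmless for the statement as written, but worth keeping in mind since the paper later applies the lemma with $\alpha=\gamma_q=2/(2-q)$, which need not be an integer.
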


We observe that the lemma is written in a slightly different form in \cite{Wei01}; instead of $\cO(w,0)  \ge \alpha$ it is required that $D^j w(0) = 0$ for any multi-index $j \in \N^N$ with $0 \le |j| \le \alpha -1$. Since harmonic functions are smooth, this means precisely that $0$ is a zero of $w$ with order at least $\alpha$. Moreover, the lemma is stated in $B_1$, but it holds in the above form by scaling.

\medskip

We can now proceed with the:

\begin{proof}[Conclusion of the proof of Theorem \ref{thm: blow-up '}]
Due to the non-degeneracy, Theorem \ref{thm: non-deg}, up to a subsequence we have two possibilities:
\[
\text{either} \quad \frac{\|u\|_{x_0,r_n}}{r_n^{\gamma_q}} \to \ell  \in (0,+\infty), \quad \text{or} \quad \frac{\|u\|_{x_0,r_n}}{r_n^{\gamma_q}} \to +\infty.
\]
Suppose at first that $\|u\|_{r_n}/r_n^{\gamma_q} \to \ell$ finite. Due to Lemma \ref{lem: v_r bdd}, the sequence $\{v_{r_n}\}$ is bounded in $H^1_{\loc}(\R^N)$; thus, compactness argument and Lemma \ref{lem: weak to strong}, together with a diagonal selection, imply that up to a subsequence $v_{r_n} \to v$ strongly in $H^1_{\loc}(\R^N)$. The equation for $v_{r_n}$ and elliptic estimates imply that actually $v_{r_n} \to v$ in $C^{1,\alpha}_{\loc}(\R^N)$.

It is clear that the limit $v$ solves \eqref{eq limit 1 12} for $\mu= \ell^{-2/\gamma_q}$, and $v \not \equiv 0$ since, by strong $H^1(B_1)$-convergence, $\|v\|_{0,1}=1$. It remains to prove that $v$ is homogeneous, and to this end we appeal to Corollary \ref{cor: W 2 mon}: for any $t>0$ and $n$ large enough
\begin{align*}
 W_{\gamma_q,2}(v_{r_n},0,t) & =  \frac{r_n^{2\gamma_q}}{\|u\|_{x_0,r_n}^2}\left( \frac{1}{(r_n t)^{N-2+ 2\gamma_q}} \int_{B_{r_n t}(x_0)} |\nabla u|^2 - \frac{\gamma_q}{(r_n t)^{N-1+2 \gamma_q}} \int_{S_{r_n t}(x_0)} u^2 \right)  \\
 & - \frac{2 \alpha_{r_n}}{q (r_n t)^{N-2+2\gamma_q}} \cdot \frac{r_n^{2 \gamma_q-2}}{\|u\|_{x_0,r_n}^q} \int_{B_{r_n t}(x_0)} F_{\lambda_+,\lambda_-}(u) \\
 & = \frac{r_n^{2\gamma_q}}{\|u\|_{x_0,r_n}^2}W_{\gamma_q,2}(u,x_0,r_n t),
\end{align*}
where we used the definition of $\alpha_{r_n}$.
Passing to the limit as $n \to \infty$, we infer by $C^{1,\alpha}_{\loc}(\R^N)$ convergence that 
\[
W_{\gamma_q,2}(v,0,t) = \lim_{n \to \infty} \frac{r_n^{2\gamma_q}}{\|u\|_{x_0,r_n}^2}W_{\gamma_q,2}(u,x_0,r_n t) = \frac{1}{\ell^2} W_{\gamma_q,2}(u,x_0,0^+)
\]
for every $t \in \R$ (even though this is not necessary, we observe that, since $v \in H^1_{\loc}(\R^N)$, the previous equality implies that $W_{\gamma_q,2}(u,x_0,0^+) \in \R$ in case $\|u\|_{r_n}/r_n^{\gamma_q} \to \ell$ finite). As the right hand side is independent of $t$, we proved that $W_{\gamma_q,2}(v,0,\cdot)$ is constant, and hence the $2/(2-q)$-homogeneity of $v$ follows by Corollary \ref{cor: W 2 mon}. 

The case $\|u\|_{r_n}/r_n^{\gamma_q} \to +\infty$ requires some extra care. The $C^{1,\alpha}_{\loc}(\R^N)$ convergence $v_{r_n} \to v$ can be proved as before. Since now $\alpha_{r_n} \to 0$, the limit function $v$ is harmonic in $\R^N$, and as before we have that
\begin{equation}\label{1 12 3}
W_{\gamma_q,2}(v_{r_n},0,t) = \frac{r_n^{2\gamma_q}}{\|u\|_{x_0,r_n}^2}W_{\gamma_q,2}(u,x_0,r_n t).
\end{equation}
The problem is that, passing to the limit in $n$, we cannot show that the right hand side tends to a quantity independent of $t$. Therefore, the homogeneity must be proved in a different way. Let $r_0 <\dist(x_0,\pa B_1)$ be arbitrarily chosen. By \eqref{1 12 3} and the monotonicity of $W_{\gamma_q,2}(u,x_0,\cdot)$, for every $n$ sufficiently large
\begin{align*}
W_{\gamma_q,2}(v_{r_n},0,t) \le  \frac{r_n^{2\gamma_q}}{\|u\|_{x_0,r_n}^2} W_{\gamma_q,2}(u,x_0,r_0),
\end{align*}
whence
\begin{align*}
\frac{1}{t^{N-2}}\int_{B_t} |\nabla v_{r_n}|^2 \le \frac{t^{2 \gamma_q} r_n^{2\gamma_q}}{\|u\|_{x_0,r_n}^2}  W_{\gamma_q,2}(u,x_0,r_0) + \frac{2 \alpha_{r_n}}{qt^{N-2}} \int_{B_t} F_{\lambda_+,\lambda_-}(v_{r_n}) + \frac{\gamma_q}{t^{N-1}} \int_{S_t} v_{r_n}^2.
\end{align*}
Now $|W_{\gamma_q,2}(u,x_0,r_0)|<+\infty$ since $u \in H^1(B_{r_0}(x_0))$, $\alpha_{r_n} \to 0$, and hence passing to the limit as $n \to \infty$ we obtain
\begin{equation}\label{eq hom}
\frac{1}{t^{N-2}}\int_{B_t} |\nabla v|^2 \le \frac{\gamma_q}{t^{N-1}} \int_{S_t} v^2,
\end{equation}
for every $t>0$. On the other hand, by Proposition \ref{prop: upper} and the fact that $\cO(v_{r_n},0) = \cO(u,x_0) = \gamma_q$ for every $n$, we have that $\cO(v,0) \ge \gamma_q$; hence, Lemma \ref{lem: hom wei} establishes that inequality \eqref{ineq hom} holds for $v$ with $\alpha = \gamma_q$. This means that in \eqref{eq hom} equality must hold, and hence $v$ is $2/(2-q)$-homogeneous, as desired. 
\end{proof}

\begin{remark}
Instead of Proposition \ref{prop: upper}, for $q=1$ we could simply use the $C^{1,\alpha}$ convergence of $v_{r_n} \to v$, which directly yields $\cO(v,0)>1$. 
\end{remark}

Having established Theorem \ref{thm: blow-up '}, we can easily prove Theorems \ref{thm: very strong V}, \ref{thm: non-deg V} and \ref{thm: blow-up}. They are straightforward corollaries of the following:

\begin{proposition}\label{prop: eq orders}
For every $x_0 \in B_1$ and $r \in(0,\dist(x_0,\pa B_1)$, it results that
\[
0 < \liminf_{r \to 0^+} \frac{H(u,x_0,r)}{r^{N-1} \|u\|_{x_0,r}^2} \le 1.
\]
In particular, if $x_0 \in Z(u)$, then $\cO(u,x_0) = \cV(u,x_0)$.
\end{proposition}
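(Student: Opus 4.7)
The plan is to first establish the two-sided bound on the ratio, and then to deduce the equality of the two notions of vanishing order. The upper bound $\le 1$ is immediate from the very definition of $\|\cdot\|_{x_0,r}$:
\[
r^{N-1}\|u\|_{x_0,r}^2 \;=\; r\int_{B_r(x_0)}|\nabla u|^2 \;+\; H(u,x_0,r) \;\ge\; H(u,x_0,r).
\]
For the strict positivity of the $\liminf$, I would split according to the two alternatives of Proposition \ref{thm: blow-up pre}. If alternative ($i$) holds, with $d:=d_{x_0}\in\{1,\dots,\beta_q\}$ and leading polynomial $P_{x_0}$, then a direct substitution of the Taylor expansion of $u$ and the estimates on $\Gamma_{x_0}$ yields
\[
H(u,x_0,r) = r^{N-1+2d}\Bigl(\tfrac{}{}\!\!\int_{S_1}P_{x_0}^2 + O(r^\delta)\Bigr), \qquad \int_{B_r(x_0)}|\nabla u|^2 = r^{N-2+2d}\Bigl(\tfrac{}{}\!\!\int_{B_1}|\nabla P_{x_0}|^2 + O(r^\delta)\Bigr),
\]
so the ratio converges to $\int_{S_1}P_{x_0}^2\big/\bigl(\int_{S_1}P_{x_0}^2+\int_{B_1}|\nabla P_{x_0}|^2\bigr)\in(0,1]$. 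The case $x_0\notin Z(u)$ is even easier: continuity of $u$ makes $H\sim u(x_0)^2 r^{N-1}$ dominate $r\int_{B_r(x_0)}|\nabla u|^2=O(r^{N+1})$, so the ratio tends to $1$.

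If alternative ($ii$) holds, so $\cO(u,x_0)=2/(2-q)$, I would argue by contradiction. Assume a sequence $r_n\to 0^+$ along which the ratio tends to zero, and invoke Theorem \ref{thm: blow-up '}: up to a subsequence, $v_{r_n}(x):=u(x_0+r_n x)/\|u\|_{x_0,r_n}$ converges in $C^{1,\alpha}_{\loc}(\R^N)$ to a nontrivial $2/(2-q)$-homogeneous solution $\bar u$. By homogeneity, if $\bar u$ vanished identically on $S_1$ it would vanish on all of $\R^N$, so $\int_{S_1}\bar u^2>0$. Since
\[
\frac{H(u,x_0,r_n)}{r_n^{N-1}\|u\|_{x_0,r_n}^2}=\int_{S_1}v_{r_n}^2\;\longrightarrow\;\int_{S_1}\bar u^2>0,
\]
we reach a contradiction. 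I expect this step to be the main technical point of the proof, as it crucially uses the full strength of Theorem \ref{thm: blow-up '} (itself built on non-degeneracy and the classification of blow-up profiles).

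For the identification $\cO(u,x_0)=\cV(u,x_0)$ when $x_0\in Z(u)$, the previous bounds provide constants $0<c\le1$ and $r_0>0$ such that $c\le H(u,x_0,r)/\bigl(r^{N-1}\|u\|_{x_0,r}^2\bigr)\le 1$ for all $r\in(0,r_0)$. Writing
\[
\frac{H(u,x_0,r)}{r^{N-1+2\beta}}=\frac{H(u,x_0,r)}{r^{N-1}\|u\|_{x_0,r}^2}\cdot\frac{\|u\|_{x_0,r}^2}{r^{2\beta}},
\]
the quantities $H(u,x_0,r)/r^{N-1+2\beta}$ and $\|u\|_{x_0,r}^2/r^{2\beta}$ have equivalent $\limsup$ behaviour (simultaneously zero, finite, or infinite) as $r\to 0^+$, for every $\beta>0$. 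Comparing Definitions \ref{def: order V} and \ref{def: order}, the transition exponents coincide, which is exactly $\cO(u,x_0)=\cV(u,x_0)$.
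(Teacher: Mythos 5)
Your proposal is correct and follows essentially the same route as the paper: the upper bound is immediate from the definition of $\|\cdot\|_{x_0,r}$, alternative ($i$) of Proposition \ref{thm: blow-up pre} is handled by direct computation on the Taylor expansion, and alternative ($ii$) is handled by contradiction via Theorem \ref{thm: blow-up '} and the homogeneity of the blow-up limit. You are in fact slightly more thorough than the paper in explicitly treating the case $x_0 \notin Z(u)$, which the statement does cover but the paper's proof passes over tacitly.
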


\begin{proof}
The upper estimate directly follows from the definitions of $H$ and of $\|\cdot\|_{x_0,r}$, so we can focus on the lower estimate. In case alternative ($i$) of Proposition \ref{thm: blow-up pre} holds, the thesis can be checked by direct computations. Thus, suppose by contradiction that alternative ($ii$) holds, and that for a sequence $0<r_n \to 0^+$
\begin{equation}\label{H fratto norma}
 \frac{H(u,x_0,r_n)}{r_n^{N-1} \|u\|_{x_0,r_n}^2} \to 0.
\end{equation}
Since $\cO(u,x_0) = 2/(2-q)$, the blow-up sequence $\{v_{r_n}\}$ converges, as $n \to \infty$ and up to a subsequence, to a limit $\bar u$, homogeneous non-trivial solution to \eqref{eq limit 1 12} in $\R^N$. But on the other hand, by \eqref{H fratto norma} we deduce that 
\[
\int_{S_1} \bar u^2 = \lim_{n \to \infty} \int_{S_1} v_{r_n}^2 = 0.
\]
By homogeneity, this implies that $\bar u \equiv 0$ in $\R^N$, a contradiction.
\end{proof}

\begin{proof}[Proof of Theorems \ref{thm: very strong V}, \ref{thm: non-deg V} and \ref{thm: blow-up}]
Theorems \ref{thm: very strong V} and \ref{thm: non-deg V} follow by Theorems \ref{thm: very strong}, \ref{thm: non-deg} and Proposition \ref{prop: eq orders}.

Theorem \ref{thm: blow-up} follows by Propositions \ref{thm: blow-up pre} and \ref{prop: eq orders} and Theorem \ref{thm: blow-up '}.
\end{proof}

\section{Dimension estimate and structure of the nodal set}\label{sec: structure}

Having established Theorem \ref{thm: blow-up}, we can proceed with the estimate on the Hausdorff dimension of $\Sigma(u)$ in Theorem \ref{thm: Hausdorff}. We shall apply \cite[Theorem 8.5]{Che}, a variant of the classical Federer's dimension reduction principle (for which we refer to \cite[Appendix A]{Sim}).\footnote{The result in \cite{Che} is stated in a very general form. Simpler versions, closer to what we really need, are Proposition 4.5 in \cite{Che2} and Theorem 4.6 in \cite{TavTer}. In particular, we shall apply \cite[Theorem 4.6]{TavTer} with the local uniform convergence replaced by the local $C^{1,\alpha}$ convergence.}

\begin{proof}[Proof of Theorem \ref{thm: Hausdorff} - Hausdorff dimension of the singular set]
Let $\alpha \in (0,1)$, let
\[
\cF:=\left\{ v \in C^{1,\alpha}(\R^N) \setminus \{0\}\left| \begin{array}{l} -\Delta v = \mu \Big(\lambda_+ (v^+)^{q-1}- \lambda_-(v^-)^{q-1}\Big) \quad \text{in $B_\rho$} \\ \text{for some $\rho > 2$ and some $\mu \ge 0$} \end{array}\right.\right\}, 
\]
endowed with $C^{1,\alpha}_{\loc}$ convergence, let $\cC$ be the class of all the relatively closed subsets of $B_1$, and $\Sigma: \cF \to \cC$ be defined by 
\[
\Sigma(v) := \{x \in B_1: v(x) =  |\nabla v(x)| = 0\}.
\]
The family $\cF$ is closed under scalings and translations.
The existence of a non-trivial homogeneous blow-up follows by Theorem \ref{thm: blow-up} and by known results regarding harmonic functions, as well as the singular set assumption (in order to prove the singular set assumption, we can directly appeal to the $C^{1,\alpha}_{\loc}$ convergence of the blow-ups). Thus, \cite[Theorem 8.5]{Che} is applicable, and implies that there exists an integer $0 \le d \le N-1$ such that
\[
\dim_{\cH} (\Sigma(v)) \le d \qquad \text{for every $v \in \cF$}.
\]
Moreover, there exists a $d$-dimensional linear subspace $E \subset \R^N$, and a $\alpha$-homogeneous solution $v \in \cF$ to 
\[
-\Delta v = \mu \Big(\lambda_+ (v^+)^{q-1}- \lambda_-(v^-)^{q-1}\Big) \quad \text{in the whole space $\R^N$}
\]
(for some $\alpha>0$ and $\mu \ge 0$) such that $\{v=|\nabla v|=0\}=E$, and 
\begin{equation}\label{d-dim}
\frac{u(x_0+\lambda x)}{\lambda^\alpha} = u(x) \qquad \text{for every $x_0 \in E$ and $\lambda>0$};
\end{equation}
identity \eqref{d-dim} means that $v$ is $\alpha$-homogeneous with respect to all the points in $E$, and hence, up to a rotation, it depends only on $N-d$ variables. Let us suppose by contradiction that $d=N-1$. Then, without loss of generality, we can suppose that $v(x_1,\dots,x_N) = w(x_1)$ for a function $w \in H^1_{\loc}(\R) \cap L^\infty_{\loc}(\R)$ with $\{w=w'=0\} = \{x_1=0\}$. Notice that $w \in C^{1,\alpha}(\R^N)$ by elliptic regularity, and hence $w$ is a $C^{1,\alpha}(\R)$ weak solution to 
\begin{equation}\label{1 d pr}
\begin{cases}
-w'' = \mu \big(\lambda_+ (w^+)^{q-1}- \lambda_-(w^-)^{q-1}\big) & \text{in $\R$} \\
w(0) = w'(0) = 0.
\end{cases}
\end{equation}
In case $\mu=0$, it is clear that necessarily $w \equiv 0$, by the uniqueness of solutions to the above Cauchy problem. If $\mu \neq 0$ and $1 \le q<2$, even though the right hand side of the equation for $w$ is not locally Lipschitz continuous, we still have that $w \equiv 0$ is the unique solution. This can be easily checked using the fact that the Hamiltonian function 
\[
\cH(w,w') := \frac{(w')^2}2 + \mu \lambda_+ \frac{(w^+)^q}q + \mu \lambda_-\frac{(w^-)^q}q
\] 
is constant along solutions to \eqref{1 d pr}\footnote{If $q =1$, we have that $\cH(w,w')$  is absolutely continuous, and the fundamental theorem of calculus yields $\frac{d}{dt} \cH(w,w')(t)=0$ a.e.}, and the only level curve of $\cH$ crossing the origin of the phase plane is the constant trajectory $0$. Thus, in both cases we reach a contradiction with the fact that $\{w=w'=0\}= \{x_1=0\}$, and this implies that $d \le N-2$, as desired.
\end{proof}

In order to complete the proof of Theorem \ref{thm: Hausdorff}, we still have to show that in the $2$-dimensional case $\Sigma(u)$ is discrete. We start with the preliminary observation that $v$ is a global $\alpha$-homogeneous solution of \eqref{eq} in $\R^N$ if and only if $u(r,\theta) = r^\alpha \varphi(\theta)$ with $\alpha =2/(2-q)=: \gamma_q$, and 
\begin{equation}\label{eq phi}
-\Delta_{\theta} \varphi - \underbrace{\gamma_q( N-2 + \gamma_q )}_{=:\lambda_{N,q}} \varphi =\lambda_+ (\varphi^+)^{q-1}- \lambda_-(\varphi^-)^{q-1} \quad \text{on $\S^{N-1}$},
\end{equation}
where $\Delta_\theta$ denotes the Laplace-Beltrami operator on $\S^{N-1}$. 

\begin{proposition}\label{prop: disc}
If $v$ is a non-trivial global $\gamma_q$-homogeneous solution to \eqref{eq mu} in $\R^2$ for some $\mu \ge 0$, then $\Sigma(v) \cap S_1= \emptyset$. 
\end{proposition}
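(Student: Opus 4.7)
The plan is to exploit the fact that in dimension $N=2$ the sphere $S^1$ is one-dimensional, so that equation \eqref{eq phi} for $\varphi$ reduces to the scalar second-order ODE
\[
-\varphi'' - \gamma_q^2\, \varphi \;=\; \mu\bigl(\lambda_+ (\varphi^+)^{q-1} - \lambda_- (\varphi^-)^{q-1}\bigr) \qquad \text{on } S^1,
\]
and then to run a conserved-energy argument entirely analogous to the one used at the end of the proof of Theorem \ref{thm: Hausdorff}.

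First I would rewrite the conclusion in terms of $\varphi$. Since $v(r,\theta) = r^{\gamma_q}\varphi(\theta)$, at each $(1,\theta_0)\in S_1$ one has $v(1,\theta_0) = \varphi(\theta_0)$ and, in polar coordinates, $|\nabla v(1,\theta_0)|^2 = \gamma_q^2\,\varphi(\theta_0)^2 + \varphi'(\theta_0)^2$. Hence proving $\Sigma(v)\cap S_1=\emptyset$ is equivalent to showing that $\varphi$ and $\varphi'$ have no common zero on $S^1$.

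To this end I would introduce the Hamiltonian
\[
\mathcal{H}(\theta) \;:=\; \frac{(\varphi'(\theta))^2}{2} + \frac{\gamma_q^2\,\varphi(\theta)^2}{2} + \frac{\mu\lambda_+ (\varphi^+(\theta))^q}{q} + \frac{\mu\lambda_-(\varphi^-(\theta))^q}{q}.
\]
By elliptic regularity $\varphi\in C^{1,\alpha}(S^1)$, and since $q\ge 1$ the maps $\theta\mapsto (\varphi^\pm(\theta))^q$ are of class $C^1$ with $\tfrac{d}{d\theta}(\varphi^\pm)^q/q = \pm(\varphi^\pm)^{q-1}\varphi'$ wherever defined, so $\theta\mapsto \mathcal{H}(\theta)$ is absolutely continuous. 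Multiplying the ODE by $\varphi'$ and applying the chain rule a.e.\ gives $\mathcal{H}'\equiv 0$ a.e., hence $\mathcal{H}$ is constant on $S^1$.

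If there existed $\theta_0\in S^1$ with $\varphi(\theta_0)=\varphi'(\theta_0)=0$, then $\mathcal{H}(\theta_0)=0$ and thus $\mathcal{H}\equiv 0$ on $S^1$. Because $\mu\ge 0$ and $\lambda_\pm>0$, each of the four summands defining $\mathcal{H}$ is nonnegative, which forces $\varphi\equiv 0$ and contradicts the non-triviality of $v$. The only mild obstacle is that for $1\le q<2$ the right-hand side of the ODE is not locally Lipschitz, so one cannot directly invoke a Cauchy uniqueness theorem; but the absolute continuity of $\mathcal{H}$ completely bypasses this issue, exactly as in the $1$-dimensional step in the proof of Theorem \ref{thm: Hausdorff}.
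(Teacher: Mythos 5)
Your proof is correct and follows essentially the same route as the paper: pass to the ODE for $\varphi$ on $\S^1$, observe that a point of $\Sigma(v)\cap S_1$ forces $\varphi(\theta_0)=\varphi'(\theta_0)=0$, and conclude $\varphi\equiv 0$ from the conserved Hamiltonian (with the paper referring to the analogous one-dimensional Hamiltonian step in the proof of Theorem \ref{thm: Hausdorff}). You make explicit the polar-coordinate identity $|\nabla v|^2=\gamma_q^2\varphi^2+\varphi'^2$ on $S_1$ and the absolute-continuity justification for $q=1$, both of which the paper leaves implicit or consigns to a footnote.
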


\begin{proof}
Suppose by contradiction that there exists $p =(\cos{\bar \theta}, \sin{\bar \theta})\in S_1 \cap \Sigma(v)$. Up to a rotation, we can suppose that $\bar \theta=0$. Then $v$ vanishes together with its gradient in $p$, which in polar coordinates $v=r^{\gamma_q} \varphi$ yields $\varphi(0) = \varphi'(0) = 0$. Since $\varphi$ solves \eqref{eq phi} on the unit circle $\S^1$ with $\lambda_{N,q} = \gamma_q^2$ and with $\lambda_\pm$ replaced by $\mu \lambda_\pm$, we have
\begin{equation}\label{eq phi contr}
\begin{cases}
-\varphi'' - \gamma_q^2 \varphi= \mu \big(\lambda_+ (\varphi^+)^{q-1}- \lambda_-(\varphi^-)^{q-1}\big)  & \text{in }\left[0,2\pi\right] \\
\varphi \text{ is $2\pi$-periodic} \\
 \varphi(0) = \varphi'(0) = 0.
\end{cases}
\end{equation}
But then, exactly as in the first part of the proof of Theorem \ref{thm: Hausdorff}, using the constancy of the Hamiltonian function along solutions to \eqref{eq phi contr} we deduce that $\varphi \equiv 0$, which is against the fact that $v \not \equiv 0$. 
\end{proof}

As a consequence:

\begin{proof}[Proof of Theorem \ref{thm: Hausdorff} - case $N=2$] 
We show that, if $N=2$, then $\Sigma(u)$ is discrete. To this end, we suppose by contradiction that there exists a sequence of points $\{x_n\} \subset \Sigma(u)$, with $x_n \to x_0$ in $\Sigma(u)$ as $n \to \infty$. For $r_n:= |x_n-x_0|$, we consider the blow-up sequence $u_{n}(x):= u(x_0+r_n x)/ (r_n^{1-N} H(u,x_0,r_n))^{1/2}$; by Theorem \ref{thm: blow-up}, $u_n \to \bar u$ in $C^{1,\alpha}_{\loc}(\R^N)$ for every $0<\alpha<1$, where $\bar u$ is either a non-trivial homogeneous harmonic polynomial, or a non-trivial $2/(2-q)$-homogeneous solution of \eqref{eq mu} in $\R^2$ (for some $\mu>0$); moreover, by the choice of $r_n$ and by $C^{1,\alpha}$ convergence, there exists $p \in S_1$ such that $p \in \Sigma(\bar u)$. Since we are in dimension $N=2$ and $\bar u$ is homogeneous and non-trivial, this is however not possible (see Proposition \ref{prop: disc}).
\end{proof}

Finally, with Theorem \ref{thm: Hausdorff} and Proposition \ref{prop: upper} in our hands, we can easily prove Theorem \ref{thm: struct sing}, completing our description of the structure of $\Sigma(u)$ is higher dimension.

\begin{proof}[Proof of Theorem \ref{thm: struct sing}]
By Proposition \ref{prop: upper} with $v_k = u$ and $\mu_k=1$ for every $k$, we deduce in particular that $x_0 \mapsto \mathcal{O}(u,x_0) =\cV(u,x_0)$ is upper semi-continuous. This implies that $\cT(u)$ is relatively closed in $\Sigma(u)$, which is relatively closed in $Z(u)$. 

For the countable $(N-2)$-rectifiability of $\cS(u)$, we follow the argument introduced in \cite{Han94}. Let $x_0 \in \mathcal{S}(u)$, and let $P_{x_0}$ be the leading polynomial at $x_0$, given by Proposition \ref{thm: blow-up pre}-($i$). We define the normalized leading polynomial
\[
\varphi_{x_0}(x):= \frac{P_{x_0}(x)}{ H(P_{x_0},0,1)^{1/2}},
\]
and introduce the blow-up family
\[
u_{x_0,r}(x):= \frac{u(x_0+rx)}{ \left( \frac{1}{r^{N-1}} \int_{S_r(x_0)} u^2 \right)^\frac12}.
\]
It is not difficult to check that $u_{x_0,r} \to \varphi_{x_0}$ in $W^{1,\infty}_{\loc}(\R^N)$ as $r \to 0$. At this point it is possible to apply step by step the proof of \cite[Theorem 2.1]{Han94} (using Proposition \ref{prop: upper} instead of \cite[Lemma 1.4]{Han94}), obtaining the desired result.
\end{proof}

\section{Multiplicity of global homogeneous solutions}\label{sec: global}

In this section we prove Theorem \ref{thm: multiple}. As already observed, $v$ is a global homogeneous solution of \eqref{eq} in $\R^2$ if and only if $u(r,\theta) = r^{\gamma_q} \varphi(\theta)$, with $\gamma_q=2/(2-q)$ and $\varphi$ $2\pi$-periodic solution to
\begin{equation}\label{eq phi N=2}
-\varphi'' - \gamma_q^2 \varphi =\lambda_+ (\varphi^+)^{q-1}- \lambda_-(\varphi^-)^{q-1} \quad \text{in $[0,2\pi]$}.
\end{equation}
Let $\bar k$ denote the minimum positive integer greater than or equal to $2 \gamma_q$, and, for any fixed $k > \bar k$, let us consider $T=T_k:=2\pi/k$. We prove the existence of a $2\pi$-periodic solution to \eqref{eq phi N=2} having exactly $2k$ zeros in $[0,2\pi)$. Since this construction can be carried on for any $k>\bar k$, the existence part in Theorem \ref{thm: multiple} follows.

For $0<t \le T$, the first eigenvalue of the Laplace-Beltrami operator $-\Delta_\theta = -d^2/(d \theta)^2$ on the arc $(0,t)$ is $\pi^2/t^2 > k^2/4> \gamma_q^2$. We consider the problem
\begin{equation}\label{eq phi k}
\begin{cases}
-\varphi'' - \gamma_q^2 \varphi= \lambda_+ |\varphi|^{q-2} \varphi & \text{in }\left(0,t\right) \\
\varphi>0 & \text{in }\left(0,t\right)
 \\ \varphi(0) = 0 = \varphi\left(t\right).
\end{cases}
\end{equation}
In order to show that \eqref{eq phi k} has a solution, we address the minimization of the functional $J_{(0,t)}^+ : H^1_0(0,t) \to \R$ defined by 
\[
J_{(0,t)}^+(\varphi):= \int_0^{t} \left(\frac12 \left(\varphi'\right)^2 - \frac{\gamma_q^2}2 \varphi^2 - \frac{\lambda_+}{q} |\varphi|^q\right).
\]

\begin{lemma}
With the previous choice of $T=2\pi/k$, for every $0<t<T$ there exists a unique non-negative minimizer of $J_{(0,t)}^+$ in $H_0^1(0,t)$, denoted by $\varphi_+(\cdot\,,t)$. Moreover, $\varphi_+(\cdot\,,t)>0$ in $(0,t)$ and $J_{(0,t)}^+(\varphi_+(\cdot\,,t)) <0$.
\end{lemma}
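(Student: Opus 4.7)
The plan is to apply the direct method of the calculus of variations, complemented by a strict-concavity argument for positivity and a Brezis--Oswald type argument for uniqueness. The key quantitative input is $k>\bar k \ge 2\gamma_q$, which yields $k>2\gamma_q$, so for every $0<t<T=2\pi/k$ the first Dirichlet eigenvalue of $-d^2/d\theta^2$ on $(0,t)$ satisfies $\pi^2/t^2 > \pi^2/T^2 = k^2/4 > \gamma_q^2$. By Poincar\'e's inequality the quadratic form $\varphi \mapsto \int_0^t\bigl((\varphi')^2-\gamma_q^2\varphi^2\bigr)$ is therefore equivalent to $\|\varphi'\|_{L^2}^2$ on $H^1_0(0,t)$; combining with $\int_0^t|\varphi|^q = O\bigl(\|\varphi\|_{H^1_0}^q\bigr)$ and $q<2$, the functional $J^+_{(0,t)}$ is coercive and bounded below. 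Weak lower semi-continuity follows from the lsc of the Dirichlet integral and the compactness of $H^1_0(0,t)\hookrightarrow L^p(0,t)$ for every $p$, so that a minimizer exists; since $J^+(\varphi)=J^+(|\varphi|)$, replacing it by its absolute value produces a non-negative minimizer $\varphi_+(\cdot,t)$.

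That the minimum value is strictly negative (hence $\varphi_+(\cdot,t)\not\equiv 0$) follows by a standard scaling argument: for any $\psi\in H^1_0(0,t)\setminus\{0\}$ with $\psi \ge 0$, one has $J^+_{(0,t)}(s\psi)=\tfrac{s^2}{2}\int_0^t((\psi')^2-\gamma_q^2\psi^2)-\tfrac{s^q\lambda_+}{q}\int_0^t\psi^q$, and because $q<2$ the $s^q$-term dominates as $s\to 0^+$, making $J^+(s\psi)<0$ for $s>0$ small.

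The main step is strict positivity of $\varphi_+(\cdot,t)$ in $(0,t)$. The minimizer weakly satisfies $-\varphi''-\gamma_q^2\varphi=\lambda_+|\varphi|^{q-2}\varphi$; since the right-hand side is bounded in $L^\infty$ (using $\varphi\in L^\infty$ and $q\in[1,2)$, with a measurable selection from the convex subdifferential $\partial|\cdot|$ in the borderline case $q=1$), we have $\varphi_+\in C^{1,1}([0,t])$. On every connected component $(a,b)$ of $U:=\{\varphi_+>0\}$ the equation reads $-\varphi_+''=\gamma_q^2\varphi_+ +\lambda_+\varphi_+^{q-1}>0$, so $\varphi_+$ is strictly concave on $(a,b)$; since $\varphi_+(a)=\varphi_+(b)=0$, this forces $\varphi_+'(a^+)>0$ and $\varphi_+'(b^-)<0$ strictly. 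If either $a$ or $b$ lay in the open interval $(0,t)$, the $C^{1,1}$-regularity combined with the local minimum condition $\varphi_+\ge 0=\varphi_+(a)$ (respectively $\varphi_+(b)$) would give $\varphi_+'(a)=0$ (respectively $\varphi_+'(b)=0$), a contradiction. Hence $U=(0,t)$.

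Finally, uniqueness follows by a Brezis--Oswald argument. Any non-negative minimizer is, by the previous paragraph, a positive solution of $-\varphi''=f(\varphi)$ on $(0,t)$ with $\varphi(0)=\varphi(t)=0$, where $f(s)=\gamma_q^2 s+\lambda_+ s^{q-1}$. Since $q<2$, the map $s\mapsto f(s)/s=\gamma_q^2+\lambda_+ s^{q-2}$ is strictly decreasing on $(0,\infty)$, and the endpoint conditions $\lim_{s\to 0^+}f(s)/s=+\infty>\pi^2/t^2$ and $\lim_{s\to\infty}f(s)/s=\gamma_q^2<\pi^2/t^2$ hold; the classical Brezis--Oswald uniqueness theorem for sublinear problems then yields the uniqueness of the positive solution, and therefore of the non-negative minimizer. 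The delicate point is the discontinuity of $f$ at $0$ when $q=1$, but the $C^{1,1}$ regularity derived above and the version of Brezis--Oswald admitting such discontinuous nonlinearities cover this case uniformly.
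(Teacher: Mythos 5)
Your proof is correct, and the existence, coercivity, and negative-value steps match the paper's; but for positivity and (especially) uniqueness you take a genuinely different route, so a brief comparison is warranted.

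For strict positivity, the paper invokes the strong maximum principle directly (the right-hand side $\gamma_q^2\varphi+\lambda_+\varphi^{q-1}\ge 0$ makes $\varphi$ superharmonic, so an interior zero forces $\varphi\equiv 0$, which is then ruled out by the negative energy value). Your concavity-plus-corner argument on each connected component of $\{\varphi_+>0\}$ is an elementary, self-contained substitute; the two are essentially equivalent in strength, both relying on the $C^{1,1}$ regularity of the minimizer.

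For uniqueness, the approaches diverge more substantially. The paper gives a self-contained one-dimensional gluing argument: if two distinct nonnegative minimizers $\bar\varphi,\tilde\varphi$ are not ordered, they cross at some interior $\theta_1$ with distinct derivatives (by local Cauchy-problem uniqueness away from zero); splicing the better half of each across $\theta_1$ produces a third minimizer that is $C^2$ (as a minimizer) yet has a corner, a contradiction. Once the two minimizers are ordered, the Euler--Lagrange identity $J^+_{(0,t)}(\varphi)=\lambda_+(\tfrac12-\tfrac1q)\int_0^t|\varphi|^q$ forces equality, closing the loop. You instead appeal to the Brezis--Oswald theorem, using the strict monotonicity of $s\mapsto f(s)/s=\gamma_q^2+\lambda_+ s^{q-2}$. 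This is valid and in fact gives something slightly stronger (uniqueness of the positive \emph{solution} to the boundary value problem, not merely of the nonnegative minimizer), but it imports a nontrivial external theorem. Your caveat about discontinuity at $q=1$ is unnecessary: for nonnegative solutions the relevant nonlinearity is $f(s)=\gamma_q^2 s+\lambda_+$, which is continuous on $[0,\infty)$; the only singular feature is $f(s)/s\to+\infty$ as $s\to 0^+$, and that is precisely what Brezis--Oswald accommodates. Both arguments are correct; the paper's is more elementary and tailored to the one-dimensional setting, while yours is more conceptual and would generalize to higher-dimensional analogues without modification.
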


\begin{proof}
By Sobolev embedding, it is clear that $J_t^+$ is weakly lower semi-continuous, and by the Poincar\'e inequality we deduce that
\begin{equation}\label{stima J}
\begin{split}
J_{(0,t)}^+(\varphi) &\ge \frac{1}{2}\left(1-\frac{t^2 \gamma_q^2}{\pi^2}\right) \int_0^{t} \left(\varphi ' \right)^2 - \frac{\lambda_+ t^\frac{2-q}{2} }{q}  \left(\int_0^{t} \varphi^2\right)^\frac{q}{2} \\
& \ge \frac{1}{2}\left(1-\frac{t^2 \gamma_q^2}{\pi^2}\right) \int_0^{t} \left(\varphi ' \right)^2 - \frac{\lambda_+ t^\frac{2+q}{2} }{q \pi^q}  \left(\int_0^{t} \left(\varphi ' \right)^2\right)^\frac{q}{2},
\end{split}
\end{equation}
and the coefficient $(1-t^2 \gamma_q^2/\pi^2)$ is strictly positive since $0< t \le T$. Thus, $J_{(0,t)}^+$ is bounded from below and coercive. Thus, $J_{(0,t)}^+$ is bounded from below and coercive. Since $H_0^1(0,t)$ is weakly closed, the direct method of the calculus of variations implies the existence of a minimizer $\bar \varphi$, which solves the first equation in \eqref{eq phi k} together with the boundary condition. It remains to show that $\bar \varphi$ is positive. We can first suppose that $\bar \varphi \ge 0$, since if $\bar \varphi$ is a minimizer, then the same holds also for $|\bar \varphi|$. Now the strong maximum principle implies that either $\bar \varphi >0$, or $\bar \varphi \equiv 0$, but the latter alternative can be easily ruled out observing that $J_{(0,t)}^+(\bar \varphi)<0$: indeed, for every $\varphi \in H_0^1(0,t)$, one has $J_t^+(s\varphi) <0$ for $s>0$ small enough. 

Lastly, we prove the uniqueness of the non-negative minimizer. Suppose by contradiction that there exists two different minimizers $\bar \varphi \ge 0$ and $\tilde \varphi \ge 0$. They have to be positive in $(0,t)$ by the strong maximum principle. We claim that 
\begin{equation}\label{order min}
\text{either $\bar \varphi<\tilde \varphi$, or $\bar \varphi> \tilde \varphi$, in $(0,t)$}.
\end{equation} 
If not, we have that the graphs of $\bar \varphi$ and of $\tilde \varphi$ intersect in a point $\theta_1 \in (0,t)$, and $\bar \varphi \not \equiv \tilde \varphi$; then necessarily $\bar \varphi'(\theta_1) \neq \tilde \varphi'(\theta_1)$, by uniqueness for the Cauchy problem associated with the equation for $\bar \varphi$. Also, without loss of generality we can assume that $J_{(0,\theta_1)}^+(\bar \varphi) \ge J_{(0,\theta_1)}^+(\tilde \varphi)$. Since on the other hand $J_{(0,t)}^+(\bar \varphi) = J_{(0,t)}^+(\tilde \varphi)$, we infer that $J_{(\theta_1,t)}^+(\bar \varphi) \le J_{(\theta_1,t)}^+(\tilde \varphi)$, and therefore we can construct a third minimizer
\[
\hat \varphi(\theta):= \begin{cases} \tilde \varphi & \text{if $\theta \in [0,\theta_1]$} \\
\bar \varphi & \text{if $\theta \in (\theta_1,t]$}.  \end{cases}
\]
As a minimizer, $\hat \varphi$ is a smooth solution to \eqref{eq phi k}, but on the other hand by construction $\hat \varphi$ is not differentiable in $\theta_1$, a contradiction. This proves the validity of \eqref{order min}.

Now we observe that, by \eqref{eq phi k}, 
\[
\int_0^t (\bar \varphi')^2 -\gamma_q^2 {\bar{\varphi}}^2 = \int_{0}^t \lambda_+ |\bar \varphi|^q,
\]
and as a consequence
\[
J_{(0,t)}^+(\bar \varphi) = \lambda_+\left(\frac12-\frac1q\right) \int_0^t |\bar \varphi|^q.
\]
Analogously 
\[
J_{(0,t)}^+(\tilde \varphi) = \lambda_+\left(\frac12-\frac1q\right) \int_0^t |\tilde \varphi|^q,
\]
but then we obtain a contradiction between \eqref{order min} and the fact that $J_{(0,t)}^+(\tilde \varphi) = J_{(0,t)}^+(\bar \varphi)$. This finally shows that the non-negative minimizer is unique.
\end{proof}

The notation $\varphi_+'(\theta,t)$ will always be used to denote the derivative with respect to $\theta$.

\begin{lemma}\label{lem: bound min}
There exists $C>0$ such that 
\[
\|\varphi_+(\cdot\,,t)\|_{H_0^1(0,t)}^2 :=  \int_0^t \left(\varphi_+'(\cdot\,,t) \right)^\frac12 \le C t^\frac{2+q}{2-q}
\]
for every $t \in (0,T]$. 
\end{lemma}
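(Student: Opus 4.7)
The plan is to derive the bound directly from the coercivity estimate \eqref{stima J}, combined with the fact that $\varphi_+(\cdot,t)$ is a minimizer of $J_{(0,t)}^+$ and hence $J_{(0,t)}^+(\varphi_+(\cdot,t)) \le 0$.

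First I would verify that the coefficient in front of the gradient term in \eqref{stima J} is bounded below by a positive constant uniformly in $t \in (0,T]$. Indeed, since $k > \bar k \ge 2\gamma_q$, one has $T\gamma_q = 2\pi\gamma_q/k < \pi$, so the quantity
\[
c_1 := 1 - \frac{T^2\gamma_q^2}{\pi^2} > 0
\]
is a positive constant (depending on $k$ and $q$ but not on $t$), and for every $t \in (0,T]$ there holds $1 - t^2\gamma_q^2/\pi^2 \ge c_1$.

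Next, setting $X := \int_0^t (\varphi_+'(\cdot,t))^2$, I would apply \eqref{stima J} to $\varphi = \varphi_+(\cdot,t)$, obtaining
\[
J_{(0,t)}^+(\varphi_+(\cdot,t)) \;\ge\; \frac{c_1}{2}\, X \;-\; \frac{\lambda_+}{q\pi^q}\, t^{(2+q)/2}\, X^{q/2}.
\]
Since $\varphi_+(\cdot,t)$ minimizes $J_{(0,t)}^+$ and $J_{(0,t)}^+(0)=0$, the left-hand side is non-positive (it is in fact strictly negative, as already noted in the preceding lemma). Hence
\[
\frac{c_1}{2}\, X \;\le\; \frac{\lambda_+}{q\pi^q}\, t^{(2+q)/2}\, X^{q/2}.
\]
Because $\varphi_+(\cdot,t) \not\equiv 0$, one has $X>0$, and dividing by $X^{q/2}$ gives
\[
X^{1-q/2} \;\le\; \frac{2\lambda_+}{c_1\, q\, \pi^q}\, t^{(2+q)/2}.
\]
Raising both sides to the power $2/(2-q)$ and noting that $\tfrac{2+q}{2}\cdot\tfrac{2}{2-q}=\tfrac{2+q}{2-q}$, I obtain
\[
X \;\le\; \left(\frac{2\lambda_+}{c_1\, q\, \pi^q}\right)^{\!2/(2-q)} t^{(2+q)/(2-q)} \;=:\; C\, t^{(2+q)/(2-q)},
\]
which is exactly the claimed estimate. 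There is no serious obstacle: the argument is a direct algebraic manipulation of the lower bound already established, the only point requiring a line of explanation being that the Poincar\'e-type coefficient in \eqref{stima J} is bounded below by a positive constant uniformly in $t \in (0,T]$, which is where the choice $k > \bar k \ge 2\gamma_q$ enters in an essential way.
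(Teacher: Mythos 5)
Your proof is correct and follows essentially the same route as the paper: apply the coercivity bound \eqref{stima J} to the minimizer, observe that the Poincar\'e coefficient is bounded below by a positive constant uniformly in $t \in (0,T]$ thanks to the choice $k > \bar k$, use $J^+_{(0,t)}(\varphi_+(\cdot,t))<0$, and solve the resulting algebraic inequality for $X = \int_0^t(\varphi_+')^2$. The constant you obtain matches the paper's (your $c_1$ equals $2\bar C$ in the paper's notation), so there is nothing to add.
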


\begin{proof}
By \eqref{stima J}, we infer that 
\begin{align*}
0 & > J_{(0,t)}^+(\varphi_+(\cdot\,,t)) \ge \frac{1}{2}\left(1-\frac{t^2 \gamma_q^2}{\pi^2}\right) \int_0^{t} \left(\varphi_+(\cdot\,,t)' \right)^2 - \frac{\lambda_+ t^\frac{2+q}{2} }{q \pi^q}  \left(\int_0^{t} \left(\varphi_+(\cdot\,,t)' \right)^2\right)^\frac{q}{2} \\
& \ge \underbrace{\frac{1}{2}\left(1-\frac{T^2 \gamma_q^2}{\pi^2}\right)}_{=: \bar C} \int_0^{t} \left(\varphi_+(\cdot\,,t)' \right)^2 - \frac{\lambda_+ t^\frac{2+q}{2} }{q \pi^q}  \left(\int_0^{t} \left(\varphi_+(\cdot\,,t)' \right)^2\right)^\frac{q}{2}.
\end{align*}
Notice that $\bar C$ is independent of $t \in (0,T]$ and, by the choice of $T=2\pi/k$ with $k > \bar k$ it results $\bar C>0$. Therefore we deduce that 
\[
 \int_0^{t} \left(\varphi_+(\cdot\,,t)' \right)^2 \le \frac{\lambda_+ t^\frac{2+q}{2} }{q \bar C \pi^q}   \left(\int_0^{t} \left(\varphi_+(\cdot\,,t)' \right)^2\right)^\frac{q}{2},
\]
whence the thesis follows.
\end{proof}

We now show the continuous dependence of the minimizers with respect to $t$. To be more precise, for any $t \in (0,T]$ let us define 
\[
\phi_+(\theta,t):= \frac1{t} \varphi_+(t\theta,t).
\]
It is clear that $\phi_+ \in H_0^1(0,1) \cap H^2(0,1)$, that $\phi_+(\cdot\,,t)$ minimizes 
\[
\tilde{J}_{(0,t)}^+(\phi) := \int_0^1 \left( \frac12 (\phi')^2 - \frac{\gamma_q^2 t^2}2 \phi^2 - \frac{\lambda_+ t^{q}}{q} |\phi|^q\right)
\]
in $H_0^1(0,1)$, and that 
\begin{equation}\label{eq phi k 1}
\begin{cases}
-\big(\phi_+(\cdot\,,t)\big)'' -t^2 \gamma_q^2 \phi_+(\cdot\,,t) = \lambda_+ t^q \big(\phi_+(\cdot\,,t)\big)^{q-1} & \text{in $(0,1)$} \\
\phi_+(\cdot\,,t)>0 & \text{in $(0,1)$} \\
\phi_+(0,t) = 0 = \phi_+(1,t).
\end{cases}
\end{equation}
The advantage of working with $\{\phi_+(\cdot\,,t): t \in (0,T]\}$ stays in the fact that this is a family of functions defined on the same interval. For future convenience, we define $\phi_+(\cdot\,,0)$ as the constant function $0$.

\begin{lemma}\label{lem: cont dep min}
If $t \to \bar t \in [0,T]$, then $\phi_+(\cdot\,,t) \to \phi_+(\cdot\,,\bar t)$ in $C^1([0,1])$.
\end{lemma}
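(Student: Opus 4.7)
The plan is to combine a uniform $H^1_0(0,1)$-estimate (rescaled from Lemma \ref{lem: bound min}), a uniform $C^{1,1}$-estimate read off the ODE \eqref{eq phi k 1}, and the variational characterization of $\phi_+(\cdot,t)$ together with the uniqueness statement in the previous lemma. Changing variable $\theta\mapsto t\theta$ in Lemma \ref{lem: bound min} gives $\|\phi_+(\cdot,t)\|_{H^1_0(0,1)}^2 = t^{-1}\|\varphi_+(\cdot,t)\|_{H^1_0(0,t)}^2 \le C\, t^{2q/(2-q)}$, uniformly in $t\in(0,T]$. In particular $\{\phi_+(\cdot,t)\}$ is uniformly bounded in $H^1_0(0,1)$ and, by the Sobolev embedding, in $C([0,1])$. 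Plugging this $L^\infty$-bound into \eqref{eq phi k 1} and using $\phi_+^{q-1}\le \|\phi_+(\cdot,t)\|_\infty^{q-1}\le C$ (valid for every $q\in[1,2)$) yields $\|\phi_+''(\cdot,t)\|_{L^\infty(0,1)}\le C$ uniformly in $t$. This $C^{1,1}$-bound will upgrade any $C^0$-convergence to $C^1$-convergence via Arzel\`a-Ascoli.

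The case $\bar t=0$ is immediate: the $H^1_0$-estimate tends to zero, so $\phi_+(\cdot,t)\to 0$ in $H^1_0$, hence in $C([0,1])$; together with the uniform $C^{1,1}$-bound this gives $\phi_+(\cdot,t)\to 0$ in $C^1([0,1])=\phi_+(\cdot,0)$. For $\bar t\in (0,T]$ the plan is to argue by the subsequence principle. Given $t_n\to\bar t$, the uniform $H^1_0$-bound together with the compactness of the Sobolev embedding $H^1_0(0,1)\embed C([0,1])$ yields, along a subsequence, $\phi_+(\cdot,t_n)\weak\psi$ weakly in $H^1_0(0,1)$ and strongly in $C([0,1])$, for some $\psi\ge 0$.

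The key step is to identify $\psi$ with $\phi_+(\cdot,\bar t)$ without passing through the (possibly discontinuous, when $q=1$) right-hand side of \eqref{eq phi k 1}. Using the minimality of $\phi_+(\cdot,t_n)$ tested against the competitor $\phi_+(\cdot,\bar t)$,
\[
\tilde{J}^+_{(0,t_n)}(\phi_+(\cdot,t_n))\le\tilde{J}^+_{(0,t_n)}(\phi_+(\cdot,\bar t)),
\]
the right-hand side converges to $\tilde{J}^+_{(0,\bar t)}(\phi_+(\cdot,\bar t))$ by the explicit dependence on $t$, while on the left-hand side the kinetic term is weakly lower semicontinuous and the $L^2$- and $L^q$-terms converge by strong $C([0,1])$-convergence. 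Thus
\[
\tilde{J}^+_{(0,\bar t)}(\psi)\le\liminf_{n}\tilde{J}^+_{(0,t_n)}(\phi_+(\cdot,t_n))\le\tilde{J}^+_{(0,\bar t)}(\phi_+(\cdot,\bar t)).
\]
Hence $\psi$ is itself a minimizer of $\tilde{J}^+_{(0,\bar t)}$ and, being non-negative, $\psi=\phi_+(\cdot,\bar t)$ by the uniqueness part of the previous lemma. Moreover the chain of inequalities collapses to equalities, which combined with weak $H^1$-convergence forces $\|\phi_+'(\cdot,t_n)\|_{L^2}\to\|\phi_+'(\cdot,\bar t)\|_{L^2}$, hence strong $H^1_0(0,1)$-convergence. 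The uniform $C^{1,1}$-bound and Arzel\`a-Ascoli then upgrade this to convergence in $C^1([0,1])$; the subsequence principle finally yields convergence of the whole family as $t\to\bar t$.

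The one genuine difficulty is the identification of the weak limit $\psi$. Passing to the limit directly in the ODE would be awkward when $q=1$ (discontinuous right-hand side) and before one can exclude $\psi\equiv 0$. The variational route sidesteps both issues by using only weak $H^1$-convergence, strong $L^p$-convergence of the lower-order terms, and uniqueness of the non-negative minimizer.
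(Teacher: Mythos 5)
Your proof is correct and follows the same overall strategy as the paper: the rescaled $H^1_0$ bound from Lemma \ref{lem: bound min}, a uniform bound from the ODE \eqref{eq phi k 1}, a subsequence extraction, identification of the weak limit via uniqueness of the nonnegative minimizer, and the subsequence principle. The one place where you deviate is the identification step: the paper first upgrades the weak $H^1_0$ limit to strong $H^2$ convergence by passing to the limit in the ODE, and then identifies the limit by a contradiction argument using joint continuity of $(t,\phi)\mapsto\tilde J^+_{(0,t)}(\phi)$ on $\R\times H^1_0(0,1)$; you instead identify the limit directly from weak $H^1$ convergence via lower semicontinuity of the kinetic term together with the minimality inequality, and only afterwards recover strong $H^1_0$ convergence from the collapse of the chain of inequalities. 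Both routes are standard for this sort of variational stability result. Your version is marginally cleaner in that it never needs to pass the nonlinearity to the limit before identifying $\psi$; note, however, that your worry about the discontinuous right-hand side when $q=1$ is not actually an obstacle for the paper's argument either, since each $\phi_+(\cdot,t_n)$ is strictly positive on $(0,1)$, so $\phi_+(\cdot,t_n)^{q-1}\equiv 1$ there and the right-hand side of \eqref{eq phi k 1} converges uniformly along the sequence regardless.
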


\begin{proof}
By Sobolev embedding, it is sufficient to show that $\phi_+(\cdot\,,t_n) \to \phi_+(\cdot\,,\bar t)$ in $H^2(0,1)$. As a preliminary observation, we observe that by definition and Lemma \ref{lem: bound min}
\begin{equation}\label{norma a 0}
\|\phi_+(\cdot\,,t)\|_{H_0^1(0,1)}^2 = \frac{1}{t}\|\varphi_+(\cdot\,,t)\|_{H_0^1(0,t)}^2 \le C t^{\frac{2q}{2-q}}
\end{equation}
for every $t \in (0,T]$. 

Suppose at first that $\bar t \in (0,T]$. Up to a subsequence $\phi_+(\cdot\,,t_n) \weak \bar \phi$ weakly in $H_0^1(0,1)$. By \eqref{eq phi k 1}, the convergence is in fact strong in $H^2(0,1)$, and $\bar \phi \ge 0$ in $[0,1]$ so that to complete the proof we have only to check that $\bar \phi= \phi_+(\cdot\,,\bar t)$. If this is not true, then by uniqueness of the non-negative minimizer
\begin{equation}\label{1 2 1}
\tilde{J}_{(0,\bar t)}^+(\bar \phi) > \tilde{J}_{(0,\bar t)}^+(\phi_+(\cdot\,,\bar t)).
\end{equation}
But the functional $\tilde{J}_{(0,t)}^+(\phi)$ is continuous with respect to both $t \in \R$ and $\phi \in H_0^1(0,1)$, and hence
\[
\tilde{J}_{(0,t_n)}^+(\phi_+(\cdot\,,t_n)) \to \tilde{J}_{(0,\bar t)}^+(\bar \phi), \quad \text{and} \quad \tilde{J}_{(0,t_n)}^+(\phi_+(\cdot\,,\bar t)) \to \tilde{J}_{(0,\bar t)}^+(\phi_+(\cdot\,,\bar t)).
\]
Combining with \eqref{1 2 1}, we deduce that for sufficiently large $n$ we have 
\[
\tilde{J}_{(0,t_n)}^+(\phi_+(\cdot\,,t_n)) > \tilde{J}_{(0,t_n)}^+(\phi_+(\cdot\,,\bar t)),
\]
in contradiction with the minimality of $\phi_+(\cdot\,,t_n)$. This shows that necessarily $\bar \phi =  \phi_+(\cdot\,,\bar t)$, as desired.

In case $\bar t=0$, the $H_0^1(0,1)$ strong convergence $\phi_+(\cdot\,,t_n) \to 0$ follows from \eqref{norma a 0}. Equation \eqref{eq phi k 1} yields also strong convergence in $H^2(0,1)$.
\end{proof}

We deduce the following:

\begin{corollary}\label{cor: cont der t}
The function $t \mapsto \varphi_+'(t^-,t)$ is continuous in $t \in (0,T)$, with 
\[
\lim_{t \to 0^+} \varphi_+'(t^-,t) = 0 \quad \text{and} \quad \lim_{t \to T^-} \varphi_+'(t^-,t) <0. 
\]
\end{corollary}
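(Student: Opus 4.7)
The plan is to recast the statement in terms of the normalized family $\{\phi_+(\cdot,t)\}_{t \in [0,T]}$. Differentiating the definition $\varphi_+(\theta,t) = t\,\phi_+(\theta/t, t)$ in $\theta$ gives
\[
\varphi_+'(\theta,t) = \phi_+'(\theta/t, t), \qquad \text{so} \qquad \varphi_+'(t^-, t) = \phi_+'(1^-, t).
\]
Thus the three assertions become, respectively: (a) $t \mapsto \phi_+'(1^-, t)$ is continuous on $(0,T)$; (b) $\phi_+'(1^-, t) \to 0$ as $t \to 0^+$; (c) $\phi_+'(1^-, t)$ has a strictly negative limit as $t \to T^-$.

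Parts (a) and (b) follow immediately from Lemma \ref{lem: cont dep min}. Indeed, that lemma yields $\phi_+(\cdot, t) \to \phi_+(\cdot, \bar t)$ in $C^1([0,1])$ for every $\bar t \in [0,T]$, and $C^1$ convergence up to the boundary transports the value of the derivative at $\theta=1$. Taking $\bar t \in (0,T)$ gives continuity, and taking $\bar t = 0$ (using the convention $\phi_+(\cdot,0) \equiv 0$) gives the limit $0$ as $t \to 0^+$.

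The only substantive point is the sign in (c). By continuity (applied at $\bar t = T$), the limit equals $\phi_+'(1^-, T)$. Since $\phi_+(\cdot, T) > 0$ on $(0,1)$ and $\phi_+(1, T) = 0$, one has $\phi_+'(1^-, T) \le 0$ automatically. To rule out equality, I would use the conserved Hamiltonian associated with \eqref{eq phi k 1}, namely
\[
\mathcal{H}(\theta) := \tfrac{1}{2}\bigl(\phi_+'(\theta,T)\bigr)^2 + \tfrac{T^2 \gamma_q^2}{2}\phi_+(\theta,T)^2 + \tfrac{\lambda_+ T^q}{q}\phi_+(\theta,T)^q,
\]
which is constant on $[0,1]$ (the standard multiplication by $\phi_+'$ and integration, already invoked in the proof of Theorem \ref{thm: Hausdorff}, applies here). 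If $\phi_+'(1^-, T) = 0 = \phi_+(1, T)$ then $\mathcal{H} \equiv 0$, forcing $\phi_+(\cdot, T) \equiv 0$ on $[0,1]$; this contradicts the positivity in $(0,1)$, so $\phi_+'(1^-, T) < 0$.

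The argument has no genuine obstacle: Lemma \ref{lem: cont dep min} does the entire heavy lifting for continuity and for the vanishing limit at $0$, while the strict negativity at $T$ is a one-line consequence of the Hamiltonian identity. The only minor care needed is to note that although the right-hand side $\phi_+^{q-1}$ is not locally Lipschitz at $0$ when $q < 2$, the conservation of $\mathcal{H}$ holds because $\phi_+(\cdot, T)$ is a $C^2$ classical solution on the open interval where it is positive (and $C^1$ up to the endpoints by elliptic regularity), so the standard chain-rule derivation of the Hamiltonian is valid.
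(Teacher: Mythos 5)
Your proposal is correct, and the bulk of it coincides exactly with the paper's argument: rewrite $\varphi_+'(t^-,t)=\phi_+'(1^-,t)$, then invoke Lemma~\ref{lem: cont dep min} for the continuity on $(0,T)$ and the vanishing limit at $t=0^+$. The one place you diverge is the strict negativity at $t=T$. The paper dispatches it with a single citation of the Hopf lemma (which is applicable despite the positive zeroth-order coefficient $T^2\gamma_q^2$ precisely because $\phi_+(1,T)=0$), whereas you instead conserve the Hamiltonian $\mathcal H$ along the solution of \eqref{eq phi k 1} and argue that $\phi_+'(1^-,T)=0$ would force $\mathcal H\equiv 0$ and hence $\phi_+(\cdot,T)\equiv 0$, contradicting interior positivity. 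Both are valid; your route is slightly longer but is self-contained, reuses a tool already deployed in the proof of Theorem~\ref{thm: Hausdorff}, and sidesteps the mild subtlety of which version of Hopf's lemma applies when the potential has the ``wrong'' sign. The remark you add about the right-hand side $\phi_+^{q-1}$ not being Lipschitz at zero is handled correctly: conservation of $\mathcal H$ is derived pointwise on $(0,1)$ where $\phi_+>0$ and the equation is classical, then extended to $[0,1]$ by $C^1$ regularity.
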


\begin{proof}
Since $\varphi_+'(t^-,t) = \phi_+'(1^-,t)$, the thesis follows directly by Lemma \ref{lem: cont dep min} (recall that $\phi_+(\cdot\,,0) \equiv 0$) and by the Hopf lemma.
\end{proof}

We can adapt the very same argument to produce a solution to 
\begin{equation}\label{eq phi k neg}
\begin{cases}
-\varphi'' - \gamma_q^2 \varphi= \lambda_- |\varphi|^{q-2} \varphi & \text{in }\left(t,T\right) \\
\varphi<0 & \text{in }\left(t,T\right)
 \\ \varphi(t) = 0 = \varphi(T),
\end{cases}
\end{equation}
characterized as the unique non-positive minimizer of the functional $J_{(t,T)}^- : H^1_0(t,T) \to \R$ defined by 
\[
J_{(t,T)}^-(\varphi):= \int_0^{t} \left(\frac12 \left(\varphi'\right)^2 - \frac{\gamma_q^2}2 \varphi^2 - \frac{\lambda_-}{q} |\varphi|^q\right).
\]
If we denote such a minimizer by $\varphi_-(\cdot\,,t)$, it is possible to use the same argument leading to Corollary \ref{cor: cont der t} in order to show that:

\begin{corollary}\label{cor: cont der t 2}
The function $t \mapsto \varphi_-'(t^+,t)$ is continuous in $t \in (0,T)$, with 
\[
\lim_{t \to 0^+} \varphi_-'(t^+,t) < 0 \quad \text{and} \quad \lim_{t \to T^-} \varphi_-'(t^+,t) =0. 
\]
\end{corollary}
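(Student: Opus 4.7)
The plan is to mirror the proof of Corollary~8.3 via a rescaling adapted to the right-anchored interval $(t,T)$. Specifically, for $t \in [0,T)$ I would set
$$\phi_-(\theta,t) := \frac{1}{T-t}\,\varphi_-(t + (T-t)\theta,\,t), \qquad \theta \in [0,1],$$
and $\phi_-(\cdot,T)\equiv 0$. A direct change of variables shows that $\phi_-(\cdot,t)$ is then the unique non-positive minimizer in $H^1_0(0,1)$ of
$$\tilde J^-_{(t,T)}(\phi) := \int_0^1 \left(\frac12 (\phi')^2 - \frac{\gamma_q^2 (T-t)^2}{2}\phi^2 - \frac{\lambda_- (T-t)^q}{q}|\phi|^q\right),$$
solving the analogue of \eqref{eq phi k 1} on $(0,1)$ with $t$ replaced by $T-t$ and $\lambda_+$ by $\lambda_-$. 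The chain rule gives the clean identity $\phi_-'(0^+,t) = \varphi_-'(t^+,t)$, reducing everything to the continuous dependence and boundary behaviour of $\phi_-'(0^+,\cdot)$.

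The crucial compatibility is that $T-t \le T = 2\pi/k$ with $k > \bar k$, so the Poincar\'e correction $1 - (T-t)^2\gamma_q^2/\pi^2$ remains uniformly bounded away from zero; hence the proof of Lemma~\ref{lem: bound min} transfers verbatim and yields the uniform bound $\|\phi_-(\cdot,t)\|^2_{H^1_0(0,1)}\le C(T-t)^{2q/(2-q)}$. In particular, $\phi_-(\cdot,t)\to 0$ strongly in $H^1_0(0,1)$ as $t\to T^-$, which through the rescaled PDE and elliptic regularity upgrades to $C^1([0,1])$. The continuous-dependence statement of Lemma~\ref{lem: cont dep min} also transfers: given $t_n\to \bar t\in[0,T]$, weak-$H^1$ compactness produces a non-positive limit $\bar\phi$, the rescaled equation upgrades the convergence to $H^2(0,1)$, and a comparison of $\tilde J^-_{(t_n,T)}$-energies against the competitor $\phi_-(\cdot,\bar t)$, combined with uniqueness of the non-positive minimizer (or, for $\bar t=T$, the bound above), forces $\bar\phi = \phi_-(\cdot,\bar t)$; Sobolev embedding then promotes the convergence to $C^1([0,1])$.

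Combining these via $\varphi_-'(t^+,t) = \phi_-'(0^+,t)$: the $C^1([0,1])$ continuity of $t\mapsto \phi_-(\cdot,t)$ delivers continuity of $t\mapsto \varphi_-'(t^+,t)$ on $(0,T)$; the limit $t\to T^-$ is zero since $\phi_-(\cdot,t)\to 0$ in $C^1$; and as $t\to 0^+$, $\phi_-(\cdot,0)$ is a nontrivial non-positive, hence (by the strong maximum principle applied to $-\phi_-(\cdot,0)\ge 0$) strictly negative solution on $(0,1)$, so Hopf's lemma at $\theta=0$ yields $\phi_-'(0^+,0)<0$. I do not anticipate any essential new difficulty: $\tilde J^-_{(t,T)}$ is formally identical to $\tilde J^+_{(0,T-t)}$ after the reflection $\theta\mapsto 1-\theta$ and swapping $\lambda_+\leftrightarrow \lambda_-$, so the machinery of Lemmas~\ref{lem: bound min}--\ref{lem: cont dep min} and Corollary~\ref{cor: cont der t} applies by this symmetry; the only small bookkeeping point is the sign of the nonlinearity under rescaling, which is automatic from working in the non-positive cone.
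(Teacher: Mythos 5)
Your proposal is correct and is essentially the argument the paper intends: the text immediately before the corollary simply says that ``it is possible to use the same argument leading to Corollary \ref{cor: cont der t},'' and you have spelled out exactly that adaptation, with the right-anchored rescaling $\phi_-(\theta,t)=(T-t)^{-1}\varphi_-(t+(T-t)\theta,t)$, the identity $\phi_-'(0^+,t)=\varphi_-'(t^+,t)$, the observation that $T-t\le T$ keeps the Poincar\'e constant uniformly positive so Lemmas \ref{lem: bound min} and \ref{lem: cont dep min} transfer, and the Hopf lemma giving the strict sign at $t\to 0^+$ while the uniform $H^1$ bound gives the vanishing limit at $t\to T^-$.
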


We are now ready for the:

\begin{proof}[Conclusion of the proof of Theorem \ref{thm: multiple}]
We consider the function 
\[
\Psi:(0,T) \to \R, \qquad \Psi(t):=  \varphi_+'(t^-,t) - \varphi_-'(t^+,t).
\]
By Corollaries \ref{cor: cont der t} and \ref{cor: cont der t 2}, it results that $\Psi$ is continuous in $(0,T)$, with $\Psi(t)>0$ for $t$ close to $0$ and $\Psi(t)<0$ for $t$ close to $T$. Therefore, there exists $\bar t \in (0,T)$ with $\Psi(\bar t) = 0$. Let us consider the function
\[
\tilde \varphi(\theta):= \begin{cases}  \varphi_+(\theta,\bar t) & \text{if $\theta \in [0,\bar t]$} \\  \varphi_-(\theta,\bar t) & \text{if $\theta \in [\bar t,T]$}. \end{cases}
\]
By construction, $\tilde \varphi$ is solution to \eqref{eq phi N=2} in $(0,T) \setminus \{\bar t\}$, and moreover is of class $C^1$, since $\Psi(\bar t)=0$. Then, $\tilde \varphi$ is an $H^2(0,1)$ solution to \eqref{eq phi N=2} in $(0,T)$, vanishes in $0$ and $T$, and has exactly one interior zero in $\bar t$. It follows that the energy function
\[
\frac{(\tilde \varphi')^2}2 + \frac{\gamma_q^2}{2} \tilde{\varphi}^2+ \lambda_+ \frac{(\tilde{\varphi}^+)^q}q + \lambda_-\frac{(\tilde{\varphi}^-)^q}q
\]
is constant in $\theta$, and in particular $(\tilde{\varphi}'(0^+))^2 = (\tilde{\varphi}'(T^-))^2$, whence 
\begin{equation}\label{der extre}
\tilde{\varphi}'(0^+)=\tilde{\varphi}'(T^-).
\end{equation} 
Recalling that $T=2\pi/k$ with $k \in \N$, this condition allows us to extend $\tilde{\varphi}$ on the whole unit circle, letting
\[
\tilde{\tilde{\varphi}}(\theta) := \begin{cases} \tilde{\varphi}(\theta) & \text{if $\theta \in \left[0,\frac{2\pi}k\right]$} \\  \tilde{\varphi}\left(\theta+ \frac{2\pi}{k}\right) & \text{if $\theta \in \left[\frac{2\pi}{k},\frac{4\pi}{k}\right]$} \\
\cdots \\
\tilde{\varphi}\left(\theta+ \frac{2(k-1)\pi}{k}\right) & \text{if $\theta \in \left[\frac{2(k-1)\pi}{k},2\pi\right]$}.
\end{cases}
\]
By \eqref{der extre} the new function $\tilde{\tilde{\varphi}}$ is of class $C^1([0,2\pi])$, is $2\pi$-periodic, and solve \eqref{eq phi N=2} in the whole unit circle. This completes the existence part in Theorem \ref{thm: multiple}. 

Let now $u_k$ the global homogeneous solution to \eqref{eq} associated with $\tilde{\tilde{\varphi}}$. It is clear that $\cO(u_k,0) = \gamma_q$ for every $k$, by homogeneity. Equality $N_q(u_k,0,1)=\gamma_q$ can be checked directly in the following way: for any $\gamma_q$-homogeneous solution $u = r^{\gamma_q} \varphi(\theta)$ to \eqref{eq}, passing to polar coordinates we have that
\[
N_q(u,0,1) = \frac{\frac{1}{N-2+2 \gamma_q} \int_{\S^{N-1}} |\nabla_\theta \varphi|^2 + \gamma_q^2 \varphi^2 - F_{\lambda_+,\lambda_-}(\varphi)}{ \int_{\S^{N-1}} \varphi^2}.
\]
Multiplying \eqref{eq phi} with $\varphi$ itself and integrating, we obtain
\[
\int_{\S^{N-1}} |\nabla_\theta \varphi|^2 - F_{\lambda_+,\lambda_-}(\varphi)= \lambda_{N,q} \int_{\S^{N-1}} \varphi^2,
\]
whence equality $N_q(u,0,1) = \gamma_q$ follows straightforwardly. This holds in particular for $u=u_k$, for any $k$.
\end{proof}


\end{document}